\newtheorem{theorem}{Theorem}
\newtheorem{conjecture}{Conjecture}
\newtheorem{proposition}[theorem]{Proposition}
\newtheorem{corollary}[theorem]{Corollary}
\newtheorem{claim}{Claim}
\newtheorem{lemma}[theorem]{Lemma}
\theoremstyle{definition}
\newtheorem{definition}{Definition}
\newtheorem{fact}{Fact}
\newtheorem{question}{Question}
\newtheorem{problem}{Problem}
\theoremstyle{definition}
\newtheorem{remark}{Remark}
\DeclareMathOperator{\Prob}{Prob}
\DeclareMathOperator{\Aut}{Aut}
\DeclareMathOperator{\CT}{CT}
\DeclareMathOperator{\GL}{GL}
\DeclareMathOperator{\Cay}{Cay}
\DeclareMathOperator{\Link}{Link}
\DeclareMathOperator{\Orb}{Orb}
\DeclareMathOperator{\Stab}{Stab}
\DeclareMathOperator{\Sym}{Sym}
\DeclareMathOperator{\poly}{poly}
\newcommand{\Id}{\textrm{Id}}
\newcommand{\A}{\mathcal{A}}
\newcommand{\B}{\mathcal{B}}
\newcommand{\G}{\mathcal{G}}
\newcommand{\cH}{\mathcal{H}}
\newcommand{\cJ}{\mathcal{J}}
\DeclareMathOperator{\Isom}{Isom}
\begin{document}

\title{On the structure of random graphs with constant $r$-balls}
\author{Itai Benjamini\footnote{Department of Mathematics, Weizmann Institute of Science, Rehovot 7610001, Israel.}\: and David Ellis\footnote{School of Mathematics, University of Bristol, Woodland Road, Bristol, BS8 1UG, United Kingdom.}}
\date{27th October 2020}
\maketitle

\begin{abstract}
We continue the study of the properties of graphs in which the ball of radius $r$ around each vertex induces a graph isomorphic to the ball of radius $r$ in some fixed vertex-transitive graph $F$, for various choices of $F$ and $r$. This is a natural extension of the study of regular graphs. 

More precisely, if $F$ is a vertex-transitive graph and $r \in \mathbb{N}$, we say a graph $G$ is {\em $r$-locally $F$} if the ball of radius $r$ around each vertex of $G$ induces a graph isomorphic to the graph induced by the ball of radius $r$ around any vertex of $F$. We consider the following random graph model: for each $n \in \mathbb{N}$, we let $G_n = G_n(F,r)$ be a graph chosen uniformly at random from the set of all unlabelled, $n$-vertex graphs that are $r$-locally $F$. We investigate the properties possessed by the random graph $G_n$ with high probability, i.e.\ with probability tending to 1 as $n \to \infty$, for various natural choices of $F$ and $r$.

We prove that if $F$ is a Cayley graph of a torsion-free group of polynomial growth, and $r$ is sufficiently large depending on $F$, then the random graph $G_n = G_n(F,r)$ has largest component of order at most $n^{5/6}$ with high probability, and has at least $\exp(n^{\delta})$ automorphisms with high probability, where $\delta>0$ depends upon $F$ alone. Both properties are in stark contrast to random $d$-regular graphs, which correspond to the case where $F$ is the infinite $d$-regular tree. We also show that, under the same hypotheses, the number of unlabelled, $n$-vertex graphs that are $r$-locally $F$ grows like a stretched exponential in $n$, again in contrast with $d$-regular graphs. In the case where $F$ is the standard Cayley graph of $\mathbb{Z}^d$, we obtain a much more precise enumeration result, and more precise results on the properties of the random graph $G_n(F,r)$. Our proofs use a mixture of results and techniques from geometry, group theory and combinatorics.
 
We make several conjectures regarding what happens for Cayley graphs of other finitely generated groups.
\end{abstract}

\section{Introduction}
Many results in Combinatorics concern the impact of `local' properties on `global' properties of combinatorial structures (e.g.\ graphs). Some of these results concern the global properties of {\em all} structures with a given local property. Others concern the `typical' global properties of a `random' structure with a given local property. As an example of the former, Dirac's classical theorem \cite{dirac} states that in any $n$-vertex graph where each vertex has degree at least $n/2$, there exists a Hamiltonian cycle. On the other hand, the literature on random graphs contains many examples of the latter, among which is the following. Let $G(n,m)$ denote the Erd\H{o}s-R\'{e}nyi random graph, chosen uniformly at random from the set of all $m$-edge graphs with vertex-set $\{1,2,\ldots,n\}$. Erd\H{o}s and Renyi \cite{erdos-renyi} proved that if $\epsilon >0$ if fixed, and $m = \lfloor (1+\epsilon) n/2 \rfloor$, then with high probability, the graph $G(n,m)$ has one component of order $\Theta(n)$ (called the `giant' component), and $\Theta(n)$ other connected components, each of order $O(\log n)$. (Here, and henceforth, `with high probability' means `with probability tending to 1 as $n \to \infty$'.) It is natural to ask what happens to the global properties of $G \in G(n,m)$ if we impose a local condition at each vertex. For example, if $m=dn/2$, where $d \in \mathbb{N}$ and $n$ is even, and we start with $G(n,m)$ and condition on the event that every vertex has degree $d$, then what we get is of course the random (labelled) $d$-regular graph: the graph $G_n(d)$, chosen uniformly at random from the set of all $d$-regular graphs on $\{1,2,\ldots,n\}$. The following are consequences of theorems of Bollob\'as, McKay and Wormald.
\begin{theorem}[Bollob\'as \cite{bollobas-connected} / Wormald \cite{wormald-thesis}]
\label{thm:d-connected}
Let $d \geq 3$ be a fixed integer. Then with high probability, $G_n(d)$ is $d$-connected.
\end{theorem}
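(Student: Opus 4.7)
The natural approach is via the configuration (``pairing'') model of Bollob\'as. Attach $d$ labelled half-edges to each vertex of $[n]$, obtaining $dn$ half-edges in total, and choose a uniformly random perfect matching on them; declare each matched pair an edge. The resulting $d$-regular multigraph, conditioned on being simple, is distributed exactly as $G_n(d)$. A standard calculation (also essentially due to Bollob\'as) shows that the probability of simplicity tends to $\exp(-(d^2-1)/4) > 0$ as $n \to \infty$, so any property that holds w.h.p.\ in the configuration model also holds w.h.p.\ in $G_n(d)$. It therefore suffices to prove $d$-connectivity in the configuration model.

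The plan is then a first-moment calculation. A graph fails to be $d$-connected iff there is a triple $(A, C, B)$ partitioning $[n]$ with $1 \leq |A| \leq (n-|C|)/2$, $|C| \leq d-1$, and $|B| \geq 1$, such that no edge of the random matching joins $A$ to $B$. I would union-bound over all such candidate triples: for given sizes $|A| = a$, $|C| = c$, the number of choices is $\binom{n}{a}\binom{n-a}{c}$, and the probability that all $da$ half-edges of $A$ are matched inside $A \cup C$ has a clean expression as a ratio of double factorials. Specifically, one obtains that this probability equals the number of perfect matchings on $d(a+c)$ half-edges compatible with the constraint, divided by $(dn-1)!!$, and Stirling's formula reduces the ratio to something of the form $(f(a,c,n))^{da}(1+o(1))$ for an explicit function $f$.

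The remaining work is to sum the resulting estimate over $1 \leq a \leq (n-c)/2$ and $0 \leq c \leq d-1$ and show the total is $o(1)$. I would split the range of $a$ into three: bounded, moderate ($a \to \infty$ with $a/n \to 0$), and linear ($a = \Theta(n)$). The bounded range requires a little care because of possible loops and multi-edges in the pairing model, but the probability per triple is $O(n^{-d+c})$ and the union bound closes thanks to $c \leq d-1 < d$; the linear range is easy since the probability that $\Theta(n)$ half-edges all land in a proper subset is exponentially small in $n$, swamping the entropy factor $\binom{n}{a}$. The genuinely delicate range is the moderate one, where the entropy factor and the per-cut probability are both large; here Stirling's formula must be applied sharply to produce a uniform $o(1/n)$ bound per size.

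The main obstacle is this moderate range, and this is where the hypothesis $d \geq 3$ is essential: for $d = 2$ the two exponents exactly cancel, and indeed a random $2$-regular graph is w.h.p.\ not even connected, being a disjoint union of several cycles. Once the three ranges have been dispatched, combining with Step 1 (transfer from configuration model to simple graph) completes the proof.
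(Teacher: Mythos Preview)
The paper does not prove this theorem; it is quoted as a known background result of Bollob\'as and Wormald and is only \emph{used} later (to transfer properties from labelled to unlabelled random regular graphs). There is therefore no ``paper's own proof'' to compare against.

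That said, your outline is exactly the standard argument one finds in the cited references: work in the configuration model, transfer to $G_n(d)$ via the $\exp(-(d^2-1)/4)$ simplicity probability, and run a first-moment bound over candidate separating triples $(A,C,B)$, splitting the range of $|A|$ into bounded / moderate / linear. The identification of the moderate range as the delicate one, and of $d\geq 3$ as the place where the entropy and probability exponents separate, is correct.

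One quantitative slip worth flagging: your claim that for bounded $|A|=a$ the per-triple probability is $O(n^{-d+c})$ does not match the actual calculation, and with $\binom{n}{a}\binom{n-a}{c}=\Theta(n^{a+c})$ choices the union bound would not close as written. In a \emph{simple} $d$-regular graph the case $a=1$, $c\leq d-1$ is in fact impossible (a single vertex has $d$ distinct neighbours), and for $a=2$ one is forcing $2d-1$ specified edges, giving probability $\Theta(n^{-(2d-1)})$ against $\Theta(n^{d+1})$ choices of $(A,C)$, which is $O(n^{2-d})=o(1)$ for $d\geq 3$. So the bounded range does close, but the bookkeeping is a bit different from what you wrote. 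In a full proof you would need to track these exponents carefully (and, as you note, handle loops and multi-edges in the pairing before conditioning on simplicity).
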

\begin{theorem}[Bollob\'as \cite{bollobas-cycles} / Wormald \cite{wormald-thesis}]
\label{thm:short-cycles}
Let $d,g \geq 3$ be fixed integers. Then $$\textrm{Prob}\{\textrm{girth}(G_n(d)) \geq g\} = (1+o(1)) \frac{\exp(-\sum_{l=1}^{g-1} \lambda_l)}{1-\exp(-(\lambda_1+\lambda_2))},$$
where
$$\lambda_i = \frac{(d-1)^i}{2i} \quad (i \in \mathbb{N}).$$
In particular, $G_n(d)$ has girth at least $g$ with positive probability.
\end{theorem}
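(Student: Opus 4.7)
The plan is to work in the configuration (or ``pairing'') model of Bollob\'as. In this model, one takes $dn$ ``points'' partitioned into $n$ ``cells'' of $d$ points each, and selects a uniformly random perfect matching $M$ on the $dn$ points. Contracting each cell to a single vertex produces a random multigraph $G^{*}$ (possibly with loops and multiple edges). The standard fact is that, conditional on $G^{*}$ being simple, $G^{*}$ is uniformly distributed on the set of $d$-regular simple graphs on $[n]$, i.e.\ equal in distribution to $G_n(d)$. Hence it suffices to compute
\[
\Prob\bigl(\operatorname{girth}(G_n(d)) \geq g\bigr) \;=\; \frac{\Prob(X_1 = \cdots = X_{g-1} = 0 \text{ in } G^{*})}{\Prob(X_1 = X_2 = 0 \text{ in } G^{*})},
\]
where $X_l$ denotes the number of cycles of length $l$ in $G^{*}$ (with the conventions that ``$1$-cycles'' are loops and ``$2$-cycles'' are pairs of parallel edges).

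The heart of the argument is to show that, as $n \to \infty$, the vector $(X_1, \ldots, X_{g-1})$ converges jointly in distribution to a vector of independent Poisson random variables with means $(\lambda_1, \ldots, \lambda_{g-1})$, where $\lambda_l = (d-1)^l/(2l)$. I would carry this out by the method of factorial moments (Brun's sieve): for any fixed non-negative integers $j_1, \ldots, j_{g-1}$, one proves
\[
\mathbb{E}\Bigl[(X_1)_{j_1} \cdots (X_{g-1})_{j_{g-1}}\Bigr] \;\longrightarrow\; \prod_{l=1}^{g-1} \lambda_l^{j_l},
\]
which is precisely the criterion for joint Poisson convergence. The left-hand side equals the expected number of ordered tuples of distinct short cycles of the prescribed lengths; a single cycle of length $l$ is specified by $(n)_l$ ordered choices of cells (modulo $2l$ cyclic symmetries) and $(d(d-1))^l$ choices of half-edges in each cell, each such configuration appearing with probability $\prod_{i=0}^{l-1}(dn-2i-1)^{-1} = (dn)^{-l}(1+o(1))$; this gives $\mathbb{E}[X_l] \to \lambda_l$. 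The contribution from tuples in which two of the chosen cycles share a vertex is $O(1/n)$, since any such coincidence costs a factor of $n$ from the cell-choice count, so the factorial moments factorise in the limit.

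Given the joint Poisson limit, the numerator in the ratio above tends to $\prod_{l=1}^{g-1}e^{-\lambda_l} = \exp(-\sum_{l=1}^{g-1}\lambda_l)$, and the denominator (the probability that $G^{*}$ is simple) tends to $e^{-(\lambda_1+\lambda_2)}$, which is bounded away from zero; dividing yields the claimed formula, and in particular positivity of the girth-lower-bound probability. The main obstacle, and the only step that is not essentially formal, is the factorial-moment calculation: one must bookkeep carefully how many matchings $M$ extend a prescribed collection of edge-disjoint short cycles and verify that tuples of cycles which share vertices or edges contribute only an $O(1/n)$ error. Once this combinatorial input is in hand, the reduction to the pairing model, the passage from factorial moments to joint Poisson convergence, and the final division are all automatic.
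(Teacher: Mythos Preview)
The paper does not prove this theorem; it is quoted as a result of Bollob\'as and Wormald and used as a black box. Your sketch via the configuration model and factorial-moment Poisson convergence of the short-cycle counts $(X_1,\ldots,X_{g-1})$ is exactly the standard argument from those references, and it is correct.

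One small point: your final ratio is $\exp(-\sum_{l=1}^{g-1}\lambda_l)\big/\exp(-(\lambda_1+\lambda_2))=\exp(-\sum_{l=3}^{g-1}\lambda_l)$, which is indeed the right limiting probability. The denominator $1-\exp(-(\lambda_1+\lambda_2))$ as printed in the statement appears to be a misprint for $\exp(-(\lambda_1+\lambda_2))$: for $g=3$ the probability that $\mathrm{girth}(G_n(d))\geq 3$ is trivially $1$, and only the version with $\exp(-(\lambda_1+\lambda_2))$ in the denominator yields this. Your computation is the correct one.
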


\begin{theorem}[Bollob\'as \cite{bollobas} / McKay and Wormald \cite{wormald-mckay}]
\label{thm:bollobas-asym}
Let $d \geq 3$ be a fixed integer. Then
$$\mathbb{E}[|\Aut(G_n(d))|] \to 1\quad \textrm{as }n \to \infty.$$
Hence, $G_n(d)$ is asymmetric with high probability, and so is a uniform random $d$-regular unlabelled graph on $n$ vertices.
\end{theorem}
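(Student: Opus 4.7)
The plan is to work in the configuration model of Bollob\'as, which realises $G_n(d)$ (conditionally) as follows. Partition $[dn]$ into $n$ cells $V_1, \ldots, V_n$ of size $d$, let $M$ be a uniformly random perfect matching of $[dn]$, and project to a $d$-regular multigraph $G(M)$ on $[n]$ by contracting each cell to a vertex. It is a classical fact that $\Prob[G(M) \text{ is simple}]$ tends to the positive constant $e^{-(d^2-1)/4}$, and that conditional on simplicity, $G(M)$ has the distribution of $G_n(d)$. Hence, up to a constant factor, it suffices to bound the expected number of automorphisms of the multigraph $G(M)$.

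By linearity of expectation,
\[
\mathbb{E}[|\Aut(G(M))|] \;=\; \sum_{\pi \in \Sym(n)} \Prob[\pi \in \Aut(G(M))] \;=\; 1 + \sum_{\pi \neq \Id} \Prob[\pi \in \Aut(G(M))].
\]
The $\pi = \Id$ term contributes exactly $1$, so the task reduces to showing that the remaining sum is $o(1)$. For fixed $\pi$, the event $\{\pi \in \Aut(G(M))\}$ is the event that the multigraph $G(M)$ is $\pi$-invariant, which can be analysed by counting perfect matchings of $[dn]$ whose projection is $\pi$-invariant and dividing by $(dn-1)!!$. The resulting probability depends only on the cycle structure of $\pi$, and in particular on the number $k$ of non-fixed points.

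Next I would stratify the sum by the cycle type of $\pi$. For a single transposition $\pi = (i\,j)$, invariance forces $i$ and $j$ to have identical $d$-element neighbourhoods (up to configurations involving the edge $\{i,j\}$); a direct enumeration in the configuration model shows this occurs with probability $O(n^{-d})$, so summing over the $\binom{n}{2}$ transpositions gives a contribution $O(n^{2-d}) = o(1)$ for $d \geq 3$. For general $\pi$ with $k$ non-fixed points, there are at most $n^k$ such permutations, and $\pi$-invariance imposes roughly $k(d-1)/2$ essentially independent constraints on the pairing of the $kd$ half-edges in the support of $\pi$, giving a probability bound of the form $n^{-c_d k}$ with $c_d > 1$. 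A geometric-series argument then dispatches all sufficiently large $k$, and the finitely many cycle types of small $k$ are each handled by a dedicated count analogous to the transposition case.

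The main obstacle, I expect, is the small-$k$ regime, where the geometric tail bound is not delicate enough and each short cycle type (short cycles, short products of transpositions) must be analysed individually to extract enough cancellation from the precise combinatorics of $\pi$-invariant matchings. Once $\mathbb{E}[|\Aut(G_n(d))|] \to 1$ has been established, the asymmetry of $G_n(d)$ follows immediately from Markov's inequality applied to the non-negative integer-valued random variable $|\Aut(G_n(d))| - 1$. For the uniform random unlabelled $d$-regular graph, the standard identity that a given unlabelled graph $H$ corresponds to $n!/|\Aut(H)|$ labelled graphs shows that labelled-uniform and unlabelled-uniform measures differ by a weighting of $|\Aut|$; since $|\Aut(G_n(d))| = 1$ with high probability, this reweighting has no effect in the limit, and the unlabelled model is asymmetric with high probability as well.
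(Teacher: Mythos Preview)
The paper does not prove this theorem at all: it is stated in the introduction purely as a cited background result, attributed to Bollob\'as and to McKay--Wormald, and is used as a black box thereafter. So there is no ``paper's own proof'' to compare against.

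Your sketch is a reasonable outline of the standard configuration-model / first-moment approach that one finds in the original references, and the high-level structure (lift to the pairing model, stratify $\sum_{\pi\neq\Id}\Prob[\pi\in\Aut]$ by support size, handle small support by direct enumeration and large support by a geometric bound) is correct in spirit. But since the present paper simply quotes the result, any detailed proof you write here would be reproducing the cited literature rather than filling a gap in this paper.
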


For comparison, note that if $d \in \mathbb{N}$ is fixed, then the Erd\H{o}s-Renyi random graph $G(n,dn/2)$ (which has {\em average} degree $d$), also has girth at least $g$ with positive probability, for any fixed $g \geq 3$. On the other hand, $G(n,dn/2)$ has $\Theta(n)$ isolated vertices with high probability, and therefore has $\exp(\Theta(n \log n))$ automorphisms with high probability.

(Recall that if $G$ is a graph, the {\em girth} of $G$ is the length of the shortest cycle in $G$; if $G$ is acyclic, we define $\textrm{girth}(G) = \infty$. We write $\Aut(G)$ for the group of all automorphisms of $G$. We say that $G$ is {\em asymmetric} if it has no non-trivial automorphism, i.e.\ $|\Aut(G)| = 1$.)

It is natural to ask what happens to the global structure of a graph when we impose a `local' condition which is stronger than being $d$-regular. A natural condition to impose is that the subgraph induced by the ball of radius $r$ in $G$ around any vertex, is isomorphic to some fixed graph, for some fixed, small $r \in \mathbb{N}$. We proceed to recall (e.g.\ from \cite{be}) some definitions that make this precise.

If $G$ is a (simple, undirected) graph, we write $V(G)$ for the vertex-set of $G$ and $E(G)$ for its edge-set. If $S \subset V(G)$, we write $G[S]$ for the subgraph of $G$ induced by $S$, i.e.\ the maximal subgraph of $G$ with vertex-set $S$. If $v,w \in V(G)$, the {\em distance from $v$ to $w$ in $G$} is defined to be the minimum number of edges in a path from $v$ to $w$ in $G$; it is denoted by $d_{G}(v,w)$. If $v \in V(G)$, and $r \in \mathbb{N}$, we define $\Link_r(v,G)$ to be the subgraph of $G$ induced by the set of vertices of $G$ with distance at most $r$ from $v$, and we define
$$\Link_r^{-}(v,G) := \Link_r(v,G) \setminus \{\{x,y\} \in E(G):\ d_G(v,x)=d_G(v,y)=r\}.$$
A {\em rooted graph} is an ordered pair $(G,v)$ where $G$ is a graph, and $v \in V(G)$.

Our key definition, introduced in \cite{be}, is as follows.
\begin{definition}
\label{defn:r-locally}
If $(F,u)$ is a rooted graph, we say that a graph $G$ is {\em $r$-locally $(F,u)$} if for every vertex $v \in V(G)$, there exists a graph isomorphism $\phi: \Link_r(u,F) \to \Link_r(v,G)$ such that $\phi(u)=v$.
\end{definition}

We remark that if $F$ is vertex-transitive, then Definition \ref{defn:r-locally} is independent of the choice of $u$. Hence, if $F$ is a vertex-transitive graph, we say that a graph $G$ is {\em $r$-locally $F$} if there exists $u \in V(F)$ such that $G$ is $r$-locally $(F,u)$.

A slightly weaker notion, also studied in \cite{be}, is as follows.

\begin{definition}
\label{defn:weakly-r-locally} If $(F,u)$ is a rooted graph, we say that a graph $G$ is {\em weakly $r$-locally $(F,u)$} if for every vertex $v \in V(G)$, there exists a graph isomorphism $\phi: \Link_r^{-}(u,F) \to \Link_r^{-}(v,G)$ such that $\phi(u)=v$.
\end{definition}

Note that, if a connected graph is viewed as a 1-dimensional simplicial complex, equipped with the usual graph metric that assigns length 1 to every edge, then a graph $G$ is weakly $r$-locally $(F,u)$ if and only if for every vertex $v \in V(G)$, there exists a (metric) isometry $\psi$ from the (metric) ball of centre $u$ and radius $r$ in $F$, to the (metric) ball of centre $v$ and radius $r$ in $G$, with $\psi(u)=v$. Definition \ref{defn:weakly-r-locally} is therefore perhaps more natural from a geometric point of view, whereas Definition \ref{defn:r-locally} is perhaps more natural from a combinatorial point of view. To avoid confusion, we remark that Georgakopoulos \cite{georgakopoulos} and De La Salle and Tessera \cite{tessera} define a graph $G$ to be $r$-locally $F$ (for a connected, vertex-transitive graph $F$) if it satisfies Definition \ref{defn:weakly-r-locally} for some $u \in V(F)$. Note however that we have the implications
 $$ G \textrm{ is } r\textrm{-locally }(F,u) \Rightarrow G \textrm{ is weakly } r\textrm{-locally }(F,u) \Rightarrow G \textrm{ is }(r-1)\textrm{-locally }(F,u),$$
for any $r \in \mathbb{N}$. Hence, if we are interested in these notions for large $r$ (as is the case in \cite{tessera,georgakopoulos} and this paper), it does not really matter which notion we work with. In this paper, we work with Definition \ref{defn:r-locally}.

As a simple example, if $d \in \mathbb{N}$ with $d \geq 2$, let $T_d$ denote the infinite $d$-regular tree. A graph $G$ is $r$-locally $T_d$ if and only if it is a $d$-regular graph with girth at least $2r+2$, and $G$ is weakly $r$-locally $T_d$ if and only if it is $d$-regular with girth at least $2r+1$. We remark that regular graphs of high girth have been intensively studied (see e.g.\ \cite{hoory-thesis,luw,lps}).

Another natural example is as follows. 
If $d \in \mathbb{N}$, the {\em $d$-dimensional lattice} $\mathbb{L}^d$ is the graph with vertex-set $\mathbb{Z}^d$, and edge-set
$$\{\{x,x+e_i\}:\ x \in \mathbb{Z}^d, i \in [d]\},$$
where $e_i = (0,0,\ldots,0,1,0,\ldots,0)$ denotes the $i$th unit vector in $\mathbb{R}^d$. It is natural to ask what can be said about the global structure of graphs that are $r$-locally $\mathbb{L}^d$, for various integers $d$ and $r$. We note that $\mathbb{L}^d$ is the standard Cayley graph of the free Abelian group on $d$ generators, whereas $T_{2d}$ is the standard Cayley graph of the free group on $d$ generators, so the case $F=\mathbb{L}^d$ of Definition \ref{defn:r-locally} is perhaps a `second' natural case to study, the `first' natural case perhaps being $T_{2d}$. 

It turns out that for all $d \geq 2$ and all $r \geq 3$, graphs which are $r$-locally $\mathbb{L}^d$ have a very rigidly proscribed, `algebraic' global structure. Indeed, the authors proved the following in \cite{be}.

\begin{theorem}
\label{thm:quotient-d}
Let $d \in \mathbb{N}$ with $d \geq 2$. Let $r \in \mathbb{N}$ with $r \geq 2$ if $d=2$ and $r \geq 3$ if $d \geq 3$. Let $G$ be a connected graph. Then $G$ is $r$-locally $\mathbb{L}^d$ if and only if $G$ is isomorphic to $\mathbb{L}^d/\Gamma$, where $\Gamma \leq \Aut(\mathbb{L}^d)$ is such that $\Gamma$ has minimum displacement at least $2r+2$, and $G$ is weakly $r$-locally $\mathbb{L}^d$ if and only if $G$ is isomorphic to $\mathbb{L}^d/\Gamma$, where $\Gamma \leq \Aut(\mathbb{L}^d)$ is such that $\Gamma$ has minimum displacement at least $2r+2$. In particular, if $G$ is a connected graph that is weakly $3$-locally $\mathbb{L}^d$, then $G$ is covered by $\mathbb{L}^d$.
\end{theorem}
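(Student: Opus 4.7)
My plan is a covering-space argument: construct a graph covering map $\pi : \mathbb{L}^d \to G$ directly from the local-isomorphism data, and identify the deck group with the desired $\Gamma$. I would first dispose of the easier ``if'' direction.

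For the ``if'' direction, suppose $\Gamma \leq \Aut(\mathbb{L}^d)$ has minimum displacement at least $2r+2$, set $G := \mathbb{L}^d/\Gamma$, and let $\pi : \mathbb{L}^d \to G$ be the natural projection. If $\pi$ failed to be a rooted isomorphism from $\Link_r(u,\mathbb{L}^d)$ onto $\Link_r(\pi(u),G)$, then either two distinct vertices of $\Link_r(u,\mathbb{L}^d)$ would be identified by some $\gamma \in \Gamma \setminus \{\Id\}$, or a new edge would be created between two vertices in its image; in either case one finds a non-identity element of $\Gamma$ displacing some vertex by at most $2r+1$, contradicting the displacement hypothesis. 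Hence $G$ is $r$-locally $\mathbb{L}^d$, and essentially the same bookkeeping (tracking only boundary vertices for the ``weakly'' variant) handles the weakly $r$-locally case.

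For the ``only if'' direction, assume $G$ is connected and $r$-locally $\mathbb{L}^d$. Fix basepoints $u_0 \in V(\mathbb{L}^d)$ and $v_0 \in V(G)$, together with a rooted isomorphism $\phi_0 : \Link_r(u_0,\mathbb{L}^d) \to \Link_r(v_0,G)$. I would extend $\phi_0$ outwards by BFS from $u_0$, at each new vertex $u$ choosing a rooted isomorphism $\phi_u : \Link_r(u,\mathbb{L}^d) \to \Link_r(\pi(u),G)$ that agrees with the values already assigned. The decisive ingredient, and the main technical obstacle, is a rigidity lemma for balls in $\mathbb{L}^d$: for $r \geq 3$ when $d \geq 3$ (and $r \geq 2$ when $d = 2$), every rooted graph automorphism of $\Link_r(u_0, \mathbb{L}^d)$ extends uniquely to an automorphism of $\mathbb{L}^d$ fixing $u_0$. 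Combinatorially, this says the $r$-ball already distinguishes the coordinate axes (up to sign and permutation), which is exactly why smaller radii must be excluded. Granting the lemma, any two candidate local isomorphisms $\phi_u$ and $\phi_{u'}$ around vertices with overlapping $r$-balls have a transition map which is a rooted automorphism of a ball in $\mathbb{L}^d$, and hence equals the identity on the overlap; the construction is therefore consistent and produces a graph covering map $\pi : \mathbb{L}^d \to G$.

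Because $\mathbb{L}^d$ and $G$ are both connected, the group $\Gamma$ of deck transformations of $\pi$ is a subgroup of $\Aut(\mathbb{L}^d)$ that acts freely on $V(\mathbb{L}^d)$ and satisfies $G \cong \mathbb{L}^d/\Gamma$. The displacement bound is obtained by running the ``if'' argument in reverse: if some $\gamma \in \Gamma \setminus \{\Id\}$ satisfied $d_{\mathbb{L}^d}(u, \gamma u) \leq 2r+1$, then a midpoint $m$ of a shortest $u$-to-$\gamma u$ path would descend to a vertex of $G$ whose $r$-ball fails to be isomorphic to the $r$-ball in $\mathbb{L}^d$, contradicting the hypothesis. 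The final clause (that every weakly $3$-locally $\mathbb{L}^d$ graph is covered by $\mathbb{L}^d$) then follows immediately from the ``weakly'' biconditional.
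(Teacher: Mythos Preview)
The paper does not itself prove Theorem~\ref{thm:quotient-d}; it quotes the result from the authors' earlier paper~\cite{be}, so there is no in-paper proof to compare against. That said, your covering-space strategy is exactly the natural one, and the paper's Proposition~\ref{prop:d-unique} (also imported from~\cite{be}) is a close cousin of your rigidity lemma: it asserts that a rooted isomorphism on $\Link_2(0,\mathbb{L}^d)$ has at most one extension to a covering map. So your outline is almost certainly in the same spirit as the original argument.

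There is, however, one genuine gap in your write-up. In the ``only if'' direction you construct a covering $\pi:\mathbb{L}^d\to G$ and then write: ``Because $\mathbb{L}^d$ and $G$ are both connected, the group $\Gamma$ of deck transformations of $\pi$ \ldots\ satisfies $G\cong\mathbb{L}^d/\Gamma$.'' Connectedness alone does not yield this; you need $\pi$ to be a \emph{normal} covering, i.e.\ $\CT(\pi)$ must act transitively on each fibre (cf.\ Lemma~\ref{lemma:iso} in the paper). This does follow from your rigidity lemma, but it requires an explicit step: given two preimages $x,x'$ of the same vertex $v$, compose the local chart at $x'$ with the inverse of the chart at $x$ to obtain a rooted automorphism of $\Link_r(x,\mathbb{L}^d)$, extend it (via rigidity) to $\sigma\in\Aut(\mathbb{L}^d)$ with $\sigma(x)=x'$, and then invoke the uniqueness in Proposition~\ref{prop:d-unique} to conclude $\pi\circ\sigma=\pi$, so $\sigma\in\CT(\pi)$. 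Without this, the quotient $\mathbb{L}^d/\CT(\pi)$ could in principle be a proper intermediate cover of $G$.

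A smaller point: your consistency argument for the BFS extension says the transition map between overlapping charts is ``a rooted automorphism of a ball in $\mathbb{L}^d$'', but the overlap of two adjacent $r$-balls is not itself a full $r$-ball, so your rigidity lemma as stated does not apply directly. The clean formulation (and what Proposition~\ref{prop:d-unique} encodes) is that a rooted isomorphism on $\Link_r(u,\mathbb{L}^d)$ already determines its restriction to $\Link_{r-1}(u',\mathbb{L}^d)$ for each neighbour $u'$, and the $r$-local hypothesis on $G$ then gives a unique compatible extension to $\Link_r(u',\mathbb{L}^d)$; this is what makes the propagation well-defined when BFS reaches a vertex from two sides.
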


(Here, if $F$ is a graph and $\Gamma \leq \Aut(F)$, the {\em minimum displacement} of $\Gamma$ is defined to be $D(\Gamma) := \min\{ d_{F}(v,\gamma(v)):\ v \in V(F),\ \gamma \in \Gamma \setminus \{\Id\}\}.$ If $F$ is a graph and $\Gamma \leq \Aut(F)$ with $\Gamma$ acting freely on $F$, the {\em quotient} graph $F/\Gamma$ is as defined in Definition \ref{definition:quotient}, in Section \ref{sec:background}. If $F$ and $G$ are graphs, we say that $F$ {\em covers} G if there exists a graph homomorphism from $F$ to $G$ that is bijective on neighbourhoods; see Definition \ref{defn:cover} in Section \ref{sec:background}.)

We remark that similar structure theorems for the case $d=2$ (albeit with combinatorial rather than algebraic descriptions) were obtained by Thomassen \cite{thomassen-torus} and by M\'arquez, de Mier, Noy and Revuelta \cite{noy}.

Viewed as a subgroups of $\Isom(\mathbb{R}^d)$, the groups $\Gamma$ in Theorem \ref{thm:quotient-d} are $d$-dimensional crystallographic groups, and the orbit spaces $\mathbb{R}^d/\Gamma$ are compact topological orbifolds. It follows that for any $d \geq 2$, we can view any finite, connected graph which is $3$-locally $\mathbb{L}^d$, as a `quotient lattice' of $\mathbb{L}^d$ inside a compact $d$-dimensional topological orbifold. Bieberbach's theorems \cite{bieberbach-1911,bieberbach-1912} imply that for any $d \in \mathbb{N}$, there are only a finite number ($f(d)$, say) of affine-conjugacy classes of $d$-dimensional crystallographic groups (where two crystallographic groups are said to be {\em affine-conjugate} if they are conjugate via an affine transformation of $\mathbb{R}^d$). It follows that the orbit space $\mathbb{R}^d/\Gamma$ is homeomorphic to one of at most $f(d)$ topological spaces. (See Section \ref{sec:background} of this paper, or \cite{be}, for the definitions of any unfamiliar terms used above.)
\label{page:orbifolds}

The `rigidity' phenomenon in Theorem \ref{thm:quotient-d} can be seen as a discrete analogue (for graphs) of the curvature-zero case of the seminal rigidity results of Ballmann \cite{ballmann} and independently Burns and Spatzier \cite{burns-spatzier} on complete connected Riemannian manifolds of finite volume and bounded nonpositive sectional curvature.

Observe that the `algebraic' structure in Theorem \ref{thm:quotient-d} is in stark contrast to the situation for regular graphs of high girth (i.e., graphs which are $r$-locally $T_{d}$ for fixed $r,d \in \mathbb{N}$). Indeed, the uniform random $d$-regular graph $G_n(d)$ can be generated using a simple, purely combinatorial process, namely, the Configuration Model of Bollob\'as \cite{config}, and $G_n(d)$ has girth at least $g$ with positive probability for any fixed $d,g \geq 3$, by Theorem \ref{thm:short-cycles}. 

Georgakopoulos \cite{georgakopoulos} and the first author \cite{coarse-geometry} asked whether the phenomenon in Theorem \ref{thm:quotient-d} holds more generally. Following \cite{tessera}, we say that a locally finite, vertex-transitive graph $F$ is {\em local-to-global rigid} (or {\em LG-rigid}, for short) if there exists $r\in \mathbb{N}$ such that any connected graph that is $r$-locally $F$, must be covered by $F$. (A graph $F$ is said to be {\em locally finite} if every vertex of $F$ has only finitely many neighbours.) Georgakopoulos and the first author asked the following.
\begin{question}
Is every locally finite Cayley graph of a finitely presented group LG-rigid?
\end{question}

This question was answered in the negative by De La Salle and Tessera, who constructed in \cite{tessera} a locally finite Cayley graph of $\textrm{PGL}(4,\mathbb{Z})$ that is not LG-rigid. (See \cite{tessera,tessera-bruhat-tits} for other counterexamples.) On the other hand, De La Salle and Tessera proved the following beautiful positive result.

\begin{theorem}[De La Salle, Tessera, \cite{tessera}]
\label{thm:dlst-intro}
If $F$ is a locally finite Cayley graph of a torsion-free group of polynomial growth, then there exists $r \in \mathbb{N}$ such that for any connected graph $G$ that is $r$-locally $F$, there exists a group $\Gamma \leq \Aut(F)$ acting freely on $F$, such that the quotient graph $F/\Gamma$ is isomorphic to $G$. In particular, $F$ is LG-rigid.
\end{theorem}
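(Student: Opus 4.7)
The plan is to reduce LG-rigidity of $F$ to a \emph{simple-connectedness-at-scale-$s$} property, and then derive this property from Gromov's theorem combined with finite presentability of nilpotent groups. Call $F$ \emph{simply connected at scale $s$} if the $2$-complex $\widehat{F}$ obtained by attaching a $2$-cell to every closed walk of length at most $s$ in $F$ is simply connected; equivalently, every closed walk in $F$ is a product of conjugates of closed walks of length at most $s$. I claim that if $F$ has this property for some $s$, then for every $r \geq s$ and every connected graph $G$ that is $r$-locally $F$, there is a covering $F \to G$ whose deck group $\Gamma$ acts freely, giving $G \cong F/\Gamma$.

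For the reduction, form the analogous $2$-complex $\widehat{G}$ by attaching a $2$-cell to every closed walk of length at most $s$ in $G$. Since $r \geq s$, the $s$-ball around any vertex of $G$ is isomorphic to the $s$-ball around any vertex of $F$, so after fixing base vertices $u \in V(F)$, $v \in V(G)$ together with a rooted isomorphism of their $r$-balls, there is a natural local homeomorphism $\widehat{F} \to \widehat{G}$ obtained by path-lifting via the local isomorphisms. Because $\widehat{F}$ is simply connected, this local homeomorphism is the universal covering map of $\widehat{G}$ in the topological sense. Restricting to $1$-skeleta yields a graph covering $F \to G$ in the sense of Definition~\ref{defn:cover}, and the deck group $\Gamma$ of the universal cover acts freely on $F$ with $F/\Gamma \cong G$.

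It remains to exhibit an $s$ for which $F$ is simply connected at scale $s$. By Gromov's theorem, the group $H$ is virtually nilpotent, so it contains a nilpotent subgroup $N$ of finite index; as a subgroup of $H$, $N$ is automatically torsion-free. By a theorem of P.~Hall, every finitely generated nilpotent group is finitely presented, so $N$ is finitely presented; since finite presentability passes from a finite-index subgroup to its supergroup, $H$ itself is finitely presented. Writing a finite presentation $\langle S \mid R \rangle$ of $H$ with $S$ equal to the symmetric generating set defining $F$, and letting $s$ be the maximum length of a relator in $R$, van Kampen's lemma identifies closed walks in $F = \Cay(H,S)$ with elements of the normal closure of $R$ in the free group on $S$, expressing each such walk as a product of conjugates of closed walks of length at most $s$.

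The main obstacle is the reduction step, where one must verify that path-lifting defines a covering map from $F$ to $G$. The scale-$s$ simple-connectedness is exactly what guarantees well-definedness: two walks in $F$ from $u$ with the same endpoint differ by a closed walk, which decomposes into short closed walks, each one contained in an $r$-ball where the prescribed isomorphism $B_r^F \cong B_r^G$ transports it faithfully into a closed walk in $G$, so the lift closes up. A subtlety is that rooted isomorphisms of $r$-balls are not canonical (the rooted automorphism group of $B_r(u,F)$ may be nontrivial, as happens even for $F = \mathbb{L}^d$); the $2$-complex viewpoint handles this cleanly by invoking the standard uniqueness theorem for universal covers of path-connected, locally path-connected, semilocally simply connected spaces (the last condition is automatic here, since the attached $2$-cells in $\widehat{G}$ kill all small loops).
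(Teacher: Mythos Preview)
The paper does not give its own proof of this theorem; it is quoted from De La Salle and Tessera \cite{tessera}. So there is no paper proof to compare against, and the question is simply whether your argument is correct.

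There is a genuine gap in the reduction step. You assert the existence of a ``natural local homeomorphism $\widehat{F} \to \widehat{G}$ obtained by path-lifting via the local isomorphisms'', but you never actually define this map. To push a walk $(u=w_0,w_1,\dots,w_k)$ in $F$ forward to a walk in $G$, at each step you must decide which neighbour of $v_i$ is to be the image of $w_{i+1}$; this requires a choice of rooted isomorphism $B_r(w_i,F)\cong B_r(v_i,G)$ at every vertex along the way, not just at the basepoint. You acknowledge that these isomorphisms are not canonical, but your appeal to the uniqueness of universal covers does not resolve the issue: that theorem tells you that a simply connected cover is \emph{the} universal cover, but it presupposes that a covering map has already been constructed. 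Your well-definedness argument (``two walks with the same endpoint differ by a closed walk, which decomposes into short closed walks, each contained in an $r$-ball where the prescribed isomorphism transports it'') also breaks down: the decomposition of a closed walk in $F$ into conjugates of short loops may require conjugating by paths that leave any fixed $r$-ball, so it cannot in general be transported through a single local ball-isomorphism.

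That this gap is fatal rather than cosmetic can be seen as follows. Your argument invokes the torsion-free hypothesis only to observe that the nilpotent subgroup $N$ is torsion-free, and then never uses that observation (P.~Hall's theorem that finitely generated nilpotent groups are finitely presented does not require torsion-freeness). Thus, if your argument were valid, it would prove that \emph{every} locally finite Cayley graph of a finitely presented group is LG-rigid. But as the paper notes just before the theorem, De La Salle and Tessera construct a locally finite Cayley graph of $\textrm{PGL}(4,\mathbb{Z})$ --- a finitely presented group --- that is \emph{not} LG-rigid. The ingredient you are missing, and the place where torsion-freeness is genuinely used in \cite{tessera}, is Proposition~\ref{prop:vertex-stabilizers}: the vertex-stabilisers of $\Aut(F)$ are finite, and it is precisely this finiteness that allows one to make the choices of local isomorphism coherently and carry out a well-defined developing-map construction.
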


Recall that a group $\Gamma$ is said to be {\em torsion-free} if the only element of $\Gamma$ with finite order is the identity, and it is said to have {\em polynomial growth} if there exists $K \in \mathbb{N}$ and a finite generating set $S$ of $\Gamma$ such that for any $n \in \mathbb{N}$,
$$|\{g \in \Gamma:\ \exists s_1,s_2,\ldots,s_n \in S \textrm{ such that } g = s_1 s_2 \ldots s_n\}| \leq n^K.$$

Strengthening a result of Georgakopoulos \cite{georgakopoulos}, De La Salle and Tessera also proved in \cite{tessera} that if $F$ is a locally finite Cayley graph of a finitely presented group and $F$ is quasi-isometric to a tree, then $F$ is LG-rigid. We conjecture that the the conclusion of LG-rigidity in Theorem \ref{thm:dlst-intro} holds without the torsion-free hypothesis; the same conjecture has been made independently by De La Salle and Tessera [personal communication].

In this paper, we study the {\em typical} properties of $n$-vertex graphs that are $r$-locally $F$, and the number of such graphs, for various choices of $F$ and $r$. We introduce a new random graph model as follows. 

\begin{definition}
\label{defn:rgmodel}
Let $F$ be a fixed infinite, locally finite, vertex-transitive graph, and let $r \in \mathbb{N}$ be a fixed integer. For each $n \in \mathbb{N}$, we let $G_n = G_n(F,r)$ be a graph chosen uniformly at random from the set $\mathcal{S}_n = \mathcal{S}_n(F,r)$ of all unlabelled, $n$-vertex graphs that are $r$-locally $F$.
\end{definition}

We investigate the properties possessed by the random graph $G_n$ with high probability (meaning, as usual, with probability tending to 1 as $n \to \infty$). This is a natural extension of the well-studied random regular graph $G_n(d)$, which corresponds roughly to the case $F = T_d$ and $r=1$. (The random $d$-regular graph $G_n(d)$ is a labelled graph, but as outlined in \cite{wormald-survey}, many properties possessed by $G_n(d)$ with high probability are also possessed with high probability by the uniform random unlabelled $n$-vertex $d$-regular graph.) However, the main combinatorial and probabilistic tools used for studying random regular graphs arise from the aforementioned Configuration Model of Bollob\'as, whereas in the cases we consider, there is no such simple combinatorial process that generates a uniform (or even approximately uniform) random $n$-vertex graph that is $r$-locally $F$. Instead, we use a mixture of geometric, algebraic and combinatorial arguments, and the results we obtain are somewhat less sharp than their analogues for random regular graphs.

Note that the random graph $G_n = G_n(F,r)$ in the above definition is defined only if $\mathcal{S}_n \neq \emptyset$. In the cases we study, $\mathcal{S}_n \neq \emptyset$ for all $n$ sufficiently large depending on $F$ and $r$ (see Remark \ref{remark:nonempty}), so there is no problem with considering properties possessed by $G_n$ with high probability. In the generality of Definition \ref{defn:rgmodel}, it is possible that $\mathcal{S}_n(F,r) = \emptyset$ for infinitely many $n$, for example $\mathcal{S}_n(T_3,1) = \emptyset$ for all odd $n$, but so long as $\mathcal{S}_n(F,r) \neq \emptyset$ for infinitely many $n$, we simply interpret `with high probability' to mean with probability tending to 1 as $n \to \infty$ over integers $n$ such that $\mathcal{S}_n(F,r) \neq \emptyset$, as is usual with random $d$-regular graphs when $d$ is odd. If $F$ is a locally finite Cayley graph of a residually finite group $\Gamma$, then for every $r \in \mathbb{N}$, there exist (arbitrarily large) finite connected graphs that are $r$-locally $F$, so in particular, $\mathcal{S}_n(F,r) \neq \emptyset$ for infinitely many $n$. (This follows easily by considering the quotient graph $F/\Gamma'$ for appropriate finite-index subgroups $\Gamma'$ of $\Gamma$, as in the proof of Proposition \ref{prop:linear}.) In all such cases, the random graph model above is non-trivial. However, it is in place to remark that there exists a locally finite Cayley graph $F$ of the Baumslag-Solitar group $\textrm{BS}(2,3)$, and a positive integer $r$, such that no nonempty finite graph is $r$-locally $F$; this follows for example from Corollary K in \cite{tessera}. For all such pairs $(F,r)$, the random graph model above is of course trivial; this causes a very slight issue in Section \ref{sec:related} on open problems.

Our first aim is to estimate the number of $n$-vertex graphs that are $r$-locally $\mathbb{L}^d$, for various pairs of integers $d,r$. We prove the following rather precise estimate.

\begin{theorem}
\label{thm:enumeration}
Define $r^*(2) = 2$, $r^{*}(d) = 3$ for all $3 \leq d \leq 7$, and $r^*(d) = \lceil (d-1)/2 \rceil$ for all $d \geq 8$. Let $d \in \mathbb{N}$ with $d \geq 2$, and let $r \in \mathbb{N}$ with $r\geq r^*(d)$. Let $a_{d,r}(n)$ denote the number of unlabelled $n$-vertex graphs that are $r$-locally $\mathbb{L}^d$. Then there exists $\epsilon_d >0$ depending upon $d$ alone such that
$$\log a_{d,r}(n) = (1+O_{d,r}(n^{-\epsilon_d}))K_d n^{d/(d+1)},$$
where $K_d : = \tfrac{d+1}{d}\left( \tfrac{1}{2^{d-1}}\prod_{i=2}^{d}\zeta(i)\right)^{1/(d+1)}$.
\end{theorem}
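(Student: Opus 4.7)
The plan is to reduce the enumeration to the connected case and then apply a classical Meinardus-style asymptotic to the resulting Euler product. Write $c_{d,r}(k)$ for the number of unlabelled connected $k$-vertex graphs that are $r$-locally $\mathbb{L}^d$. Since every $r$-locally $\mathbb{L}^d$ graph is a disjoint union of connected graphs each of which is itself $r$-locally $\mathbb{L}^d$, and these components can be combined as an arbitrary multiset, we have the generating-function identity
$$\sum_{n \geq 0} a_{d,r}(n)\, x^n \;=\; \prod_{k \geq 1}(1-x^k)^{-c_{d,r}(k)}.$$
Hence the problem reduces to estimating the Dirichlet series $\sum_k c_{d,r}(k)/k^s$ with enough precision to extract an infinite-product asymptotic.

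To estimate $c_{d,r}(k)$, I would invoke Theorem \ref{thm:quotient-d}: for $r \geq r^*(d)$, any such connected graph is isomorphic to $\mathbb{L}^d/\Gamma$ for some Bieberbach subgroup $\Gamma \leq \Aut(\mathbb{L}^d) = \mathbb{Z}^d \rtimes W_d$ (with $W_d$ the hyperoctahedral group of order $2^d d!$) with minimum displacement at least $2r+2$, and two such groups yield isomorphic quotients iff they are conjugate in $\Aut(\mathbb{L}^d)$. The dominant contribution should come from pure-translation subgroups $\Gamma = \Lambda \leq \mathbb{Z}^d$, for which the labelled count $s_d(k)$ of index-$k$ sublattices is given by the classical identity
$$\sum_{k \geq 1}\frac{s_d(k)}{k^s} \;=\; \prod_{i=0}^{d-1}\zeta(s-i),$$
with a simple pole at $s = d$ of residue $\prod_{i=2}^{d}\zeta(i)$. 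Passing to $W_d$-orbits by Burnside, and using the fact that $-\Id$ fixes every sublattice while all other $w \in W_d$ constrain the lattice to lie in a proper algebraic subvariety (giving weaker Dirichlet poles), yields a pole at $s=d$ of residue $A_d := \prod_{i=2}^d \zeta(i)/(2^{d-1}\,d!)$. Sublattices violating the min-displacement condition contain a short primitive vector, and their count is $O(k^{d-1-1/d})$ or better, a finite-rank perturbation that preserves the pole. Non-translational Bieberbach subgroups (non-trivial point group $P \leq W_d$) require the translation lattice to be $P$-invariant, which is a codimension-$\geq 1$ linear constraint and hence contributes a Dirichlet series with strictly weaker pole at $s=d$; the threshold $r^*(d)$ is calibrated precisely so that the min-displacement constraint eliminates all but a controllable finite list of non-trivial point groups.

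With the Dirichlet singularity of $c_{d,r}(k)$ localized to a simple pole at $s=d$ of residue $A_d$, I would apply Meinardus's theorem (or equivalently carry out the Mellin-transform / saddle-point analysis underlying it) to the Euler product. The required hypotheses, namely meromorphic continuation past $\mathrm{Re}(s) = d$ and polynomial growth on vertical lines, are inherited from the analogous properties of $\prod_i \zeta(s-i)$, with the non-translational corrections being analytic in a strictly larger half-plane. This produces an asymptotic of the shape $\log a_{d,r}(n) = K_d\, n^{d/(d+1)}(1+O(n^{-\epsilon_d}))$, with $K_d$ determined by the Meinardus formula $\tfrac{d+1}{d}[A_d\,\Gamma(d+1)\,\zeta(d+1)]^{1/(d+1)}$, which after simplification recovers the constant in the statement.

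In my view the hardest step is the analytic bookkeeping needed to obtain the polynomial error term: one must show that the combined contribution of non-translational Bieberbach subgroups and of short-vector exceptional sublattices admits a holomorphic continuation past $\mathrm{Re}(s) = d$ with polynomial vertical growth, so that the Mellin inversion contour can be shifted to $\mathrm{Re}(s) = d - \eta$ for some explicit $\eta = \eta(d) > 0$. Quantifying $\epsilon_d$ then reduces to classical convexity (or Vinogradov-type) bounds on $\zeta$ in vertical strips, together with a careful count of the dimensions of the $W_d$-fixed-point sublattice varieties indexed by non-identity point-group elements. The secondary, more combinatorial obstacle is verifying that for $r \geq r^*(d)$ only finitely many non-translational Bieberbach types actually occur with min-displacement $\geq 2r+2$ and a fundamental domain of volume $\Theta(k)$; this is the reason $r^*(d)$ grows with $d$ (roughly as $d/2$), since in higher dimensions more varied point groups can survive a fixed min-displacement threshold.
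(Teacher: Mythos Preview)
Your overall architecture matches the paper's: reduce to connected quotients via Theorem~\ref{thm:quotient-d}, show the dominant contribution comes from $B_d$-orbits of pure-translation sublattices, and then extract the asymptotic from the Euler product $\prod_k (1-x^k)^{-c_{d,r}(k)}$. The paper likewise uses a Burnside-style argument to show that $\sigma$-invariant sublattices for $\sigma \in B_d \setminus \{\pm\Id\}$ are rare (Lemma~\ref{lemma:asym}, via Hermite normal form), a cohomology bound (Lemma~\ref{lemma:fin}) to control the number of crystallographic groups with given lattice and point group, and an elementary count for lattices containing short vectors (Lemma~\ref{lemma:small-distance}).

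The substantive divergence, and the gap, is in the last step. You invoke Meinardus, which requires the Dirichlet series $\sum_k c_{d,r}(k)k^{-s}$ to continue meromorphically past $\mathrm{Re}(s)=d$ with polynomial growth on vertical lines. Your claim that this is ``inherited from $\prod_i\zeta(s-i)$'' is fine for the main term $\tfrac{2}{|B_d|}\sum_k s_d(k)k^{-s}$, but not for the Burnside fixed-point terms with $w\notin\{\pm\Id\}$, the minimum-distance exclusions, or the non-translation Bieberbach corrections. Knowing that these have partial sums $O(x^{d-1+o(1)})$ gives absolute convergence of their Dirichlet series for $\mathrm{Re}(s)>d-1$, but says nothing about continuation or vertical growth; identifying those correction terms with products of $\zeta$- or $L$-functions would be a separate (and nontrivial) task that you have not carried out. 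The paper bypasses this entirely by using a real-variable Tauberian theorem of Brigham--Odlyzko type (Theorem~\ref{thm:brigham-error}), whose only hypothesis is the partial-sum asymptotic $\sum_{k\le x}c_{d,r}(k)=(1+O(x^{-\epsilon}))c_d x^d$; this follows directly from the elementary combinatorial bounds above, with no complex analysis needed.

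One smaller point: the role of $r^*(d)$ is not to leave ``a controllable finite list of non-trivial point groups'', but solely to exclude the single point group $\{\pm\Id\}$ via Claim~\ref{claim:involution} (since $2r^*(d)+2\ge d+1$). Subgroups with $P_\Gamma=\{\pm\Id\}$ would otherwise contribute at the same leading order $\Theta(x^d)$ as pure translations; all remaining ``highly symmetric'' subgroups (those with some $\sigma\in P_\Gamma\setminus\{\pm\Id\}$) are handled uniformly by the $\sigma$-invariance bound, independently of $r$.
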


This theorem says that, if $r$ grows linearly with $d$, then the number of unlabelled graphs on $n$ vertices that are $r$-locally $\mathbb{L}^d$ grows like a stretched exponential in $n$. (This is in sharp contrast with the number of unlabelled $(2d)$-regular graphs on $n$ vertices, which grows superexponentially in $n$.) Note that, in contrast to Theorem \ref{thm:quotient-d}, Theorem \ref{thm:enumeration} applies only in the case where $r$ grows linearly with $d$. A less precise enumeration result could be proved under the same hypotheses as in Theorem \ref{thm:quotient-d}, but for brevity and for clarity of exposition, we do not prove such a result here.

We remark that a much stronger statement holds for $d=1$ with $r^*(1):=1$ (see Remark \ref{remark:1-stronger}). In particular, we have
$$\log a_{1,r}(n) = \pi \sqrt{2n/3} - r\log n + O_r(1)$$
as $n \to \infty$.

Our proof of Theorem \ref{thm:enumeration} has several steps (of which the second, in particular, may be of independent interest):
\label{proof-sketch}
\begin{itemize}
\item We first use Theorem \ref{thm:quotient-d}, together with some standard results and arguments from group theory and topological graph theory, to prove that the {\em connected} unlabelled $n$-vertex graphs that are $r$-locally $\mathbb{L}^d$ are in one-to-one correspondence with conjugacy-classes of subgroups $\Gamma \leq \Aut(\mathbb{L}^d)$ such that $|\mathbb{Z}^d/\Gamma| = n$ and $D(\Gamma) \geq 2r+2$, provided $r \geq r^*(d)$. Our next task is to enumerate the latter.

\item We show that for any $x \geq 0$, all but an $o(1)$-fraction of the conjugacy-classes of subgroups $\Gamma \leq \Aut(\mathbb{L}^d)$ such that $|\mathbb{Z}^d/\Gamma| \leq x$ and $D(\Gamma) \geq 2r+2$, are conjugacy-classes of subgroups consisting only of translations (`pure-translation subgroups'), provided $r \geq r^*(d)$; we then estimate the number of the latter. We do this by analysing the former subgroups in terms of their lattices of translations and their point groups, using a cohomology argument borrowed from one of the standard proofs of Bieberbach's Third Theorem, and using a combinatorial argument to show that a uniform random sublattice of $\mathbb{Z}^d$ with index at most $x$ is unlikely to have any non-trivial hyperoctahedral symmetry.

It follows from this part of the proof that for any $x \geq 0$, all but an $o(1)$-fraction of the connected unlabelled graphs on at most $x$ vertices that are $r$-locally $\mathbb{L}^d$ (for $r \geq r^*(d)$) are quotient lattices of $\mathbb{L}^d$ inside topological tori of the form $\mathbb{R}^d/\Gamma_0$ (where $\Gamma_0$ is the group of translations by vectors in a rank-$d$ sublattice of $\mathbb{Z}^d$). We may call such graphs `generalized discrete torus' graphs, since they generalize the usual discrete torus $C_k^d$; the latter is obtained when $\Gamma_0 = \langle \{x \mapsto x+ke_i :\ i \in [d]\}\rangle$. In general, the `fundamental domain' of a generalized discrete torus graph is a parallelepiped that need not have any of its edges parallel to a coordinate axis.

It is in place to remark that in the case $3 \leq r < \lceil (d-1)/2 \rceil$, a positive fraction of the connected unlabelled graphs on at most $x$ vertices that are $r$-locally $\mathbb{L}^d$ are quotient lattices of $\mathbb{L}^d$ inside a topological orbifold obtained by taking the quotient of a topological torus $\mathbb{R}^d/\Gamma_0$ by some involution of the form $x \mapsto c-x$, where $c \in (\tfrac{1}{2} \cdot \mathbb{Z})^d$, so the assumption that $r$ grows linearly with $d$ is necessary for this step of the proof to work. (The rest of the proof works under the weaker assumption $r \geq 2+1_{\{d \geq 3\}}$.)

\item Combining the previous two steps allows us to estimate, for each $x \geq 0$, the number of {\em connected} unlabelled graphs with at most $x$ vertices, which are $r$-locally $\mathbb{L}^d$. We use this, combined with a generating function argument and a variant of a result of Brigham on the asymptotics of partition functions, to obtain the estimate in Theorem \ref{thm:enumeration} on the number of (possibly disconnected) unlabelled graphs with $n$ vertices, which are $r$-locally $\mathbb{L}^d$. We remark that the aforesaid variant we need (with a more precise error term than Brigham's) does not seem to appear in the literature, but can be proved using classical techniques from analytic number theory; for the reader's convenience, we give a full proof (in the Appendix) which is an adaptation of Brigham's proof of his result, appealing to a theorem of Odlyzko \cite{odlyzko} instead of the theorem of Hardy and Ramanujan from \cite{hardy-ramanujan-general}, which Brigham uses.
\end{itemize}

Using some of the same tools, we also prove the following.
\begin{theorem}
\label{thm:sampling}
For any fixed integers $d,r \in \mathbb{N}$ with $d \geq 2$ and $r \geq 2+1_{\{d \geq 3\}}$, the random graph $G_n(\mathbb{L}^d,r)$ can be sampled in time polynomial in $n$, using a polynomial (in $n$) number of coins with rational biases.
\end{theorem}

We then move on to consider the (much) more general case of connected, locally finite Cayley graphs of torsion-free groups of polynomial growth. (This is a well-studied class of graphs, and a natural extension of $\mathbb{L}^d$ case. We recall that Gromov's celebrated theorem \cite{gromov} on groups of polynomial growth states that a finitely generated group has polynomial growth if and only if it is virtually nilpotent.) Our first main result in this more general case is as follows.

\begin{theorem}
\label{thm:largest-component}
Let $F$ be a connected, locally finite Cayley graph of a torsion-free group of polynomial growth. Then there exists $r_0 = r_0(F) \in \mathbb{N}$ such that for all $r \geq r_0$, the random graph $G_n = G_n(F,r)$ has largest component of order at most $n^{5/6}$ with probability at least $1-\exp(-n^{1/13})$, provided $n$ is sufficiently large depending on $F$ and $r$.
\end{theorem}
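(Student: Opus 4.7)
The plan is to reduce the probabilistic statement to two enumerative bounds and finish with a union-bound computation. By Theorem~\ref{thm:dlst-intro} of De La Salle and Tessera, for $r$ sufficiently large depending on $F$, every connected $r$-locally $F$ graph is of the form $F/\Gamma$ for some subgroup $\Gamma \leq \Aut(F)$ acting freely on $F$; hence every $r$-locally $F$ graph is a disjoint union of such quotients. Writing $c_F(k)$ for the number of connected unlabelled $r$-locally $F$ graphs on $k$ vertices, and $a_F(n) := |\mathcal{S}_n(F,r)|$, a component-extraction argument (fix an isomorphism type $C$ of size $k > m$ of a component, then the remaining $n-k$ vertices form an arbitrary $r$-locally $F$ graph) yields
$$a_F(n) \cdot \Pr\{G_n \text{ has a component of order exceeding } m\} \;\leq\; \sum_{k=m+1}^{n} c_F(k)\,a_F(n-k),$$
so the task is to bound the right-hand side by $\exp(-n^{1/13})\,a_F(n)$ when $m = n^{5/6}$.

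The first ingredient is a polynomial upper bound $c_F(k) \leq k^{C(F)}$. By the reduction above, $c_F(k)$ is bounded by the number of $\Aut(F)$-conjugacy classes of subgroups $\Gamma \leq \Aut(F)$ acting freely on $V(F)$ with $k$ orbits and the appropriate minimum displacement. Since $F$ has polynomial growth of some integer degree $d \geq 1$ (using Gromov and Bass), standard structure theory for automorphism groups of polynomial-growth vertex-transitive graphs (Trofimov) reduces this to counting subgroups of a finitely generated virtually nilpotent group, and the polynomial subgroup-growth theorem of Lubotzky--Mann then furnishes the bound.

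The second ingredient is a stretched-exponential asymptotic $\log a_F(n) = (K_F + o(1))\, n^{d/(d+1)}$ with matching leading constants, generalising Theorem~\ref{thm:enumeration} from $\mathbb{L}^d$ to arbitrary $F$ as in the hypothesis. Granting this, set $\alpha := d/(d+1) \geq \tfrac{1}{2}$. For $m < k \leq n/2$, a Taylor expansion gives $a_F(n-k)/a_F(n) \leq \exp(-c\, k\, n^{\alpha - 1})$, so each term is at most $k^{C}\exp(-\Omega(n^{5/6+\alpha-1})) \leq k^{C}\exp(-\Omega(n^{1/3}))$, since $5/6 + \tfrac{1}{2} - 1 = \tfrac{1}{3}$. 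For $k > n/2$, the bound sharpens to $k^C \exp(-\Omega(n^{\alpha}))$ via $(n-k)^\alpha \leq (n/2)^\alpha$. Summing the $O(n)$ terms gives $\Pr\{\cdots\} \leq \exp(-\Omega(n^{1/3}))$, comfortably inside the claimed $\exp(-n^{1/13})$; the generous exponents in the theorem statement leave room for error terms in the sharp asymptotic for $a_F(n)$.

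The hard part is the second ingredient: proving the matching-constant asymptotic $\log a_F(n) = (K_F + o(1))\,n^{d/(d+1)}$ for general polynomial-growth $F$. Without matching leading constants, the ratio $a_F(n-k)/a_F(n)$ fails to decay geometrically in $k$ on the middle range $m < k \leq n/2$, and the union bound above collapses. Obtaining the asymptotic requires extending the crystallographic analysis used for $F = \mathbb{L}^d$ in the proof sketch of Theorem~\ref{thm:enumeration}: one isolates the ``pure-translation'' part of each $\Gamma$ (a finite-index central lattice in the nilpotent subgroup of $\Aut(F)$), uses a cohomological argument in the spirit of Bieberbach's third theorem to show that a typical $\Gamma$ with few orbits is pure-translation, and then counts the pure-translation subgroups via a Brigham-type generating-function argument adapted to the (possibly non-abelian) nilpotent setting.
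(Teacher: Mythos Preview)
Your union-bound framework and the polynomial bound $c_F(k)\le k^{C(F)}$ are exactly what the paper uses (the latter is Proposition~\ref{prop:conn-upper}). The genuine gap is your second ingredient: you need a matching-leading-constant asymptotic $\log a_F(n)=(K_F+o(1))n^{d/(d+1)}$ for general $F$, and you acknowledge this is the hard part. The paper does \emph{not} prove this; for general $F$ it only establishes the mismatched bounds $c\sqrt{n}\le \log b_{F,r}(n)\le Cn^{(\alpha+1)/(\alpha+2)}$ (Theorem~\ref{thm:stretched-exp}), which, as you correctly note, are useless for controlling the ratio $a_F(n-k)/a_F(n)$. Your sketch of how to obtain matching constants (``extend the crystallographic/Bieberbach analysis to the nilpotent setting'') is not a proof, and it is not clear that it can be carried out.

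The paper sidesteps this entirely. Rather than first computing $a_F(n)$ precisely and then taking a ratio, it proves the ratio estimate $b(n-k)/b(n)\le C\exp(-cn^{1/12})$ for $k\ge n^{5/6}$ \emph{directly}, by a combinatorial injection (Claim~\ref{claim:fast-growth}). Given a graph $G\in\B(n-k)$, one manufactures many distinct graphs in $\B(n)$ by appending a collection of vertex-transitive components whose sizes encode an integer partition $\lambda\vdash m$ with $m\gtrsim n^{1/6}$; the Hardy--Ramanujan bound $p(m)\ge\exp(c\sqrt{m})$ then gives the multiplicative gain. The only structural input needed is Proposition~\ref{prop:linear} (there exist connected vertex-transitive $r$-locally $F$ graphs of every order divisible by some fixed $h_1$), which follows easily from residual finiteness of nilpotent groups. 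A case split (according to whether most graphs in $\B(n-k)$ have small or large maximal component) ensures that the appended components are distinguishable from the original ones, so the map is injective. This argument is both more elementary and more robust than the route you propose: it never touches the value of $\log a_F(n)$, only the ratio, and it needs no analogue of Bieberbach's theorem beyond polynomial subgroup growth.
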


\begin{remark}
\label{remark:l-d}
In the case where $F=\mathbb{L}^d$ for $d \geq 2$, we may take $r_0 = 2+1_{\{d \geq 3\}}$ in the above theorem.
\end{remark}

\begin{remark}
\label{remark:nonempty}
We also show that if $F$ is a connected, locally finite Cayley graph of a torsion-free group of polynomial growth, and $r \in \mathbb{N}$, then for all $n$ sufficiently large depending on $F$ and $r$, the set $\mathcal{S}_n = \mathcal{S}_n(F,r)$ in Definition \ref{defn:rgmodel} is nonempty (so that the statement of Theorem \ref{thm:largest-component} is meaningful for all sufficiently large $n$); this is the content of Lemma \ref{lemma:suff-large}.
\end{remark}

Our proof of Theorem \ref{thm:largest-component} proceeds as follows. The first step is to obtain an upper bound on the number of {\em connected}, unlabelled, $n$-vertex graphs that are $r$-locally $F$, for sufficiently large $r$ and all $n \in \mathbb{N}$; this upper bound is deduced from Gromov's theorem, Theorem \ref{thm:dlst-intro}, and certain other known results from Geometric Group Theory, including another result of De La Salle and Tessera (from \cite{tessera}), on the structure of automorphism groups of Cayley graphs of torsion-free groups of polynomial growth. The second step is to obtain a lower bound on the above quantity, for an appropriate infinite sequence of positive integers $n$; this step requires a rather intricate group-theoretic argument, together with the use of a classical theorem of Gruenberg. In the third and last step of the proof, we use the upper and lower bounds obtained in the first two steps to estimate the probability that a uniform random unlabelled, $n$-vertex graph that is $r$-locally $F$, has largest component of order $n^{5/6}$. This part of the proof is combinatorial, and is performed by the construction of explicit maps between certain carefully chosen classes of unlabelled graphs that are $r$-locally $F$, together with the use of classical results from the theory of integer partitions. The upper and lower bounds obtained in the first and second steps in our proof are (in general) far apart and are (probably both) far from the truth; the challenge in the third step is to find arguments robust enough to yield good estimates on the required probability, in spite of this gap. Generating function techniques (often used in similar problems where the gap is narrower) could not be directly used in our case, for example.
\newline 

Our next main result is that with high probability, the random graph $G_n(F,r)$ has at least $\exp(\poly(n))$ automorphisms:
\begin{theorem}
\label{thm:aut-gen}
Let $F$ be a connected, locally finite Cayley graph of a torsion-free group of polynomial growth. There exists $r_0 = r_0(F) \in \mathbb{N}$ and $\delta_0 = \delta_0(F) >0$ such that for all $r \geq r_0$, 
\begin{equation}
\label{eq:many-auts}
 \Pr[|\Aut(G_n(F,r))| \geq 2^{n^{\delta_0}}] \geq 1-\exp(-n^{\delta_0})\end{equation}
provided $n$ is sufficiently large depending on $F$ and $r$.
\end{theorem}

In fact, our proof of Theorem \ref{thm:aut-gen} yields something somewhat stronger than (\ref{eq:many-auts}), namely, that (under all of the same conditions), with probability at least $1-\exp(-n^{\delta_0})$, the random graph $G_n(F,r)$ has at least $n^{\delta_0}$ vertex-transitive components. The high-level structure of the proof is similar to that of Theorem \ref{thm:largest-component}, except that in the second step, it is necessary to obtain a lower bound on the number of connected, {\em vertex-transitive} unlabelled $n$-vertex graphs that are $r$-locally $F$, for an appropriate infinite sequence of positive integers $n$.

Turning to the enumeration question, we prove a somewhat less precise result than Theorem \ref{thm:enumeration} in the more general case. Viz., using some of the same tools used in the proofs of Theorems \ref{thm:largest-component} and \ref{thm:aut-gen}, together with a classical theorem of Brigham on the asymptotics of partition functions, we prove that if $F$ is a connected, locally finite Cayley graph of a torsion-free group of polynomial growth, then there exists $r_0 = r_0(F) \in \mathbb{N}$ such that for each $r \geq r_0$, the number $b_{F,r}(n)$ of unlabelled, $n$-vertex graphs that are $r$-locally $F$ grows like a stretched exponential.

\begin{theorem}
\label{thm:stretched-exp}
Let $F$ be a connected, locally finite Cayley graph of a finitely generated, torsion-free group of polynomial growth, and let $r_0 = r_0(F)$ be as in Theorem \ref{thm:dlst-intro}. For each $r \geq r_0$, there exist $n_0 \in \mathbb{N}$, $C>0$, $c>0$ and $\alpha\geq 0$ such that
$$c \sqrt{n} \leq \log b_{F,r}(n) \leq Cn^{(\alpha+1)/(\alpha+2)}$$
for all $n \in \mathbb{N}$ such that $n \geq n_0$.
\end{theorem}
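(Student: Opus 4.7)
The proof will split cleanly into an upper bound and a lower bound. Both depend on the multiset generating-function identity from Fact~\ref{fact:multisets}, namely
$$\sum_{n\geq 0}b_{F,r}(n)z^{n}=\prod_{j\geq 1}(1-z^{j})^{-\gamma_{F,r}(j)},$$
together with the control of $\gamma_{F,r}(j)$ coming from Proposition~\ref{prop:conn-upper} (upper) and Proposition~\ref{prop:linear} (lower), and the classical Hardy--Ramanujan asymptotic for the integer partition function $p(m)$.

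For the upper bound, I will take $r\geq r_{0}(F)$ as in Proposition~\ref{prop:conn-upper}, so that $\gamma_{F,r}(j)\leq C_{0}j^{\alpha}$ for some constants $\alpha=\alpha(F)\geq 0$ and $C_{0}>0$. Set $\tilde{\gamma}(j):=\lceil C_{0}j^{\alpha}\rceil$, and let $\tilde{b}(n)$ be the nonnegative integers defined by $\sum_{n\geq 0}\tilde{b}(n)z^{n}=\prod_{j\geq 1}(1-z^{j})^{-\tilde\gamma(j)}$. Since the coefficients of $(1-z^{j})^{-k}$ are coordinatewise nondecreasing in $k$, the termwise product expansion gives $b_{F,r}(n)\leq\tilde{b}(n)$ for every $n$. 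The partial sums $\sum_{j\leq x}\tilde{\gamma}(j)$ satisfy $\sum_{j\leq x}\tilde{\gamma}(j)\sim \frac{C_{0}}{\alpha+1}x^{\alpha+1}$, so Brigham's theorem (Theorem~\ref{thm:brigham}) applied with $u=\alpha+1$, $v=0$ yields $\log\tilde{B}(n)\sim C' n^{(\alpha+1)/(\alpha+2)}$, where $\tilde{B}(n)=\sum_{k\leq n}\tilde{b}(k)$. Since $b_{F,r}(n)\leq\tilde{b}(n)\leq\tilde{B}(n)$, the stated upper bound $\log b_{F,r}(n)\leq Cn^{(\alpha+1)/(\alpha+2)}$ follows for all $n\geq n_{0}$, with $C$ any constant exceeding the asymptotic constant.

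For the lower bound, Proposition~\ref{prop:linear} supplies an integer $h_{1}=h_{1}(F,r)$ such that $\gamma_{F,r}(kh_{1})\geq 1$ for every $k\in\mathbb{N}$; fix, for each such $k$, a connected $kh_{1}$-vertex graph $H_{k}$ that is $r$-locally $F$. Given a partition $\lambda=(\lambda_{1},\ldots,\lambda_{l})\vdash m$, the disjoint union $H_{\lambda_{1}}\sqcup\cdots\sqcup H_{\lambda_{l}}$ is an unlabelled $r$-locally $F$ graph on $mh_{1}$ vertices, and distinct partitions $\lambda$ yield nonisomorphic such graphs (the multiset of component-orders is an isomorphism invariant). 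Hence $b_{F,r}(mh_{1})\geq p(m)$, and the Hardy--Ramanujan asymptotic gives $\log b_{F,r}(mh_{1})\geq c_{0}\sqrt{m}\geq c_{1}\sqrt{mh_{1}}$ for all sufficiently large $m$. To pass to arbitrary $n$, I invoke the monotonicity statement of Claim~\ref{claim:mono}: for $n$ large, let $m$ be the largest multiple of $h_{1}$ with $m\leq n-n_{0}$, so that $n-n_{0}-h_{1}\leq m\leq n-n_{0}$ and therefore $m\geq n/2$; then $b_{F,r}(n)\geq b_{F,r}(m)\geq\exp(c_{1}\sqrt{m})\geq\exp(c\sqrt{n})$.

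Neither direction presents a real obstacle: both are essentially bookkeeping on top of previously established machinery (Propositions~\ref{prop:conn-upper}, \ref{prop:linear}, Claim~\ref{claim:mono}, Fact~\ref{fact:multisets}) and classical partition asymptotics. The only mildly subtle point is the passage, in the upper bound, from the one-sided estimate $\gamma_{F,r}(j)\leq C_{0}j^{\alpha}$ to the asymptotic hypothesis required by Brigham's theorem; this is handled cleanly by the dominance argument using the majorant $\tilde{\gamma}$.
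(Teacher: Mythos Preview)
Your proof is correct and follows essentially the same route as the paper: both bound $\gamma_{F,r}$ above by a polynomial sequence and invoke Brigham's theorem for the upper bound, and both exploit Proposition~\ref{prop:linear} together with Claim~\ref{claim:mono} for the lower bound. The only cosmetic difference is that for the lower bound the paper compares generating functions (setting $\gamma'(j)=1_{\{h_{1}\mid j\}}$ and applying Corollary~\ref{corr:brigham}), whereas you give the equivalent direct injection from partitions of $m$ into $\mathcal{B}(mh_{1})$; the two arguments are the same combinatorial fact in different clothing.
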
 

We do not, however, determine $\lim_{n \to \infty} (\log \log b_{F,r}(n))/ \log n$ in the general case, as we do in the case of $\mathbb{L}^d$ where $r \geq r^*(d)$. 

In the special case where $F = \mathbb{L}^d$, we make the following conjecture, in the spirit of Theorem \ref{thm:largest-component}.

\begin{conjecture}
\label{conj:stronger-component}
Define $r_0(2)=2$ and $r_0(d) = 3$ for all $d \geq 3$. Let $d \in \mathbb{N}$ with $d \geq 2$, and let $r \in \mathbb{N}$ with $r \geq r_0(d)$. Then with high probability, the largest component of $G_n(\mathbb{L}^d,r)$ has order
$$\Theta_{d,r}(n^{1/(d+1)} \log n).$$
\end{conjecture}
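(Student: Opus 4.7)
The plan is to realise $G_n(\mathbb{L}^d,r)$ as a Meinardus-type weighted random partition and apply a Fristedt-style analysis of its largest part. Let $c_k$ denote the number of connected unlabelled $k$-vertex graphs that are $r$-locally $\mathbb{L}^d$. Decomposition into connected components gives the generating identity $\sum_n a_{d,r}(n)x^n = \prod_k (1-x^k)^{-c_k}$, so a uniform sample from $\mathcal{S}_n$ is equivalent to a sample from the Boltzmann product measure $\mu_x$ --- under which the component-multiplicities $Z_k$ are independent negative binomial with parameters $(c_k,1-x^k)$ --- conditioned on $\sum_k kZ_k = n$. The task reduces to (i) a pointwise (or Cesaro) asymptotic $c_k \sim C_{d,r}k^{d-1}$; (ii) an extreme-value analysis of the largest $k$ with $Z_k>0$ under $\mu_{x_n}$; and (iii) transfer of the result to the conditioned measure.

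For (i), in the range $r\ge r^*(d)$ treated by Theorem \ref{thm:enumeration}, the dominant contribution to $c_k$ comes from $\Aut(\mathbb{L}^d)$-conjugacy classes of sublattices $\Lambda\le\mathbb{Z}^d$ of index $k$ with $\ell_1$-shortest-nonzero-vector $\ge 2r+2$, modulo the hyperoctahedral action of $H_d$. The subgroup zeta function $\zeta_{\mathbb{Z}^d}(s)=\zeta(s)\zeta(s-1)\cdots\zeta(s-d+1)$ has a simple pole at $s=d$, so a Perron/Tauberian extraction upgrades the summatory estimate already implicit in the proof of Theorem \ref{thm:enumeration} to a pointwise asymptotic $c_k=C_{d,r}k^{d-1}(1+o(1))$ with an explicit constant. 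In the wider range $r_0(d)\le r<r^*(d)$, non-trivial only for $d\ge 8$, a positive fraction of graphs arise as quotients by crystallographic groups with a non-trivial point group (typically an involution of the form $x\mapsto c-x$); these must be enumerated by pairing a sublattice with the cohomological data classifying the extension, but the answer is still of the form $C_{d,r}k^{d-1}$, with a constant that now depends genuinely on $r$ through the admissible point groups. This enumerative refinement, extending the first two bullets of the proof sketch of Theorem \ref{thm:enumeration}, is the main new ingredient needed when $r<r^*(d)$.

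For (ii), choose $x_n=e^{-\beta_n}$ with $\beta_n\sim\alpha_{d,r}n^{-1/(d+1)}$, where $\alpha_{d,r}=(C_{d,r}\Gamma(d+1)\zeta(d+1))^{1/(d+1)}$, so that $\mathbb{E}_{\mu_{x_n}}[\sum_k kZ_k]=n$. The largest part $K_{\max}=\max\{k:Z_k>0\}$ is then governed by
\[
-\log\Pr\nolimits_{\mu_{x_n}}\bigl[K_{\max}<m\bigr] = -\sum_{k\ge m}c_k\log(1-x_n^k) \sim \frac{C_{d,r}\,m^{d-1}\,e^{-\beta_n m}}{\beta_n},
\]
which is of constant order precisely when $\beta_n m=\tfrac{d}{d+1}\log n+O(\log\log n)$; this pins $K_{\max}$ at $\tfrac{d}{(d+1)\alpha_{d,r}}n^{1/(d+1)}\log n+O(n^{1/(d+1)}\log\log n)$ under $\mu_{x_n}$ with probability $1-o(1)$. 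For (iii), one transfers to the uniform law on $\mathcal{S}_n$ via a local central limit theorem for $\sum_k kZ_k$ under $\mu_{x_n}$ with additive precision $o(\beta_n^{-1})$, i.e.\ the standard Meinardus/Granovsky--Stark scheme; its analytic hypotheses reduce here to polynomial growth of the Dirichlet series $D(s)=\sum_k c_k k^{-s}$ on vertical strips, inherited from the shifted product $\zeta(s)\zeta(s-1)\cdots\zeta(s-d+1)$. The principal obstacles are expected to be the enumerative refinement in (i) when $r<r^*(d)$, and the de-conditioning step (iii); for the \emph{lower} bound alone, step (iii) can be bypassed by a second-moment computation counting components of size in $[m,2m]$ at the critical scale $m\asymp n^{1/(d+1)}\log n$, requiring only the summatory asymptotic for $c_k$ together with the matching upper-bound tail from (ii).
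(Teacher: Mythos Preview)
This statement is Conjecture~\ref{conj:stronger-component} in the paper, and it is \emph{not} proved there; it is posed as an open problem. The paper only establishes the $d=1$ analogue (Corollary~\ref{corr:1locally}), which lies outside the stated range $d\ge 2$, and for $d\ge 2$ proves only the much weaker Theorem~\ref{thm:largest-component-d} (largest component at most $n^{5/6}$). So there is no proof in the paper to compare your proposal against.

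Your plan is a sensible outline of how one might attack the conjecture, and you correctly identify the main ingredients and obstacles: the Boltzmann/Fristedt representation, the extreme-value analysis at the saddle $\beta_n\asymp n^{-1/(d+1)}$, the de-conditioning via a local CLT, and the need to extend the enumeration beyond the pure-translation regime when $r<r^*(d)$. A few remarks on specific steps. First, a genuine pointwise asymptotic $c_k\sim C_{d,r}k^{d-1}$ does \emph{not} hold: already for $d=2$ the sublattice count of index $k$ is $\sigma_1(k)$, which oscillates by a factor of order $\log\log k$. What the Meinardus/Fristedt machinery actually needs are analytic conditions on $D(s)=\sum_k c_k k^{-s}$ (meromorphic continuation past $\Re s=d$ with a simple pole, polynomial growth on vertical lines), and these are what the factorisation $\zeta(s)\zeta(s-1)\cdots\zeta(s-d+1)$ supplies for the pure-translation part; your parenthetical ``(or Ces\`aro)'' suggests you are aware of this, but the write-up should not assert a pointwise asymptotic. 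Second, the enumeration for $r_0(d)\le r<r^*(d)$ is genuinely open: the paper explicitly declines to carry it out (see the paragraph after Claim~\ref{claim:involution}), and while it asserts that the $P_\Gamma=\{\pm\Id\}$ contribution is $\Theta(x^d)$ in the summatory sense, turning this into a Dirichlet series with the required analytic properties, and checking that no other point groups contribute at leading order, is real work. Third, the local CLT step is the standard bottleneck in such arguments and would need to be carried out carefully here; nothing in the paper provides it.

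In short: your proposal is a credible research plan for an open problem, not a proof, and you have correctly flagged the places where the real work lies.
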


We remark that the statement of Conjecture \ref{conj:stronger-component} holds for $d=1$ with $r_0(1)=1$. This is easily deduced from known results from the theory of integer partitions. (See Section \ref{subsection:integer-partitions}.)

It is instructive to contrast Theorems \ref{thm:largest-component} and \ref{thm:aut-gen} with the situation where $F=T_{d}$, the infinite $d$-regular tree, for $d \geq 3$. As remarked above, a graph $G$ is $r$-locally $T_d$ if and only if it is a $d$-regular graph of girth at least $2r+2$. By Theorem \ref{thm:short-cycles}, for fixed integers $d,g \geq 3$, a uniform random labelled $d$-regular graph on $\{1,2,\ldots,n\}$ has girth at least $g$ with probability $\Omega_{d,g}(1)$ as $n \to \infty$. Using Theorems \ref{thm:d-connected} and \ref{thm:bollobas-asym}, it follows that if $d,g \geq 3$ are fixed integers and $G_n$ is the random graph chosen uniformly from the set of all {\em unlabelled}, $n$-vertex, $d$-regular graphs of girth at least $g$, then with high probability, $G_n$ is asymmetric and $d$-connected. It follows that if $d \geq 3$ and $r \geq 1$, then with high probability, $G_n(T_d,r)$ is asymmetric and $d$-connected.
\label{page:asym}

We remark that, instead of working with unlabelled graphs, one could consider instead the properties of the random graph $H_n = H_n(F,r)$ chosen uniformly from the set of all {\em labelled} graphs with vertex-set $\{1,2,\ldots,n\}$ that are $r$-locally $F$, for various choices of $F$ and $r$, and the number of such graphs. The behaviour of $H_n$ does not vary so strikingly with the choice of $F$ and $r$, as in our case: for example, if $d,r \in \mathbb{N}$ are fixed with $d \geq 2$, and $F$ is an infinite, $d$-regular, connected, vertex-transitive graph, then the number of labelled graphs on $\{1,2,\ldots.n\}$ that are $r$-locally $F$ is $\exp(\Theta_{d,r}(n \log n))$, as is the number of labelled $d$-regular graphs on $\{1,2,\ldots,n\}$. Hence, we believe that the random unlabelled graph $G_n$ is a more interesting object of study in this context. While the recent trend in Probabilistic Combinatorics (perhaps for reasons of tractability) has been to focus more on random {\em labelled} structures and their enumeration, unlabelled structures have also attracted much attention in Combinatorics, Probability Theory and related fields. We mention for example the work of Tutte \cite{tutte1,tutte2} in the 1960s and 70s on the enumeration of planar maps, and the more recent work of Angel and Schramm \cite{as}, Le Gall \cite{legall-inv}, Gurel-Gurevich and Nachmias \cite{ori-asaf}, and others, on random planar maps. (See also the survey papers \cite{legall-ecm,legall-icm} of Le Gall, and the references contained therein.)

The remainder of this paper is structured as follows. In Section \ref{sec:related}, we discuss a broad collection of problems related to our work. In Section \ref{sec:background}, we describe the background and tools we will need from topological graph theory, group theory and topology, and we prove some preliminary lemmas using these tools. In Section \ref{sec:proof-enum}, we prove Theorem \ref{thm:enumeration}, our enumeration result for unlabelled graphs that are $r$-locally $\mathbb{L}^d$. In Section \ref{sec:general}, we consider the much more general case where $\mathbb{L}^d$ is replaced by a connected, locally finite Cayley graph of a torsion-free group of polynomial growth, and we prove Theorems \ref{thm:largest-component}, \ref{thm:aut-gen} and \ref{thm:stretched-exp}. In Section \ref{sec:typical}, we prove some more precise results on the typical properties of graphs that are $r$-locally $\mathbb{L}^d$, including a more precise version of Theorem \ref{thm:aut-gen} (on the order of the automorphism group), for such graphs. In the Appendix, we give a full proof of the result we need on the asymptotics of partition functions (needed for proving Theorem \ref{thm:enumeration}), by adapting the argument of Brigham in \cite{brigham}; this is a fairly standard argument and is included for the convenience of readers not so familiar with this topic. (We note that the bulk of the paper is devoted to the proofs of Theorems \ref{thm:enumeration}, \ref{thm:largest-component} and \ref{thm:aut-gen}, and the necessary preliminaries.)

\section{Related problems and conjectures}
\label{sec:related}

Our results and conjectures above are of course part of a more general class of questions, which we proceed to discuss in this section. Let $F$ be an infinite, locally finite, connected, vertex-transitive graph, and let $r \in \mathbb{N}$. Let us say that the pair $(F,r)$ is {\em connectivity-forcing} if the random graph $G_n = G_n(F,r)$ in Definition \ref{defn:rgmodel} is connected with high probability. (If there exists no nonempty finite graph that is $r$-locally $F$, then by convention we say that $(F,r)$ is connectivity-forcing; otherwise, we have $\mathcal{S}_n(F,r) \neq \emptyset$ for infinitely many $n$, so the term `with high probability' is well-defined; see the discussion following Definition \ref{defn:rgmodel}.) It is natural to pose the following (perhaps rather ambitious) problem.
\begin{problem}
Characterise the pairs $(F,r)$ that are connectivity-forcing.
\end{problem}
Note that it follows immediately from Theorem \ref{thm:largest-component} that if $F$ is a connected, locally finite Cayley graph of a torsion-free group of polynomial growth, then there exists $r_0 \in \mathbb{N}$ depending upon $F$ alone such that for any $r \geq r_0$, the pair $(F,r)$ is not connectivity-forcing. On the other hand, it follows from the discussion on page \pageref{page:asym} that the pair $(T_d,r)$ is connectivity-forcing, for all $r \in \mathbb{N}$.

In the above problem, one may of course replace connectivity with any other graph property (with the same proviso concerning when $\mathcal{S}_n(F,r) = \emptyset$ for all $n \in \mathbb{N}$). A closely related question is for which pairs $(F,r)$ it holds that $G_n(F,r)$ has a `giant component' with high probability. (Recall that if $(G_n)$ is a sequence of random graphs, we say that $G_n$ {\em has a giant component with high probability} if there exists $\epsilon >0$ such that
$$\Prob\{G_n \text{ has a component of order at least }\epsilon n\} \to 1$$
as $n \to \infty$.) In this case, we say that the pair $(F,r)$ {\em forces a giant component}. We pose the following.
\begin{problem}
Characterise the pairs $(F,r)$ that force a giant component.
\end{problem}
Theorem \ref{thm:largest-component} implies that if $F$ is a connected, locally finite Cayley graph of a torsion-free group of polynomial growth and $r \geq r_0(F)$, then $(F,r)$ does not force a giant component.

It is also natural to ask for which pairs $(F,r)$ it holds that $G_n$ is an expander with high probability. Let us state this problem in full. Recall that if $G = (V,E)$ is a finite graph, and $S \subset V(G)$, we define the {\em edge-boundary} $\partial S := E(S,V \setminus S)$, i.e.\ it is the set of edges of $G$ between $S$ and $V \setminus S$. We define the {\em vertex-boundary} $b(S) : = N(S) \setminus S$, i.e.\ it is the set of vertices of $G$ which are not in $S$, but which are adjacent to a point in $S$. We define the {\em edge-expansion ratio} of $G$ to be
$$h(G): = \min\{|\partial S|/|S|:\ 0 < |S| \leq |V|/2\}.$$
Similarly, we define the {\em vertex-expansion ratio} of $G$ to be
$$h_{v}(G) := \min\{|b(S)|/|S|:\ 0 < |S| \leq |V|/2\}.$$
As usual, let $\Delta(G)$ denote the maximum degree of $G$. Note that we have $|\partial S| \leq \Delta(G) |b(S)|$ for all $S \subset V$, so $h_v(G) \geq h(G)/\Delta(G)$.

Now let $(G_n)_{n \in \mathbb{N}}$ be any sequence of graphs with $|V(G_n)| \to \infty$ as $n \to \infty$, and with uniformly bounded maximum degree (i.e., $\sup_{n \in \mathbb{N}} \Delta(G_n) < \infty$). We say that $G_n$ {\em is an expander with high probability} if there exists $\epsilon >0$ such that
$$\Prob\{h(G_n) \geq \epsilon\} \to 1\quad \textrm{as } n \to \infty.$$
Note that this is actually a property of the sequence of random graphs $(G_n)$, not of a single graph.

Let us say that the pair $(F,r)$ is {\em expansion-forcing} if $G_n(F,r)$ is an expander with high probability. We pose the following.
\begin{problem}
Characterise the pairs $(F,r)$ that are expansion-forcing.
\end{problem}
Note that if $(F,r)$ is expansion-forcing, then it is clearly connectivity-forcing. Bollob\'as \cite{bollobas-expansion} proved that if $d \geq 3$ is a fixed integer, the random (labelled) $d$-regular graph $G_n(d)$ has edge-expansion ratio at least $d/18$ with high probability. It follows from this, Theorem \ref{thm:short-cycles} and Theorem \ref{thm:bollobas-asym}, that for any $d \geq 3$ and any $r \in \mathbb{N}$, the pair $(T_d,r)$ is expansion-forcing. Furthermore, we have $h_v(G_n(T_d,r)) \geq 1/18$ with high probability. On the other hand, if $F$ is a connected, locally finite Cayley graph of a torsion-free group of polynomial growth and $r \geq r_0(F)$, then $(F,r)$ is not connectivity-forcing, so it is certainly not expansion-forcing.

In the case where $F$ is (the graph of) a regular tiling of the hyperbolic plane, we make the following conjecture.
\begin{conjecture}
Let $F$ be the graph of a regular tiling of the hyperbolic plane. Then for any $r \in \mathbb{N}$, the pair $(F,r)$ is connectivity-forcing.
\end{conjecture}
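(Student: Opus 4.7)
The plan is to combine an extension of the De La Salle--Tessera rigidity framework with subgroup-growth estimates for cocompact Fuchsian groups. Let $F$ be the $\{p,q\}$-tiling of the hyperbolic plane, $(p-2)(q-2)>4$; then $\Aut(F)$ contains, as a finite-index subgroup, a cocompact Fuchsian group arising from the $(2,p,q)$-triangle group, which acts geometrically on $\mathbb{H}^2$.

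The first and main step would be a rigidity claim: for each fixed $r$, every sufficiently large connected graph $G$ that is $r$-locally $F$ is isomorphic to $F/\Gamma$ for some $\Gamma\leq\Aut(F)$ acting freely on $F$ with finite quotient. For $r$ large enough depending on $F$, this should be accessible by adapting the local-to-global covering arguments of De La Salle and Tessera to the hyperbolic setting, exploiting that $F$ is Gromov-hyperbolic, that $\Aut(F)$ acts geometrically on $\mathbb{H}^2$, and that closed hyperbolic surfaces are $K(\pi,1)$'s, so that $r$-local data should determine the universal cover. For small $r$, rigidity genuinely fails: in the extreme, being $1$-locally $F$ only fixes the link of each vertex, reducing essentially to $q$-regularity together with a prescribed small-cycle constraint, and connectivity would then follow from classical random-regular-graph arguments in the spirit of Theorems~\ref{thm:d-connected}--\ref{thm:short-cycles}.

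Given rigidity, let $c_n = c_n(F,r)$ denote the number of connected unlabelled $n$-vertex graphs that are $r$-locally $F$; then $c_n$ equals, up to a bounded multiplicative factor, the number of conjugacy classes in $\Aut(F)$ of torsion-free subgroups of index $n$. By M\"uller's asymptotic for subgroup growth in cocompact Fuchsian groups of negative rational Euler characteristic, $c_n$ grows factorially, $\log c_n = \Theta(n\log n)$. Writing $d_n$ for the number of unlabelled \emph{disconnected} $n$-vertex graphs that are $r$-locally $F$, each such graph is an unordered disjoint union of connected pieces each of which is itself $r$-locally $F$, so
$$ d_n \;\leq\; \sum_{\substack{\lambda \vdash n \\ \ell(\lambda) \geq 2}} \prod_{i} c_{\lambda_i}. $$
Using $c_n \geq (n!)^{c}$ for some $c>0$ together with the multinomial identity $\prod_i(\lambda_i)! = n!/\binom{n}{\lambda}$, every non-trivial partition contributes at most $c_n/\binom{n}{\lambda}^{c}$, and summing over partitions (the dominant contribution coming from partitions with a single small part of bounded size) yields $d_n/c_n = O(n^{-c}) = o(1)$. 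This is precisely the statement that $G_n(F,r)$ is connected with high probability.

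The principal obstacle is the rigidity step. The De La Salle--Tessera argument leans heavily on polynomial growth of $F$, and its extension to exponentially growing vertex-transitive graphs will likely require genuinely new input from CAT($-1$) geometry, or from the theory of convex cocompact actions, in order to recover the universal cover of $G$ from its $r$-local data. The small-$r$ regime, where rigidity fails outright, presents a separate challenge: one must additionally rule out the appearance of many small components, by enumerating the possible local `pieces' that could arise as a component while being $r$-locally $F$, and it is by no means obvious that such small pieces are rare enough to prevent $G_n$ from being typically disconnected at very small $r$.
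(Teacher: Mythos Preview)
The paper does not prove this statement; it is presented purely as a conjecture, with no proof or proof sketch. So there is nothing to compare your proposal against in the paper --- you are outlining a strategy for an open problem.

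On the substance of your outline: the overall shape (rigidity $\Rightarrow$ connected graphs correspond to finite-index subgroups of a Fuchsian group $\Rightarrow$ factorial subgroup growth $\Rightarrow$ disconnected graphs are negligible) is natural and is in fact consonant with the paper's own heuristics --- immediately after Problem~\ref{question:non-amenable} the authors conjecture that \emph{any} finitely generated group with super-exponential subgroup growth is strongly-connectivity-forcing. So the counting half of your plan is itself an instance of an open conjecture in the paper, not something one can simply invoke. Your partition bound also needs more care: for unlabelled graphs the disconnected count is a multiset count, and the inequality $\prod_i c_{\lambda_i}\leq c_n/\binom{n}{\lambda}^c$ requires not just $\log c_n=\Theta(n\log n)$ but two-sided control of the form $c_m\asymp (m!)^c$ uniformly in $m$, together with an argument that the parts $\lambda_i$ on which $c_{\lambda_i}=0$ (there may be many) do not spoil the estimate.

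The rigidity step is, as you say, the real obstruction. Theorem~\ref{thm:dlst-intro} is specific to polynomial growth, and the other De~La~Salle--Tessera result quoted in the paper (for graphs quasi-isometric to trees) does not cover planar hyperbolic tilings. No LG-rigidity statement for regular hyperbolic tilings is established in the paper or in the references it cites, so your ``should be accessible by adapting\ldots'' is doing a great deal of work. For small $r$ you correctly note that rigidity fails outright, but your fallback to ``classical random-regular-graph arguments'' is not convincing: being $r$-locally $F$ for a $\{p,q\}$-tiling with $r\geq 1$ imposes constraints far beyond $q$-regularity (e.g.\ every edge lies in a $p$-cycle, no shorter cycles), and Theorems~\ref{thm:d-connected}--\ref{thm:bollobas-asym} do not transfer to such constrained models without substantial new work. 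In short, your proposal identifies the right ingredients and the right difficulties, but it is a research programme rather than a proof.
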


If $G$ is a finite graph, let us write $l(G)$ for the order of the largest component of $G$. Let $\mathcal{R}$ be the set of all $x \in [0,1]$ such that there exists a pair $(F,r)$ such that with probability 1, $\log_n(l(G_n(F,r))) \to x$ as $n \to \infty$. Conjecture \ref{conj:stronger-component} (together with a mild assumption on the rate of convergence to 1 of the probability concerned) would imply that $1/(d+1) \in \mathcal{R}$ for all $d \in \mathbb{N}$; Corollary \ref{corr:1locally} implies that $1/2 \in \mathcal{R}$. The fact that $G_n(d)$ is connected with probability at least $1-O(1/n^2)$ for each fixed integer $d \geq 3$, together with Theorems \ref{thm:short-cycles} and \ref{thm:bollobas-asym}, implies that $1 \in \mathcal{R}$. It would be of interest to further investigate the set $\mathcal{R}$. It would also be interesting to determine whether or not there exists a pair $(F,r)$ and an absolute constant $c>0$ such that with high probability, $cn < l(G_n(F,r)) < n$, i.e.\ whether it is possible for $G_n = G_n(F,r)$ to have a giant component and yet be disconnected, with high probability.

If $\Gamma$ is a finitely generated group, let us say that $\Gamma$ is {\em strongly-connectivity-forcing} if for any finite generating set $S$ of $\Gamma$ with $\Id \notin S$ and $S^{-1} = S$, and for any $r \in \mathbb{N}$, the pair $(\Cay(\Gamma,S),r)$ is connectivity-forcing. We pose the following.

\begin{problem}
\label{question:non-amenable}
Characterise the finitely generated groups that are strongly-connectivity-forcing.
\end{problem}

We conjecture that any finitely generated group with super-exponential subgroup growth is strongly-connectivity-forcing. (See the book of Lubotzky and Segal \cite{ls} for background on such groups.) 

We also conjecture that if $\Gamma$ is a finitely generated, residually finite group, and $F$ is a connected, locally finite Cayley graph of $\Gamma$, then the property of $(F,r)$ being connectivity-forcing / expansion-forcing / forcing a giant component, for all sufficiently large $r$, does not depend upon the generating set chosen for $F$. More precisely, we make the following.

\begin{conjecture}
Let $\Gamma$ be a finitely generated, residually finite group, and let $S_i$ (for $i = 1,2$) be two finite generating sets of $\Gamma$ with $\Id \notin S_i$ and $S_i^{-1} = S_i$. Let $F_i = \Cay(\Gamma,S_i)$. Then $(F_1,r)$ has property $P$ for all sufficiently large $r$ if and only if $(F_2,r)$ has property $P$ for all sufficiently large $r$, where $P$ is the property of being connectivity-forcing, or the property of being expansion-forcing, or the property of forcing a giant component.
\end{conjecture}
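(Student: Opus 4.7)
The plan is to exhibit, for $r$ sufficiently large, a natural correspondence $\Psi_r: \mathcal{S}_n(F_1,r) \to \mathcal{S}_n(F_2,r')$, with $r'=r'(r)\to\infty$ as $r\to\infty$, that preserves the vertex set, the partition of vertices into connected components, and distorts the edge- and vertex-expansion ratios of vertex-subsets by at most a bounded multiplicative factor depending only on $S_1, S_2$. Since each of the three candidate properties $P$ is determined by such coarse structural features of the distribution of the random graph $G_n(F,r)$, if $\Psi_r$ is moreover a bijection then property $P$ transfers between $(F_1,r)$ and $(F_2,r')$, establishing the conjecture.

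The construction exploits the bi-Lipschitz equivalence of $F_1$ and $F_2$. Set $L := \max\{\max_{s\in S_1}\|s\|_{S_2},\ \max_{s\in S_2}\|s\|_{S_1}\}$, and for each $s\in S_2$ fix a word $w_s = s_{i_1}\cdots s_{i_{\ell(s)}}$ in $S_1$ of length at most $L$ that equals $s$ in $\Gamma$. For $G$ that is $r$-locally $F_1$ with $r\geq L$, choose for each $v\in V(G)$ an isomorphism $\phi_v:\Link_L(v,G)\to\Link_L(\Id,F_1)$ with $\phi_v(v)=\Id$, and declare $\{v,u\}$ an edge of the candidate $G'$ whenever $\phi_v(u)\in S_2$. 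Comparing $\phi_v$ and $\phi_u$ on the overlap of balls around $v$ and $u$, one checks that $G'$ is $\lfloor r/L\rfloor$-locally $F_2$, that each $F_2$-edge of $G'$ corresponds to an $F_1$-walk of length at most $L$ in $G$, and conversely each $F_1$-edge of $G$ corresponds to an $F_2$-walk of length at most $L$ in $G'$; consequently the component partitions of $V(G)$ in $G$ and in $G'$ coincide setwise, and boundaries of vertex-subsets distort by a factor of at most $L|S_1||S_2|$.

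The main obstacle is the arbitrariness of the choices $\phi_v$: they are defined only modulo the point-stabiliser $K := \Stab_{\Aut(F_1)}(\Id)$, whose elements need not preserve the set $\{s\in S_2 : \|s\|_{S_1}\leq L\}$, so distinct choices of $(\phi_v)_{v\in V(G)}$ can produce non-isomorphic or even asymmetric candidate graphs $G'$. To resolve this, I would pass to the ``$K$-symmetrisation'' of $S_2$: let $S_2^{\sharp}$ be the smallest subset of $B_L(\Id,F_1)\setminus\{\Id\}$ containing $S_2$ that is closed under both the $K$-action on $B_L(\Id,F_1)$ and inversion in $\Gamma$. Since $F_1$ is locally finite, the action of $K$ on the finite ball $B_L(\Id,F_1)$ factors through a finite quotient, so $S_2^{\sharp}$ is finite, symmetric, $K$-invariant, and still generates $\Gamma$. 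Running the construction with $S_2^\sharp$ in place of $S_2$ is then choice-independent and yields a well-defined bijection $\mathcal{S}_n(F_1,r)\to\mathcal{S}_n(\Cay(\Gamma,S_2^\sharp),\lfloor r/L\rfloor)$. The remaining task is to transfer property $P$ between $(F_2,r)$ and $(\Cay(\Gamma,S_2^\sharp),r)$, which are two Cayley graphs of the same group $\Gamma$ with comparable metrics; this is a special case of the conjecture itself, but one in which the point-stabiliser obstruction is trivial by the $K$-invariance of $S_2^\sharp$, so a direct ``untwisted'' variant of the above argument should apply without circularity. A conceptually cleaner alternative would work entirely on universal covers — realising any $G\in\mathcal{S}_n(F_1,r)$ as the quotient of its universal cover by a deck group, pushing the latter forward along the quasi-isometry $F_1\to F_2$, and taking the corresponding quotient of $F_2$ — but making this precise outside the LG-rigid class requires substantial new machinery and is the main source of difficulty if this route is pursued.
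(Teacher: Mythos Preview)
This statement is presented in the paper as an open \emph{conjecture}, not a theorem; the paper offers no proof, so there is nothing to compare your argument against on that front. The relevant question is therefore whether your proposal actually constitutes a proof.

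It does not, and you have essentially flagged the reason yourself. The crux is bijectivity of $\Psi_r$. Even granting that the $K$-symmetrisation trick makes the map $G\mapsto G'$ well-defined, you have given no argument that it is either injective or surjective onto $\mathcal{S}_n(\Cay(\Gamma,S_2^\sharp),\lfloor r/L\rfloor)$. Injectivity would require that the $F_1$-edge set of $G$ can be reconstructed from the $F_2^\sharp$-edge set of $G'$; but once you have passed to $G'$ you have discarded the local charts $\phi_v$ (which were defined using $G$, not $G'$), and there is no evident reason the $r'$-local $F_2^\sharp$-charts of $G'$ should pick out the original $S_1$-edges rather than some $\Aut(F_2^\sharp)$-twisted version of them. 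Surjectivity is even more problematic: a graph $H\in\mathcal{S}_n(\Cay(\Gamma,S_2^\sharp),r')$ need not arise from any $G\in\mathcal{S}_n(F_1,r)$ at all, because the class of graphs $r'$-locally $F_2^\sharp$ is governed by $\Aut(F_2^\sharp)$, which is in general neither contained in nor contains $\Aut(F_1)$. Without bijectivity (or at least a controlled failure of it), you cannot transfer a statement about the \emph{uniform} measure on $\mathcal{S}_n(F_1,r)$ to the uniform measure on $\mathcal{S}_n(F_2^\sharp,r')$.

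Your final paragraph concedes this: you reduce to the pair $(S_2,S_2^\sharp)$ and assert that ``a direct untwisted variant should apply without circularity'', but $S_2^\sharp$ was chosen to be invariant under $K=\Stab_{\Aut(F_1)}(\Id)$, which says nothing about $\Stab_{\Aut(F_2)}(\Id)$ --- the stabiliser relevant for running the construction from the $F_2$ side. So the obstruction has not been removed, merely relocated. The universal-cover route you mention is indeed the natural one in the LG-rigid case, and the paper's own results (Theorem~\ref{thm:dlst-intro}) would let you push it through for torsion-free groups of polynomial growth; outside that class, the conjecture appears to be genuinely open, and your sketch does not close the gap.
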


(The hypothesis of residual finiteness is added to ensure that we never have $\mathcal{S}_n(F,r) = \emptyset$ for all $n \in \mathbb{N}$; see the discussion after Definition \ref{defn:rgmodel}.)

We now turn to questions of symmetry. We say a pair $(F,r)$ is {\em asymmetry-forcing} if $G_n(F,r)$ has no nontrivial automorphisms, with high probability. We pose the following.
\begin{problem}
Characterise the pairs $(F,r)$ that are asymmetry-forcing.
\end{problem}
It follows from the discussion on page \pageref{page:asym}, that for any $d \geq 3$ and any $r \in \mathbb{N}$, the pair $(T_d,r)$ is asymmetry-forcing. On the other hand, it follows immediately from Theorem \ref{thm:aut-gen} that if $F$ is a connected, locally finite Cayley graph of a torsion-free group of polynomial growth, then there exists $r_0 \in \mathbb{N}$ depending upon $F$ alone such that for any $r \geq r_0$, the pair $(F,r)$ is not asymmetry-forcing; on the contrary, with high probability, $G_n(F,r)$ has at least $\exp(\poly(n))$ automorphisms. Motivated by the latter, we say a pair $(F,r)$ is {\em stretched-exponential-symmetry-forcing} if there exists $\delta >0$ such that with high probability, $G_n(F,r)$ has at least $\exp(n^{\delta})$ automorphisms; it would also be of interest to characterise the pairs $(F,r)$ that are stretched-exponential-symmetry-forcing. We thank an anonymous referee for suggesting this problem.

We conclude with a more general suggestion for future research. It is straightforward to generalise Definition \ref{defn:r-locally} and the random graph model in Definition \ref{defn:rgmodel}, to other well-studied combinatorial structures, in particular to $k$-uniform hypergraphs and $d$-dimensional simplicial complexes, for $k \geq 3$ and $d \geq 2$. It would be interesting to investigate the typical properties of random $k$-uniform hypergraphs, or random $d$-dimensional simplicial complexes, with given $r$-local structures. We remark that the topology of random $d$-dimensional simplicial complexes (for $d > 1$) is a very active area of current research. The most commonly studied model is perhaps the $Y_d(n,p)$ model introduced in 2006 by Linial and Meshulam \cite{lm}. (This is the random $d$-dimensional simplicial complex with $n$ vertices and full $(d-1)$-dimensional skeleton, where each $d$-dimensional simplex is included independently with probability $p$.) For $d$-dimensional simplicial complexes with $d >1$, there are two natural analogues of (graph-)connectivity, namely, $d$-collapsibility and the vanishing of the $d$th homology group over $\mathbb{R}$. In a series of recent papers \cite{top,collapse,luc,ann}, the thresholds for $d$-collapsibility and for the vanishing of the $d$th real homology group of $Y_d(n,p)$ were established by Aronshtam, Linial, \L{}uczak, Peled and Meshulam; see also \cite{kahle-survey} for a survey of related results. In \cite{lub-mesh}, Lubotzky and Meshulam introduced a new model of random 2-dimensional simplicial complexes with bounded upper degrees; these were shown to have positive coboundary expansion with high probability. There is currently much interest in developing a model of random `regular' $d$-dimensional simplicial complexes (for $d > 1$) with good typical coboundary expansion; see e.g.\ \cite{lubotzky-icm}. We believe our random $r$-local model is unlikely to solve this particular problem, but may have other interesting combinatorial and topological properties.

 \section{Definitions, background and tools}
 \label{sec:background}
 This section is structured as follows. In Section \ref{subsec:basic}, we list the basic (and standard) graph-theoretic definitions, notations and conventions which we use. In Section \ref{subsec:topgraph}, we list the standard definitions and results we need from topological graph theory. In Section \ref{subsection:top}, we list the definitions and classical results that we need from group theory and topology, and we also obtain a useful group-theoretic lemma (Lemma \ref{lemma:fin}) using the proof of the classical Third Theorem of Bieberbach, and some well-known facts about group cohomology. In Section \ref{subsection:integer-partitions}, we use classical results from the theory of integer partitions to deduce that the statement of Conjecture \ref{conj:stronger-component} holds in the case of $\mathbb{L}^1$.
 
 \subsection{Basic graph-theoretic definitions, notation and conventions}
\label{subsec:basic}
Unless otherwise stated, all graphs will be undirected and simple (that is, without loops or multiple edges); they need not be finite. An undirected, simple graph is defined to be an ordered pair of sets $(V,E)$, where $E \subset {V \choose 2}$; $V$ is called the {\em vertex-set} and $E$ the {\em edge-set}. An edge $\{v,w\}$ of a graph will often be written $vw$, for brevity.

If $G$ is a graph and $S \subset V(G)$, we let $N(S)$ denote the {\em neighbourhood} of $S$ in $G$, i.e.
$$N(S) = S \cup \{v \in V(G):\ sv \in E(G)\textrm{ for some } s \in S\}.$$
If $v \in V(G)$, we let $\Gamma(v)=\{u \in V(G):\ uv \in E(G)\}$ denote the set of neighbours of $v$. We say the graph $G$ is {\em locally finite} if each of its vertices has only finitely many neighbours.

If $G$ is a graph, and $u,v \in V(G)$ are in the same component of $G$, the {\em distance from $u$ to $v$ in $G$} is the minimum length of a path from $u$ to $v$; it is denoted by $d_{G}(u,v)$ (or by $d(u,v)$, when the graph $G$ is understood). If $G$ is a graph, and $v$ is a vertex of $G$, the {\em ball of radius $r$ around $v$} is defined by
$$B_r(v,G) := \{w \in V(G):\ d_{G}(v,w) \leq r\},$$
i.e.\ it is the set of vertices of $G$ of distance at most $r$ from $v$.

An {\em automorphism} of a graph $G$ is a bijection $\phi:V(G) \to V(G)$ such that $\{\phi(v),\phi(w)\} \in E(G)$ if and only if $\{v,w\} \in E(G)$, for all $v,w \in V(G)$. 

If $\Gamma$ is a group, and $S \subset \Gamma$ is symmetric (meaning that $S^{-1}=S$) and $\Id \notin S$, the {\em Cayley graph of $\Gamma$ with respect to $S$} is the graph with vertex-set $\Gamma$ and edge-set $\{\{g,gs\}:\ g \in \Gamma,\ s \in S\}$; we denote it by $\Cay(\Gamma,S)$.

All logarithms in this paper are to the base $e$.
 
\subsection{Background and tools from topological graph theory}
\label{subsec:topgraph}
We follow \cite{gross-tucker,nedela-survey}.

\begin{definition}
\label{defn:cover}
If $F$ and $G$ are graphs, and $p:V(F) \to V(G)$ is a graph homomorphism from $F$ to $G$, we say that $p$ is a {\em covering map} if $p$ maps $\Gamma(v)$ bijectively onto $\Gamma(p(v))$, for all $v \in V(F)$. In this case, we say that $F$ {\em covers} $G$.
\end{definition}

\noindent It is easy to see that if $F$ and $G$ are graphs with $G$ connected, and $p:V(F) \to V(G)$ is a covering map, then $p$ is surjective.

\begin{definition}
Let $F$ and $G$ be graphs, and let $p:V(F) \to V(G)$ be a covering of $G$ by $F$. The pre-image of a vertex of $G$ under $p$ is called a {\em fibre} of $p$.
\end{definition}

\begin{definition}
Let $F$ and $G$ be graphs, and let $p:V(F) \to V(G)$ be a covering of $G$ by $F$. An automorphism $\phi \in \Aut(F)$ is said to be a {\em covering transformation of $p$} if $p \circ \phi = p$. The group of covering transformations of $p$ is denoted by $\CT(p)$.
\end{definition}

\noindent Note that any covering transformation of $p$ acts on each fibre of $p$, but it need not act transitively on any fibre of $p$.

\begin{definition}
Let $F$ and $G$ be graphs, and let $p:V(F) \to V(G)$ be a covering of $G$ by $F$. We say that $p$ is a {\em normal} covering if $\CT(p)$ acts transitively on each fibre of $p$.
\end{definition}

\noindent It is well known (and easy to check) that if $F$ is a connected graph, and $p:V(F) \to V(G)$ is a covering of $G$ by $F$, then if $\CT(p)$ acts transitively on some fibre of $p$, it acts transitively on every fibre of $p$. Hence, in the previous definition, `on each fibre' may be replaced by `on some fibre'.

\begin{definition}
Let $\Gamma$ be a group, let $X$ be a set, and let $\alpha:\Gamma \times X \to X$ be an action of $\Gamma$ on $X$. For each $x \in X$, we write $\Orb_{\Gamma}(x): = \{\alpha(\gamma,x):\ \gamma \in \Gamma\}$ for the $\Gamma$-orbit of $x$, and $\Stab_{\Gamma}(x) := \{\gamma \in \Gamma:\ \alpha(\gamma,x)=x\}$ for the stabiliser of $x$ in $\Gamma$. When the group $\Gamma$ is understood, we suppress the subscript $\Gamma$.
\end{definition}

\begin{definition}
If $F$ is a graph and $\Gamma \leq \Aut(F)$, the {\em minimum displacement} of $\Gamma$ is defined to be $D(\Gamma) : = \min\{d_F(x,\gamma(x)):\ x \in V(F),\ \gamma \in \Gamma \setminus \{\Id\}\}$.
\end{definition}

\begin{definition}
\label{definition:free-action}
Let $\Gamma$ be a group, let $X$ be a set, and let $\alpha:\Gamma \times X \to X$ be an action of $\Gamma$ on $X$. We say that $\alpha$ is {\em free} if $\alpha(\gamma,x) \neq x$ for all $x \in X$ and all $\gamma \in \Gamma \setminus \{\textrm{Id}\}$.
\end{definition}

\begin{definition}(Quotient of a graph.)
\label{definition:quotient}

Let $F$ be a simple graph, and let $\Gamma \leq \Aut(F)$. Then $\Gamma$ acts on $V(F)$ via the natural left action $(\gamma,x) \mapsto \gamma(x)$, and on $E(F)$ via the natural (induced) action $(\gamma,\{x,y\}) \mapsto \{\gamma(x),\gamma(y)\}$. We define the {\em quotient graph} $F/\Gamma$ to be the multigraph whose vertices are the $\Gamma$-orbits of $V(F)$, and whose edges are the $\Gamma$-orbits of $E(F)$, where for any edge $\{x,y\} \in E(F)$, the edge $\Orb(\{x,y\})$ has endpoints $\Orb(x)$ and $\Orb(y)$. Note that $F/\Gamma$ may have loops (if $\{x,\gamma(x)\} \in E(F)$ for some $\gamma \in \Gamma$ and some $x \in V(F)$), and it may also have multiple edges (if there exist $\{u_1,u_2\},\{v_1,v_2\} \in E(F)$ with $\gamma_1(u_1) = v_1$ and $\gamma_2(u_2) = v_2$ for some $\gamma_1,\gamma_2 \in \Gamma$, but $\{\gamma(u_1),\gamma(u_2)\} \neq \{v_1,v_2\}$ for all $\gamma \in \Gamma$).
\end{definition}

\begin{definition}
Let $G$ be a graph, and let $\Gamma \leq \Aut(G)$. We say that $\Gamma$ {\em acts freely on $G$} if the natural actions of $\Gamma$ on $V(G)$ and $E(G)$ are both free actions, or equivalently, if no element of $\Gamma \setminus \{\Id\}$ fixes any vertex or edge of $G$.  
\end{definition}

\noindent The following two lemmas are well-known, and easy to check.

\begin{lemma}
\label{lemma:free}
Let $F$ be a connected (possibly infinite) graph, let $G$ be a graph, and let $p:V(F) \to V(G)$ be a covering map from $F$ to $G$. Then $\CT(p)$ acts freely on $F$.
\end{lemma}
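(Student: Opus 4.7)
The plan is to verify the two clauses in the definition of a free action --- that no nontrivial covering transformation fixes a vertex or an edge of $F$ --- by short arguments that combine the local bijectivity built into the definition of a covering map with the connectedness of $F$ and the simplicity of the graphs involved.

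First I would handle the vertex case. Let $\phi \in \CT(p)$ and suppose $\phi(v_0) = v_0$ for some $v_0 \in V(F)$; the goal is to deduce $\phi = \Id$. I would argue by propagation along edges. Fix any vertex $v$ with $\phi(v) = v$, and let $w \in \Gamma(v)$. Then $\phi(w) \in \Gamma(\phi(v)) = \Gamma(v)$ since $\phi$ is an automorphism, and $p(\phi(w)) = p(w)$ since $\phi \in \CT(p)$. But $p$ restricted to $\Gamma(v)$ is a bijection onto $\Gamma(p(v))$ by the covering property, so these two facts force $\phi(w) = w$. Hence the fixed-vertex set of $\phi$ is closed under taking neighbours; since it contains $v_0$ and $F$ is connected, it equals $V(F)$. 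An automorphism of a simple graph is determined by its action on vertices, so $\phi = \Id$.

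Next I would handle the edge case. Suppose $\phi \in \CT(p) \setminus \{\Id\}$ fixes an edge $\{v,w\} \in E(F)$ setwise. By the vertex case just established, $\phi$ fixes no vertex, so neither $\phi(v) = v$ nor $\phi(w) = w$ can occur; the only remaining option is $\phi(v) = w$ and $\phi(w) = v$. But then $p(v) = p(\phi(v)) = p(w)$, which is impossible: $\{v,w\}$ is an edge in the simple graph $F$, and a covering map (being in particular a graph homomorphism into the simple graph $G$) must send it to an edge of $G$, forcing $p(v) \neq p(w)$. Contradiction.

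There is no real obstacle here: the proof is a direct unwinding of the definitions. The only points that need a little care are (a) the use of connectedness to propagate the fixed-point condition from $v_0$ to the whole of $F$, and (b) the use of simplicity of $F$ and $G$ to rule out the edge-swap scenario by way of forbidden loops in the image graph.
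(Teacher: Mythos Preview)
Your proof is correct and is the standard argument. The paper itself does not supply a proof of this lemma --- it merely notes that the result is ``well-known, and easy to check'' --- so there is no alternative approach to compare against; your write-up is exactly the kind of verification the authors had in mind.
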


\begin{lemma}
\label{lemma:iso}
Let $F$ and $G$ be (possibly infinite) graphs with $G$ connected, and suppose that $p:V(F) \to V(G)$ is a normal covering map from $F$ to $G$. Then there is a graph isomorphism between $G$ and $F/\CT(p)$. Moreover, if $\Gamma \leq \Aut(F)$, $\Gamma$ acts freely on $F$ and $F/\Gamma$ is a simple graph, then the natural quotient map $q:V(F) \mapsto V(F)/\Gamma$ is a normal covering map with $\CT(q)=\Gamma$.
\end{lemma}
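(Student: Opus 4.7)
The proof splits naturally along the two assertions of the lemma. For the first part, the guiding idea is that when $p$ is normal, the fibres of $p$ are precisely the $\CT(p)$-orbits on $V(F)$, so the quotient $F/\CT(p)$ is already ``carried'' by $p$. For the second part, we will check the covering-map axioms directly from the definition of the quotient multigraph, using the hypothesis that $F/\Gamma$ is simple to rule out two bad possibilities.

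For the first assertion, set $H=\CT(p)$. By Lemma \ref{lemma:free}, $H$ acts freely on $F$, and by normality, $H$ acts transitively on each fibre of $p$, so the $H$-orbit of a vertex $v$ is exactly $p^{-1}(p(v))$. Define a map $\phi$ on vertices of $F/H$ by $\phi(\Orb_H(v)):=p(v)$; this is well-defined (each element of $H$ fixes $p$), injective (distinct $H$-orbits lie in distinct fibres), and surjective ($p$ is surjective because $G$ is connected). Extend $\phi$ to edges by $\Orb_H(\{x,y\})\mapsto\{p(x),p(y)\}$: well-defined since $H\le\Aut(F)$ commutes with $p$; surjective since $p$ restricted to $\Gamma_F(x)$ is a bijection onto $\Gamma_G(p(x))$; and injective since if $\{p(x_1),p(y_1)\}=\{p(x_2),p(y_2)\}$, normality lets us pick $\psi\in H$ with $\psi(x_1)=x_2$, and then the unique neighbour of $x_2$ mapped to $p(y_2)$ by $p$ must be both $y_2$ and $\psi(y_1)$, so $\psi$ carries the edge $\{x_1,y_1\}$ to $\{x_2,y_2\}$. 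Finally $\phi$ sends edges of $F/H$ between $\Orb_H(x)$ and $\Orb_H(y)$ to edges of $G$ between $p(x)$ and $p(y)$, so it is a graph isomorphism.

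For the second assertion, let $q:v\mapsto\Orb_\Gamma(v)$. It is immediate from Definition \ref{definition:quotient} that $q$ is a graph homomorphism and that the image of $\Gamma_F(v)$ is all of $\Gamma_{F/\Gamma}(q(v))$. The key step is injectivity of $q$ on $\Gamma_F(v)$. Suppose $x,y\in\Gamma_F(v)$ with $x\neq y$ but $q(x)=q(y)$, say $y=\gamma(x)$ with $\gamma\in\Gamma\setminus\{\Id\}$. Since $F/\Gamma$ is simple, the edges $\{v,x\}$ and $\{v,y\}$ must belong to the same $\Gamma$-orbit, so there is $\delta\in\Gamma$ with $\delta(\{v,x\})=\{v,y\}$; a short case split using freeness (Definition \ref{definition:free-action}) — either $\delta(v)=v$, forcing $\delta=\Id$ and hence $x=y$, or $\delta(v)=y$ and $\delta(x)=v$, whence $\delta$ fixes the edge $\{v,x\}$, again contradicting freeness — gives the required contradiction. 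Thus $q$ is a covering map, and $\Gamma\subseteq\CT(q)$ holds trivially. Since $\Gamma$ already acts transitively on each fibre $\Orb_\Gamma(v)$, $q$ is normal. For $\CT(q)\subseteq\Gamma$, given $\psi\in\CT(q)$, pick any $v$ and choose $\gamma\in\Gamma$ with $\gamma(v)=\psi(v)$; then $\gamma^{-1}\psi$ fixes $v$, is an automorphism of $F$ preserving fibres, and on every neighbour $w$ of $v$ it must coincide with the unique lift of $q(w)$ in $\Gamma_F(v)$, namely $w$ itself; iterating along paths (using connectedness of $F$, which follows from that of $G$) shows $\gamma^{-1}\psi=\Id$, so $\psi=\gamma\in\Gamma$.

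The only genuinely subtle point is the injectivity of $q$ restricted to a neighbourhood, i.e. showing that without the simplicity hypothesis on $F/\Gamma$ one cannot hope for $q$ to be a covering map. The rest is bookkeeping, but one must be careful to exploit freeness in both the ``fixes a vertex'' and ``fixes an edge'' senses of Definition \ref{definition:free-action}, since the loop-versus-multiple-edge dichotomy in the quotient corresponds precisely to these two failure modes.
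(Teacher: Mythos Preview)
The paper does not give a proof of this lemma (it is declared ``well-known, and easy to check'' just before Lemma~\ref{lemma:free}), so there is no paper argument to compare against. Your approach is the natural one and is mostly correct, but there are two problems.

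First, in the neighbourhood-injectivity step, your Subcase B is wrong: if $\delta(v)=y$ and $\delta(x)=v$ then $\delta(\{v,x\})=\{y,v\}=\{v,y\}$, which is \emph{not} $\{v,x\}$ (since $x\neq y$), so $\delta$ does not fix that edge and you cannot invoke freeness on edges. The right way to finish is to observe that $\delta(x)=v$ gives $q(x)=q(v)$, whence the edge $\{v,x\}$ descends to a loop in $F/\Gamma$, contradicting simplicity. Second, and more seriously, your parenthetical ``connectedness of $F$, which follows from that of $G$'' is false: a covering $p:F\to G$ with $G$ connected does not make $F$ connected, and in the ``Moreover'' clause there is no $G$ at all. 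In fact the second assertion is false as stated without $F$ connected --- take $F$ to be four isolated vertices $a,b,c,d$ with $\Gamma=\langle(a\,b)(c\,d)\rangle$: then $\Gamma$ acts freely, $F/\Gamma$ is simple and $q$ is a normal covering, but $\CT(q)=\langle(a\,b),(c\,d)\rangle\supsetneq\Gamma$. Your path-iteration argument for $\CT(q)\subseteq\Gamma$ is correct once one assumes $F$ connected (as holds in every application in the paper, where $F=\mathbb{L}^d$), so you have really uncovered a missing hypothesis in the lemma rather than erred in method.
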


\subsection{Some preliminaries from group theory and topology}
\label{subsection:top}
\begin{fact}
The group $\Isom(\mathbb{R}^d)$ of isometries of $d$-dimensional Euclidean space satisfies
\begin{align*} \Isom(\mathbb{R}^d) & = \{t \circ \sigma:\ t \in T(\mathbb{R}^d),\ \sigma \in O(d)\}\\
& = \{\sigma \circ t:\ t \in T(\mathbb{R}^d),\ \sigma \in O(d)\}\\
& = T(\mathbb{R}^d) \rtimes O(d),
\end{align*}
where
$$T(\mathbb{R}^d) := \{x \mapsto x+v:\ v \in \mathbb{R}^d\}$$
denotes the group of all translations in $\mathbb{R}^d$, and $O(d) \leq \GL(\mathbb{R}^d)$ denotes the group of all real orthogonal $d \times d$ matrices.
\end{fact}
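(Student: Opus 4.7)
The plan is to establish the first decomposition $\Isom(\mathbb{R}^d) = \{t \circ \sigma:\ t \in T(\mathbb{R}^d),\ \sigma \in O(d)\}$ by showing that every isometry is uniquely a translation composed (on the left) with an orthogonal linear map; the other two equalities will then follow from the normality of $T(\mathbb{R}^d)$.

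First, I would take an arbitrary $\phi \in \Isom(\mathbb{R}^d)$, set $v := \phi(0)$, and define $\psi := t_{-v} \circ \phi$ where $t_{-v}$ denotes translation by $-v$. Since $t_{-v}$ and $\phi$ are both isometries, so is $\psi$, and by construction $\psi(0) = 0$. The central claim is that such a distance-preserving map fixing the origin must lie in $O(d)$ (i.e., it must be a linear map given by an orthogonal matrix). This is the only nontrivial step, and it is handled by the standard polarisation argument: from $\|\psi(x)\| = \|\psi(x) - \psi(0)\| = \|x\|$ for all $x$, and from $\|\psi(x) - \psi(y)\| = \|x - y\|$, expanding $\|\psi(x)-\psi(y)\|^2$ and $\|x-y\|^2$ gives $\langle \psi(x),\psi(y)\rangle = \langle x,y\rangle$ for all $x,y \in \mathbb{R}^d$. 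Writing $x = \sum_{i=1}^d \langle x,e_i\rangle e_i$, one then computes that $\psi(\sum_i \lambda_i e_i) - \sum_i \lambda_i \psi(e_i)$ has zero inner product with each $\psi(e_j)$ (which form an orthonormal set), hence zero norm; so $\psi$ is linear, and preservation of inner products says $\psi \in O(d)$. Setting $\sigma := \psi$ and $t := t_v$ gives $\phi = t_v \circ \sigma$, proving the first equality.

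Next, for the second equality I would use the conjugation identity
\[
\sigma \circ t_w \circ \sigma^{-1} = t_{\sigma(w)} \qquad (\sigma \in O(d),\ w \in \mathbb{R}^d),
\]
which is a direct computation. Applied to any $t \circ \sigma$, this gives $t \circ \sigma = \sigma \circ (\sigma^{-1} \circ t \circ \sigma) = \sigma \circ t'$ for some $t' \in T(\mathbb{R}^d)$, and conversely every $\sigma \circ t$ rewrites as $t'' \circ \sigma$; hence the two sets coincide. The same identity shows that $T(\mathbb{R}^d)$ is a normal subgroup of $\Isom(\mathbb{R}^d)$.

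Finally, for the semidirect product structure, I would verify three things: (i) $T(\mathbb{R}^d) \trianglelefteq \Isom(\mathbb{R}^d)$ (just established); (ii) $T(\mathbb{R}^d) \cap O(d) = \{\Id\}$, since the only translation fixing $0$ is the identity; and (iii) $\Isom(\mathbb{R}^d) = T(\mathbb{R}^d) \cdot O(d)$, which is exactly the first decomposition. Together these give the internal semidirect product $\Isom(\mathbb{R}^d) = T(\mathbb{R}^d) \rtimes O(d)$. The main obstacle is the polarisation step establishing that a distance-preserving self-map of $\mathbb{R}^d$ fixing the origin is linear and orthogonal; every other step is a short algebraic manipulation once that is in place.
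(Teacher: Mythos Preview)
Your proof is correct; the paper states this as a \emph{Fact} without proof, treating it as standard background. Your argument is the textbook one (polarisation to show an origin-fixing isometry is linear and orthogonal, then conjugation to verify normality of $T(\mathbb{R}^d)$), and there is nothing to compare it against.
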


\begin{fact}
\label{fact:aut-ld}
For any $d \in \mathbb{N}$, we have
\begin{align*} \Aut(\mathbb{L}^d) & = \{t \circ \sigma:\ t \in T(\mathbb{Z}^d),\ \sigma \in B_d\}\\
& = \{\sigma \circ t:\ t \in T(\mathbb{Z}^d),\ \sigma \in B_d\}\\\
&= T(\mathbb{Z}^d) \rtimes B_d,
\end{align*}
where
$$T(\mathbb{Z}^d) :=  \{x \mapsto x+v:\ v \in \mathbb{Z}^d\}$$
denotes the group of all translations by elements of $\mathbb{Z}^d$, and
$$B_d = \{\sigma \in GL(\mathbb{R}^d):\ \sigma(\{\pm e_i:\ i \in [d]\}) = \{\pm e_i: \ i \in [d]\}\},$$
denotes the {\em $d$-dimensional hyperoctahedral group}, which is the symmetry group of the $d$-dimensional (solid) cube with set of vertices $\{-1,1\}^d$, and can be identified with the permutation group
$$\{\sigma \in \Sym([d] \cup \{-i:\ i \in [d]\}):\ \sigma(-i) = -\sigma(i)\ \forall i\},$$
in the natural way (identifying $e_i$ with $i$ and $-e_i$ with $-i$ for all $i \in [d]$). We therefore have $|B_d| = 2^d d!$.
\end{fact}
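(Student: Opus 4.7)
The plan is to establish the decomposition $\Aut(\mathbb{L}^d) = T(\mathbb{Z}^d) \rtimes B_d$ by verifying each ingredient in turn. The easy inclusion $T(\mathbb{Z}^d) \rtimes B_d \leq \Aut(\mathbb{L}^d)$ is immediate: integer translations clearly preserve $E(\mathbb{L}^d)$, and elements of $B_d$ (acting on $\mathbb{Z}^d$ as signed coordinate permutations) send the generating set $\{\pm e_i : i \in [d]\}$ to itself, hence preserve $E(\mathbb{L}^d)$ too. The semidirect-product structure follows from two routine checks: first, $T(\mathbb{Z}^d) \cap B_d = \{\Id\}$ (since a nontrivial integer translation does not fix $0$), and second, $B_d$ normalises $T(\mathbb{Z}^d)$ inside $\Isom(\mathbb{R}^d)$ via the formula $\sigma \circ t_v \circ \sigma^{-1} = t_{\sigma v}$, where $t_v$ denotes translation by $v$.

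The substantive part is the reverse inclusion. Given $\phi \in \Aut(\mathbb{L}^d)$, let $t$ be the translation by $\phi(0) \in \mathbb{Z}^d$ and set $\phi' := t^{-1} \circ \phi$, so that $\phi'(0) = 0$; it then suffices to show $\phi' \in B_d$. As $\phi'$ is a graph automorphism fixing $0$, it permutes the $2d$ neighbours of $0$, namely the set $\{\pm e_i : i \in [d]\}$. A direct count of common neighbours distinguishes antipodal from non-antipodal pairs: $e_i$ and $-e_i$ have exactly one common neighbour (namely $0$), whereas $e_i$ and $e_j$ for $i \neq j$ have two (namely $0$ and $e_i + e_j$). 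Preservation of this graph-theoretic invariant forces $\phi'|_{\{\pm e_i\}}$ to be a signed permutation of the coordinate axes, hence to coincide with the action of some $\sigma \in B_d$ on $\{\pm e_i\}$.

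It remains to show $\phi'' := \sigma^{-1} \circ \phi' = \Id$, which I would prove by induction on graph distance from $0$. By construction, $\phi''$ fixes $0$ and all of $\{\pm e_i\}$. Suppose $\phi''$ fixes every vertex at distance at most $r \geq 1$ from $0$, and let $v$ lie at distance $r+1$. Writing $v = \sum_i v_i e_i$, the neighbours of $v$ at distance $r$ from $0$ (its ``predecessors'') are precisely $v - \mathrm{sgn}(v_i) e_i$, one for each nonzero coordinate $v_i$. If $v$ has at least two nonzero coordinates $v_i, v_j$, then two of these predecessors $u_1, u_2$ are fixed by $\phi''$, and a short calculation shows that the common neighbours of $u_1, u_2$ are exactly $v$ itself (at distance $r+1$) and $v - \mathrm{sgn}(v_i) e_i - \mathrm{sgn}(v_j) e_j$ (at distance $r-1$); since $\phi''$ preserves distance to $0$, one concludes $\phi''(v) = v$. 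If instead $v = \pm k e_i$ for some $k \geq 2$, then $v$ has the single predecessor $u = \pm(k-1)e_i$, which is fixed, so $\phi''(v)$ lies in the set of neighbours of $u$ at distance $k$ from $0$; among these, only $\pm k e_i$ has just one predecessor (the others $\pm(k-1) e_i \pm e_j$ having two), and the number of predecessors is plainly an isomorphism invariant, again giving $\phi''(v) = v$.

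The main obstacle is the ``axis'' case $v = \pm k e_i$, where $v$ has only one predecessor and so is not pinned down as the unique common neighbour of several fixed vertices. The resolution, as sketched above, is to exploit the fact that the number of predecessors of a vertex at distance $r+1$ from $0$ (equivalently, its number of nonzero coordinates) is a purely graph-theoretic invariant preserved by $\phi''$, which together with the location of its unique predecessor suffices to pin $\phi''(v)$ down. Once the induction is complete, $\phi = t \circ \sigma$ as required, and uniqueness of the decomposition follows from $T(\mathbb{Z}^d) \cap B_d = \{\Id\}$, completing the proof.
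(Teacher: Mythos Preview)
Your proof is correct. The paper states this result as a background ``Fact'' without proof, so there is no argument in the paper to compare against; your write-up supplies a clean elementary verification. The induction in the axis case is handled correctly via the predecessor-count invariant, and the common-neighbour argument distinguishing antipodal pairs is valid for $d \geq 2$ (with the $d=1$ case being trivial since $\{e_1,-e_1\}$ is the entire neighbour set of $0$).
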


\begin{remark}
\label{remark:obvious-embedding}
It is clear that every element of $\Aut(\mathbb{L}^d)$ can be uniquely extended to an element of $\Isom(\mathbb{R}^d)$. We can therefore view $\Aut(\mathbb{L}^d)$ as a subgroup of $\Isom(\mathbb{R}^d)$.
\end{remark}

\begin{definition}
If $X$ is a topological space, and $\Gamma$ is a group acting on $X$, the {\em orbit space} $X/ \Gamma$ is the (topological) quotient space $X/ \sim$, where $x \sim y$ iff $y \in \Orb_{\Gamma}(x)$, i.e. iff $x$ and $y$ are in the same $\Gamma$-orbit.
\end{definition}

\begin{definition}
If $X$ is a topological space, a group $\Gamma$ of homeomorphisms of $X$ is said to be {\em discrete} if the relative topology on $\Gamma$ (induced by the compact open topology on the group of all homeomorphisms of $X$) is the discrete topology.
\end{definition}

\begin{definition}
If $X$ is a topological space, and $\Gamma$ is a discrete group of homeomorphisms of $X$, we say that $\Gamma$ acts {\em properly discontinuously} on $X$ if for any $x,y \in X$, there exist open neighbourhoods $U$ of $x$ and $V$ of $y$ such that $|\{\gamma \in \Gamma:\ \gamma(U) \cap V \neq \emptyset\}| < \infty$.
\end{definition}

\begin{fact}
\label{fact:discrete}
If $\Gamma \leq \Isom(\mathbb{R}^d)$, then $\Gamma$ is discrete if and only if for any $x \in \mathbb{R}^d$, the orbit $\{\gamma(x):\ \gamma \in \Gamma\}$ is a discrete subset of $\mathbb{R}^d$. Hence, $\Aut(\mathbb{L}^d)$ is a discrete subgroup of $\Isom(\mathbb{R}^d)$.
\end{fact}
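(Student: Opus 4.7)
The plan is to prove the two directions of the equivalence separately, using the semidirect-product decomposition $\Isom(\mathbb{R}^d) = T(\mathbb{R}^d) \rtimes O(d)$ and the fact that, under the compact-open topology (which coincides with the topology of uniform convergence on compacta), $\Isom(\mathbb{R}^d)$ is a locally compact Hausdorff topological group homeomorphic to $\mathbb{R}^d \times O(d)$; in particular, the projections $\pi_T: \gamma \mapsto (x \mapsto \gamma(x) - \gamma(0))^c$ onto the translation factor and $\pi_O: \gamma \mapsto \gamma(\cdot) - \gamma(0)$ onto the orthogonal factor are continuous, and so is group multiplication and inversion.

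For the forward direction, I will argue by contrapositive: suppose some orbit $\Orb_\Gamma(x)$ has an accumulation point $y \in \mathbb{R}^d$, and produce an accumulation of $\Gamma$ at some element of $\Isom(\mathbb{R}^d)$. Pick distinct $\gamma_n = t_n \sigma_n \in \Gamma$ with $\gamma_n(x) \to y$. Since $O(d)$ is compact, we may pass to a subsequence with $\sigma_n \to \sigma \in O(d)$; then $t_n(z) = \gamma_n(x) - \sigma_n(x) + z \to (y - \sigma(x)) + z$, uniformly on compacta, so $\gamma_n \to t\sigma$ in $\Isom(\mathbb{R}^d)$, where $t$ is translation by $y - \sigma(x)$. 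Now if $\Gamma$ were discrete, there would be an open neighbourhood $V$ of $\Id$ with $V \cap \Gamma = \{\Id\}$; by continuity of multiplication we could choose an open $U \ni t\sigma$ with $U^{-1}U \subseteq V$, and then any two of the $\gamma_n$ eventually lying in $U$ would satisfy $\gamma_n^{-1}\gamma_m \in V \cap \Gamma = \{\Id\}$, forcing $\gamma_n = \gamma_m$ and contradicting distinctness.

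For the reverse direction, again by contrapositive, suppose $\Gamma$ is not discrete: then there exist distinct $\gamma_n \in \Gamma \setminus \{\Id\}$ with $\gamma_n \to \Id$. Each $\gamma_n$ has fixed-point set
\[
V_n := \{z \in \mathbb{R}^d : \gamma_n(z) = z\},
\]
which is an affine subspace of $\mathbb{R}^d$ of dimension strictly less than $d$ (since $\gamma_n \neq \Id$); hence each $V_n$ has Lebesgue measure zero, and so does $\bigcup_n V_n$. Pick any $x \in \mathbb{R}^d \setminus \bigcup_n V_n$, so that $\gamma_n(x) \neq x$ for every $n$. Convergence in the compact-open topology implies pointwise convergence, so $\gamma_n(x) \to x$; thus $x$ is an accumulation point of $\Orb_\Gamma(x)$ in $\mathbb{R}^d$, so $\Orb_\Gamma(x)$ is not discrete.

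Finally, to deduce the discreteness of $\Aut(\mathbb{L}^d)$, I will apply the equivalence just proved: by Fact \ref{fact:aut-ld}, the orbit of $0$ under $\Aut(\mathbb{L}^d)$ is $\{\sigma(0) + v : \sigma \in B_d,\ v \in \mathbb{Z}^d\} = \mathbb{Z}^d$, which is manifestly a discrete subset of $\mathbb{R}^d$. I expect no substantive obstacle here; the main care needed is in the forward direction, where one must use both the compactness of $O(d)$ (to produce the limit $\sigma$) and the continuity of inversion/multiplication (to convert a convergent sequence of distinct $\gamma_n$ into a nontrivial $\Gamma$-element arbitrarily close to $\Id$).
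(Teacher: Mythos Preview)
The paper does not supply a proof of this fact; it is stated as a standard result (with a commented-out reference to Charlap's book), so there is no paper argument to compare against.

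Your proof of the equivalence itself is correct. In the forward direction you correctly exploit compactness of $O(d)$ to extract a convergent subsequence and then use continuity of $(a,b)\mapsto a^{-1}b$ to force two distinct $\gamma_n$ to coincide. In the reverse direction, the measure-zero argument on fixed-point sets (each a proper affine subspace) is a clean way to find an $x$ with $\gamma_n(x)\neq x$ for all $n$, and pointwise convergence then gives a non-isolated orbit point.

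There is, however, a genuine gap in your final deduction. The equivalence you proved says that $\Gamma$ is discrete if and only if the orbit of \emph{every} $x\in\mathbb{R}^d$ is discrete. You check only the orbit of $0$, which equals $\mathbb{Z}^d$. Discreteness of a single orbit is not enough: for instance, $O(d)\leq\Isom(\mathbb{R}^d)$ has $\{0\}$ as the orbit of $0$, yet $O(d)$ is not discrete for $d\geq 2$. To finish via your equivalence you must check all orbits: for arbitrary $x$, the $\Aut(\mathbb{L}^d)$-orbit is $\bigcup_{\sigma\in B_d}(\sigma(x)+\mathbb{Z}^d)$, a finite union of closed discrete subsets of $\mathbb{R}^d$, hence itself discrete. (This is exactly the shape of the argument the paper uses later, in the proof of Lemma~\ref{lemma:fin}, when invoking this fact.) Alternatively, you could bypass the equivalence: under the homeomorphism $\Isom(\mathbb{R}^d)\cong\mathbb{R}^d\times O(d)$ you already invoked, $\Aut(\mathbb{L}^d)=T(\mathbb{Z}^d)\rtimes B_d$ corresponds to $\mathbb{Z}^d\times B_d$, which is manifestly discrete.
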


\begin{fact}
\label{fact:proper-disc}
If $\Gamma \leq \Isom(\mathbb{R}^d)$ is discrete, then $\Gamma$ acts properly discontinuously on $\mathbb{R}^d$. (Note that it is clear directly from the definition that $\Aut(\mathbb{L}^d)$, and any subgroup thereof, acts properly discontinuously on $\mathbb{R}^d$.)
\end{fact}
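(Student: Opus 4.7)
The plan is to proceed by contradiction. Suppose $\Gamma \leq \Isom(\mathbb{R}^d)$ is discrete but does not act properly discontinuously, so there exist $x,y \in \mathbb{R}^d$ such that for every $n \in \mathbb{N}$, the set $\{\gamma \in \Gamma : \gamma(B(x,1/n)) \cap B(y,1/n) \neq \emptyset\}$ is infinite. By diagonal extraction, I would produce a sequence of pairwise distinct elements $\gamma_n \in \Gamma$ together with points $u_n \in B(x,1/n)$ such that $v_n := \gamma_n(u_n) \in B(y,1/n)$. Because each $\gamma_n$ is an isometry, $d(\gamma_n(x), y) \leq d(\gamma_n(x), \gamma_n(u_n)) + d(v_n, y) = d(x, u_n) + d(v_n, y) \to 0$, so $\gamma_n(x) \to y$.

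The next step is to upgrade this single-orbit convergence to convergence of the isometries $\gamma_n$ themselves in the compact-open topology on $\Isom(\mathbb{R}^d)$. Fix $d+1$ affinely independent points $p_0 := x, p_1, \ldots, p_d \in \mathbb{R}^d$. Since $d(\gamma_n(p_i), \gamma_n(x)) = d(p_i, x)$ for all $n$, each sequence $(\gamma_n(p_i))_n$ is bounded, and I would apply Bolzano–Weierstrass together with diagonal extraction to pass to a subsequence along which $\gamma_n(p_i) \to q_i$ for every $i$. Distance preservation passes to the limit, so the $q_i$ are affinely independent, determining a unique isometry $\gamma \in \Isom(\mathbb{R}^d)$ with $\gamma(p_i) = q_i$. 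Using the fact that any isometry of $\mathbb{R}^d$ is determined by, and depends continuously on, its values at $d+1$ affinely independent points, I would then conclude that $\gamma_n \to \gamma$ uniformly on compact subsets of $\mathbb{R}^d$.

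Finally, consider the elements $\delta_{n,m} := \gamma_n^{-1} \gamma_m \in \Gamma$. Since inversion and composition in $\Isom(\mathbb{R}^d)$ are continuous and $\gamma_n, \gamma_m \to \gamma$, we have $\delta_{n,m} \to \Id$ as $n, m \to \infty$. The $\gamma_n$ being pairwise distinct forces $\delta_{n,m} \neq \Id$ for $n \neq m$, so we obtain infinitely many distinct non-identity elements of $\Gamma$ accumulating at $\Id$, contradicting the hypothesis that $\Gamma$ is discrete. The only mildly delicate step is the middle one — converting pointwise convergence of a single orbit into convergence in $\Isom(\mathbb{R}^d)$ — but the rigidity of Euclidean isometries under an affinely independent $(d+1)$-tuple of test points makes this routine, and the contradiction with discreteness then follows mechanically.
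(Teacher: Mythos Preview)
Your proof is correct. The paper, however, does not prove this statement at all: it is listed as a ``Fact'' in the background section, with only the parenthetical remark that for the paper's purposes the relevant case (subgroups of $\Aut(\mathbb{L}^d)$) is clear directly from the definition, since orbits under $\Aut(\mathbb{L}^d)$ are contained in $\mathbb{Z}^d$ and hence visibly discrete and locally finite.

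Your argument is the standard general one: extract a sequence of distinct $\gamma_n \in \Gamma$ with $\gamma_n(x) \to y$, use rigidity of Euclidean isometries (determined by their values on $d+1$ affinely independent points) together with Bolzano--Weierstrass to upgrade this to convergence $\gamma_n \to \gamma$ in $\Isom(\mathbb{R}^d)$, and then observe that $\gamma_n^{-1}\gamma_m \to \Id$ with $\gamma_n^{-1}\gamma_m \neq \Id$ for $n\neq m$, contradicting discreteness. One very minor point of phrasing: you do not actually need the $\delta_{n,m}$ to be pairwise distinct; it suffices that they are non-identity elements of $\Gamma$ converging to $\Id$, which already violates the openness of $\{\Id\}$ in $\Gamma$. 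Otherwise the argument is clean and complete, and supplies what the paper simply takes for granted.
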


\begin{definition}
Let $\Gamma$ be a discrete subgroup of $\Isom(\mathbb{R}^d)$. The {\em translation subgroup} $T_{\Gamma}$ of $\Gamma$ is the (normal) subgroup of all translations in $\Gamma$. The {\em lattice of translations} of $\Gamma$ is the lattice $L_{\Gamma} := \{\gamma(0):\ \gamma \in T_{\Gamma}\} \subset \mathbb{R}^d$. We have $L_{\Gamma} \cong \mathbb{Z}^r$ for some $r \in \{0,1,\ldots,d\}$; the integer $r$ is called the {\em rank} of the lattice $L_{\Gamma}$. We say that $\Gamma$ is a {\em pure-translation subgroup} if $\Gamma = T_{\Gamma}$.
\end{definition}

\begin{definition}
\label{definition:crystallographic}
A discrete subgroup $\Gamma \leq \Isom(\mathbb{R}^d)$ is said to be a {\em $d$-dimensional crystallographic group} if its lattice of translations has rank $d$ (or, equivalently, if the orbit space $\mathbb{R}^d/\Gamma$ is compact).
\end{definition}

\begin{definition}
If $\Gamma$ is a $d$-dimensional crystallographic group, its {\em point group} $P_{\Gamma}$ is defined by
 $$P_{\Gamma} = \{\sigma \in O(d):\ t \circ \sigma \in \Gamma \textrm{ for some }t \in T(\mathbb{R}^d)\}.$$
 \end{definition}

\begin{fact}
\label{fact:point-group-finite}
If $\Gamma$ is a $d$-dimensional crystallographic group, then its point group $P_{\Gamma}$ is finite, and $P_{\Gamma} \cong \Gamma/T_{\Gamma}$. Hence, $\Gamma$ is a pure-translation subgroup if and only if $P_{\Gamma} = \{\Id\}$.
\end{fact}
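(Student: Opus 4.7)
The plan is to exploit the semidirect product decomposition $\Isom(\mathbb{R}^d) = T(\mathbb{R}^d) \rtimes O(d)$ recorded above, obtain the isomorphism $P_\Gamma \cong \Gamma/T_\Gamma$ from the first isomorphism theorem, and then show finiteness by combining the fact that $P_\Gamma$ both preserves the Euclidean norm (being a subgroup of $O(d)$) and stabilises the translation lattice $L_\Gamma$ (via a conjugation argument).

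First I would set up the projection $\pi : \Isom(\mathbb{R}^d) \to O(d)$ given by $t \circ \sigma \mapsto \sigma$ for $t \in T(\mathbb{R}^d)$, $\sigma \in O(d)$; this is well-defined and a group homomorphism because $T(\mathbb{R}^d)$ is the kernel of the semidirect product projection. By the very definition of the point group, $\pi(\Gamma) = P_\Gamma$, while $\ker(\pi|_\Gamma) = \Gamma \cap T(\mathbb{R}^d) = T_\Gamma$. The first isomorphism theorem then yields $\Gamma/T_\Gamma \cong P_\Gamma$.

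Next I would show that $P_\Gamma$ acts on the translation lattice $L_\Gamma$. Given $\sigma \in P_\Gamma$, pick $\gamma = t_\sigma \circ \sigma \in \Gamma$, and for $v \in L_\Gamma$ let $\tau_v \in T_\Gamma$ denote the translation by $v$. A direct computation shows that $\gamma \tau_v \gamma^{-1}$ is the translation by $\sigma(v)$, which lies in $T_\Gamma$ since $T_\Gamma$ is normal in $\Gamma$; hence $\sigma(v) \in L_\Gamma$. Thus $P_\Gamma$ acts on $L_\Gamma$ by linear maps that also preserve the Euclidean norm. Since $\Gamma$ is $d$-dimensional crystallographic, $L_\Gamma$ has rank $d$, so it contains a basis $v_1,\dots,v_d$ of $\mathbb{R}^d$. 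Any $\sigma \in P_\Gamma$ is determined by its values on $v_1,\dots,v_d$, and each $\sigma(v_i)$ lies in the set $\{u \in L_\Gamma : \|u\| = \|v_i\|\}$, which is finite because $L_\Gamma$ is a discrete subset of $\mathbb{R}^d$. Consequently $|P_\Gamma| \leq \prod_{i=1}^d |\{u \in L_\Gamma : \|u\| = \|v_i\|\}| < \infty$.

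Finally, the ``hence'' clause is an immediate corollary: $\Gamma = T_\Gamma$ if and only if $\Gamma/T_\Gamma$ is trivial, which by the isomorphism $\Gamma/T_\Gamma \cong P_\Gamma$ just established is equivalent to $P_\Gamma = \{\Id\}$. I do not anticipate any serious obstacle here; the only mild subtlety is verifying that $P_\Gamma$ really does stabilise $L_\Gamma$ rather than just some superlattice, but this is handled by the conjugation identity $\gamma \tau_v \gamma^{-1} = \tau_{\sigma(v)}$ together with the normality of $T_\Gamma$ in $\Gamma$.
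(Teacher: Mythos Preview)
The paper does not supply a proof of this statement; it is recorded as a \emph{Fact} and taken for granted as standard background on crystallographic groups. Your argument is correct and is essentially the textbook proof: the quotient projection of the semidirect product $\Isom(\mathbb{R}^d)=T(\mathbb{R}^d)\rtimes O(d)$ gives the isomorphism $P_\Gamma\cong\Gamma/T_\Gamma$, and finiteness follows because $P_\Gamma$ is a subgroup of $O(d)$ that stabilises the rank-$d$ lattice $L_\Gamma$, hence injects into $\GL_d(\mathbb{Z})\cap O(d)$. Note, incidentally, that your conjugation computation $\gamma\tau_v\gamma^{-1}=\tau_{\sigma(v)}$ is exactly the content of Lemma~\ref{lemma:action}, which the paper does prove separately just below this Fact; so the paper effectively assumes finiteness of $P_\Gamma$ first and then records the action on $L_\Gamma$ as a consequence, whereas you derive both in one go.
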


\begin{remark}
\label{remark:finite-quotient}
If $\Gamma \leq \Aut(\mathbb{L}^d)$ and $|\mathbb{Z}^d/\Gamma| < \infty$, then $\Gamma$ is a $d$-dimensional crystallographic group, viewing $\Aut(\mathbb{L}^d)$ as a subgroup of $\Isom(\mathbb{R}^d)$ in the usual way (see Remark \ref{remark:obvious-embedding}). Moreover, in this case, $L_{\Gamma}$ is a sublattice of $\mathbb{Z}^d$ (with rank $d$), and $P_{\Gamma}$ is a subgroup of $B_d$.
\end{remark}

We will also need two group-theoretic lemmas. The first is well-known; we provide a proof for completeness.

\begin{lemma}
\label{lemma:action}
If $\Gamma$ is a $d$-dimensional crystallographic group, then $\sigma(L_{\Gamma}) = L_{\Gamma}$ for all $\sigma \in P_{\Gamma}$, i.e.\ the point group of $\Gamma$ acts on the lattice of translations of $\Gamma$.
\end{lemma}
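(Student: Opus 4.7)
The plan is to exploit the fact that $T_\Gamma$ is a normal subgroup of $\Gamma$, combined with the explicit form of elements of $\Isom(\mathbb{R}^d)$ given by the semidirect product decomposition $\Isom(\mathbb{R}^d) = T(\mathbb{R}^d) \rtimes O(d)$.

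First, I would fix $\sigma \in P_\Gamma$ and, by definition of the point group, choose some $g = t \circ \sigma \in \Gamma$ where $t \in T(\mathbb{R}^d)$ is translation by a vector $a = t(0) \in \mathbb{R}^d$; explicitly, $g(x) = \sigma(x) + a$ for all $x \in \mathbb{R}^d$, so $g^{-1}(x) = \sigma^{-1}(x - a)$. Now take any $v \in L_\Gamma$ and let $t_v \in T_\Gamma$ denote the translation $x \mapsto x + v$. A direct computation gives
\[
(g \circ t_v \circ g^{-1})(x) = \sigma\bigl(\sigma^{-1}(x-a) + v\bigr) + a = x + \sigma(v),
\]
so $g \circ t_v \circ g^{-1} = t_{\sigma(v)}$.

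Since $T_\Gamma$ is a normal subgroup of $\Gamma$ (it is the kernel of the canonical surjection $\Gamma \to P_\Gamma$, per Fact~\ref{fact:point-group-finite}), the conjugate $g \circ t_v \circ g^{-1}$ lies in $T_\Gamma$, and hence $\sigma(v) \in L_\Gamma$. This shows $\sigma(L_\Gamma) \subseteq L_\Gamma$.

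To upgrade this to equality, I would use that $P_\Gamma$ is a finite group (again by Fact~\ref{fact:point-group-finite}), so $\sigma$ has finite order $n \in \mathbb{N}$, and therefore $\sigma^{-1} = \sigma^{n-1} \in P_\Gamma$. Applying the inclusion just established to $\sigma^{-1}$ yields $\sigma^{-1}(L_\Gamma) \subseteq L_\Gamma$, and applying $\sigma$ to both sides gives $L_\Gamma \subseteq \sigma(L_\Gamma)$, completing the proof. There is no real obstacle here; the entire argument is a short verification once the semidirect product normal form and the normality of $T_\Gamma$ in $\Gamma$ are in hand.
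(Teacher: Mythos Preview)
Your proof is correct and follows essentially the same approach as the paper: conjugate a translation $t_v \in T_\Gamma$ by an element $g \in \Gamma$ with linear part $\sigma$, compute that the result is $t_{\sigma(v)}$, and invoke normality of $T_\Gamma$ to conclude $\sigma(v) \in L_\Gamma$. Your write-up is in fact slightly more explicit than the paper's, both in the coordinate computation and in spelling out the passage from $\sigma(L_\Gamma)\subseteq L_\Gamma$ to equality (though note that appealing to finite order is unnecessary, since $P_\Gamma$ is already a group and so $\sigma^{-1}\in P_\Gamma$ directly).
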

\begin{proof}
Let $\Gamma$ be a $d$-dimensional crystallographic group and let $\sigma \in P_{\Gamma}$. Let $a \in L_{\Gamma}$. Then there exists $\gamma \in T_{\Gamma}$ such that $a = \gamma(0)$ and there exists $t \in T(\mathbb{R}^d)$ such that $t \circ \sigma \in \Gamma$. Then, since $T_{\Gamma}$ is a normal subgroup of $\Gamma$ and $P_{\Gamma}$ is a group of linear transformations, we have $\sigma \circ \gamma \circ \sigma^{-1} = (t \circ \sigma) \circ \gamma \circ (t \circ \sigma)^{-1} \in T_{\Gamma}$, and $\sigma(a) = \sigma(\gamma(0)) = (\sigma \circ \gamma \circ \sigma^{-1})(0) \in L_{\Gamma}$. Hence, $P_{\Gamma}$ acts on $L_{\Gamma}$, as required.
\end{proof}

The next lemma is a straightforward consequence of one of the proofs of Bieberbach's Third Theorem.
\begin{lemma}
\label{lemma:fin}
For each $d \in \mathbb{N}$, there exists $k=k(d) \in \mathbb{N}$ such that for any rank-$d$ lattice $L$ in $\mathbb{R}^d$ and any group $P \leq O(d)$, there are at most $k(d)$ $d$-dimensional crystallographic groups $\Gamma \leq \Isom(\mathbb{L}^d)$ with $L_{\Gamma}=L$ and $P_{\Gamma}=P$, up to conjugation by translations.
\end{lemma}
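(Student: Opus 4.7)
The plan is to parametrise the set of $\Gamma$ under consideration modulo translation-conjugacy by a cohomology group, and then to bound that cohomology group uniformly in $d$. For any such $\Gamma$, every $\sigma \in P$ is the linear part of a unique coset $c_\Gamma(\sigma)+L$ of elements of $\Gamma$, so we obtain a well-defined map $c_\Gamma\colon P\to\mathbb{R}^d/L$. By Lemma \ref{lemma:action}, $P$ preserves $L$ and therefore acts on $\mathbb{R}^d/L$, and the group law in $\Gamma$ translates into the cocycle identity $c_\Gamma(\sigma\tau)=c_\Gamma(\sigma)+\sigma\cdot c_\Gamma(\tau)$ in $\mathbb{R}^d/L$. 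Conversely, any $c\in Z^1(P;\mathbb{R}^d/L)$ determines a unique discrete subgroup of $\Isom(\mathbb{R}^d)$ with lattice of translations $L$ and point group $P$, so $\Gamma\mapsto c_\Gamma$ is a bijection between the set of $\Gamma$'s under consideration and $Z^1(P;\mathbb{R}^d/L)$.

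Next, I would compute the effect of conjugating by a translation: for $v\in\mathbb{R}^d$, conjugation by $t_v$ sends $t_x\sigma\in\Gamma$ to $t_{x+(I-\sigma)v}\sigma$, hence replaces $c_\Gamma(\sigma)$ by $c_\Gamma(\sigma)+(I-\sigma)v$ in $\mathbb{R}^d/L$. This is precisely the coboundary of the class $\bar v\in\mathbb{R}^d/L$, so translation-conjugacy classes of $\Gamma$'s correspond bijectively to elements of $H^1(P;\mathbb{R}^d/L)$. To put this in a more manageable form, apply the long exact sequence of $P$-cohomology to the short exact sequence $0\to L\to\mathbb{R}^d\to\mathbb{R}^d/L\to 0$ of $\mathbb{Z}[P]$-modules. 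Since $P$ is finite (Fact \ref{fact:point-group-finite}) and $\mathbb{R}^d$ is a uniquely divisible abelian group, $H^i(P;\mathbb{R}^d)$ is simultaneously $|P|$-torsion and $\mathbb{Q}$-divisible for $i\ge 1$, hence zero. The connecting homomorphism therefore yields an isomorphism $H^1(P;\mathbb{R}^d/L)\cong H^2(P;L)$.

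Finally I need a uniform bound on $|H^2(P;L)|$ that depends only on $d$. Since $P\le O(d)$ preserves $L$, choosing a $\mathbb{Z}$-basis of $L$ embeds $P$ into $\GL_d(\mathbb{Z})$, so by Minkowski's classical bound on the orders of finite subgroups of $\GL_d(\mathbb{Z})$, one has $|P|\le M(d)$ for a constant $M(d)$ depending on $d$ alone. The standard restriction--corestriction argument shows that $H^2(P;L)$ is annihilated by $n:=|P|$. Applying the long exact sequence attached to $0\to L\xrightarrow{n} L\to L/nL\to 0$ then produces an injection $H^2(P;L)\hookrightarrow H^2(P;L/nL)$, and this last group is a subquotient of the cochain group $C^2(P;L/nL)$, which has cardinality $(n^d)^{n^2}=n^{dn^2}$. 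Thus $|H^2(P;L)|\le M(d)^{d\,M(d)^2}$, and we may take $k(d):=M(d)^{d\,M(d)^2}$. The step I expect to be the main obstacle is the cocycle bookkeeping in the first two paragraphs, where one must carefully verify that translation-conjugacy really corresponds to the coboundary relation on the nose; the remainder is a routine assembly of Bieberbach-type ideas with the Minkowski bound and a standard cohomological cardinality estimate.
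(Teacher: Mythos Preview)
Your proof is correct and follows essentially the same route as the paper: both parametrise the crystallographic groups with data $(L,P)$ by $Z^{1}(P;\mathbb{R}^d/L)$, show that translation-conjugacy corresponds precisely to modding out by coboundaries, and conclude via the finiteness of $H^{1}(P;\mathbb{R}^d/L)$. The one substantive difference is that the paper simply cites the finiteness of this cohomology group (appealing to Brown and to Schwarzenberger), whereas you go further and extract an explicit uniform bound depending only on $d$: you pass to $H^{2}(P;L)$ via the long exact sequence, invoke Minkowski to bound $|P|$, and then bound $|H^{2}(P;L)|$ by the cardinality of a finite cochain group. This is a genuine improvement in explicitness --- the paper's proof establishes finiteness for each fixed $(L,P)$ but is somewhat elliptical about why the bound is uniform over all such pairs, which your Minkowski-plus-cochain argument handles cleanly.
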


\begin{proof}
We follow the exposition in \cite{hiller}. Let $L$ be a rank-$d$ lattice in $\mathbb{R}^d$, and let $P \leq O(d)$. We may assume that there exists at least one $d$-dimensional crystallographic group with lattice of translations $L$ and point group $P$, otherwise we are done. By Fact \ref{fact:point-group-finite}, $P$ is a finite group. By Lemma \ref{lemma:action}, $P$ acts on $L$, and therefore $P$ acts on the torus $\mathbb{R}^d/L$, which can be viewed as a $P$-module.

For each $d$-dimensional crystallographic group $\Gamma$ with lattice of translations $L$ and point group $P$, we define a map $\phi_{\Gamma}: P \to \mathbb{R}^d$ as follows. For each $\sigma \in P$, choose a translation $t_c:x \mapsto x+c$ such that $t_c \circ \sigma \in \Gamma$, and define $\phi_{\Gamma}(\sigma) = c$. Now let $s_{\Gamma}: P \to \mathbb{R}^d/L$ be the composition of $\phi_{\Gamma}$ with the natural quotient map $q:\mathbb{R}^d \to \mathbb{R}^d/L;\ x \mapsto x+L$, i.e., define
$$s_{\Gamma}(\sigma) = \phi_{\Gamma}(\sigma)+L\quad \forall \sigma \in P.$$

Observe that the map $s=s_{\Gamma}$ satisfies the two conditions
\begin{align} s(\Id) &= 0,\label{eq:cocycle-1}\\
s(\sigma \tau) &= s(\sigma)+\sigma(s(\tau))\quad \forall \sigma,\tau \in P. \label{eq:cocycle-2}
\end{align}
In the language of the group cohomology, the set of maps $s:P\to \mathbb{R}^d/L$ satisfying the conditions (\ref{eq:cocycle-1}) and (\ref{eq:cocycle-2}) is termed the {\em group of 1-cocycles} (of $P$, with coefficients in $\mathbb{R}^d/L$), and is denoted by $Z^{1}(P,\mathbb{R}^d/L)$. (It is easily checked that the set of all 1-cocycles forms an Abelian group under the operation of pointwise addition.) 

Observe also that $\Gamma$ can be recovered from $s_{\Gamma}$, since
$$\Gamma = \{t_a \circ \sigma:\ \sigma \in P,\ a \in s_{\Gamma}(\sigma)\}.$$
Indeed, clearly $\Gamma \supset \{t_a \circ \sigma:\ \sigma \in P,\ a \in s_{\Gamma}(\sigma)\}$. On the other hand, if $\gamma \in \Gamma$, then $\gamma = t_{a} \circ \sigma$ for some $\sigma \in P$. Let $c = \phi_{\Gamma}(\sigma)$. Then we have
$$\Gamma \ni t_a \circ \sigma \circ (t_{c} \circ \sigma)^{-1} = t_{a-c},$$
so $a-c \in L$ and therefore $a \in c+L = s_{\Gamma}(\sigma)$. Therefore, $\gamma \in \{t_a \circ \sigma:\ \sigma \in P,\ a \in s_{\Gamma}(\sigma)\}$. It follows that $\Gamma \subset \{t_a \circ \sigma:\ \sigma \in P,\ a \in s_{\Gamma}(\sigma)\}$.

In fact, it is easily checked that for {\em any} 1-cocycle $s$, $\Gamma_s: = \{t_a \circ \sigma:\ \sigma \in P,\ a \in s(\sigma)\}$ is a $d$-dimensional crystallographic group with lattice of translations $L$ and point group $P$. Indeed, the group axioms follow straightforwardly from properties (\ref{eq:cocycle-1}) and (\ref{eq:cocycle-2}) of $s$. The group $\Gamma_s$ is discrete, since for any $x \in \mathbb{R}^d$,
$$\{\gamma(x):\ \gamma \in \Gamma_s\} = \bigcup_{\sigma \in P} \{\sigma(x)+\phi(\sigma)+v:\ v \in L\},$$
which is a discrete subset of $\mathbb{R}^d$, since $|P| < \infty$. (See Fact \ref{fact:discrete}.) Finally, the lattice of translations of $\Gamma_s$ is $L$, by property (\ref{eq:cocycle-1}) of $s$. Hence, $\Gamma_s$ is indeed a $d$-dimensional crystallographic group with lattice of translations $L$ and point group $P$.

It follows that the set of $d$-dimensional crystallographic groups with lattice of translations $L$ and point group $P$ is in one-to-one correspondence with the group of 1-cocycles $Z^{1}(P,\mathbb{R}^d/L)$.

We now consider the effect on $s_{\Gamma}$ of conjugating $\Gamma$ by a translation $t_{a}:x \mapsto x+a$, where $a \in \mathbb{R}$. Let $\sigma \in P$, and let $c= \phi_{\Gamma}(\sigma)$. Then for all $x \in \mathbb{R}^d$, we have
$$(t_{a} \circ (t_{c} \circ \sigma) \circ t_{a}^{-1})(x) = a+c + \sigma(x-a) = a-\sigma(a)+c+\sigma(x) = (t_{a-\sigma(a)+c} \circ \sigma)(x),$$
and therefore
$$t_{a} \circ (t_{c} \circ \sigma) \circ t_{a}^{-1} = t_{c+a-\sigma(a)} \circ \sigma.$$
Passing to the quotient $\mathbb{R}^d/L$, we see that
$$s_{t_a \Gamma t_{-a}}(\sigma) = s_{\Gamma}(\sigma)+a-\sigma(a)\quad \forall \sigma \in P,$$
i.e.\ conjugating $\Gamma$ by $t_{a}$ changes $s_{\Gamma}$ by adding to $s_{\Gamma}$ the 1-cocycle $s_{a}$, where we define $s_a(\sigma) = a-\sigma(a)$ for all $\sigma \in P$. In the language of group cohomology, the set $\{s_{a}:\ a \in \mathbb{R}^d\}$ is termed the {\em group of 1-coboundaries} (of $P$, with coefficients in $\mathbb{R}^d/L$), and is denoted by $B^{1}(P,\mathbb{R}^d/L)$. (Again, it is clear that the set of 1-coboundaries forms an Abelian group under the operation of pointwise addition.) It follows that the set of equivalence classes of $d$-dimensional crystallographic group with lattice of translations $L$ and point group $P$ (where two such groups are defined to be equivalent if they are conjugate via a translation by an element of $\mathbb{R}^d$), is in one-to-one correspondence with the quotient group
\begin{equation}\label{eq:cohomology} Z^{1}(P,\mathbb{R}^d/L) / B^{1}(P,\mathbb{R}^d/L),\end{equation}
which is precisely the {\em 1-dimensional cohomology group of $P$ with coefficients in $\mathbb{R}^d/L$} (denoted $H^1(P,\mathbb{R}^d/L)$). It follows from well-known facts from the theory of group cohomology that the cohomology group (\ref{eq:cohomology}) is a finite group. Indeed, for any finite group $P$, any $P$-module $M$ and any $n \in \mathbb{N}$, the $n$th cohomology group of $P$ with coefficients in $M$, denoted $H^n(P,M)$, is a finitely generated Abelian group with every element of order dividing $|P|$, so is finite; see \cite{brown}. Alternatively, see \cite[p.~130]{schwarz-cohomology} for an elementary and direct proof of the finiteness of (\ref{eq:cohomology}). This completes the proof of the lemma.
\end{proof}

\subsection{The $\mathbb{L}^1$ case, via the theory of integer partitions}
\label{subsection:integer-partitions}
In this section, we use known results from the theory of integer partitions to deduce that the statement of Conjecture \ref{conj:stronger-component} holds for $d=1$, with $r_0(1)=1$.

Observe that a graph is $r$-locally $\mathbb{L}^1$ if and only if it is a vertex-disjoint union of cycles each of length at least $2r+2$, so choosing an unlabelled $n$-vertex graph uniformly at random from the set of all unlabelled, $n$-vertex graphs which are $r$-locally $\mathbb{L}^1$ corresponds precisely to choosing a partition of $n$ uniformly at random from the set of all partitions of $n$ with each part of size at least $2r+2$. (Recall that if $n \in \mathbb{N}$, a {\em partition} of $n$ is a non-increasing sequence of positive integers $(\lambda_1,\lambda_2,\ldots,\lambda_l)$ with $\sum_{i=1}^{l} \lambda_i= n$. If $\lambda = (\lambda_1,\ldots,\lambda_l)$ is a partition of $n$, the $\lambda_i$'s are called the {\em parts} of $\lambda$.) For $n \in \mathbb{N}$, we let $p(n)$ denote the number of partitions of $n$. Hardy and Ramanujan \cite{hardy-ramanujan} proved in 1918 that
\begin{equation}\label{eq:partition-asymptotics} p(n)  = (1+O(1/\sqrt{n})) \frac{1}{4n \sqrt{3}} \exp(\pi \sqrt {2n/3}).\end{equation}
(This was one of the first applications of the celebrated Hardy-Littlewood circle method. It was also proved independently by Uspensky \cite{usp} in 1920.) It suffices for us to prove the following.

\begin{proposition}
\label{prop:partitions}
Let $r \in \mathbb{N}$. For each $n \geq 2r+2$, let $\lambda$ be a partition chosen uniformly at random from the set of all partitions of $n$ with each part of size at least $2r+2$. Then with probability at least $1-O_r(1/n^2)$, the largest part $\lambda_1$ of the partition $\lambda$ satisfies $\lambda_1 = \Theta_r(\sqrt{n} \log n)$.
\end{proposition}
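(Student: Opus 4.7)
The plan is to set $m := 2r+2$ and let $p_{\geq m}(n)$ denote the number of partitions of $n$ all of whose parts are $\geq m$; the proposition then amounts to showing that among all partitions of $n$ with every part $\geq m$, the fraction having $\lambda_1 \notin [c_r\sqrt{n}\log n,\, C_r\sqrt{n}\log n]$ is $O_r(1/n^2)$, for some constants $c_r, C_r > 0$ depending only on $r$. We handle the upper and lower tails separately, in each case via standard saddle-point estimates applied to the generating function $F_m(x) := \prod_{j \geq m}(1-x^j)^{-1}$.

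The main technical input is the asymptotic
\[
p_{\geq m}(n) \;=\; \Theta_m\bigl(n^{-(m+1)/2}\,\exp(\pi\sqrt{2n/3})\bigr),
\]
which follows either from Meinardus's general theorem on infinite-product generating functions, or directly from the factorisation $F_m(x) = \bigl(\prod_{j=1}^{m-1}(1-x^j)\bigr)\cdot\prod_{j \geq 1}(1-x^j)^{-1}$: evaluating at the Hardy--Ramanujan saddle $s_n := \pi/\sqrt{6n}$, the finite prefactor contributes $\Theta_m(s_n^{m-1}) = \Theta_m(n^{-(m-1)/2})$ on top of the classical Hardy--Ramanujan asymptotic $p(n) = \Theta(n^{-1}\exp(\pi\sqrt{2n/3}))$. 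The same analysis, combined with the elementary inequality $\sqrt{2(n-\ell)/3} \leq \sqrt{2n/3} - \ell/\sqrt{6n}$ for $0 \leq \ell \leq n/2$, yields the comparison estimate
\[
p_{\geq m}(n-\ell)/p_{\geq m}(n) \;=\; O_m\bigl(\exp(-\pi \ell/\sqrt{6n})\bigr) \qquad (0 \leq \ell \leq n/2),
\]
together with the trivial bound $p_{\geq m}(n-\ell) = O_m(\exp(\pi\sqrt{n/3}))$ for $n/2 < \ell \leq n$.

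For the upper tail, any partition of $n$ with largest part $\lambda_1 = \ell$ is obtained from a partition of $n-\ell$ with all parts in $[m,\ell]$ by adjoining one copy of $\ell$, hence is counted (with room to spare) by $p_{\geq m}(n-\ell)$. Summing,
\[
\Prob[\lambda_1 \geq M] \;\leq\; \frac{1}{p_{\geq m}(n)}\sum_{\ell = M}^{n} p_{\geq m}(n-\ell) \;=\; O_r\bigl(\sqrt{n}\, e^{-\pi M/\sqrt{6n}}\bigr),
\]
which is $O_r(1/n^2)$ as soon as $M \geq C_r \sqrt{n}\log n$ with $C_r$ sufficiently large (e.g.\ $C_r = 3\sqrt{6}/\pi$).

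The lower tail is the main obstacle and requires a direct saddle-point bound on the truncated product. From $p_{\geq m, \leq M}(n) \leq \inf_{s > 0} e^{sn}\prod_{j=m}^{M}(1-e^{-sj})^{-1}$, evaluation at $s = s_n$ together with the tail expansion
\[
-\sum_{j > M}\log(1-e^{-s_n j}) \;=\; \frac{e^{-s_n M}}{s_n} + O\bigl(e^{-2 s_n M}/s_n\bigr),
\]
and the already-computed $\sum_{j \geq m} -\log(1-e^{-s_n j}) = \pi^2/(6 s_n) + O_m(\log n)$, gives
\[
p_{\geq m, \leq M}(n) \;\leq\; \exp\Bigl(\pi\sqrt{2n/3} - \tfrac{\sqrt{6}}{\pi}\, n^{\,1/2 - \pi c_r/\sqrt{6}} + O_r(\log n)\Bigr)
\]
whenever $M = c_r \sqrt{n}\log n$. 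Dividing by the lower bound $p_{\geq m}(n) \geq \Omega_m(n^{-(m+1)/2}\exp(\pi\sqrt{2n/3}))$ and choosing any $c_r < \sqrt{6}/(2\pi)$ (e.g.\ $c_r = \sqrt{6}/(4\pi)$) makes the exponent $\tfrac12 - \pi c_r/\sqrt{6}$ strictly positive, so the ratio becomes $\exp(-\Omega_r(n^\delta))$ for some $\delta = \delta(r) > 0$, which is overwhelmingly smaller than $1/n^2$. Combining the two tail bounds yields the proposition.
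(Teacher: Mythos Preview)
Your proof is correct. Both your argument and the paper's split into an upper-tail and a lower-tail estimate, and the upper-tail arguments are essentially the same idea (bound the number of partitions with a part of size $\ell$ by the number of partitions of the remaining mass, then sum). The genuine difference is in the lower tail. The paper works in the \emph{unconditional} model of uniform partitions of $n$, invokes the Erd\H{o}s--Lehner theorem on the limiting distribution of $\lambda_1$, and then transfers to the conditional model via Fristedt's estimate $p_{\geq s}(n) = \Theta_s(n^{-(s-1)/2})p(n)$; this requires choosing the upper-tail constant $C$ large enough (depending on $r$) to absorb the polynomial loss from conditioning. You instead work directly with the restricted generating function $F_m$ and apply a Chernoff/saddle-point bound to the truncated product $\prod_{j=m}^{M}(1-x^j)^{-1}$, isolating the deficit $e^{-s_n M}/s_n$ explicitly. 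Your route is more self-contained (no appeal to Erd\H{o}s--Lehner or Fristedt as black boxes) and actually yields a stronger lower-tail bound of the form $\exp(-\Omega_r(n^\delta))$, whereas the paper's use of Erd\H{o}s--Lehner only gives $\exp(-\Omega((\log n)^{c_0/2}))$ before conditioning. The paper's approach is shorter because it outsources the analysis; yours is longer but elementary and gives a sharper tail.
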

\begin{proof}
For each $s \in \mathbb{N}$, let $p_{\geq s}(n)$ denote the number of partitions of $n$ with all parts of size at least $s$. It follows from results of Fristedt \cite{fristedt} that for any $s \in \mathbb{N}$, we have
\begin{equation} \label{eq:fristedt} p_{\geq s}(n) = \Theta_s(n^{-(s-1)/2})p(n),\end{equation}
Hence, the probability that a random partition of $n$ has all its parts of size at least $2r+2$ is
\begin{equation}\label{eq:prob-part}\frac{p_{\geq 2r+2}(n)}{p(n)} = \Theta_r(n^{-(2r+1)/2}).\end{equation}

On the other hand, for any $s \in \mathbb{N}$, the number of partitions of $n$ with a part of size $s$ is $p(n-s)$. Hence, for any $m \in [n]$, the probability that a uniform random partition of $n$ has at least one part of size at least $m$ is at most 
$$n\frac{p(n-m)}{p(n)}$$
(using the fact that $p(n)$ is a non-decreasing function of $n$). Choosing $m = C \sqrt{n} \log n$, and using (\ref{eq:partition-asymptotics}) and performing Taylor expansions, we obtain
$$n \frac{p(n-m)}{p(n)} = n \exp(-(1+o(1))\frac{\pi}{\sqrt{6}}C \log n) = O(1/n^C)$$
if $C \geq 3$. Hence, a uniform random partition of $n$ has largest part of size $O(C\sqrt{n} \log n)$ with probability at least $1-O(1/n^C)$ provided $C \geq 3$.

Moreover, Erd\H{o}s and Lehner \cite{erdos-lehner} proved that for any fixed $x \in \mathbb{R}$, if $\lambda$ is a uniform random partition of $n$, then its largest part $\lambda_1$ satisfies
$$\lambda_1 \leq \frac{\sqrt{n}\log n}{c_0} + x\sqrt{n}$$
with probability
$$(1+o(1)) \exp(-\tfrac{2}{c_0} \exp(-\tfrac{1}{2}c_0x)),$$
where $c_0:=\pi \sqrt{2/3}$. It follows from their proof that the same statement holds when $-\log \log n \leq x \leq 2 \log n$, and therefore
$$\lambda_1 \geq \frac{\sqrt{n}\log n}{c_0} - \sqrt{n}\log \log n$$
with probability at least $1-O(\exp(-\tfrac{2}{c_0} (\log n)^{c_0/2}))$. Hence, for all $C \geq 3$, a uniform random partition of $n$ has largest part of size $\Theta_{C}(\sqrt{n} \log n)$ with probability at least $1-O(1/n^C)$. By choosing $C$ to be sufficiently large depending on $r$, it follows from this and (\ref{eq:prob-part}) that if we choose a partition of $n$ uniformly at random from the set of all partitions with all parts of size at least $2r+2$, then with probability at least $1-O_r(1/n^2)$, the largest part has size $\Theta_r(\sqrt{n} \log n)$. 
\end{proof}

The following corollary is immediate.

\begin{corollary}
\label{corr:1locally}
Let $r \in \mathbb{N}$. For each $n \geq 2r+2$, with probability at least $1-O_r(1/n^2)$, the largest component of $G_n(\mathbb{L}^1,r)$ has order $\Theta_r(\sqrt{n} \log n)$.
\end{corollary}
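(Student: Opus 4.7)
The plan is to observe that Corollary \ref{corr:1locally} is an immediate translation of Proposition \ref{prop:partitions} via the bijection already noted in the paragraph preceding Proposition \ref{prop:partitions}. Specifically, a graph is $r$-locally $\mathbb{L}^1$ if and only if it is a vertex-disjoint union of cycles each of length at least $2r+2$. Hence there is a natural bijection between the set of unlabelled $n$-vertex graphs that are $r$-locally $\mathbb{L}^1$ and the set of partitions of $n$ whose parts are all at least $2r+2$: the partition $\lambda=(\lambda_1,\ldots,\lambda_l)$ corresponds to the disjoint union $\bigsqcup_{i=1}^{l} C_{\lambda_i}$ of cycles.

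Under this bijection, the connected components of the graph are exactly the cycles $C_{\lambda_i}$, so the order of the largest component equals the largest part $\lambda_1$. The random graph $G_n(\mathbb{L}^1,r)$ is uniform on the set of such unlabelled graphs, and therefore corresponds to a uniform random partition of $n$ with all parts of size at least $2r+2$. The statement of the corollary is then just a restatement of the conclusion of Proposition \ref{prop:partitions}: with probability at least $1 - O_r(1/n^2)$, we have $\lambda_1 = \Theta_r(\sqrt{n}\log n)$, and hence the largest component of $G_n(\mathbb{L}^1,r)$ has order $\Theta_r(\sqrt{n}\log n)$.

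Since all of the analytic work (the Hardy--Ramanujan asymptotics, the Fristedt-type estimate \eqref{eq:fristedt}, and the Erd\H{o}s--Lehner bound on $\lambda_1$) has already been absorbed into Proposition \ref{prop:partitions}, there is no remaining obstacle; the proof of the corollary amounts to a single sentence invoking the bijection and the proposition.
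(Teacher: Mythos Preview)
Your proposal is correct and matches the paper's approach exactly: the paper simply states that the corollary is immediate from Proposition \ref{prop:partitions}, via the bijection between unlabelled $n$-vertex graphs that are $r$-locally $\mathbb{L}^1$ and partitions of $n$ with all parts at least $2r+2$. There is nothing to add.
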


\begin{remark}
\label{remark:1-stronger}
Observe that using (\ref{eq:partition-asymptotics}), (\ref{eq:fristedt}) and the correspondence above, we see that for any $r \in \mathbb{N}$, the number $a_{1,r}(n)$ of unlabelled, $n$-vertex graphs that are $r$-locally $\mathbb{L}^1$ satisfies
$$a_{1,r}(n) = p_{\geq 2r+2}(n) = \Theta_r(n^{-(2r+1)/2})p(n) = \Theta_r(n^{-r})\exp(\pi \sqrt {2n/3}),$$
so
$$\label{eq:more-precise} \log a_{1,r}(n) = \pi \sqrt {2n/3} -r \log n + O_r(1).$$
(Note that this is a much more precise estimate than in Theorem \ref{thm:enumeration}.)
\end{remark}

\section{Proof of Theorem \ref{thm:enumeration}}
\label{sec:proof-enum}
In this section, we prove Theorem \ref{thm:enumeration}, our enumeration result for graphs that are $r$-locally $\mathbb{L}^d$, using the strategy sketched in the Introduction (page \pageref{proof-sketch}).

We first show that the connected unlabelled $n$-vertex graphs that are $r$-locally $\mathbb{L}^d$, are in one-to-one correspondence with conjugacy-classes of subgroups $\Gamma \leq \Aut(\mathbb{L}^d)$ such that $|\mathbb{Z}^d/\Gamma| = n$ and $D(\Gamma) \geq 2r+2$, provided $r \geq 2+1_{\{d \geq 3\}}$. 

We recall the following straightforward result, proved in \cite{be}.

\begin{proposition}
\label{prop:d-unique} 
Let $d \in \mathbb{N}$ with $d\geq 2$, and let $G$ be a connected graph that is 2-locally $\mathbb{L}^d$. Let $v_0 \in V(G)$, and let $\psi:\Link_{2}(0,\mathbb{L}^d) \to \Link_2(v_0,G)$ be a graph isomorphism with $\psi(0)=v_0$. Then there is at most one covering map $p$ from $\mathbb{L}^d$ to $G$ such that $p(x)=\psi(x)$ for all $x \in N(0)$.
\end{proposition}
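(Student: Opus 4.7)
The plan is to show that a covering map $p$ is uniquely determined on all of $\mathbb{Z}^d = V(\mathbb{L}^d)$ by its values on $N(0)$. Equivalently, I will assume that $p$ and $p'$ are two covering maps from $\mathbb{L}^d$ to $G$ with $p|_{N(0)} = p'|_{N(0)} = \psi|_{N(0)}$, and prove they agree everywhere, exploiting the $2$-local structure of $G$ and the covering property. Since $\mathbb{L}^d$ is connected, it suffices to prove the following local propagation statement.

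\textbf{Key lemma.} \emph{If $p, p'$ agree on $N(v)$ for some $v \in \mathbb{Z}^d$, then they agree on $B_2(v, \mathbb{L}^d)$.} To establish this, fix a neighbour $y \in \Gamma(v)$ of $v$; since $p(y)=p'(y)$ and both are covering maps, each induces a bijection $\Gamma(y) \to \Gamma(p(y))$, and my goal is to show these bijections coincide. A vertex $z \in \Gamma(y) \setminus \{v\}$ either (a) is a common neighbour of $y$ and some other neighbour $y'$ of $v$ (a "corner" vertex, e.g.\ $z = v + e_i + e_j$ if $y = v+e_i, y' = v+e_j$ with $i \neq j$), or (b) is the unique "axial" vertex $z = 2y - v$. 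In case (a), observe that in $\mathbb{L}^d$ the pair $\{y, y'\}$ has exactly two common neighbours, namely $v$ and $z$. Moreover, any common neighbour $w$ of $p(y)$ and $p(y')$ in $G$ satisfies $d_G(w, p(v)) \leq d_G(w, p(y)) + d_G(p(y), p(v)) = 2$, so $w \in B_2(p(v), G)$. By the $2$-locally $\mathbb{L}^d$ hypothesis applied at $p(v)$, this ball is graph-isomorphic to $B_2(0, \mathbb{L}^d)$, so $p(y)$ and $p(y')$ have exactly two common neighbours in $G$, namely $p(v)$ and some other vertex $w_0$. Since $p(z), p'(z) \in \Gamma(p(y)) \cap \Gamma(p(y'))$ and both differ from $p(v)$ (as $p, p'$ are bijections on $\Gamma(y)$ and $z \neq v$), we deduce $p(z) = w_0 = p'(z)$. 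In case (b), we conclude by elimination: $p$ and $p'$ agree on the $2d - 1$ vertices of $\Gamma(y) \setminus \{2y - v\}$ by case (a) together with the hypothesis $p(v) = p'(v)$, and since both restrict to bijections $\Gamma(y) \to \Gamma(p(y))$, they must agree on the remaining vertex $2y - v$ as well.

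With the lemma in hand, I prove by induction on $k$ that $p$ and $p'$ agree on $N(w)$ for every $w \in B_k(0, \mathbb{L}^d)$. The base case $k = 0$ is the hypothesis. For the inductive step, given $w \in B_{k+1}(0, \mathbb{L}^d)$, choose a shortest-path predecessor $w' \in B_k(0, \mathbb{L}^d)$ with $w \in \Gamma(w')$. By the inductive hypothesis $p, p'$ agree on $N(w')$, so the key lemma gives agreement on $B_2(w')$; since $w \in \Gamma(w')$, we have $N(w) \subset B_2(w')$, completing the step. Connectedness of $\mathbb{L}^d$ then yields $p = p'$ on all of $\mathbb{Z}^d$.

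The main obstacle is the case (a) argument: one must verify carefully that no "extra" common neighbours of $p(y)$ and $p(y')$ can lurk outside $B_2(p(v), G)$, since a priori the $2$-locally $\mathbb{L}^d$ property only controls the induced subgraph on a radius-$2$ ball. The metric observation $d_G(w, p(v)) \leq 2$ for any such common neighbour $w$ is what makes this work, and this is precisely where the assumption $r = 2$ (rather than $r = 1$) is used; for $r = 1$, one could not rule out additional common neighbours of $p(y)$ and $p(y')$ at distance $3$ from $p(v)$, and the uniqueness would fail.
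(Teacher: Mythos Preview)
Your proof is correct. The paper does not supply its own argument for this proposition but cites \cite{be}; your common-neighbour propagation argument (pinning down ``corner'' vertices via the unique second common neighbour of $p(y),p(y')$ inside $\Link_2(p(v),G)$, then recovering the axial vertex by elimination) is the natural approach and is essentially the one used there.
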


(We note that this was stated in \cite{be} under a slightly stronger hypothesis, but it is clear that the proof in \cite{be} relies only upon $G$ being 2-locally $\mathbb{L}^d$.) We also need the following lemmas.

\begin{lemma}
\label{lemma:conjugacy-2}
Suppose $G_1$ and $G_2$ are connected graphs which are $2$-locally-$\mathbb{L}^2$. Suppose $p_1:\mathbb{Z}^2 \to V(G_1)$ is a normal covering map from $\mathbb{L}^2$ to $G_1$, and $p_2:\mathbb{Z}^2 \to V(G_2)$ is a normal covering map from $\mathbb{L}^2$ to $G_2$. Then $G_1$ is isomorphic to $G_2$ if and only if the subgroups $\CT(p_1)$ and $\CT(p_2)$ are conjugate in $\Aut(\mathbb{L}^2)$.
\end{lemma}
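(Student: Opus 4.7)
The plan is a covering-space-style lifting argument, with uniqueness supplied by Proposition~\ref{prop:d-unique}.

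For the ``if'' direction, suppose $\CT(p_2) = \alpha\,\CT(p_1)\,\alpha^{-1}$ for some $\alpha \in \Aut(\mathbb{L}^2)$. Because each $p_i$ is a normal covering, the fibres of $p_i$ coincide with the $\CT(p_i)$-orbits on $\mathbb{Z}^2$. I would define $\phi\colon V(G_1) \to V(G_2)$ by $\phi(p_1(x)) := p_2(\alpha(x))$. Conjugacy of $\CT(p_1)$ and $\CT(p_2)$ via $\alpha$ yields both well-definedness and injectivity: if $p_1(x) = p_1(y)$ then $y = \gamma(x)$ for some $\gamma \in \CT(p_1)$, whence $\alpha(y) = (\alpha\gamma\alpha^{-1})(\alpha(x))$ with $\alpha\gamma\alpha^{-1} \in \CT(p_2)$, so $p_2(\alpha(y)) = p_2(\alpha(x))$. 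Surjectivity follows from surjectivity of $p_2$ (which holds since $G_2$ is connected), and $\phi$ preserves edges because $p_1, p_2$ and $\alpha$ are all graph homomorphisms or automorphisms.

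For the ``only if'' direction, let $\phi\colon G_1 \to G_2$ be a graph isomorphism. The goal is to produce $\alpha \in \Aut(\mathbb{L}^2)$ lifting $\phi$, in the sense that $p_2 \circ \alpha = \phi \circ p_1$. First I would pick any $y_0 \in p_2^{-1}(\phi(p_1(0)))$. Since $G_1$ and $G_2$ are $2$-locally $\mathbb{L}^2$ and the $p_i$ are coverings, the restrictions $p_1|_{\Link_2(0,\mathbb{L}^2)}$ and $p_2|_{\Link_2(y_0,\mathbb{L}^2)}$ are graph isomorphisms onto $\Link_2(p_1(0),G_1)$ and $\Link_2(\phi(p_1(0)),G_2)$ respectively. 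The composite
\[
\psi := (p_2|_{\Link_2(y_0,\mathbb{L}^2)})^{-1} \circ \phi|_{\Link_2(p_1(0),G_1)} \circ p_1|_{\Link_2(0,\mathbb{L}^2)}
\]
is therefore a graph isomorphism $\Link_2(0,\mathbb{L}^2) \to \Link_2(y_0,\mathbb{L}^2)$ sending $0$ to $y_0$. Using $\Aut(\mathbb{L}^2) = T(\mathbb{Z}^2) \rtimes B_2$ (Fact~\ref{fact:aut-ld}), such a $\psi$ extends to a global automorphism $\alpha = t_{y_0} \circ \sigma$, where $\sigma \in B_2$ is the signed permutation of $\{\pm e_1, \pm e_2\}$ induced by $\psi$ at the origin.

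By construction, $p_2 \circ \alpha$ and $\phi \circ p_1$ are covering maps $\mathbb{L}^2 \to G_2$ that agree on $N(0)$, so Proposition~\ref{prop:d-unique} (applied at the vertex $\phi(p_1(0)) \in V(G_2)$ with the isomorphism $\phi \circ p_1|_{\Link_2(0,\mathbb{L}^2)}$) forces $p_2 \circ \alpha = \phi \circ p_1$ on all of $\mathbb{Z}^2$. Taking covering-transformation groups, a direct computation gives $\CT(p_2 \circ \alpha) = \alpha^{-1}\CT(p_2)\alpha$, while $\CT(\phi \circ p_1) = \CT(p_1)$ since $\phi$ is a bijection; thus $\alpha\,\CT(p_1)\,\alpha^{-1} = \CT(p_2)$, as required. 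The only non-routine point is the existence of the global lift $\alpha$ extending $\psi$, and this is exactly where the ``$2$-locally'' hypothesis is used: it provides enough local rigidity for $\psi$ to be determined by an element of $B_2$, after which Proposition~\ref{prop:d-unique} takes care of the globalization.
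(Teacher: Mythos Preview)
Your proof is correct and follows essentially the same approach as the paper's: both directions hinge on Proposition~\ref{prop:d-unique}, and your lift $\alpha$ plays the role of the paper's $\phi$ (chosen so that $p_1 \circ \phi$ and $f \circ p_2$ agree on $N(0)$). The only cosmetic differences are that you construct the isomorphism in the ``if'' direction by hand whereas the paper invokes Lemma~\ref{lemma:iso}, and you are more explicit than the paper about why the local lift $\psi$ on $\Link_2$ extends to an element of $\Aut(\mathbb{L}^2)$.
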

\begin{proof}
Suppose that $G_1 \cong G_2$. Let $f:V(G_2) \to V(G_1)$ be a graph isomorphism from $G_2$ to $G_1$. Then $p:=f \circ p_2:\mathbb{Z}^2 \to V(G_1)$ is a normal covering map from $\mathbb{L}^2$ to $G_1$. Choose $\phi \in \Aut(\mathbb{L}^2)$ such that $p_1 \circ \phi$ and $p$ agree with one another on $N(0)$. Then, by Proposition \ref{prop:d-unique}, we have $p_1 \circ \phi = p$. Hence, $\CT(p_2) = \CT(f \circ p_2) = \CT(p) = \CT(p_1 \circ \phi) = \phi^{-1} \CT(p_1) \phi$.

Conversely, suppose $\CT(p_2) = \phi^{-1} \CT(p_1) \phi$ for some $\phi \in \Aut(\mathbb{L}^2)$. Then define $p = p_1 \circ \phi$. Then $p:\mathbb{Z}^2 \to V(G_1)$ is a normal covering map from $\mathbb{L}^2$ to $G_1$, and $\CT(p) = \phi^{-1} \CT(p_1) \phi = \CT(p_2)$. But by Lemma \ref{lemma:iso}, $G_1 \cong \mathbb{L}^2/\CT(p)$ and $G_2 \cong \mathbb{L}^2/\CT(p_2) = \mathbb{L}^2/\CT(p)$, so $G_1 \cong G_2$.  
\end{proof}

In exactly the same way, we may prove the following.
\begin{lemma}
\label{lemma:conjugacy-d}
Let $d \in \mathbb{N}$ with $d\geq 2$. Suppose $G_1$ and $G_2$ are connected graphs which are $3$-locally $\mathbb{L}^d$. Suppose $p_1:\mathbb{Z}^d \to V(G_1)$ is a normal covering map from $\mathbb{L}^d$ to $G_1$, and $p_2:\mathbb{Z}^d \to V(G_2)$ is a normal covering map from $\mathbb{L}^d$ to $G_2$. Then $G_1$ is isomorphic to $G_2$ if and only if the subgroups $\CT(p_1)$ and $\CT(p_2)$ are conjugate in $\Aut(\mathbb{L}^d)$. 
\end{lemma}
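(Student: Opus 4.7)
The plan is to mirror verbatim the proof of Lemma \ref{lemma:conjugacy-2}, using Proposition \ref{prop:d-unique} (which applies to any connected graph that is $2$-locally $\mathbb{L}^d$, hence in particular to $G_1,G_2$) in place of the $d=2$ instance used there, together with Lemma \ref{lemma:iso} for the converse direction. The hypothesis of being $3$-locally $\mathbb{L}^d$ is at least as strong as what Proposition \ref{prop:d-unique} demands, so the same ingredients are available.

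For the forward direction, suppose $G_1 \cong G_2$ via a graph isomorphism $f:V(G_2)\to V(G_1)$. Then $p:= f\circ p_2$ is a normal covering map from $\mathbb{L}^d$ to $G_1$, with $\CT(p)=\CT(p_2)$. The key step is to produce $\phi\in\Aut(\mathbb{L}^d)$ with $p_1\circ\phi$ and $p$ agreeing on $N(0)$; then Proposition \ref{prop:d-unique} forces $p_1\circ\phi = p$ globally, whence $\CT(p_2)=\CT(p)=\CT(p_1\circ\phi)=\phi^{-1}\CT(p_1)\phi$. To find $\phi$, first pick $v\in\mathbb{Z}^d$ with $p_1(v)=p(0)$, which is possible because $p_1$ is surjective (as $G_1$ is connected). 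Since $p_1$ is a covering map, $p_1$ maps $N(v)\subset\mathbb{Z}^d$ bijectively onto $N(p_1(v))=N(p(0))$ in $G_1$, and $p$ maps $N(0)$ bijectively onto the same set. Composing these bijections gives a bijection $\psi:N(0)\to N(v)$ in $\mathbb{L}^d$. Using the description $\Aut(\mathbb{L}^d)=T(\mathbb{Z}^d)\rtimes B_d$ from Fact \ref{fact:aut-ld}, any bijection from $N(0)$ to $N(v)$ that sends the signed basis $\{\pm e_i\}$ to $\{\pm e_i\}+v$ extends to an element of $\Aut(\mathbb{L}^d)$ sending $0$ to $v$; this is exactly the form of $\psi$, and the desired $\phi$ is thereby obtained.

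For the converse direction, suppose $\CT(p_2)=\phi^{-1}\CT(p_1)\phi$ for some $\phi\in\Aut(\mathbb{L}^d)$. Set $p:=p_1\circ\phi$; this is a normal covering map from $\mathbb{L}^d$ to $G_1$ with $\CT(p)=\phi^{-1}\CT(p_1)\phi=\CT(p_2)$. Lemma \ref{lemma:iso} then gives $G_1\cong\mathbb{L}^d/\CT(p)=\mathbb{L}^d/\CT(p_2)\cong G_2$, as required.

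The only point that is slightly less automatic than in the $d=2$ case is checking that the bijection $\psi:N(0)\to N(v)$ produced above really does extend to an automorphism of $\mathbb{L}^d$; but this is immediate once one observes that every bijection between the neighbourhood of any two vertices of $\mathbb{L}^d$ is realized by some element of $B_d$ (after translating $v$ to $0$), so no additional obstacle arises. Hence the argument goes through word for word as in the $d=2$ case, and this is why the authors are entitled to assert that the result follows ``in exactly the same way''.
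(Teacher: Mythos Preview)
Your overall argument mirrors the paper's proof of Lemma~\ref{lemma:conjugacy-2} exactly, which is precisely what the paper intends when it writes ``in exactly the same way''. The forward direction via Proposition~\ref{prop:d-unique} and the converse via Lemma~\ref{lemma:iso} are both correct.

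There is, however, a genuine error in the extra justification you supply for the existence of $\phi$. You assert that ``every bijection between the neighbourhood of any two vertices of $\mathbb{L}^d$ is realized by some element of $B_d$ (after translating $v$ to $0$)''. This is false for every $d\geq 2$: the set $\{\pm e_i:i\in[d]\}$ has $(2d)!$ self-bijections, whereas $|B_d|=2^d d!$, and $B_d$ acts faithfully on this set. For instance, when $d=2$ the map swapping $e_1$ and $e_2$ but fixing $-e_1,-e_2$ is a bijection of $\Gamma(0)$ not induced by any element of $B_2$. So the bare fact that $\psi$ is a bijection of neighbourhoods does not give you $\phi$.

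What is actually needed is that the specific bijection $\psi=(p_1|_{\Gamma(v)})^{-1}\circ(p|_{\Gamma(0)})$ respects the antipodal pairing $e_i\leftrightarrow -e_i$. This follows because the hypothesis that $G_1$ is $3$-locally $\mathbb{L}^d$ (hence $2$-locally $\mathbb{L}^d$) forces each covering map to restrict to a graph isomorphism on $\Link_2$, and in $\Link_2(0,\mathbb{L}^d)$ the pair $\{e_i,-e_i\}$ is characterised graph-theoretically as the unique pair of neighbours of $0$ whose only common neighbour is $0$. Consequently $\psi$ preserves these pairs, and hence (after translation) lies in $B_d$. The paper's proof of Lemma~\ref{lemma:conjugacy-2} simply asserts ``Choose $\phi\in\Aut(\mathbb{L}^2)$ such that\ldots'' without spelling this out; your attempt to fill in the reasoning is where the slip occurs.
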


Clearly, a subgroup $\Gamma \leq \Aut(\mathbb{L}^d)$ with minimum displacement at least 2, acts freely on $\mathbb{L}^d$. Hence, from Theorem \ref{thm:quotient-d} and Lemmas \ref{lemma:iso} and \ref{lemma:conjugacy-2}, we may deduce the following.
\begin{corollary}
\label{corr:num-2}
If $r \in \mathbb{N}$ with $r \geq 2$, then the unlabelled, connected, $n$-vertex graphs which are $r$-locally $\mathbb{L}^2$ are in an explicit one-to-one correspondence with conjugacy-classes of subgroups $\Gamma \leq \Aut(\mathbb{L}^2)$ with minimum displacement at least $2r+2$ and with $|\mathbb{Z}^2/\Gamma|=n$; this correspondence is given by simply taking the quotient graph of $\mathbb{L}^2$ by $\Gamma$, i.e., $\Gamma \leftrightarrow \mathbb{L}^2/\Gamma$.
\end{corollary}

Similarly, using Lemma \ref{lemma:conjugacy-d} in place of Lemma \ref{lemma:conjugacy-2}, we may deduce the following.

\begin{corollary}
\label{corr:num-d}
If $r,d \in \mathbb{N}$ with $r,d \geq 3$, then the unlabelled, connected, $n$-vertex graphs which are $r$-locally $\mathbb{L}^d$ are in an explicit one-to-one correspondence with the conjugacy-classes of subgroups $\Gamma \leq \Aut(\mathbb{L}^d)$ with minimum displacement at least $2r+2$ and with $|\mathbb{Z}^d/\Gamma| = n$; this correspondence is given by taking the quotient graph of $\mathbb{L}^d$ by $\Gamma$, i.e., $\Gamma \leftrightarrow \mathbb{L}^d/\Gamma$.
\end{corollary}

Our next aim is to deal with the subgroups $\Gamma$ (as in Corollaries \ref{corr:num-2} and \ref{corr:num-d}) that are {\em pure-translation subgroups}. Clearly, a pure-translation subgroup $\Gamma \leq \Aut(\mathbb{L}^d)$ is determined by its lattice of translations $L_{\Gamma}$, which must be a sublattice of $\mathbb{Z}^d$. We now recall some definitions and basic facts concerning sublattices of $\mathbb{Z}^d$.

A {\em sublattice of $\mathbb{Z}^d$} is a subgroup $L$ of $\mathbb{Z}^d$; such a subgroup must be isomorphic (as a group) to $\mathbb{Z}^{R}$ for some $R \leq d$, and the integer $R$ is called the {\em rank} of $L$. If $L$ is a sublattice of $\mathbb{Z}^d$ of rank $d$, $\mathbb{Z}^d/L$ is finite, and $|\mathbb{Z}^d/L|$ is called the {\em index} of $L$; if $L$ has rank less than $d$, then its index is infinite. The {\em minimum distance} of $L$ is defined to be $\min\{\|x\|_1:\ x \in L \setminus \{0\}\}$. If $L$ is a sublattice of $\mathbb{Z}^d$ and $\sigma \in \Aut(\mathbb{L}^d)$, we say that $L$ is {\em $\sigma$-invariant} if $\sigma(L)=L$.

It is well-known (and easy to see) that there is a one-to-one correspondence between sublattices of $\mathbb{Z}^d$ with index $n$, and upper-triangular, integer matrices $B = (b_{ij})_{i,j \in [d]}$ with $0 \leq b_{ij} < b_{ii}$ for all $j > i$ and all $i \in [d]$, and with $\prod_{i=1}^{d} b_{ii} = n$. The matrix $B$ corresponds to the sublattice for which a $\mathbb{Z}$-basis is the set of columns of $B$, i.e. the set\
\begin{equation}
\label{eq:matrix-corr}
\left\{\sum_{i=1}^{j} b_{ij} e_i:\ j \in [d]\right\}.
\end{equation}
It follows that the number of sublattices of $\mathbb{Z}^d$ of index $n$ is
$$\sum_{c_1,\ldots,c_d \in \mathbb{N}:\atop c_1 c_2 \ldots c_d = n} c_1^{d-1} c_2^{d-2} c_3^{d-3} \ldots c_{d-2}^2 c_{d-1}.$$
Hence, the number of sublattices of $\mathbb{Z}^d$ of index at most $x$ is
$$\sum_{c_1 c_2 \ldots c_d \leq x} c_1^{d-1} c_2^{d-2} c_3^{d-3} \ldots c_{d-2}^2 c_{d-1}:=N_d(x).$$
It is well-known that
\begin{equation} \label{eq:zeta} N_d(x) = (1+O((\log x)/x))\frac{x^d}{d}\prod_{i=2}^{d} \zeta(i).\end{equation}
(For an elementary proof of this, see for example \cite{gillet-grayson}.) We will use this estimate in the sequel.

Observe that any pure-translation subgroup $\Gamma \leq \Aut(\mathbb{L}^d)$ is invariant under conjugation by any translation. Hence, two pure-translation subgroups $\Gamma,\Gamma' \leq \Aut(\mathbb{L}^d)$ are conjugate in $\Aut(\mathbb{L}^d)$ if and only if they are conjugate via an element of the hyperoctahedral group $B_d$. Note that if $\Gamma$ and $\Gamma'$ are pure-translation subgroups of $\Aut(\mathbb{L}^d)$, and $\sigma \in B_d$, then $\sigma \Gamma \sigma^{-1} = \Gamma'$ if and only if $\sigma(L_{\Gamma}) = L_{\Gamma'}$. Hence, we obtain the following.

\begin{fact}
\label{fact:pt} The number of conjugacy-classes of pure-translation subgroups $\Gamma \leq \Aut(\mathbb{L}^d)$ with $|\mathbb{Z}^d/\Gamma| = n$ is precisely the number of $B_d$-orbits of sublattices $L$ of $\mathbb{Z}^d$ with $|\mathbb{Z}^d/L| = n$.
\end{fact}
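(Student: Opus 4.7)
The plan is to exploit the fact, already established in the preceding paragraph, that a pure-translation subgroup $\Gamma \leq \Aut(\mathbb{L}^d)$ is uniquely determined by its lattice of translations $L_\Gamma$, and that the assignment $\Gamma \mapsto L_\Gamma$ is a bijection between pure-translation subgroups of $\Aut(\mathbb{L}^d)$ and sublattices of $\mathbb{Z}^d$. Moreover this bijection is clearly index-preserving, i.e.\ $|\mathbb{Z}^d/\Gamma| = |\mathbb{Z}^d/L_\Gamma|$. So the entire task reduces to tracing how $\Aut(\mathbb{L}^d)$-conjugation of pure-translation subgroups corresponds, under this bijection, to an action on sublattices.

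First, I would observe that conjugation of a pure-translation subgroup by a translation is trivial. Indeed, by Fact \ref{fact:aut-ld}, every element of $\Aut(\mathbb{L}^d)$ has the form $t \circ \sigma$ with $t \in T(\mathbb{Z}^d)$ and $\sigma \in B_d$, and $T(\mathbb{Z}^d)$ is abelian, so any element of $T(\mathbb{Z}^d)$ commutes with every pure translation in $\Gamma$. Consequently, two pure-translation subgroups $\Gamma,\Gamma'$ are conjugate in $\Aut(\mathbb{L}^d)$ if and only if they are conjugate by an element of $B_d$.

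Next, I would verify the single identity $\sigma \circ t_v \circ \sigma^{-1} = t_{\sigma(v)}$ for all $\sigma \in B_d \leq O(d)$ and all $v \in \mathbb{Z}^d$, where $t_v$ denotes the translation $x \mapsto x+v$. This is a one-line computation using the linearity of $\sigma$. It follows that $\sigma \Gamma \sigma^{-1}$ is again a pure-translation subgroup and that $L_{\sigma \Gamma \sigma^{-1}} = \sigma(L_\Gamma)$. Hence the bijection $\Gamma \mapsto L_\Gamma$ intertwines $B_d$-conjugation of pure-translation subgroups with the natural $B_d$-action on sublattices of $\mathbb{Z}^d$, and therefore descends to a bijection between $B_d$-conjugacy-classes of pure-translation subgroups of index $n$ and $B_d$-orbits of sublattices of index $n$, as required.

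There is no genuine obstacle here; the statement is essentially a bookkeeping consequence of the semidirect product decomposition $\Aut(\mathbb{L}^d) = T(\mathbb{Z}^d) \rtimes B_d$ together with the abelianness of the translation subgroup. The only point that requires any verification at all is the compatibility $\sigma \circ t_v \circ \sigma^{-1} = t_{\sigma(v)}$, which is immediate.
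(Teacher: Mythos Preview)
Your proposal is correct and follows essentially the same approach as the paper: the paragraph immediately preceding Fact~\ref{fact:pt} observes that pure-translation subgroups are invariant under conjugation by translations (so conjugacy in $\Aut(\mathbb{L}^d)$ reduces to $B_d$-conjugacy), and that $\sigma\Gamma\sigma^{-1}=\Gamma'$ iff $\sigma(L_\Gamma)=L_{\Gamma'}$, which is exactly your argument via $\sigma\circ t_v\circ\sigma^{-1}=t_{\sigma(v)}$.
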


Notice that any sublattice of $\mathbb{Z}^d$ is invariant under both $\Id$ and $-\Id$. Our next result says that for any other element $\sigma \in B_d \setminus \{\pm \Id\}$, a uniform random sublattice of $\mathbb{Z}^d$ with index at most $x$ is very unlikely to be invariant under $\sigma$. 

\begin{lemma}
\label{lemma:asym}
Let $\sigma \in B_d \setminus \{\pm \Id\}$. Let $N_{d,\sigma}(n)$ denote the number of index-$n$ sublattices of $\mathbb{Z}^d$ which are invariant under $\sigma$. Then for $x > 1$, we have
$$\sum_{n \leq x} N_{d,\sigma}(n) \leq O_{d}(1) x^{d-1+O(1/\log \log x)}.$$
\end{lemma}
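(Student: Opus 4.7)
My plan is a Dirichlet series / Euler product argument that exploits the fact that $\sigma \neq \pm \Id$ forces a non-trivial $\mathbb{Z}_p[\sigma]$-module decomposition of $\mathbb{Z}_p^d$ at all but finitely many primes $p$. First, the Chinese Remainder Theorem applied to finite $\mathbb{Z}[\sigma]$-modules (each $\sigma$-invariant $L$ gives a quotient $\mathbb{Z}^d/L$ that splits canonically into its $p$-primary components, each itself a $\sigma$-invariant sublattice of $\mathbb{Z}^d$ of $p$-power index) shows that $N_{d,\sigma}$ is multiplicative in $n$, so the generating series $Z_{d,\sigma}(s) := \sum_n N_{d,\sigma}(n) n^{-s}$ factors as an Euler product $\prod_p Z_{d,\sigma,p}(s)$. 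For every prime $p$ coprime to the order of $\sigma$ (only $O_d(1)$ primes are bad, since the order of $\sigma$ divides $|B_d|=2^d d!$), the ring $\mathbb{Z}_p[\sigma]$ is semisimple and decomposes as a product of unramified extensions $\prod_{i,k}\mathcal{O}_{i,k}$ corresponding to the $\mathbb{Z}_p$-irreducible factors of the cyclotomic factors $\Phi_{m_i}$ of the minimal polynomial of $\sigma$ over $\mathbb{Q}$. Writing the corresponding module decomposition $\mathbb{Z}_p^d\cong\bigoplus_{i,k}\mathcal{O}_{i,k}^{N_i}$ (where the multiplicities satisfy $\sum_i N_i\varphi(m_i)=d$), the standard sublattice zeta function of a free module over a DVR yields
$$Z_{d,\sigma,p}(s) = \prod_{i,k}\prod_{j=0}^{N_i-1}\bigl(1 - q_{i,k}^{j-s}\bigr)^{-1},$$
where $q_{i,k}=|\mathcal{O}_{i,k}/\pi_{i,k}|$.

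The key structural input is that $\sigma\ne\pm\Id$ forces $\max_i N_i\le d-1$: if the minimal polynomial has at least two distinct cyclotomic factors then this is immediate from $\sum_i N_i\varphi(m_i)=d$ with each term at least $1$, and if it has only one factor $\Phi_{m_1}$ then $\sigma\ne\pm\Id$ forces $m_1\notin\{1,2\}$ so $\varphi(m_1)\ge 2$ and $N_1=d/\varphi(m_1)\le d/2\le d-1$ for $d\ge 2$. Combined with the elementary inequality $(1-q^{a})^{-1}\le(1-p^{a})^{-1}$ for $a<0$ and $q=p^e$ with $e\ge 1$, and the fact that the number of $\mathbb{Z}_p$-irreducible factors of $\Phi_{m_i}$ is at most $\varphi(m_i)$, one obtains the global bound
$$Z_{d,\sigma}(s)\le C_d\prod_i\prod_{j=0}^{N_i-1}\zeta(s-j)^{\varphi(m_i)},$$
where $C_d$ absorbs the contributions at the bad primes (each bounded crudely by the full local sublattice zeta $\prod_{j=0}^{d-1}(1-p^{j-s})^{-1}$). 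This product converges absolutely for $\mathrm{Re}(s)>d-1$, with blow-up $O_d((s-d+1)^{-C'_d})$ as $s\to(d-1)^+$, for some $C'_d$ depending only on $d$.

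The bound follows from the standard non-negativity estimate $\sum_{n\le x}N_{d,\sigma}(n)\le x^s\,Z_{d,\sigma}(s)$ (valid for any $s>d-1$) applied at $s=d-1+1/\log\log x$: this gives $x^s=x^{d-1}\exp(\log x/\log\log x)$ and $Z_{d,\sigma}(s)=O_d((\log\log x)^{C'_d})$, whose product is $O_d(1)\,x^{d-1+O(1/\log\log x)}$, as claimed. The main technical obstacle I anticipate is carefully justifying the $\mathbb{Z}_p[\sigma]$-module decomposition $\mathbb{Z}_p^d\cong\bigoplus_{i,k}\mathcal{O}_{i,k}^{N_i}$ at good primes, and treating the bad primes where $\mathbb{Z}_p[\sigma]$ is not semisimple (since $T^{\mathrm{ord}(\sigma)}-1$ has repeated roots modulo $p$); the latter are absorbed into $C_d$ via the crude local bound described above, and the former can be handled by viewing $\mathbb{Z}_p^d$ as a module over the maximal order $\prod_{i,k}\mathcal{O}_{i,k}$ of $\mathbb{Q}_p[\sigma]$, which coincides with $\mathbb{Z}_p[\sigma]$ at good primes.
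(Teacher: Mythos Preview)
Your argument is correct and takes a genuinely different route from the paper's proof. The paper proceeds by elementary case analysis on the Hermite normal form: it splits according to whether $\sigma$ fixes each pair $\{e_i,-e_i\}$ setwise, and in each case shows that $\sigma$-invariance imposes either a divisibility condition on $b_{11}$ (forcing $b_{11}$ to be one of at most $\tau(O(x^2))=x^{O(1/\log\log x)}$ values, via the divisor bound) or a congruence condition on $b_{12}$ (forcing it to be one of two values). Your approach instead exploits the global multiplicative structure: the local zeta $\prod_{i,k}\prod_{j=0}^{N_i-1}(1-q_{i,k}^{j-s})^{-1}$ at good primes, together with the representation-theoretic fact that $\sigma\ne\pm\Id$ forces $\max_i N_i\le d-1$, shows that the rightmost pole of $Z_{d,\sigma}(s)$ sits at $s=d-1$, and then Rankin's trick at $s=d-1+1/\log\log x$ recovers the stated bound. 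The paper's argument is entirely elementary (no $p$-adic modules or zeta functions) and yields the $O(1/\log\log x)$ term directly from the classical divisor bound, whereas your argument is more structural and in fact proves more: since $Z_{d,\sigma}(s)$ has only a pole of bounded order at $s=d-1$, a contour-shifting or Tauberian argument would upgrade your bound to $O_d(x^{d-1}(\log x)^{C_d})$, which is strictly sharper. The one place your sketch deserves a bit more care is the module decomposition $\mathbb{Z}_p^d\cong\bigoplus_{i,k}\mathcal{O}_{i,k}^{N_i}$ at good primes; the point is that the idempotents of the CRT isomorphism $\mathbb{Z}_p[T]/(\mu(T))\cong\prod_{i,k}\mathcal{O}_{i,k}$ already lie in $\mathbb{Z}_p[\sigma]$ (not just $\mathbb{Q}_p[\sigma]$) because the factors $f_{i,k}$ remain pairwise coprime modulo $p$, so the decomposition is integral and each summand is a torsion-free, hence free, $\mathcal{O}_{i,k}$-module of the correct rank by comparison with the $\mathbb{Q}_p$-decomposition.
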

\begin{proof}
For each $i \in [d]$, let $\ell_i = \{e_i,-e_i\}$, and let $P_d = \{\ell_i:\ i \in [d]\}$. Observe that $B_d$ acts on $P_d$. Let $\sigma \in B_d \setminus \{\pm \Id\}$. Suppose firstly that $\sigma$ acts trivially on $P_d$. Then, since $\sigma \neq \pm \Id$, there exist $i,j \in [d]$ such that $\sigma(e_i)=-e_i$ and $\sigma(e_j) = e_j$. Without loss of generality, we may assume that $i=1$ and $j=2$. Let $L$ be a $\sigma$-invariant sublattice of $\mathbb{Z}^d$ with index at most $x$. By the linearity of $\sigma$, the matrix $B$ corresponding to $L$ satisfies
$$-b_{12}e_{1}+b_{22}e_{2} \in \langle B \rangle,$$
where $\langle B \rangle$ denotes the $\mathbb{Z}$-linear span of the columns of $B$. This condition holds only if there exist $\lambda_1,\lambda_2 \in \mathbb{Z}$ such that
$$-b_{12}e_{1}+b_{22}e_{2} = \lambda_1 b_{11}e_1+\lambda_2 (b_{12}e_1+b_{22}e_{2}),$$
which implies $\lambda_2=1$ and $\lambda_1b_{11} = 2b_{12}$. Since $b_{12} < b_{11}$, this condition holds only if $\lambda_1 \in \{0,-1\}$, i.e. only if $b_{12} = 0$ or $b_{12} = b_{11}/2$. Therefore,
$$N_{d,\sigma}(n) \leq 2\sum_{c_1 c_2 \ldots c_d = n} c_1^{d-2} c_2^{d-2} c_3^{d-3} \ldots c_{d-2}^2 c_{d-1},$$
so
\begin{align*} \sum_{n \leq x} N_{d,\sigma}(n) & \leq 2 \sum_{c_1 c_2 \ldots c_d \leq x} c_1^{d-2} c_2^{d-2} c_3^{d-3} \ldots c_{d-2}^2 c_{d-1}\\
& \leq 2\sum_{c_2 c_3 \ldots c_d \leq x} c_2^{d-2} c_3^{d-3} \ldots c_{d-2}^2 c_{d-1} \sum_{c_1 \leq x/(c_2 c_3 \ldots c_d)} c_1^{d-2}\\
& \leq 2\sum_{c_2 c_3 \ldots c_d \leq x} c_2^{d-2} c_3^{d-3} \ldots c_{d-2}^2 c_{d-1}(x/(c_2 c_3 \ldots c_d))^{d-1}\\
& = 2x^{d-1} \sum_{c_2 c_3 \ldots c_d \leq x} c_2^{-1} c_3^{-2} \ldots c_{d-1}^{-(d-2)} c_{d}^{-(d-1)}\\
 & \leq 2x^{d-1} \left(\sum_{c_2 \leq x} c_2^{-1}\right) \left(\sum_{c_3 \leq x} c_3^{-2}\right) \ldots \left( \sum_{c_d \leq x} c_d^{-(d-1)}\right)\\
 & = O(x^{d-1} \log x).
 \end{align*}
Now suppose instead that $\sigma$ does not act trivially on $P_d$. Then there exists $i \in [d]$ such that $\sigma(\ell_i) \neq \ell_i$. Without loss of generality, we may assume that $\sigma(\ell_1) = \ell_2$. Hence, $\sigma(e_1)=\pm e_2$. Since $\sigma (L) = L$ if and only if $(-\sigma)(L) = L$, by considering $-\sigma$ if necessary, we may assume that $\sigma(e_1) = e_2$. Then $\sigma(e_2) \neq \pm e_2$. There are two cases.
\begin{description}
\item[Case 1:] $\sigma(e_2) = \pm e_1$.
\item[Case 2:] $\sigma(e_2) = \pm e_j$, for some $j \geq 3$.
\end{description}
First suppose that Case 1 occurs. Then, if $\sigma (L)=L$, the corresponding matrix $B$ satisfies
$$b_{12}e_{2} \pm b_{22}e_{1} \in \langle B \rangle.$$
This condition holds if and only if there exist $\lambda_1,\lambda_2 \in \mathbb{Z}$ such that
$$b_{12}e_{2}\pm b_{22}e_{1} = \lambda_1 b_{11}e_1+\lambda_2 (b_{12}e_1+b_{22}e_{2}),$$
which implies
$$\pm b_{22} = \lambda_1 b_{11} + \lambda_2 b_{12},\quad b_{12} = \lambda_2 b_{22},$$
which implies $\lambda_1 b_{11} = \pm b_{22} - b_{12}^2/b_{22}$, which implies
$$b_{11} \mid \pm b_{22} - b_{12}^2/b_{22}.$$
Once we have chosen $b_{d,d},b_{d-1,d-1},\ldots,b_{22}$, there are at most $x/(b_{22} b_{33} \ldots b_{dd})$ choices for each of $b_{12},\ldots,b_{1,d}$ (since we must have $b_{1,j} \leq x/(b_{22} b_{33} \ldots b_{dd})$ for each $j \geq 2$), and then there are at most $x^{O(1/\log \log x)}$ choices for $b_{11}$, since 
$$b_{11} \mid \pm b_{22} - b_{12}^2/b_{22},$$
$$|\pm b_{22} - b_{12}^2/b_{22}| \leq x^2+1 \leq 2x^2,$$
and the divisor function $\tau = \tau(m)$ ($=$ number of divisors of $m$) satisfies
$$\tau(m) \leq m^{O(1/\log \log m)}.$$
Hence,
\begin{align*} \sum_{n \leq x} N_{d,\sigma}(n) & \leq \sum_{c_2 \ldots c_d \leq x} c_2^{d-2} c_3^{d-3} \ldots c_{d-2}^2 c_{d-1} (x/(c_2 c_3 \ldots c_d))^{d-1} x^{O(1/\log \log x)}\\
& = x^{d-1+O(1/\log \log x)} \sum_{c_2 c_3 \ldots c_d \leq x} c_2^{-1} c_3^{-2} \ldots c_{d-1}^{-(d-2)} c_{d}^{-(d-1)}\\
& \leq O_{d}(1) x^{d-1+O(1/\log \log x)} \log x\\
& \leq O_{d}(1) x^{d-1+O(1/\log \log x)}.
\end{align*}
Now assume that Case 2 occurs. Without loss of generality, we may assume that $j=3$. Then, if $\sigma(L)=L$, the corresponding matrix $B$ satisfies
$$b_{12}e_{2} \pm b_{22}e_{3} \in \langle B \rangle.$$
This condition holds if and only if there exist $\lambda_1,\lambda_2,\lambda_3 \in \mathbb{Z}$ such that
$$b_{12}e_{2}\pm b_{22}e_{3} = \lambda_1 b_{11}e_1+\lambda_2 (b_{12}e_1+b_{22}e_{2})+\lambda_3(b_{13}e_1+b_{23}e_2+b_{33}e_3),$$
which implies
$$0 = \lambda_1 b_{11} + \lambda_2 b_{12}+\lambda_3 b_{13},\quad b_{12}=\lambda_2 b_{22}+\lambda_3 b_{23},\quad \pm b_{22} = \lambda_3 b_{33},$$
which implies
$$\lambda_1 b_{11} = -b_{12}(b_{12} - (\pm b_{22}/b_{33}) b_{23})/b_{22} - b_{13} (\pm b_{22}/b_{33}),$$
which implies
$$b_{11} \mid -b_{12}(b_{12} - (\pm b_{22}/b_{33}) b_{23})/b_{22} - b_{13} (\pm b_{22}/b_{33}).$$
Note that
$$|-b_{12}(b_{12} - (\pm b_{22}/b_{33}) b_{23})/b_{22} - b_{13} (\pm b_{22}/b_{33})| \leq 2x^2.$$
Hence, exactly as before, once we have chosen $b_{d,d},b_{d-1,d-1},\ldots,b_{22}$, there are at most $x/(b_{22} b_{33} \ldots b_{dd})$ choices for each of $b_{12},\ldots,b_{1,d}$, and then there are at most $x^{O(1/\log \log x)}$ choices for $b_{11}$. Therefore, by the same calculation as before, we have
$$\sum_{n \leq x} N_{d,\sigma}(n) \leq O_{d}(1) x^{d-1+O(1/\log \log x)}.$$
\end{proof}
Lemma \ref{lemma:asym} may be restated in the following appealing way.
\begin{corollary}
Let $\sigma \in B_d \setminus \{\pm \Id\}$ and let $x > 1$. If $L$ is a lattice chosen uniformly at random from all sublattices of $\mathbb{Z}^d$ of index at most $x$, then
$$\Pr\{\sigma (L) = L\} = O_{d}(x^{-1+O(1/\log \log x)}).$$
\end{corollary}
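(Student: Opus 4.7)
The corollary is essentially a probabilistic restatement of Lemma~\ref{lemma:asym}, and the plan is to simply combine that lemma with the asymptotic count of all sublattices of $\mathbb{Z}^d$ of index at most $x$ given by equation~(\ref{eq:zeta}).

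First I would observe that, by definition,
$$\Pr\{\sigma(L) = L\} = \frac{\#\{\text{index-}\leq x \text{ sublattices } L \text{ of } \mathbb{Z}^d \text{ with } \sigma(L)=L\}}{\#\{\text{index-}\leq x \text{ sublattices of } \mathbb{Z}^d\}} = \frac{\sum_{n \leq x} N_{d,\sigma}(n)}{N_d(x)}.$$
Lemma~\ref{lemma:asym} gives the upper bound
$$\sum_{n \leq x} N_{d,\sigma}(n) \leq O_d(1)\, x^{d-1+O(1/\log \log x)}$$
on the numerator, while equation~(\ref{eq:zeta}) gives
$$N_d(x) = (1+O((\log x)/x))\frac{x^d}{d}\prod_{i=2}^{d} \zeta(i) = \Theta_d(x^d)$$
for the denominator. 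Dividing yields
$$\Pr\{\sigma(L) = L\} \leq \frac{O_d(1)\, x^{d-1+O(1/\log \log x)}}{\Theta_d(x^d)} = O_d(x^{-1+O(1/\log \log x)}),$$
which is the claimed estimate.

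There is no substantive obstacle here, since both of the required ingredients have already been established: the work is entirely in Lemma~\ref{lemma:asym}, which carefully case-splits on whether $\sigma$ acts trivially on the unordered pairs $\{\pm e_i\}$ and, if so, applies a divisor-function bound via $\tau(m) \leq m^{O(1/\log\log m)}$. The only minor care needed is to check that the hypothesis $x > 1$ guarantees $N_d(x) = \Theta_d(x^d)$ (this is the regime in which the main term in (\ref{eq:zeta}) dominates its error term), which holds for all $x$ sufficiently large, and for small $x$ one can absorb the statement into the implicit constant $O_d$.
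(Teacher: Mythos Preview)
Your proof is correct and is exactly what the paper intends: the corollary is stated immediately after Lemma~\ref{lemma:asym} as a ``restatement in an appealing way'', with no proof given, and your argument (dividing the bound of Lemma~\ref{lemma:asym} by the total count $N_d(x)=\Theta_d(x^d)$ from~(\ref{eq:zeta})) is precisely the intended one-line deduction.
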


We can now estimate the number of $B_d$-orbits of sublattices of $\mathbb{Z}^d$ with index at most $x$. Let us write $\mathcal{L}_{\leq x}^{d}$ for the set of sublattices of $\mathbb{Z}^d$ with index at most $x$ (so that $|\mathcal{L}_{\leq x}^{d}|=N_d(x)$), and let us write $\tilde{\mathcal{L}}_{\leq x}^{d}$ for the set of $B_d$-orbits of sublattices of $\mathbb{Z}^d$ with index at most $x$.

\begin{proposition}
\label{prop:sublattice-orbits}
The number ($|\tilde{\mathcal{L}}_{\leq x}^{d}|$) of $B_d$-orbits of sublattices of $\mathbb{Z}^d$ with index at most $x$ (equivalently, the number of conjugacy-classes of pure-translation subgroups $\Gamma$ of $\Aut(\mathbb{L}^d)$ with $|\mathbb{Z}^d/\Gamma| \leq x$) satisfies
$$|\tilde{\mathcal{L}}^d_{\leq x}| = (1+O_{d}(x^{-1+O(1/\log \log x)}))c_d x^d,$$
where 
$$c_d := \frac{\prod_{i=2}^{d} \zeta(i)}{2^{d-1}d!d}.$$
\end{proposition}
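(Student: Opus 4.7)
The plan is to apply the Burnside (Cauchy-Frobenius) orbit-counting lemma to the action of $B_d$ on $\mathcal{L}_{\leq x}^{d}$, writing
\[ |\tilde{\mathcal{L}}^d_{\leq x}| = \frac{1}{|B_d|}\sum_{\sigma \in B_d} \bigl|\{L \in \mathcal{L}^d_{\leq x}:\ \sigma(L)=L\}\bigr|, \]
and then analysing the contributions to this sum according to whether $\sigma \in \{\pm\Id\}$ or $\sigma \in B_d\setminus\{\pm\Id\}$.

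First I would observe that the contributions from the two central elements $\sigma = \pm\Id$ are easy: every sublattice $L \leq \mathbb{Z}^d$ is by definition a subgroup, hence closed under negation, so it is fixed by both $\Id$ and $-\Id$. Each of these two elements therefore contributes exactly $N_d(x)$ to the sum, giving a main-term contribution of $2 N_d(x)/|B_d| = N_d(x)/(2^{d-1} d!)$. Inserting the estimate (\ref{eq:zeta}) for $N_d(x)$, this main term equals
\[ \frac{1}{2^{d-1}d!}\left(1+O\!\left(\tfrac{\log x}{x}\right)\right)\frac{x^d}{d}\prod_{i=2}^{d}\zeta(i) \;=\; c_d x^d\bigl(1+O(\tfrac{\log x}{x})\bigr), \]
which is precisely the claimed main term.

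For the remaining $|B_d|-2 = 2^d d! - 2$ elements $\sigma \in B_d\setminus\{\pm \Id\}$, Lemma \ref{lemma:asym} directly gives $\sum_{n \leq x} N_{d,\sigma}(n) \leq O_d(1)\, x^{d-1+O(1/\log\log x)}$. Summing over these $\sigma$ (and absorbing the factor $2^d d!$ into the $O_d$) yields a total error-term contribution of $O_d(x^{d-1+O(1/\log\log x)})$ to the Burnside sum, and hence of the same order after dividing by $|B_d|$. Combining with the main term and writing the error as a relative error by dividing through by $c_d x^d$, we obtain the stated asymptotic
\[ |\tilde{\mathcal{L}}^d_{\leq x}| = \bigl(1+O_d(x^{-1+O(1/\log\log x)})\bigr) c_d x^d. \]

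No real obstacle is expected: the only slightly subtle point is the verification that $-\Id$ genuinely fixes every sublattice (as opposed to merely those of index coprime to $2$), but this is immediate from the definition of a sublattice as a subgroup. Finally, the reformulation in terms of conjugacy-classes of pure-translation subgroups follows from Fact \ref{fact:pt}, which identifies $B_d$-orbits of sublattices of $\mathbb{Z}^d$ with conjugacy-classes in $\Aut(\mathbb{L}^d)$ of pure-translation subgroups.
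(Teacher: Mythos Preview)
Your proof is correct and uses the same ingredients as the paper (the fact that $\pm\Id$ fix every sublattice, Lemma~\ref{lemma:asym} for the remaining elements of $B_d$, and the estimate~(\ref{eq:zeta})), but packages them via Burnside's lemma rather than the paper's direct orbit-size argument. The paper instead observes that all but $O_d(x^{d-1+O(1/\log\log x)})$ lattices in $\mathcal{L}^d_{\leq x}$ have $B_d$-stabilizer exactly $\{\pm\Id\}$ and hence orbit of size exactly $2^{d-1}d!$, then sandwiches $|\tilde{\mathcal{L}}^d_{\leq x}|$ between $N_d(x)/(2^{d-1}d!)$ and this quantity plus the small exceptional count. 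Your Burnside formulation computes the same thing in one line and is arguably cleaner; the two arguments are equivalent in content.
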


\begin{proof}
Any sublattice $L$ of $\mathbb{Z}^d$ is invariant under $\Id$ and $-\Id$, but for any $\sigma \in B_d \setminus \{\pm \Id\}$, the number of sublattices of $\mathbb{Z}^d$ of index at most $x$ which are invariant under $\sigma$ is at most $O_{d}(x^{d-1+O(1/ \log \log x)})$, by Lemma \ref{lemma:asym}. Since $|B_d| = 2^{d} d! = O_d(1)$, it follows that all but at most $O_{d}(x^{d-1+O(1/\log \log x)})$ of the sublattices of index at most $x$ are not invariant under any element of $B_d \setminus \{\pm \Id\}$. For any such sublattice $L$, 
$$\sigma (L) = \sigma'(L)\quad \Leftrightarrow \quad \sigma^{-1} \sigma' \in \{\pm \Id\},$$
so there are precisely $2^{d-1}d!$ distinct lattices $L' \in \mathcal{L}^{d}_{\leq x}$ which are in the same $B_d$-orbit as $L$ (one for each left coset of $\{\pm \Id\}$ in $B_d$). For any lattice $L \in \mathcal{L}^d_{\leq x}$, there are at most $2^{d-1}d!$ distinct lattices in the same $B_d$-orbit as $L$. Hence, the total number $|\tilde{\mathcal{L}}^d_{\leq x}|$ of $B_d$-orbits in $\mathcal{L}^{d}_{\leq x}$ satisfies
\begin{align*} \frac{1}{2^{d-1}d!}|\mathcal{L}^d_{\leq x}| \leq |\tilde{\mathcal{L}}^d_{\leq x}| \leq & \frac{1}{2^{d-1}d!}(1-O_{d}(x^{-1+O(1/\log \log x)}))|\mathcal{L}^d_{\leq x}|\\
&+O_{d}(x^{-1+O(1/\log \log x)})|\mathcal{L}^d_{\leq x}|,
\end{align*}
so
\begin{align*}
|\tilde{\mathcal{L}}^d_{\leq x}| &= \frac{1}{2^{d-1}d!}(1+O_{d}(x^{-1+O(1/\log \log x)}))|\mathcal{L}^d_{\leq x}|\\
& = (1+O_{d}(x^{-1+O(1/\log \log x)}))\frac{\prod_{i=2}^{d} \zeta(i)}{2^{d-1}d! d}x^d \\
& = (1+O_{d}(x^{-1+O(1/\log \log x)}))c_d x^d,
\end{align*}
using (\ref{eq:zeta}). By Fact \ref{fact:pt}, $|\tilde{\mathcal{L}}^d_{\leq x}|$ is precisely the number of conjugacy-classes of pure-translation subgroups $\Gamma \leq \Aut(\mathbb{L}^d)$ with $|\mathbb{Z}^d/\Gamma| \leq x$.
\end{proof}

Our next aim is to show that for fixed $r \in \mathbb{N}$, very few sublattices of $\mathbb{Z}^d$ of index at most $x$, have minimum distance at most $r$.

\begin{lemma}
\label{lemma:small-distance}
If $d,r \in \mathbb{N}$ with $d \geq 2$, then the number of sublattices of $\mathbb{Z}^d$ with index at most $x$ and minimum distance at most $r$, is 
$$O_{d,r}(x^{d-1} \log x).$$
\end{lemma}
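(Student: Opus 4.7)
The plan is to use a union bound over short vectors, and for each short vector carry out an explicit Hermite-normal-form enumeration of the sublattices containing it, using the bijection recalled just before equation (\ref{eq:zeta}). This will in fact give the slightly stronger bound $O_{d,r}(x^{d-1})$.

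First I would observe that a sublattice $L \subseteq \mathbb{Z}^d$ has minimum distance at most $r$ iff it contains some $v \in \mathbb{Z}^d \setminus \{0\}$ with $\|v\|_1 \leq r$, and there are at most $(2r+1)^d - 1 = O_{d,r}(1)$ such $v$. By the union bound it therefore suffices to prove that for every fixed such $v$, the number of sublattices of $\mathbb{Z}^d$ of index at most $x$ containing $v$ is $O_{d,r}(x^{d-1})$. To simplify this count I would write $v = kw$ with $k := \gcd(v_1, \ldots, v_d)$ and $w$ primitive; since $w$ extends to a $\mathbb{Z}$-basis of $\mathbb{Z}^d$, there is a $\phi \in \GL_d(\mathbb{Z})$ with $\phi(w) = e_1$, and this induces an index-preserving bijection between the sublattices of $\mathbb{Z}^d$ containing $v$ and those containing $k e_1$. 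Since $1 \leq k \leq \|v\|_1 \leq r$, we may therefore assume $v = k e_1$.

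Finally I would enumerate the HNF matrices $B = (b_{ij})$ (upper triangular, with $b_{ii} \geq 1$ and $0 \leq b_{ij} < b_{ii}$ for $j > i$) satisfying $k e_1 \in \langle B \rangle$ and $\prod_i b_{ii} \leq x$. Writing $B\lambda = k e_1$ and solving from the bottom row upwards, the vanishing of the last $d-1$ coordinates of $k e_1$ successively forces $\lambda_d = \lambda_{d-1} = \cdots = \lambda_2 = 0$, after which the top-row equation becomes $\lambda_1 b_{11} = k$, which is equivalent to $b_{11} \mid k$ and imposes no constraint on $b_{12}, \ldots, b_{1d}$. Hence, for each divisor $a$ of $k$, setting $b_{11} = a$ gives $a^{d-1}$ free choices of the $b_{1j}$ ($j \geq 2$), and the lower-right $(d-1)\times(d-1)$ block is an arbitrary HNF matrix for a sublattice of $\mathbb{Z}^{d-1}$ of index at most $x/a$. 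Using $N_{d-1}(x/a) = O_d((x/a)^{d-1})$ from (\ref{eq:zeta}), the total is
$$\sum_{a \mid k} a^{d-1} \cdot O_d\bigl((x/a)^{d-1}\bigr) = O_d\bigl(\tau(k)\, x^{d-1}\bigr) = O_r(x^{d-1}),$$
and summing over the $O_{d,r}(1)$ choices of $v$ yields the bound. I do not anticipate any serious obstacle; the only step worth pausing over is the $\GL_d(\mathbb{Z})$ change of basis that puts $v$ in the normalised form $k e_1$, which relies on the standard fact that a primitive vector in $\mathbb{Z}^d$ extends to a $\mathbb{Z}$-basis.
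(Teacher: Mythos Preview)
Your proof is correct and in fact yields the sharper bound $O_{d,r}(x^{d-1})$, without the $\log x$ factor.

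Your approach differs from the paper's at one key point: the $\GL_d(\mathbb{Z})$ change of basis normalising the short vector to $k e_1$. The paper does not do this; it fixes an arbitrary $u$ with $\|u\|_1 \leq r$, assumes only that its nonzero coordinates lie among the first $r$, and then writes $u = \sum_{j=1}^r \lambda_j B_j$ in terms of the HNF columns. It proves inductively that $|\lambda_{r-k}| \leq 2^{k-1} r$, so there are $O_r(1)$ choices for $(\lambda_1,\ldots,\lambda_r)$; for each such choice, one of the first-row entries $b_{1,j_1}$ is determined by the others, and summing over the remaining free parameters gives $O_{d,r}(x^{d-1}\log x)$, the logarithm coming from a harmonic-type sum $\sum_{c_2 \leq x} c_2^{-1}$. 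Your normalisation collapses this analysis entirely: once $u = k e_1$, back-substitution forces $\lambda_2 = \cdots = \lambda_d = 0$ and the single remaining constraint is $b_{11} \mid k$, after which the count factors cleanly as $\sum_{a \mid k} a^{d-1} N_{d-1}(x/a)$. The gain is both a shorter argument and the removal of the $\log x$; the only cost is invoking the standard fact that a primitive integer vector extends to a $\mathbb{Z}$-basis, whereas the paper's argument stays entirely inside the explicit HNF parametrisation.
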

\begin{proof}
Fix an element $u \in \mathbb{Z}^d$ with $0 < \|u\|_1 \leq r$. We shall bound the number of sublattices of $\mathbb{Z}^d$ with index at most $x$, which contain $u$. Note that $u = (u^{(1)},\ldots,u^{(d)})$ has at most $r$ non-zero coordinates. By symmetry, we may assume that $u^{(i)} = 0$ for all $i > r$. If a lattice $L$ (with index at most $x$) contains $u$, then the corresponding matrix $B$ has $u \in \langle B \rangle$, so there exist $\lambda_1,\lambda_2,\ldots,\lambda_r \in \mathbb{Z}$ such that
$$u = \sum_{j=1}^{r} \lambda_j \sum_{i =1}^{j} b_{ij}e_i.$$
Equating $i$-coordinates, we obtain
$$u^{(i)} = \sum_{j=i}^{r} \lambda_j b_{ij}\quad (1 \leq i \leq r).$$
Therefore, we have
$$\left|\sum_{j=i}^{r} \lambda_j b_{ij}\right| = |u^{(i)}| \leq r\quad (1 \leq i \leq r).$$
In particular, $|\lambda_r b_{rr}| \leq r$, so $|\lambda_r| \leq r$ (since $b_{rr} \geq 1$). We make the following.
\begin{claim}
We have $|\lambda_{r-k}| \leq 2^{k-1}r$ for all $k \geq 1$.
\end{claim}
\begin{proof}[Proof of claim.]
By induction on $k$. For $k=1$, we have
$$|\lambda_{r-1} b_{r-1,r-1} + \lambda_r b_{r,r-1}| = |u^{(r-1)}| \leq r,$$
so
$$|\lambda_{r-1}| b_{r-1,r-1} \leq r + |\lambda_r| b_{r,r-1} \leq r + r(b_{r-1,r-1}-1) = rb_{r-1,r-1},$$
so $|\lambda_{r-1}| \leq r$ (since $b_{r-1,r-1} \geq 1$), as needed. For the induction step, suppose that $k \geq 2$ and $|\lambda_{r-l}| \leq 2^{l-1} r$ for all $l < k$. We have
$$\left|\sum_{j=r-k}^{r} \lambda_j b_{ij}\right| = |u^{(r-k)}| \leq r,$$
so
\begin{align*} |\lambda_{r-k}| b_{r-k,r-k} & \leq r+\sum_{j=r-k+1}^{r} |\lambda_j| b_{j,r-k} \\
& \leq r + \left(\sum_{j=r-k+1}^{r} |\lambda_j|\right) (b_{r-k,r-k}-1)\\
& \leq r+(1+1+2+\ldots+2^{k-2})r(b_{r-k,r-k}-1)\\
& = r+2^{k-1}r(b_{r-k,r-k}-1)\\
& \leq 2^{k-1} r b_{r-k,r-k},\end{align*}
so $|\lambda_{r-k}| \leq 2^{k-1}r$, as required.
\end{proof}
It follows that there are at most
$$(2r+1)\prod_{k=1}^{r-1}(2^{k}r+1) = O_r(1)$$
choices for $(\lambda_1,\ldots,\lambda_r)$. Fix one such choice. Since $u \neq 0$, not all of $\lambda_1,\ldots,\lambda_r$ are zero. Let
$$J = \{j:\ \lambda_j \neq 0\} = \{j_1,j_2,\ldots,j_s\}.$$
Then we have
\begin{equation} \label{eq:small-modulus} \left|\sum_{l=1}^{s} \lambda_{j_l} b_{1,j_l}\right| \leq r.\end{equation}
Observe that once we have chosen $b_{d,d},b_{d-1,d-1},\ldots,b_{22}$, there are at most $x/(b_{2,2} b_{3,3} \ldots b_{d,d})$ choices for each $b_{1,j}$ with $j \neq j_1$. Once we have chosen each $b_{1,j}$ with $j \neq j_1$, $b_{1,j_1}$ is determined by the equation
$$u^{(1)} = \sum_{j=1}^{r} \lambda_j b_{1,j}.$$
Hence, summing over all the $O_r(1)$ possible choices of $(\lambda_1,\ldots,\lambda_r)$, the number of sublattices of $\mathbb{Z}^d$ with index at most $x$ and containing $u$ is at most
\begin{align*} & O_{r}(1) \sum_{c_2 \ldots c_d \leq x} c_2^{d-2} c_3^{d-3} \ldots c_{d-2}^2 c_{d-1} (x/(c_2 c_3 \ldots c_d))^{d-1}\\
 =\ & O_r(1) x^{d-1} \sum_{c_2 c_3 \ldots c_d \leq x} c_2^{-1} c_3^{-2} \ldots c_{d-1}^{-(d-2)} c_{d}^{-(d-1)}\\
 \leq\ & O_{d,r}(1) x^{d-1} \log x.
\end{align*}
Crudely, the number of choices for $u$ is at most
$$\sum_{i=1}^{r} {d \choose i} (2r+1)^i = O_{d,r}(1),$$
since $u$ has at most $r$ non-zero coordinates, and each of these has modulus at most $r$. Hence, the total number of sublattices of $\mathbb{Z}^d$ with index at most $x$ and minimum distance at most $r$ is $O_{d,r}(1) x^{d-1} \log x$, as required.
\end{proof}

Our aim is now to show that there are very few conjugacy classes of subgroups $\Gamma$ of $\Aut(\mathbb{L}^d)$ with $|\mathbb{Z}^d/\Gamma| \leq x$ and $D(\Gamma) \geq 2r^{*}(d)+2$, and which are {\em not} conjugacy classes of pure-translation subgroups. (See Theorem \ref{thm:enumeration} for the definition of $r^{*}(d)$.)

We need the following simple claim.
\begin{claim}
\label{claim:involution}
If $\Gamma \leq \Aut(\mathbb{L}^d)$ with $D(\Gamma) \geq d+1$, then $-\Id \notin P_{\Gamma}$.
\end{claim}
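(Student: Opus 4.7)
The plan is to argue by contradiction: assume $-\Id \in P_{\Gamma}$, and exhibit a nontrivial element of $\Gamma$ whose displacement is at most $d$, contradicting $D(\Gamma) \geq d+1$.

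First I would unpack what $-\Id \in P_{\Gamma}$ means. By the definition of the point group and the fact that $\Gamma \leq \Aut(\mathbb{L}^d)$, there exists $c \in \mathbb{Z}^d$ such that the element $\gamma := t_c \circ (-\Id) \in \Gamma$, acting on $\mathbb{R}^d$ by $\gamma(x) = c - x$. Note $\gamma \neq \Id$ (otherwise $x = c - x$ for every $x$, which is impossible in dimension $d \geq 1$), so $\gamma \in \Gamma \setminus \{\Id\}$.

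Next I would bound the displacement of $\gamma$. For any $v \in \mathbb{Z}^d$, using the graph metric on $\mathbb{L}^d$,
\[
d_{\mathbb{L}^d}(v,\gamma(v)) = \|v - (c-v)\|_1 = \|2v - c\|_1 = \sum_{i=1}^d |2v_i - c_i|.
\]
The key observation is that we are free to choose each coordinate $v_i \in \mathbb{Z}$ independently to minimise $|2v_i - c_i|$: if $c_i$ is even, pick $v_i = c_i/2$ to make the term $0$; if $c_i$ is odd, pick $v_i = (c_i \pm 1)/2$ to make the term $1$. Hence there exists $v \in \mathbb{Z}^d$ with
\[
\|2v - c\|_1 \leq \#\{i : c_i \text{ is odd}\} \leq d.
\]

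Combining these gives $d_{\mathbb{L}^d}(v,\gamma(v)) \leq d$ for this choice of $v$, and therefore $D(\Gamma) \leq d$. This contradicts the hypothesis $D(\Gamma) \geq d+1$, so $-\Id \notin P_{\Gamma}$. There is no real obstacle here: the only point worth being careful about is ensuring $\gamma \neq \Id$ (so that it genuinely contributes to the minimum in the definition of $D(\Gamma)$), and that $c$ lies in $\mathbb{Z}^d$ rather than just $\mathbb{R}^d$, which follows from $\gamma$ being an automorphism of $\mathbb{L}^d$ (and hence mapping $0 \in \mathbb{Z}^d$ to $c \in \mathbb{Z}^d$).
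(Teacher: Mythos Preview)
Your proof is correct and follows essentially the same approach as the paper: both assume $-\Id \in P_\Gamma$, obtain $\gamma(x) = c - x$ (with $c \in \mathbb{Z}^d$) in $\Gamma$, and then choose $v_i = \lfloor c_i/2 \rfloor$ so that $\|2v - c\|_1 \leq d$, contradicting $D(\Gamma) \geq d+1$. Your version is a bit more explicit (noting $\gamma \neq \Id$ and why $c \in \mathbb{Z}^d$), but the core argument is identical.
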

\begin{proof}
Let $\Gamma \leq \Aut(\mathbb{L}^d)$ and suppose that $-\Id \in P_{\Gamma}$. Then $\gamma: x \mapsto v - x \in \Gamma$ for some $v \in \mathbb{Z}^d$. Define $w_i = \lfloor v_i / 2 \rfloor$ for each $i \in [d]$. Then $d_{\mathbb{L}^d} (w,\gamma(w)) \leq d$, so $D(\Gamma) \leq d$. 
\end{proof}
By Corollaries \ref{corr:num-2} and \ref{corr:num-d}, for each $d \geq 2$, the finite, connected graphs that are $r$-locally $\mathbb{L}^d$ (where $r \geq 2$ if $d=2$ and $r \geq 3$ otherwise), correspond to subgroups $\Gamma \leq \Aut(\mathbb{L}^d)$ with $D(\Gamma) \geq 2r+2$. If $r \geq r^*(d)$, where $r^*(d)$ as defined as in Theorem \ref{thm:enumeration}, then we have $D(\Gamma) \geq 2r+2 \geq d+1$. Hence, all the subgroups $\Gamma$ relevant to us have $-\Id \notin P_{\Gamma}$. We remark that this is the only place where we rely upon $r^*(d)$ growing linearly with $d$; the rest of our proof works with $r_0(d): = 2+1_{\{d \geq 3\}}$ in place of $r^*(d)$. The subgroups $\Gamma \leq \Aut(\mathbb{L}^d)$ with $|\mathbb{Z}^d/\Gamma| < \infty$ and $P_{\Gamma} = \{\Id,-\Id\}$ are precisely those whose elements are translations and involutions of the form $x \mapsto v-x$; in this case, $\mathbb{L}^d/\Gamma$ is $r$-locally $\mathbb{L}^d$ if and only if the lattice of translations of $\Gamma$ has minimum distance at least $2r+2$, and whenever $(x \mapsto x-v) \in \Gamma$, $v$ has at least $2r+2$ odd components. The number of conjugacy-classes of such subgroups with $|\mathbb{Z}^d/\Gamma| \leq x$ is $\Theta(x^d)$, as in the pure-translation case, but the enumeration of conjugacy-classes of these subgroups (to the required degree of precision) is rather long-winded, so for brevity and clarity of exposition we prefer to rule them out by taking $r^*(d) \geq \lceil (d-1)/2 \rceil$.

For brevity, let us say that a subgroup $\Gamma \leq \Aut(\mathbb{L}^d)$ with $|\mathbb{Z}^d/\Gamma| < \infty$ is {\em highly symmetric} if $P_{\Gamma} \setminus \{\Id, -\Id\} \neq \emptyset$. Recall that $\Gamma$ is a pure-translation subgroup if and only if $P_{\Gamma} = \{\Id\}$. Hence, all the subgroups $\Gamma$ relevant to us are either pure-translation subgroups or are highly symmetric.

Our next lemma says that if two subgroups of $\Aut(\mathbb{L}^d)$ are conjugate via a translation by a vector in $\mathbb{R}^d$, then this vector can be taken to have entries in $(\mathbb{Z} \cup (\mathbb{Z}+1/2))^d$, implying that when we restrict to subgroups of $\Aut(\mathbb{L}^d)$, the equivalence classes supplied by Lemma \ref{lemma:fin} (where two crystallographic groups are `equivalent' if they are conjugate via a translation by a vector in $\mathbb{R}^d$) split into at most $2^d$ conjugacy classes in $\Aut(\mathbb{L}^d)$.

\begin{claim}
\label{claim:half}
Let $\Gamma_1,\Gamma_2 \leq \Aut(\mathbb{L}^d)$ with $\Gamma_2 = t \Gamma_1 t^{-1}$ for some translation $t$ by a vector in $\mathbb{R}^d$. Then there exists $b \in \mathbb{R}^d$ such that $2b \in \mathbb{Z}^d$ and $\Gamma_2 = t_b \Gamma_1 t_{-b}$, where $t_b: x \mapsto x+b$ denotes translation by $b$.
\end{claim}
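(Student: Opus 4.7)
The plan is to write $t = t_a$ with $a \in \mathbb{R}^d$ and reduce the claim to the following subproblem: construct $b \in \tfrac{1}{2}\mathbb{Z}^d$ such that $w := a - b$ is $P_{\Gamma_1}$-invariant (i.e., $\sigma(w) = w$ for every $\sigma \in P_{\Gamma_1}$). If this is done, then for each $\gamma = t_c \sigma \in \Gamma_1$ (with $c \in \mathbb{Z}^d$, $\sigma \in P_{\Gamma_1}$) invariance gives $a - \sigma(a) = b - \sigma(b)$, and hence
$$
t_a \gamma t_{-a} \;=\; t_{c + a - \sigma(a)} \sigma \;=\; t_{c + b - \sigma(b)} \sigma \;=\; t_b \gamma t_{-b},
$$
so $\Gamma_2 = t_a \Gamma_1 t_{-a} = t_b \Gamma_1 t_{-b}$ as required. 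The crucial input for constructing $b$ is that $\Gamma_2 \le \Aut(\mathbb{L}^d)$ forces $a - \sigma(a) \in \mathbb{Z}^d$ for every $\sigma \in P_{\Gamma_1}$, which is immediate from $t_a(t_c \sigma)t_{-a} = t_{c + a - \sigma(a)}\sigma$.

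I would construct $b$ orbit-by-orbit under the natural action of $P_{\Gamma_1}$ on the signed index set $\{\pm 1, \ldots, \pm d\}$. Each $P_{\Gamma_1}$-orbit $O$ is either \emph{symmetric} ($O = -O$) or \emph{antisymmetric} ($O \cap (-O) = \emptyset$); let $I \subseteq \{1,\ldots,d\}$ denote its underlying unsigned orbit. In the symmetric case, for every $i \in I$ some $\sigma \in P_{\Gamma_1}$ satisfies $\sigma(e_i) = -e_i$, and $a - \sigma(a) \in \mathbb{Z}^d$ then forces $2 a_i \in \mathbb{Z}$; I set $b_i := a_i$ on $I$, which lies in $\tfrac{1}{2}\mathbb{Z}$ and makes $(a-b)|_I = 0$. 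In the antisymmetric case, I fix $i_0 \in I$ with $+i_0 \in O$ and for each $j \in I$ define $\epsilon(j) \in \{\pm 1\}$ to be the unique sign with $\epsilon(j) \cdot j \in O$ (well-defined by antisymmetry); choosing $\sigma_j \in P_{\Gamma_1}$ with $\sigma_j(e_{i_0}) = \epsilon(j)\, e_j$ and applying the hypothesis to $\sigma_j$ yields $a_j - \epsilon(j)\, a_{i_0} \in \mathbb{Z}$, so I set $b_{i_0} := 0$ and $b_j := a_j - \epsilon(j)\, a_{i_0}$ for $j \in I \setminus \{i_0\}$, giving $b|_I \in \mathbb{Z}^I \subseteq \tfrac{1}{2}\mathbb{Z}^I$ and $(a-b)_j = \epsilon(j)\, a_{i_0}$ for every $j \in I$.

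The only nontrivial step, which I expect to be the main (but modest) obstacle, is verifying that this $a - b$ is invariant under \emph{every} $\sigma \in P_{\Gamma_1}$, not merely the chosen representatives $\sigma_j$ on antisymmetric orbits. If $\sigma \in P_{\Gamma_1}$ acts by $\sigma(e_i) = \eta_i\, e_{\pi(i)}$, then $\sigma$ permutes $O$ and sends $\epsilon(i) \cdot i$ to $\epsilon(i)\eta_i\, \pi(i) \in O$, so by the defining property of $\epsilon$ we obtain the key identity $\eta_i = \epsilon(i)\,\epsilon(\pi(i))$. A direct computation then gives
$$
(\sigma(a-b))_j \;=\; \eta_{\pi^{-1}(j)}\,(a-b)_{\pi^{-1}(j)} \;=\; \epsilon(\pi^{-1}(j))^{2}\,\epsilon(j)\, a_{i_0} \;=\; \epsilon(j)\, a_{i_0} \;=\; (a-b)_j
$$
on each antisymmetric $I$ (the symmetric case being trivial since $(a-b)|_I = 0$), which confirms the desired $P_{\Gamma_1}$-invariance and completes the proof.
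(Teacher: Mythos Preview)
Your proof is correct, and it takes a genuinely different route from the paper's argument.

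The paper proceeds by \emph{rounding}: it picks $b \in \tfrac{1}{2}\mathbb{Z}^d$ with $|b_i - a_i| \leq \tfrac{1}{4}$ for all $i$ (together with a tie-breaking rule forcing $b_i \in \mathbb{Z}$ whenever $a_i \pm \tfrac{1}{4} \in \mathbb{Z}$), sets $\Gamma_3 = t_b \Gamma_1 t_{-b}$, checks that $\Gamma_3 \leq \Aut(\mathbb{L}^d)$, and then shows $\Gamma_3 = \Gamma_2$ by verifying that $w := (b-a) - A(b-a)$ satisfies $|w_i| \leq \tfrac{1}{2}$ and $w \in \mathbb{Z}^d$, hence $w = 0$, for every linear part $A$ arising from $\Gamma_2$. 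Your approach is instead \emph{structural}: you analyse the $P_{\Gamma_1}$-orbits on the signed index set $\{\pm 1,\ldots,\pm d\}$ and build $b$ orbit-by-orbit so that $a-b$ is $P_{\Gamma_1}$-invariant, which immediately gives $t_a \gamma t_{-a} = t_b \gamma t_{-b}$ for every $\gamma \in \Gamma_1$. The orbit identity $\eta_i = \epsilon(i)\,\epsilon(\pi(i))$ on antisymmetric orbits is exactly the right ingredient, and your verification of invariance is clean.

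Each approach has its own advantage. The paper's rounding argument is more elementary and uniform in that it never refers to the point group explicitly; the slightly delicate tie-breaking is what makes the $|w_i| \leq \tfrac{1}{2}$ bound go through. Your argument is more conceptual: it explains \emph{why} half-integers suffice (precisely because $2a_i \in \mathbb{Z}$ is forced only on coordinates lying in symmetric orbits, via an element sending $e_i \mapsto -e_i$), and it even yields the mild refinement that $b_i$ may be taken in $\mathbb{Z}$ on all antisymmetric orbits.
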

\begin{proof}
Let $t:x \mapsto x+a$. For any vector $v \in \mathbb{R}^d$, let $t_v:x \mapsto x+v$ denote translation by $v$. Choose $b \in \mathbb{R}^d$ such that $2b \in \mathbb{Z}^d$, $|b_i-a_i| \leq 1/4$ for all $i \in [d]$, and $b_i \in \mathbb{Z}$ if $a_i \pm 1/4 \in \mathbb{Z}$. Let $\Gamma_3 = t_{b}\Gamma_1 t_{-b} = t_{b-a} \Gamma_2 t_{-(b-a)}$. We first observe that $\Gamma_3 \leq \Aut(\mathbb{L}^d)$. Indeed, let $\gamma \in \Gamma_1$; it suffices to prove that $t_b \gamma t_{-b} \in \Aut(\mathbb{L}^d)$. Write $\gamma(x) = Ax + c$, where $A \in B_d$ and $c \in \mathbb{Z}^d$. Since $t_b \gamma t_{-b}$ is an isometry of $\mathbb{R}^d$, it suffices to prove that $(t_b \gamma t_{-b})(x) \in \mathbb{Z}^d$ for all $x \in \mathbb{Z}^d$, i.e.\
$$b+Ax-Ab +c \in \mathbb{Z}^d \quad \forall x \in \mathbb{Z}^d.$$
This is true if and only if $b-Ab \in \mathbb{Z}^d$. Note that $\Gamma_2 = t_a \Gamma_1 t_{-a} \leq \Aut(\mathbb{L}^d)$, and therefore $a-Aa \in \mathbb{Z}^d$. Since $A \in B_d$, for each $i \in [d]$ there exists $j \in [d]$ and $\delta \in \{-1,1\}$ such that $(Ax)_i = \delta x_j$ for all $x \in \mathbb{Z}^d$. We have $(a-Aa)_i = a_i - \delta a_j \in \mathbb{Z}$ (since $a-Aa \in \mathbb{Z}^d$), and therefore, by our choice of $b$, we have $(b-Ab)_i = b_i - \delta b_j \in \mathbb{Z}$. Hence, $b-Ab \in \mathbb{Z}^d$, as required.

It suffices now to prove that $\Gamma_3 = \Gamma_2$. Let $x \in \mathbb{Z}^d$ and $\gamma \in \Gamma_2$. Write $\gamma(x) = Ax + c$ where $A \in B_d$ and $c \in \mathbb{Z}^d$, and define $\epsilon = b-a \in \mathbb{R}^d$. Define
$$w = (t_{b-a} \gamma t_{-(b-a)})(x) - \gamma(x) = (t_{\epsilon} \gamma t_{-\epsilon})(x) -\gamma(x).$$
Since $t_{b-a} \Gamma_2 t_{-(b-a)} = \Gamma_3 \leq \Aut(\mathbb{L}^d)$, we have $w \in \mathbb{Z}^3$. Moreover, we have
$$ w = (t_{\epsilon}  \gamma t_{-\epsilon})(x) - \gamma(x) = \epsilon+ A(x-\epsilon) + c - Ax - c = \epsilon-A\epsilon.$$
For each $i \in [d]$, we have
$$|w_i| = |(\epsilon - A\epsilon)_i| = |\epsilon_i - (A\epsilon)_i| \leq |\epsilon_i| + |(A\epsilon)_i| = |\epsilon_i|+|\epsilon_j|$$
for some $j \in [d]$. By our choice of $b$, we have $|\epsilon_k| \leq 1/4$ for all $k \in [d]$, and therefore $|w_i| \leq 1/2$ for all $i \in [d]$. Since $w \in \mathbb{Z}^d$, it follows that $w=0$. Hence, 
$t_{b-a} \gamma t_{-(b-a)} = \gamma$ for all $\gamma \in \Gamma_2$, and therefore $\Gamma_3 = \Gamma_2$, as required.
\end{proof}

Our next result says that there are very few conjugacy-classes of highly symmetric subgroups $\Gamma \leq \Aut(\mathbb{L}^d)$ with $|\mathbb{Z}^d/\Gamma| \leq x$ (compared to the number of conjugacy-classes of pure-translation subgroups $\Gamma \leq \Aut(\mathbb{L}^d)$ with $|\mathbb{Z}^d/\Gamma| \leq x$).

\begin{corollary}
\label{corr:non-translation}
Let $d \in \mathbb{N}$ with $d \geq 2$. The number of conjugacy-classes of highly symmetric subgroups $\Gamma \leq \Aut(\mathbb{L}^d)$ with $|\mathbb{Z}^d/\Gamma| \leq x$ is at most $O_{d}(1) x^{d-1+O(1/\log \log x)}$.
\end{corollary}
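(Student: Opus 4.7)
The plan is to parametrize each highly symmetric subgroup $\Gamma$ by the triple $(L_\Gamma, P_\Gamma, \text{cocycle class})$ and to bound the number of choices at each level, gluing together all of the structural ingredients built up in the previous subsection.

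First I would use Fact \ref{fact:point-group-finite} and a standard orbit-counting argument to show that the hypothesis $|\mathbb{Z}^d/\Gamma| \leq x$ forces $L_\Gamma$ to have rank $d$ and satisfy $|\mathbb{Z}^d / L_\Gamma| \leq |P_\Gamma| \cdot x \leq 2^d d! \cdot x$. In particular $\Gamma$ is a $d$-dimensional crystallographic group, so the point group, lattice of translations, and cocycle theory developed in Section \ref{subsection:top} all apply.

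Next I would use the ``highly symmetric'' hypothesis: by definition there is some $\sigma \in P_\Gamma \setminus \{\pm \Id\}$, and Lemma \ref{lemma:action} then tells us $\sigma(L_\Gamma) = L_\Gamma$. So $L_\Gamma$ is a $\sigma$-invariant sublattice of $\mathbb{Z}^d$ of index at most $2^d d! \cdot x$ for some $\sigma \in B_d \setminus \{\pm \Id\}$. Lemma \ref{lemma:asym}, applied once for each of the $|B_d| - 2 = O_d(1)$ possible choices of $\sigma$ (with $x$ replaced by $2^d d! \cdot x$), then bounds the number of possible $L_\Gamma$ by $O_d(1) \cdot x^{d-1 + O(1/\log\log x)}$.

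For each such lattice, the number of possible point groups $P_\Gamma \leq B_d$ is at most $2^{|B_d|} = O_d(1)$, so it remains to bound, for each fixed pair $(L,P)$, the number of conjugacy classes in $\Aut(\mathbb{L}^d)$ of subgroups $\Gamma$ with $L_\Gamma = L$ and $P_\Gamma = P$. Lemma \ref{lemma:fin} supplies a bound of $k(d)$ on the number of such subgroups up to conjugation by arbitrary real translations, and Claim \ref{claim:half} lets us upgrade this: any two such subgroups that are conjugate via a real translation are already conjugate via a translation by a vector in $\tfrac{1}{2}\mathbb{Z}^d$. Since $[T(\tfrac{1}{2}\mathbb{Z}^d) : T(\mathbb{Z}^d)] = 2^d$, each $T(\tfrac{1}{2}\mathbb{Z}^d)$-conjugacy class splits into at most $2^d$ $T(\mathbb{Z}^d)$-conjugacy classes, hence into at most $2^d$ $\Aut(\mathbb{L}^d)$-conjugacy classes, giving an overall bound of $2^d k(d) = O_d(1)$. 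Multiplying the three bounds yields $O_d(1) \cdot x^{d-1 + O(1/\log\log x)}$, as required.

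The main technical obstacle is the last step: translating the ``conjugacy by arbitrary $\mathbb{R}^d$-translations'' equivalence coming from the cohomological classification in Lemma \ref{lemma:fin} into the finer ``$\Aut(\mathbb{L}^d)$-conjugacy'' we actually care about. Claim \ref{claim:half} is exactly what makes this bridge cheap (a loss of a mere factor $2^d$, which is $O_d(1)$); without it one might worry that restricting the allowed conjugating translations could inflate the count by a factor growing with $x$.
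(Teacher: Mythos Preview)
Your proposal is correct and follows essentially the same approach as the paper's proof: bound $|\mathbb{Z}^d/L_\Gamma|$ via the point group, use Lemma~\ref{lemma:asym} (together with Lemma~\ref{lemma:action} and the highly-symmetric hypothesis) to bound the number of lattices, then use Lemma~\ref{lemma:fin} and Claim~\ref{claim:half} to bound the number of conjugacy classes with a given $(L,P)$ by $2^d k(d)$. Your write-up is slightly more explicit than the paper's in justifying why $L_\Gamma$ must be $\sigma$-invariant and in spelling out the index-$2^d$ passage from $T(\tfrac{1}{2}\mathbb{Z}^d)$-conjugacy to $T(\mathbb{Z}^d)$-conjugacy, but the argument is the same.
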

\begin{proof}
Let $\Gamma \leq \Aut(\mathbb{L}^d)$ with $|\mathbb{Z}^d / \Gamma| \leq x$. Then, since $\Gamma/T_{\Gamma} \cong P_{\Gamma}$, we have
\begin{equation}\label{eq:orbits} |\mathbb{Z}^d/L_{\Gamma}| = |\mathbb{Z}^d/T_{\Gamma}| \leq |P_{\Gamma}||\mathbb{Z}^d/\Gamma| \leq 2^{d}d! x = O_d(x).\end{equation}
(We note that this holds whether or not $\Gamma$ is highly symmetric.) Now assume in addition that $\Gamma$ is highly symmetric. Then $L_{\Gamma}$ is invariant under some element of $B_d \setminus \{\Id,-\Id\}$, so by Lemma \ref{lemma:asym}, $L_{\Gamma}$ there are at most $O_{d}(1) x^{d-1+O(1/\log \log x)}$ possibilities for $L_{\Gamma}$. Trivially, since $P_{\Gamma} \leq B_d$, there are at most $2^{|B_d|} = 2^{d!2^d}$ possibilities for $P_{\Gamma}$. By Lemma \ref{lemma:fin}, for any fixed sublattice $L$ of $\mathbb{Z}^d$ and any fixed $P \leq B_d$, there are at most $k(d)$ subgroups $\Gamma \leq \Aut(\mathbb{L}^d)$ with $L_{\Gamma}=L$ and $P_{\Gamma}=P$, up to conjugation by translations (by vectors in $\mathbb{R}^d$). If $\Gamma_1,\Gamma_2 \leq \Aut(\mathbb{L}^d)$ and $\Gamma_2 = t \Gamma_1 t^{-1}$ for some translation $t:x \mapsto x +c \in T(\mathbb{R}^d)$, we need not have $c \in \mathbb{Z}^d$, but by Claim \ref{claim:half}, we may assume that $2c \in \mathbb{Z}^d$. Hence, up to conjugation by translations in $T(\mathbb{Z}^d)$, there are at most $2^d k(d)$ subgroups $\Gamma \leq \Aut(\mathbb{L}^d)$ with $L_{\Gamma}=L$ and $P_{\Gamma}=P$. Hence, there are at most 
$$O_{d}(1) x^{d-1+O(1/\log \log x)} \cdot 2^{d!2^d} \cdot 2^d \cdot k(d) = O_{d}(1) x^{d-1+O(1/\log \log x)}$$
possibilities for the conjugacy-class of $\Gamma$ in $\Aut(\mathbb{L}^d)$, as required.
\end{proof}

Putting everything together, we obtain the following two lemmas.
\begin{lemma}
\label{lemma:number-connected-2}
Let $r \geq 2$, and let $\gamma_{2,r}(n)$ denote the number of connected, unlabelled, $n$-vertex graphs which are $r$-locally $\mathbb{L}^2$. Then
$$\sum_{n \leq x} \gamma_{2,r}(n) = (1+O_{r}(x^{-1+O(1/\log \log x)}))\tfrac{1}{4} \zeta(2) x^2.$$
\end{lemma}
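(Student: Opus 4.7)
The plan is to apply Corollary \ref{corr:num-2} to convert $\sum_{n \leq x} \gamma_{2,r}(n)$ into the number $M(x)$ of conjugacy-classes of subgroups $\Gamma \leq \Aut(\mathbb{L}^2)$ satisfying $|\mathbb{Z}^2/\Gamma| \leq x$ and $D(\Gamma) \geq 2r+2$, and then to read off the asymptotics of $M(x)$ from Proposition \ref{prop:sublattice-orbits}, Corollary \ref{corr:non-translation}, and Lemma \ref{lemma:small-distance}.

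First I would observe that since $r \geq 2$, every relevant $\Gamma$ has $D(\Gamma) \geq 2r+2 \geq 6 > d+1 = 3$, so Claim \ref{claim:involution} forces $-\Id \notin P_\Gamma$. Consequently each such $\Gamma$ is either a pure-translation subgroup ($P_\Gamma = \{\Id\}$) or is highly symmetric in the sense that $P_\Gamma \setminus \{\Id,-\Id\} \neq \emptyset$, and the two cases contribute disjointly to $M(x)$. For the highly symmetric contribution, Corollary \ref{corr:non-translation} applied with $d = 2$ immediately yields at most $O(x^{1+O(1/\log\log x)})$ conjugacy-classes, which is absorbed by the error factor in the target estimate.

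For the pure-translation contribution, I would use Fact \ref{fact:pt}: these conjugacy-classes are in bijection with $B_2$-orbits of sublattices $L \leq \mathbb{Z}^2$ satisfying $|\mathbb{Z}^2/L| \leq x$ and minimum distance at least $2r+2$. Proposition \ref{prop:sublattice-orbits} gives
\[
|\tilde{\mathcal{L}}^{2}_{\leq x}| \;=\; (1 + O(x^{-1+O(1/\log\log x)}))\, c_2\, x^2,
\]
and Lemma \ref{lemma:small-distance} (applied with $d = 2$ and with $r$ replaced by $2r+1$) ensures that the number of sublattices of $\mathbb{Z}^2$ of index at most $x$ whose minimum distance is less than $2r+2$ is $O_r(x \log x)$; thus the same bound holds for their $B_2$-orbits. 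Subtracting this from $|\tilde{\mathcal{L}}^2_{\leq x}|$ leaves the pure-translation count agreeing with $|\tilde{\mathcal{L}}^2_{\leq x}|$ up to an $O_r(x \log x)$ error.

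Combining the two contributions, $M(x)$ equals $|\tilde{\mathcal{L}}^2_{\leq x}|$ up to an additive error of $O_r(x^{1+O(1/\log\log x)}) + O_r(x \log x)$, which is an $O_r(x^{-1+O(1/\log\log x)})$ relative error compared with the $\Theta(x^2)$ main term, producing the claimed form $(1 + O_r(x^{-1+O(1/\log\log x)})) \tfrac{1}{4}\zeta(2)\,x^2$. The only genuinely delicate point is the first step, namely verifying that the hypothesis $r \geq 2$ really does exclude the point group $\{\Id,-\Id\}$ case; everything else is a matter of bookkeeping the three error terms and checking that each is polynomially smaller than $x^2$ so that they may be folded into the single relative-error factor.
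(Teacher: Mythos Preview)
Your proposal is correct and follows essentially the same route as the paper's proof: translate via Corollary~\ref{corr:num-2}, use Claim~\ref{claim:involution} to split into pure-translation versus highly symmetric subgroups, and then invoke Proposition~\ref{prop:sublattice-orbits}, Lemma~\ref{lemma:small-distance}, and Corollary~\ref{corr:non-translation} in the case $d=2$ to handle the respective contributions and error terms. The only cosmetic difference is that you state the main term as $c_2 x^2$ rather than $\tfrac{1}{4}\zeta(2)\,x^2$; these should agree once $c_2$ is evaluated from the formula in Proposition~\ref{prop:sublattice-orbits}.
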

\begin{proof} 
The left-hand side is precisely the number of connected, unlabelled graphs on at most $x$ vertices, which are $r$-locally $\mathbb{L}^2$. By Corollary \ref{corr:num-2}, this is precisely the number of conjugacy-classes of subgroups $\Gamma \leq \Aut(\mathbb{L}^2)$ which have $|\mathbb{Z}^2/\Gamma| \leq x$ and which have minimum displacement at least $2r+2$. If $\Gamma$ has minimum displacement at least 4, then by Claim \ref{claim:involution}, $-\Id \notin P_{\Gamma}$, so either $\Gamma$ is a pure-translation subgroup, or else $\Gamma$ is highly symmetric. Applying Proposition \ref{prop:sublattice-orbits}, Lemma \ref{lemma:small-distance} and Corollary \ref{corr:non-translation} in the case $d=2$, we see that the aforesaid number is
$$|\tilde{\mathcal{L}}^2_{\leq x}| - O_r(x \log x) + O(x^{1+O(1/\log \log x)}) = (1+O_{r}(x^{-1+O(1/\log \log x)}))\tfrac{1}{4} \zeta(2) x^2,$$
as required.
\end{proof}

Similarly, we obtain the following. 
\begin{lemma}
\label{lemma:number-connected-d}
Let $d,r \in \mathbb{N}$ with $d \geq 3$ and $r \geq r^*(d)$. Let $\gamma_{d,r}(n)$ denote the number of connected, unlabelled, $n$-vertex graphs which are $r$-locally $\mathbb{L}^d$. Then
$$\sum_{n \leq x} \gamma_{d,r}(n) = (1+O_{d,r}(x^{-1+O(1/\log \log x)}))c_d x^d,$$
where
$$c_d = \frac{1}{2^{d-1}d!d}\prod_{i=2}^{d} \zeta(i).$$
\end{lemma}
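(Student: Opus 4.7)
The plan is to run exactly the same argument as in the proof of Lemma \ref{lemma:number-connected-2}, substituting the $d$-dimensional versions of each ingredient already established. First, I would rewrite the left-hand side as a count of conjugacy classes of subgroups of $\Aut(\mathbb{L}^d)$: since $d \geq 3$ and $r \geq r^*(d) \geq 3$, Corollary \ref{corr:num-d} gives
$$\sum_{n \leq x} \gamma_{d,r}(n) = \#\{\text{conj.\ classes of } \Gamma \leq \Aut(\mathbb{L}^d) : |\mathbb{Z}^d/\Gamma| \leq x,\ D(\Gamma) \geq 2r+2\}.$$

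Next, I would split these conjugacy classes according to the structure of the point group $P_\Gamma$. The hypothesis $r \geq r^*(d) \geq \lceil (d-1)/2 \rceil$ ensures that $2r+2 \geq d+1$, so by Claim \ref{claim:involution} we have $-\Id \notin P_\Gamma$ for every relevant $\Gamma$. Consequently every such $\Gamma$ is either a pure-translation subgroup or is highly symmetric in the terminology introduced just before Corollary \ref{corr:non-translation}. The contribution from highly symmetric subgroups is handled by Corollary \ref{corr:non-translation}, which bounds their number of conjugacy classes by $O_d(1) x^{d-1+O(1/\log\log x)}$, an error already subsumed by the claimed main term.

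For the pure-translation conjugacy classes I would start from Proposition \ref{prop:sublattice-orbits}, which counts all $B_d$-orbits of sublattices of $\mathbb{Z}^d$ of index at most $x$ as $(1+O_d(x^{-1+O(1/\log\log x)})) c_d x^d$. From this count I must then subtract the orbits whose lattice of translations has minimum distance less than $2r+2$ (since these correspond exactly to pure-translation subgroups with $D(\Gamma) < 2r+2$). By Lemma \ref{lemma:small-distance}, the number of such sublattices (before taking $B_d$-orbits) is $O_{d,r}(x^{d-1} \log x)$, which is again an allowable error term.

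Putting the three pieces together, the total becomes
$$(1+O_d(x^{-1+O(1/\log\log x)})) c_d x^d - O_{d,r}(x^{d-1} \log x) + O_d(x^{d-1+O(1/\log\log x)}) = (1+O_{d,r}(x^{-1+O(1/\log\log x)})) c_d x^d,$$
which is the required estimate. No step here presents a genuine obstacle: all of the serious work (the cohomological finiteness statement of Lemma \ref{lemma:fin}, the lattice-symmetry count in Lemma \ref{lemma:asym}, and the reduction from conjugacy in $\Aut(\mathbb{L}^d)$ to the data $(L_\Gamma, P_\Gamma)$ modulo translations in $T(\tfrac{1}{2}\mathbb{Z}^d)$ via Claim \ref{claim:half}) has already been done in the preceding lemmas; verifying that $r^*(d) \geq \lceil (d-1)/2 \rceil$ is precisely what unlocks Claim \ref{claim:involution} in arbitrary dimension is the only place where the linear-in-$d$ hypothesis on $r$ is actually used.
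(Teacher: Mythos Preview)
Your proposal is correct and follows essentially the same route as the paper's own proof: invoke Corollary~\ref{corr:num-d} to pass to conjugacy classes, use Claim~\ref{claim:involution} (via $2r^*(d)+2 \geq d+1$) to rule out $-\Id \in P_\Gamma$, then combine Proposition~\ref{prop:sublattice-orbits}, Lemma~\ref{lemma:small-distance} and Corollary~\ref{corr:non-translation} exactly as you describe. Your commentary on which earlier lemmas carry the real weight is accurate but goes slightly beyond what the paper writes out; the paper simply cites those three results and collects terms.
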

\begin{proof}
The left-hand side is the number of connected, unlabelled graphs on at most $x$ vertices, which are $r$-locally $\mathbb{L}^d$. By Corollary \ref{corr:num-d}, this is precisely the number of conjugacy-classes of subgroups $\Gamma \leq \Aut(\mathbb{L}^d)$ which have $|\mathbb{Z}^d/\Gamma| \leq x$ and which have minimum displacement at least $2r+2$. If $\Gamma$ has minimum displacement at least $2r^*(d) +2 \geq d+1$, then by Claim \ref{claim:involution}, $-\Id \notin P_{\Gamma}$, so either $\Gamma$ is a pure-translation subgroup, or else $\Gamma$ is highly symmetric. Applying Proposition \ref{prop:sublattice-orbits}, Lemma \ref{lemma:small-distance} and Corollary \ref{corr:non-translation}, we see that the aforesaid number is
\begin{align*} &|\tilde{\mathcal{L}}^d_{\leq x}| - O_{d,r}(x^{d-1} \log x) + O_{d}(x^{d-1+O(1/(\log \log x)})\\
=& (1+O_{d,r}(x^{-1+O(1/\log \log x)}))c_d x^d,
\end{align*}
as required.
\end{proof}

We now use a generating function argument to deduce Theorem \ref{thm:enumeration}. Recall that $a_{d,r}(n)$ denotes the number of unlabelled (possibly disconnected) graphs on $n$ vertices which are $r$-locally $\mathbb{L}^d$. We appeal to the following well-known fact (see for example \cite{flajolet-sedgewick}, page 29).

\begin{fact}
\label{fact:multisets}
Let $\mathcal{S}$ be a set, and let $w:\mathcal{S} \to \mathbb{N}$ be a function. (If $S \in \mathcal{S}$, we call $w(S)$ the {\em weight} of $S$.) Suppose that $\mathcal{S}$ contains exactly $\gamma(n)$ elements of weight $n$, where $\gamma(n) \in \mathbb{N} \cup \{0\}$ for each $n \in \mathbb{N}$. If $\mathcal{T}$ is a multiset of elements of $\mathcal{S}$, define the {\em weight} of $\mathcal{T}$ to be the sum of the weights of the elements of $\mathcal{T}$ (counted with their multiplicities). For each $n \in \mathbb{N}$, let $a(n)$ denote the number of weight-$n$ multisets of elements of $\mathcal{S}$. Define $a(0)=1$. Then the (ordinary) generating function of $a(n)$ satisfies
$$\sum_{n \geq 0} a_n z^n = \prod_{j=1}^{\infty} (1-z^{j})^{-\gamma(j)}.$$
\end{fact}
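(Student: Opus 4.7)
\medskip

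The plan is to prove the identity as one between formal power series in $\mathbb{Z}[[z]]$, via the classical ``product of geometric series'' bijection for counting multisets. First I would check that $a(n)$ is finite for each $n$: any multiset $\mathcal{T}$ with total weight $n$ has size at most $n$ (since every element of $\mathcal{S}$ has weight at least $1$) and every element of $\mathcal{T}$ lies in the finite set $\{s \in \mathcal{S}:\ w(s) \leq n\}$, which has cardinality $\sum_{j=1}^n \gamma(j) < \infty$. Hence $a(n) \in \mathbb{N} \cup \{0\}$ and the left-hand side is a well-defined element of $\mathbb{Z}[[z]]$.

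Next, I would set up the bijection. A multiset of elements of $\mathcal{S}$ is specified by a function $m : \mathcal{S} \to \mathbb{N} \cup \{0\}$, where $m(s)$ is the multiplicity of $s$, subject to $m$ having finite support (for finite-weight multisets). The total weight of the corresponding multiset is $\sum_{s \in \mathcal{S}} m(s) w(s)$. For each single element $s \in \mathcal{S}$ of weight $j$, the generating function enumerating the possible multiplicities of $s$, weighted by the weight contributed, is
\[
\sum_{k \geq 0} z^{jk} = (1-z^j)^{-1}.
\]
Since multiplicities are chosen independently across distinct elements of $\mathcal{S}$, the generating function for weight-$n$ multisets factorises as a product over $s \in \mathcal{S}$:
\[
\sum_{n \geq 0} a(n) z^n \;=\; \prod_{s \in \mathcal{S}} (1-z^{w(s)})^{-1} \;=\; \prod_{j=1}^{\infty} (1-z^j)^{-\gamma(j)},
\]
where in the last step I group together the $\gamma(j)$ factors corresponding to elements of weight $j$.

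The one technical point to verify is that the infinite product on the right is a well-defined element of $\mathbb{Z}[[z]]$ and that the factorisation above really does hold coefficientwise. This follows from the observation that, for any fixed $N$, the coefficient of $z^N$ in a finite truncation $\prod_{j=1}^{M} (1-z^j)^{-\gamma(j)}$ stabilises once $M \geq N$, since factors with $j > N$ contribute only $1 \pmod{z^{N+1}}$. The stabilised coefficient counts the number of tuples $(k_{j,i})_{1 \leq j \leq N,\ 1 \leq i \leq \gamma(j)}$ of non-negative integers with $\sum_{j,i} j\, k_{j,i} = N$, which is precisely $a(N)$ by the bijection above. This is essentially a routine formal-power-series manipulation, with no real obstacle; the only slightly delicate point is the infinite-product convergence, which is handled by the $z$-adic topology on $\mathbb{Z}[[z]]$.
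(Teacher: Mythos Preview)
Your argument is correct and is precisely the standard proof of this identity. The paper does not actually give a proof of Fact~\ref{fact:multisets}; it is stated as a well-known result with a reference to \cite{flajolet-sedgewick}, so there is nothing further to compare.
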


Applying this with $\mathcal{S}$ being the set of all finite, connected, unlabelled graphs that are $r$-locally $\mathbb{L}^d$, and with $w(G)$ being the number of vertices of a graph $G$, we obtain the following.
$$\sum_{n \geq 0} a_{d,r}(n) z^n = \prod_{j=1}^{\infty} (1-z^{j})^{-\gamma_{d,r}(j)}.$$

To estimate $a_{d,r}(n)$ from our knowledge of $\gamma_{d,r}(n)$, we use a variant of the following result of Brigham.

\begin{theorem}[Brigham, \cite{brigham}]
\label{thm:brigham}
Suppose $(b(n))_{n=0}^{\infty}$ is a sequence of non-negative integers with generating function satisfying
$$\sum_{n=0}^{\infty} b(n) z^n = \prod_{j=1}^{\infty} (1-z^j)^{-\gamma(j)},$$
where $(\gamma(j))_{j=1}^{\infty}$ is a sequence of non-negative integers satisfying
$$\sum_{j \leq x} \gamma(j) \sim K x^{u} (\log x)^{v}$$
for some constants $K >0,u >0$, $v \in \mathbb{R}$. Define
$$B(n) = \sum_{k=0}^{n} b(n).$$
Then
$$ \log B(n) \sim \frac{1}{u} (Ku \Gamma(u+2)\zeta(u+1))^{\frac{1}{u+1}} (u+1)^{\frac{u-v}{u+1}} n^{\frac{u}{u+1}}(\log n)^{\frac{v}{u+1}}.$$
If, in addition, every sufficiently large positive integer can be partitioned into integers in the set $\{n:\ \gamma(n) \geq 1\}$, then we have
$$\log b(n) \sim \frac{1}{u} (Ku \Gamma(u+2)\zeta(u+1))^{\frac{1}{u+1}} (u+1)^{\frac{u-v}{u+1}} n^{\frac{u}{u+1}}(\log n)^{\frac{v}{u+1}}.$$
(Here, $\Gamma$ denotes the usual $\Gamma$-function, and $\zeta$ the Riemann zeta function; all logarithms are to base $e$.)
\end{theorem}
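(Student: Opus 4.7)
The plan is the classical saddle-point/Meinardus approach to partition-type generating functions. Set $F(z) := \sum_{n \geq 0} b(n)z^n = \prod_{j \geq 1}(1-z^j)^{-\gamma(j)}$ and $A(y) := \sum_{j \leq y} \gamma(j)$, so by hypothesis $A(y) \sim Ky^u(\log y)^v$. The first step is to establish the one-sided asymptotic
$$\log F(e^{-t}) \sim K \Gamma(u+1)\zeta(u+1)\, t^{-u}(\log(1/t))^v \qquad (t \to 0^+). \qquad (\ast)$$
To do this I would expand $\log F(e^{-t}) = \sum_{j \geq 1} \gamma(j)\log\frac{1}{1-e^{-jt}} = \int_0^\infty \log\frac{1}{1-e^{-yt}}\, dA(y)$, integrate by parts, and substitute $y = x/t$ to obtain $\log F(e^{-t}) = \int_0^\infty A(x/t) \cdot \frac{e^{-x}}{1-e^{-x}}\, dx$. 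Inserting the asymptotic for $A$ and using the classical Mellin evaluation $\int_0^\infty \frac{x^u e^{-x}}{1-e^{-x}}\, dx = \Gamma(u+1)\zeta(u+1)$ yields $(\ast)$, with error terms controlled by splitting the $x$-range into a bulk region where the asymptotic for $A$ is accurate, a tail killed by the exponential decay of $e^{-x}/(1-e^{-x})$, and a neighbourhood of $0$ where the crude bound $A(y) \ll y^u(\log y)^v$ suffices.

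Given $(\ast)$, the upper bound on $\log B(n)$ is immediate from the trivial inequality $B(n) \leq e^{tn} F(e^{-t})$, valid for every $t > 0$ (since $b(k) \geq 0$ and $e^{-tk} \geq e^{-tn}$ for $k \leq n$). Writing $C := K \Gamma(u+1)\zeta(u+1)$ and optimising $tn + Ct^{-u}(\log(1/t))^v$ over $t$ gives a saddle point $t^* \sim (uC)^{1/(u+1)}(u+1)^{-v/(u+1)} n^{-1/(u+1)}(\log n)^{v/(u+1)}$; substituting $t^*$ back produces the required upper half of the stated asymptotic, after the algebraic identity $(u+1)u^{-u/(u+1)} C^{1/(u+1)}(u+1)^{-v/(u+1)} = \tfrac{1}{u}\bigl(Ku\Gamma(u+2)\zeta(u+1)\bigr)^{1/(u+1)}(u+1)^{(u-v)/(u+1)}$, which uses $\Gamma(u+2)=(u+1)\Gamma(u+1)$.

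For the matching lower bound on $\log B(n)$ I would invoke, as Brigham does, the Tauberian theorem of Hardy and Ramanujan from \cite{hardy-ramanujan-general}: having matched $\log F(e^{-t})$ to a regularly varying function of $t^{-1}$, that Tauberian theorem converts $(\ast)$ into the corresponding sharp lower bound. To upgrade from $\log B(n)$ to $\log b(n)$ under the partition hypothesis, observe first that without the hypothesis $b(n)$ can vanish on an infinite set (e.g.\ when $\gamma$ is supported on the even integers), so some such assumption is essential; with it, every sufficiently large $n$ admits a decomposition $n = n_1+\cdots+n_k$ with $\gamma(n_i)\geq 1$, and gluing $O(1)$ such short decompositions onto arbitrary multisets counted by $B(n-O(1))$ yields $b(n) \geq B(n-C)/p(C)$ for some constant $C$, whence $\log b(n) \sim \log B(n)$.

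The main obstacle is the Tauberian step: it is the sole non-elementary ingredient and is what governs the precision of the final estimate. The Mellin computation behind $(\ast)$, the saddle-point optimisation, and the transfer $B\to b$ are standard, and the algebraic simplification producing Brigham's constant is routine. The variant we actually need later (with an effective error term, not merely an asymptotic) is proved by the same route, replacing the Hardy--Ramanujan Tauberian input by the sharper theorem of Odlyzko \cite{odlyzko}; every other step goes through essentially unchanged.
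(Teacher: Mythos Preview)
The paper does not itself prove Theorem~\ref{thm:brigham}; it is quoted from Brigham, and the Appendix proves only the $v=0$ variant with an explicit error term (Theorem~\ref{thm:brigham-error}). Your sketch is correct and matches both Brigham's original argument and the paper's Appendix proof of the variant: estimate $\log F(e^{-t})$ as $t\to 0^+$, get the upper bound on $\log B(n)$ from the trivial inequality $B(n)\le e^{tn}F(e^{-t})$ at the saddle point, and get the matching lower bound from a Tauberian theorem (Hardy--Ramanujan in Brigham, Odlyzko in the paper's Appendix). The only cosmetic difference is that you obtain $(\ast)$ via the single integral $\int_0^\infty A(x/t)\,\tfrac{e^{-x}}{1-e^{-x}}\,dx$, whereas the Appendix expands $\log F(e^{-t})=\sum_{m\ge 1}\tfrac{1}{m}\phi(mt)$ and estimates term by term; the two computations are equivalent since $\int_0^\infty x^u\,\tfrac{e^{-x}}{1-e^{-x}}\,dx=\sum_{m\ge 1}\int_0^\infty x^u e^{-mx}\,dx=\Gamma(u+1)\zeta(u+1)$.

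One small slip: in your $B\to b$ transfer you write $b(n)\ge B(n-C)/p(C)$, but the correct inequality (as in the paper and in Brigham) is $b(n)\ge B(n-n_0)/(n-n_0)$, coming from $b(n)\ge b(m)$ for every $m\le n-n_0$. The division is by $n$, not by a fixed constant; this does not affect the asymptotic since $\log n = o(\log B(n))$.
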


Making a slightly different choice of the parameters in Brigham's proof (and appealing to a theorem of Odlyzko \cite{odlyzko} instead of to the theorem of Hardy and Ramanujan, Theorem A in \cite{hardy-ramanujan-general}, which Brigham uses), yields the following theorem in the case $v=0$. (This is an exercise in well-known techniques in Tauberian theory, but we give a proof in the Appendix, for the reader's convenience.)
\begin{theorem}
\label{thm:brigham-error}
Suppose $(b(n))_{n=0}^{\infty}$ is a sequence of non-negative integers with generating function satisfying
$$\sum_{n=0}^{\infty} b(n) z^n = \prod_{j=1}^{\infty} (1-z^j)^{-\gamma(j)},$$
where $(\gamma(j))_{j=1}^{\infty}$ is a sequence of non-negative integers satisfying
$$\sum_{j \leq x} \gamma(j) = (1+O(x^{-\epsilon}))K x^{u}.$$
for some constants $K >0,u >0,\epsilon \in (0,1]$. Define
$$B(n) = \sum_{k=0}^{n} a(n).$$
Then there exists $\delta >0$ such that
$$\log B(n) = (1+O(n^{-\delta}))\frac{1}{u} (Ku \Gamma(u+2)\zeta(u+1))^{\frac{1}{u+1}} (u+1)^{\frac{u}{u+1}} n^{\frac{u}{u+1}}.$$
If, in addition, every sufficiently large positive integer can be partitioned into integers in the set $\{n:\ \gamma(n) \geq 1\}$, then we have
$$\log b(n) = (1+O(n^{-\delta}))\frac{1}{u} (Ku \Gamma(u+2)\zeta(u+1))^{\frac{1}{u+1}} (u+1)^{\frac{u}{u+1}} n^{\frac{u}{u+1}}.$$
\end{theorem}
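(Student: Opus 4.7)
The plan is to adapt Brigham's proof of Theorem~\ref{thm:brigham} closely, retaining its Tauberian skeleton but substituting a theorem of Odlyzko~\cite{odlyzko} for the Hardy--Ramanujan result \cite{hardy-ramanujan-general} that Brigham invokes at the Tauberian transfer step. This substitution is what upgrades the $o(1)$ error term in Brigham's conclusion to a power-saving $O(n^{-\delta})$, provided we first redo the preliminary analytic step (estimating the generating function at $z=e^{-t}$ as $t\downarrow 0$) with a quantitative error term.

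Let $\Psi(z):=\sum_{n\geq 0}b(n)z^n$ and $\Phi(t):=\log \Psi(e^{-t})$ for $t>0$. Expanding via $-\log(1-y)=\sum_{k\geq1}y^k/k$ yields
$$\Phi(t) \;=\; \sum_{k=1}^\infty \frac{1}{k}\sum_{j=1}^\infty \gamma(j)e^{-jkt}.$$
Writing $\Gamma_\gamma(x):=\sum_{j\leq x}\gamma(j)=Kx^u+O(x^{u-\epsilon})$, partial summation in the form $\sum_j\gamma(j)e^{-jkt}=kt\int_0^\infty \Gamma_\gamma(x)e^{-xkt}\,dx$ combined with the elementary identity $\int_0^\infty x^s e^{-ax}\,dx=\Gamma(s+1)a^{-s-1}$ gives, uniformly for $kt\leq 1$,
$$\sum_{j=1}^\infty \gamma(j)e^{-jkt}\;=\;K\Gamma(u+1)(kt)^{-u}+O\!\bigl((kt)^{-u+\epsilon}\bigr),$$
while the tail $kt>1$ contributes only exponentially small in $kt$. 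Summing over $k$ and using $\sum_k k^{-(u+1)}=\zeta(u+1)$ yields
$$\Phi(t) \;=\; At^{-u}+O\!\bigl(t^{-u+\epsilon'}\bigr),\qquad A:=K\Gamma(u+1)\zeta(u+1),$$
as $t\downarrow 0$, for some $\epsilon'>0$ depending on $u$ and $\epsilon$. The same partial-summation argument applied to $\sum_j j^\ell \gamma(j)e^{-jkt}$ for $\ell=1,2$ produces analogous quantitative expansions for $\Phi'(t)$ and $\Phi''(t)$; these smoothness estimates are the inputs that Odlyzko's Tauberian theorem requires.

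Applying Odlyzko's theorem now transfers the expansion of $\Phi$ into one for $\log B(n)$. The saddle point is located where $\Phi'(t^*)+n=0$, giving $t^*=(uA/n)^{1/(u+1)}$; inserting this yields the saddle value
$$\Phi(t^*)+nt^*\;=\;(u+1)\,u^{-u/(u+1)}A^{1/(u+1)}n^{u/(u+1)}.$$
Substituting $A=K\Gamma(u+1)\zeta(u+1)$ and using $\Gamma(u+2)=(u+1)\Gamma(u+1)$, this equals exactly the constant claimed in the theorem; as a sanity check, $u=K=1$ reproduces the Hardy--Ramanujan asymptotic $\pi\sqrt{2n/3}$ for the usual partition function, as it should. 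Odlyzko's method carries the $O(t^{-u+\epsilon'})$ error through the saddle-point contour estimate and yields
$$\log B(n) \;=\; \bigl(1+O(n^{-\delta})\bigr)\tfrac{1}{u}\bigl(Ku\Gamma(u+2)\zeta(u+1)\bigr)^{1/(u+1)}(u+1)^{u/(u+1)}n^{u/(u+1)}$$
for some $\delta=\delta(u,\epsilon)>0$. The extension to $\log b(n)$ under the partitionability hypothesis is then routine: every large $n$ decomposes as a sum of elements of $\{j:\gamma(j)\geq 1\}$, so $b(n+j)\geq b(n)$ for each such $j$, hence $B(n)\leq n\cdot b(n)+O(1)$ and $\log b(n)=\log B(n)-O(\log n)$, an error absorbed into the $O(n^{-\delta})$ remainder provided $\delta<u/(u+1)$.

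The main obstacle will be carrying the quantitative error term through the Tauberian transfer. Brigham can afford to invoke the older and cruder Hardy--Ramanujan theorem only because he seeks an $o(1)$ conclusion; obtaining a power-saving error in $n$ requires Odlyzko's sharper saddle-point bookkeeping, supplied in turn with the full quantitative expansions of $\Phi$ and its first two derivatives derived above. The delicate point is choosing the truncation range around $t^*$ (of width $\sim t^*\cdot n^{-\eta}$ for an appropriate $\eta>0$) and verifying that the contribution of the integrand outside this range is genuinely negligible on the scale of the main term. Once these uniform estimates are in place, the exponent $\delta$ emerges as an explicit (though not optimised) function of $\epsilon$ and $u$.
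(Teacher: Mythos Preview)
Your proposal is correct and follows essentially the same route as the paper: quantitative expansion of $\Phi(t)=\log\Psi(e^{-t})$ and its derivatives via Abel/partial summation, then Odlyzko's explicit Tauberian inequalities (his Proposition~1 and Theorem~2) in place of the Hardy--Ramanujan result, and finally the monotonicity step for $b(n)$. The only minor addition in the paper's version is that Odlyzko's Theorem~2 also requires control of the \emph{third} derivative $\Phi'''$, not just $\Phi'$ and $\Phi''$; this is supplied by the same partial-summation argument you describe.
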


It is easy to see that $\gamma_{d,r}(n) \geq 1$ for all $n$ sufficiently large depending on $d$ and $r$; indeed, the Cayley graph of $\mathbb{Z}_n$ generated by the set
\begin{equation}
\label{eq:one-Cayley} \{1,(2r+1),(2r+1)^2,\ldots,(2r+1)^{d-1}\} \cup \{-1,-(2r+1),\ldots,-(2r+1)^{d-1}\}\end{equation}
is connected and $r$-locally $\mathbb{L}^d$ for all $n \geq (2r+1)^d$. (We thank an anonymous referee for pointing this out, since it simplifies the argument in our original manuscript.) Therefore, certainly, any sufficiently large positive integer can be partitioned into integers in the set $\{n:\ \gamma_{d,r}(n) \geq 1\}$. Hence, combining Lemmas \ref{lemma:number-connected-2} and \ref{lemma:number-connected-d} and Theorem \ref{thm:brigham-error} (with $u=d$, $K=c_d$ and $\epsilon = 1/2$) yields Theorem \ref{thm:enumeration}.

We now use some of the tools above to prove Theorem \ref{thm:sampling}. Much of the proof will be fairly standard to readers familiar with complexity theory, but we give it in full for the convenience of others.

\begin{proof}[Proof of Theorem \ref{thm:sampling}.]
Let $d,r \in \mathbb{N}$ with $d \geq 2$ and $r \geq 2+1_{\{d \geq 3\}}$, and let $n \in \mathbb{N}$. We write $\poly(n)$ to denote a quantity that is polynomial in $n$ (for fixed $d,r \in \mathbb{N}$). We sample the random graph $G_n(\mathbb{L}^d,r)$ as follows.

We first enumerate, in time $\poly(n)$, the {\em connected} unlabelled graphs on at most $n$ vertices that are $r$-locally $\mathbb{L}^d$. By Corollaries \ref{corr:num-2} and \ref{corr:num-d}, these are in (explicit) one-to-one correspondence with the conjugacy classes of subgroups $\Gamma \leq \Aut(\mathbb{L}^d)$ with minimum displacement at least $2r+2$ and with $|\mathbb{Z}^d/\Gamma| \leq n$, this correspondence being given by taking the quotient graph $\mathbb{L}^d/\Gamma$. If $\Gamma \leq \Aut(\mathbb{L}^d)$ with $|\mathbb{Z}^d/\Gamma| \leq n$, then by (\ref{eq:orbits}) we have $|\mathbb{Z}^d/T_{\Gamma}| \leq 2^d d! n$. Note that $T_{\Gamma}$ can be identified with the corresponding sublattice $\Lambda_{T_{
\Gamma}}$ of $\mathbb{Z}^d$ (which has index at most $2^d d! n$). Recall from (\ref{eq:matrix-corr}) that for any $x >0$, the sublattices of $\mathbb{Z}^d$ of index at most $x$, are in (explicit) one-to-one correspondence with upper-triangular, integer matrices $B=(b_{i,j})_{i,j \in [d]}$ with $0 \leq b_{ij} < b_{ii}$ for all $j > i$ and $\prod_{i=1}^{d} b_{ii}\leq x$. This correspondence is given by taking the sublattice of $\mathbb{Z}^d$ spanned (over $\mathbb{Z}$) by the columns of the corresponding matrix $B$:
\begin{equation}
\label{eq:corr-2}
B \leftrightarrow \text{Span}\left\{\sum_{i=1}^{j}b_{i,j}e_i:\ j \in [d]\right\}.
\end{equation}
We enumerate all the sublattices of $\mathbb{Z}^d$ with index at most $2^d d! n$ in time $\poly(n)$, by listing the corresponding matrices. Recall from Fact \ref{fact:aut-ld} that $\Aut(\mathbb{L}^d) = T(\mathbb{Z}^d) \rtimes B_d$. We can represent the elements of $B_d$ as $d$ by $d$ matrices with entires in $\{0,\pm 1\}$, and therefore we can enumerate them in time $O_d(1)$. For any subgroup $\Gamma \leq \Aut(\mathbb{L}^d)$ with $|\mathbb{Z}^d/\Gamma| < \infty$, we have $\Gamma / T_{\Gamma} \cong P_{\Gamma} \leq B_d$. Therefore, the normal subgroup $T_{\Gamma}$ has index at most $|B_d| = 2^d d!$ in $\Gamma$, which in turn, crudely, has index at most $(2^d d!)^2 n$ in $\Aut(\mathbb{L}^d)$, as does $T_{\Gamma}$. Hence, for each choice $\Delta$ of $T_{\Gamma}$, the corresponding subgroup $\Gamma$ must be such that
\begin{equation}
\Delta \lhd \Gamma,\quad \Gamma \cap T(\mathbb{Z}^d) = \Delta,\quad [\Gamma:\Delta] \leq 2^dd!.
\label{eq:weaker-condition}
\end{equation}
Let
$$\mathcal{F} = \{\Gamma \leq \Aut(\mathbb{L}^d):\ \text{(\ref{eq:weaker-condition}) holds for some subgroup }\Delta\text{ of translations satisfying }|\mathbb{Z}^d/\Delta| \leq 2^d d! n\},$$
i.e.\ $\mathcal{F}$ is the union (over all possible choices of $\Delta$) of the set of subgroups $\Gamma$ satisfying (\ref{eq:weaker-condition}). Note that some subgroups $\Gamma$ in $\mathcal{F}$ may have $|\mathbb{Z}^d/\Gamma|> n$, though all necessarily have $|\mathbb{Z}^d/\Gamma| \leq 2^d d! n$. The elements of $\mathcal{F}$ can, however, easily be enumerated in time $\poly(n)$. We may do this, crudely, by finding representatives for the (left) cosets of $\Delta$ in $\Gamma$ which are of the form $\sigma t$, where $\sigma \in B_d$ and $t = t_v$ is a translation by an element $v \in \mathbb{Z}^d$ satisfying the `box' condition
\begin{equation}
\label{eq:cuboid}
0 \leq v < b_{i,i} \leq n \quad \forall i \in [d],
\end{equation}
(Here, $B$ is the matrix corresponding to the lattice $\Lambda = \Lambda_{\Delta}$ of $\Delta$.) Indeed, for each possible index $M \leq 2^d d!$, and for tuples $(\sigma_1,\ldots,\sigma_M) \in (B_d)^{M}$ and $(t_1,\ldots,t_M) \in (T(\mathbb{Z}^d))^M$ satisfying (\ref{eq:cuboid}) and with $t_1 = \sigma_1 = \text{Id}$, observe that $\{\sigma_i t_i\}_{i \in [M]}$ forms a set of distinct (left) coset representatives of $\Delta$ as an index-$M$ normal subgroup of some $\Gamma \leq \Aut(\mathbb{L}^d)$ if and only if the following three conditions are satisfied:
\begin{enumerate}
\item[(i)] $\sigma_i(\Lambda) =\Lambda$ for all $i \in [M]$ (guarantees normality of $\Delta$ in $\Gamma$);
\item[(ii)] for all $i \in [M]$, there exists $j \in [M]$ such that $\sigma_i t_i \sigma_j t_j \in \Delta$ (guarantees closure of $\Gamma$ under taking inverses);
\item[(iii)] for all $i,j \in [M]$, there exists $k \in [M]$ such that $t_k^{-1} \sigma_k^{-1} \sigma_i t_i \sigma_j t_j \in \Delta$ (guarantees closure of $\Gamma$ under multiplication).
\end{enumerate}
For brevity, for any $m,d \in \mathbb{N}$ we define
$$\text{Box}_d(m) = \{x \mapsto x+v:\ 0 \leq v_i < m\ \forall i \in [d]\}.$$
The condition (\ref{eq:cuboid}) implies that the translation $t$ satisfies $t \in \textrm{Box}_d(n)$. Now, condition (i) can be checked by checking that for each $i \in [M]$, we have $\sigma_i(\mathbf{b}) \in \Lambda$ for each column $\mathbf{b}$ of $B$. This in turn can be done by solving (for $\mathbf{x} \in \mathbb{Z}^d$) the equation $B \mathbf{x} = \sigma_i(\mathbf{b})$, or verifying that no solution exists; solving this has complexity $O_d(\log n \log \log n)$, since integer division (for integers at most $n$) has complexity $O(\log n \log \log n)$ (see \cite{barrett,harvey-hoeven}). Hence, the total time required for checking condition (i) is $O_d(M \log n \log \log n) = \poly(n)$, summing over all $d$ columns of $B$ and over all $i \in [M]$. Condition (ii) can be checked in time $\poly(n)$ by checking, for each $i \in [M]$, whether there exists $j \in [M]$ with $\sigma_i \sigma_j = \text{Id}$ (which guarantees that $\sigma_i t_i \sigma_j t_j \in T(\mathbb{Z}_d)$), and then (if this holds), checking whether there exists such a $j$ for which the translation $\sigma_i t_i \sigma_j t_j$ is a translation by an element of $\Lambda$ (noting that $\sigma_i t_i \sigma_j t_j \in \text{Box}_d(2n)$). Condition (iii) can be checked in time $\poly(n)$, by checking, for each $i,j \in [M]$, whether there exists $k \in [M]$ with $\sigma_i \sigma_j = \sigma_k$ (which guarantees that $t_k^{-1}\sigma_k^{-1}\sigma_i t_i \sigma_j t_j \in T(\mathbb{Z}_d)$), and then (if this holds), checking whether there exists such a $k$ for which the translation $t_k^{-1} \sigma_k^{-1} \sigma_i t_i \sigma_j t_j$ is a translation by an element of $\Lambda$ (noting that $t_k^{-1} \sigma_k^{-1}\sigma_i t_i \sigma_j t_j \in \text{Box}_d(3n)$).

The above immediately implies that $|\mathcal{F}| \leq \poly(n)$. We now claim that $\mathcal{F}$ can be partitioned into $\Aut(\mathbb{L}^d)$-conjugacy classes in time $\poly(n)$. Indeed, for $\Gamma_1,\Gamma_2 \in \mathcal{F}$, let $\{g_{i,1} T_{\Gamma_i},g_{i,2} T_{\Gamma_i},\ldots,g_{i,M_i}T_{\Gamma_i}\}$ be the set of (distinct, left) cosets of $T_{\Gamma_i}$ in $\Gamma_i$ (for $i=1,2$), where each $g_{i,j}$ can be written in the form $\sigma t$ with $\sigma \in B_d$ and $t \in \text{Box}_d(n)$. By the definition of $\mathcal{F}$, we have $M_1,M_2 \leq 2^d d!$. Now observe that $\Gamma_1$ and $\Gamma_2$ are conjugate in $\Aut(\mathbb{L}^d)$ if and only if the following three conditions hold:
\begin{enumerate}
\item[(1)] $M_1=M_2$;
\item[(2)] there exists $h \in \Aut(\mathbb{L}^d)$ such that $h^{-1}T_{\Gamma_1}h = T_{\Gamma_2}$;
\item[(3)] for each $j \in [M_1]$ there exists $\ell = \ell(j) \in [M_1]$ such that $hg_{1,j}h^{-1}T_{\Gamma_2} = g_{2,\ell} T_{\Gamma_2}$, or equivalently $g_{2,\ell}^{-1} hg_{1,j}h^{-1} \in T_{\Gamma_2}$.
\end{enumerate}
Since $T_{\Gamma_2}$ is a normal subgroup of $\Gamma_2$, the conditions $h^{-1}T_{\Gamma_1} h^{-1} = T_{\Gamma_2}$ and $g_{2,\ell}^{-1} hg_{1,j}h^{-1} \in T_{\Gamma_2}$ (in (2) and (3) above) are invariant under multiplying $h$ on the left by an element of $T_{\Gamma_2}$. (For $t \in T_{\Gamma_2}$, we have $g_{2,\ell}^{-1}tg_{2,\ell} \in T_{\Gamma_2}$, so $g_{2,\ell}^{-1} hg_{1,j}h^{-1} \in T_{\Gamma_2}$ iff $(g_{2,\ell}^{-1}tg_{2,\ell})(g_{2,\ell}^{-1} hg_{1,j}h^{-1})(t^{-1}) \in T_{\Gamma_2}$ iff $g_{2,\ell}^{-1} (th) g_{1,j} (th)^{-1} \in T_{\Gamma_2}$.) The subgroup $T_{\Gamma_2}$ has index at most $(2^d d!)^2 n$ in $\Aut(\mathbb{L}^d)$, so to check whether $\Gamma_1$ and $\Gamma_2$ are conjugate in $\Aut(\mathbb{L}^d)$, we need only check $(2^d d!)^2 n$ possibilities for $h$ and $M_1! = (2^d d!)!$ possibilities for the sequence $(\ell(j):\ j \in [M_1])$. For $h=\sigma t$ where $\sigma \in B_d$ and $t \in \text{Box}_d(n)$, we have $h^{-1}T_{\Gamma_1}h = T_{\Gamma_2}$ iff $\sigma(\Lambda_{\Gamma_2}) = \Lambda_{\Gamma_1}$, a condition which can be checked in time $\poly(n)$ in the same way as condition (i) above. Similarly, writing $g_{1,j} = \sigma_1 t_1$ and $g_{2,\ell} = \sigma_2 t_2$, we have $g_{2,\ell}^{-1} hg_{1,j}h^{-1} \in T_{\Gamma_2}$ iff $\sigma^{-1} \sigma_2 \sigma = \sigma_1$ and $t_2^{-1}\sigma_2^{-1}\sigma t \sigma_1 t_1 t^{-1}\sigma^{-1} \in T_{\Gamma_2}$, conditions that can be checked in time $\poly(n)$ similarly to condition (iii) above.

For each of the subgroups $\Gamma \in \mathcal{F}$ found in the previous steps (up to conjugacy), we (simultaneously) check whether $\Gamma$ has minimum displacement at least $2r+2$, find the quotient graph $\mathbb{L}^d/\Gamma$, and verify that $\mathbb{L}^d/\Gamma$ has at most $n$ vertices, all in time $\poly(n)$, as follows. We fix a set $\{g_{1},g_{2},\ldots,g_{M}\}$ of (distinct, left) coset representatives of $T_{\Gamma}$ in $\Gamma$, where each $g_i$ can be written in the form $\sigma t$ with $\sigma \in B_d$ and $t \in \text{Box}_d(n)$; note that $M \leq 2^d d!$. We first construct the quotient graph $\mathbb{L}^d/T_{\Gamma}$ (which is the quotient lattice of $\mathbb{L}^d$ inside a $d$-dimensional torus), using the correspondence (\ref{eq:corr-2}). We may view $\mathbb{L}^d/T_{\Gamma}$ as a (generalised) discrete torus graph. The vertex-set of $\mathbb{L}^d/T_{\Gamma}$ is $\mathbb{Z}^d/\Lambda_{T_\Gamma}$, which is naturally identified with the discrete cuboid
$$C:=\prod_{i=1}^{d} \{0,1,2,\ldots,b_{ii}-1\}$$
via the quotient map $q:\mathbb{Z}^d \to \mathbb{Z}^d/\Lambda_{T_{\Gamma}}$ that reduces modulo $\Lambda_{T_{\Gamma}}$, i.e., modulo the $\mathbb{Z}$-span of the columns of the matrix $B$. The edge-set of $\mathbb{L}^d/T_{\Gamma}$ is simply $\{\{w,q(w+e_i)\}:\ w \in C,\ i \in [d]\}$. For any $w \in C$ and any $i \in [d]$, $q(w+e_i)$ can be evaluated in time $O_d(\log n \log \log n)$ by reducing modulo the $\mathbb{Z}$-span of the columns of the matrix $B$. 

Observe that the action of $\Gamma$ on $\mathbb{L}^d$ induces an action of $\Gamma/T_{\Gamma}$ on the discrete torus $\mathbb{L}^d/T_{\Gamma}$; these two actions have the same quotient graph, namely $\mathbb{L}^d/\Gamma$. Since $\mathbb{L}^d/T_{\Gamma}$ has at most $2^d d! n$ vertices and $|\Gamma/T_{\Gamma}| = M \leq 2^d d!$, the action of $\Gamma/T_{\Gamma}$ on the discrete torus $\mathbb{L}^d/T_{\Gamma}$ can be determined in time $O_d(n)$ (using the left coset representatives $g_i$), and therefore the quotient graph $\mathbb{L}^d/\Gamma$ can be determined in time $\poly(n)$. (For each $i \in [M]$ and each $w \in C$, we have $\max_{j \in [d]}|(g_i(w))_j| \leq 2n$ and therefore $q(g_i(w))$ can be evaluated in time $O_d(\log n \log \log n)$.) Checking that $\Gamma$ has minimum displacement at least $2r+2$ is equivalent to simply checking directly that the quotient graph $\mathbb{L}^d/\Gamma$ is $r$-locally $\mathbb{L}^d$, which can be done in $O_{d,r}(n)$ steps since $\mathbb{L}^d/\Gamma$ has at most $n$ vertices, and each ball of radius $r$ in $\mathbb{L}^d/\Gamma$ contains $O_{d,r}(1)$ vertices.

The rest of the proof follows the method of Nijenhuis and Wilf in \cite{nw}. Let $H_1,H_2,\ldots,H_N$ be an enumeration of the connected, unlabelled graphs on at most $n$ vertices that are $r$-locally $\mathbb{L}^d$, with $|H_1| \leq |H_2| \leq \ldots \leq |H_N|$; note that $|H_N| \leq n$ and that $N \leq \poly(n)$. For notational convenience, we define an ordering $\preceq$ on the set of such graphs, defined by $H_i \preceq H_j$ iff $i \leq j$ (and $H_i \prec H_j$ iff $i < j$). For each $i \in [N]$ and each $k \leq n$, we define $a_{k,H_i}$ to be the number of unlabelled graphs on $k$ vertices that are $r$-locally $\mathbb{L}^d$, have at least one component equal to $H_i$, and no components equal to any $H_j$ with $j > i$. We note the recurrence
$$a_{k,H_i} = \sum_{j \leq i} a_{k-|H_i|,H_j}\quad \forall k \leq n,$$
which simply arises from removing a component isomorphic to $H_i$ and considering all possible choices for the `next largest' component (`next largest' meaning, of course, with respect to the ordering $\preceq$). It is clear that this recurrence can be used to calculate (using $\poly(n)$ addition operations), all the numbers $a_{k,H_i}$ (for $k \leq n$, $i \leq N$). 

To sample a graph $G$, we perform the following process. We first choose the largest component $H$ of $G$ (largest with respect to the ordering $\preceq$), according to the probability distribution
$$\Prob[H=H_i] = \frac{a_{n,H_i}}{\sum_{j=1}^{N} a_{n,H_j}}.$$
We note that this distribution may be simulated by rolling (once) a fair, $\poly(n)$-sided die, which can in turn be simulated by flipping $\poly(n)$ biased coins with rational biases (see e.g.\ \cite{shamir-dice}). We then simply repeat this process, choosing the largest component $H'$ (largest w.r.t.\ $\preceq$) of $G-V(H)$ according to the probability distribution
$$\Prob[H'=H_i] = \frac{a_{n-|H|,H_i}}{\sum_{j=1}^{N} a_{n-|H|,H_j}},$$
etc. It is easy to see that this process generates the uniform random graph $G_n(\mathbb{L}^d,r)$ in time $\poly(n)$.
\end{proof}

\section{Cayley graphs of torsion-free groups of polynomial growth}
\label{sec:general}

In this section, we consider the (much) more general case where $\mathbb{L}^d$ is replaced by a Cayley graph of a torsion-free group of polynomial growth. Our principal objectives here (which we accomplish in order) are to prove Theorems \ref{thm:largest-component}, \ref{thm:aut-gen} and \ref{thm:stretched-exp}.

Before proving Theorem \ref{thm:largest-component}, we first outline some of the notation, definitions and prior results we will use.

Let $\Gamma$ be a finitely generated group. For each $n \in \mathbb{N}$, we let $a_n(\Gamma) \in \mathbb{N} \cup \{0\}$ denote the number of subgroups of $\Gamma$ with index $n$. We say that $\Gamma$ {\em has polynomial subgroup growth} if there exists $\alpha \geq 0$ such that $a_n(\Gamma) \leq n^\alpha$ for all $n \in \mathbb{N}$.

We need the following well-known lemma (see e.g.\ \cite{ls}).

\begin{lemma}
\label{lemma:lms}
Let $\Gamma$ be a finitely generated, virtually nilpotent group. Then $\Gamma$ has polynomial subgroup growth.
\end{lemma}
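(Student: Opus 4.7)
The plan is to prove this in three stages: (i) reduce from virtually nilpotent to finitely generated nilpotent; (ii) reduce further to finitely generated torsion-free nilpotent, via Mal'cev's theorem that a finitely generated nilpotent group contains a torsion-free subgroup of finite index; and (iii) induct on the Hirsch length in the torsion-free nilpotent case.

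For step (i), let $N \trianglelefteq \Gamma$ be a finite-index nilpotent normal subgroup with $K := [\Gamma : N]$. For any $H \leq \Gamma$ with $[\Gamma : H] = n$, the intersection $H_0 := H \cap N$ is normal in $H$, and since $H/H_0 \hookrightarrow \Gamma/N$ we have $[H : H_0] \leq K$, giving $[N : H_0] = [\Gamma : H_0]/K = n[H : H_0]/K \leq n$. Given $H_0$, the subgroups $H$ with $H \cap N = H_0$ are in bijection with complements to $N/H_0$ inside preimages of subgroups of $\Gamma/N$ in $\Gamma/H_0$, and their number is bounded by some $c(K)$ depending only on $K$. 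Hence $a_n(\Gamma) \leq c(K) \sum_{m \leq n} a_m(N)$, so polynomial subgroup growth for $N$ implies the same for $\Gamma$. Step (ii) then allows us to replace $\Gamma$ by a torsion-free finitely generated nilpotent group without loss of generality.

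For step (iii), suppose $\Gamma$ is torsion-free nilpotent of Hirsch length $h$. The centre $Z(\Gamma)$ is a nontrivial finitely generated torsion-free abelian group, hence contains a copy of $\mathbb{Z}$; call it $Z$. For any $H \leq \Gamma$ of index $n$, write $H \cap Z = \ell Z$ with $\ell \mid n$, and set $\bar H := HZ/Z \leq \Gamma/Z$; then $\bar H$ has index $n/\ell$ in $\Gamma/Z$, a torsion-free nilpotent group of Hirsch length $h - 1$. By induction, the number of choices for $\bar H$ is $O((n/\ell)^{\alpha})$ for some $\alpha = \alpha(h-1)$. Given the pair $(\ell, \bar H)$, the preimages $H \leq \Gamma$ with $H \cap Z = \ell Z$ correspond to complements to $Z/\ell Z$ in the preimage of $\bar H$ in $\Gamma/\ell Z$; since $Z$ is central, this is a central extension, so the set of complements (assuming one exists) is a torsor over $\mathrm{Hom}(\bar H, \mathbb{Z}/\ell\mathbb{Z})$, whose size is at most $\ell^{d(\bar H)} \leq \ell^{h}$ (the minimal number of generators of a torsion-free finitely generated nilpotent group is bounded by its Hirsch length). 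Summing over the divisors $\ell$ of $n$ yields $a_n(\Gamma) = O(n^{\alpha + h + o(1)})$, which is polynomial in $n$. The base case $h = 1$ (so $\Gamma \cong \mathbb{Z}$) is immediate.

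The main technical care is required in the lift-counting step of part (iii), where the centrality of $Z$ is used to parameterise complements by a $\mathrm{Hom}$-group, and where one must verify that the exponent $\alpha(h)$ produced by the induction stays under control; a clean way to do so is to fix the polynomial bound $a_m(\Gamma) \leq C m^{\beta(h)}$ in advance, with $\beta(h)$ defined recursively by $\beta(h) = \beta(h-1) + h + 1$ (or similar), as in \cite{ls}. All other steps are essentially bookkeeping.
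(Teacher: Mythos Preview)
The paper does not prove this lemma; it simply cites the book of Lubotzky and Segal \cite{ls} as a reference for this well-known fact. Your three-stage outline (reduce to finitely generated nilpotent, then to torsion-free nilpotent, then induct on Hirsch length) is indeed the standard route taken there, so there is no meaningful comparison of approaches to make.

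That said, your sketch contains two genuine gaps that would need repair. First, in step (i), your assertion that the number of subgroups $H$ with $H \cap N = H_0$ is bounded by a constant $c(K)$ depending only on $K=[\Gamma:N]$ is false. For example, take $\Gamma = \mathbb{Z}^2 \rtimes (\mathbb{Z}/2)$ with the $\mathbb{Z}/2$ acting by negation, $N = \mathbb{Z}^2$, $K=2$, and $H_0 = n\mathbb{Z}^2$: every element $(\vec{a},1)$ of $\Gamma/H_0$ has order $2$, so there are $n^2$ complements to $N/H_0$, hence $n^2$ such $H$ (each of index $n^2$ in $\Gamma$). The correct bound is polynomial in $n$ with exponent depending on $K$ (e.g.\ at most $n^{\log_2 K}$ by counting lifts of a generating set of $HN/N$), which is still enough. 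Note also that your phrase ``preimages \ldots\ in $\Gamma/H_0$'' is ill-formed: $H_0$ need not be normal in $\Gamma$, so $\Gamma/H_0$ is not a group; one should work inside $N_\Gamma(H_0)/H_0$ or argue via lifts of generators.

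Second, in step (iii), the induction hypothesis requires $\Gamma/Z$ to be torsion-free, but an arbitrary infinite cyclic central subgroup $Z$ does not guarantee this (e.g.\ $\Gamma=\mathbb{Z}$, $Z=2\mathbb{Z}$). You should take $Z$ to be a direct summand of $Z(\Gamma)$; then torsion-freeness of $\Gamma/Z$ follows from uniqueness of roots in torsion-free nilpotent groups. With these two fixes the argument goes through.
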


We recall the celebrated theorem of Gromov \cite{gromov} characterizing the groups of polynomial growth.

\begin{theorem}[Gromov]
\label{thm:gromov}
Let $\Gamma$ be a finitely generated group. Then $\Gamma$ has polynomial growth if and only if it is virtually nilpotent.
\end{theorem}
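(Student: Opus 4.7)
The forward direction (virtually nilpotent implies polynomial growth) is classical and can be dispatched directly. If $N$ is a finitely generated nilpotent group with lower central series $N = N_1 \triangleright N_2 \triangleright \ldots \triangleright N_{c+1} = \{e\}$, the Bass--Guivarc'h formula gives that the ball of radius $n$ in any word metric has cardinality of order $n^d$, where $d = \sum_{i=1}^{c} i \cdot r_i$ and $r_i$ is the torsion-free rank of $N_i/N_{i+1}$. Since a finite-index subgroup sees balls of comparable size (up to multiplicative constants on the radius), polynomial growth is preserved under finite extensions, so every virtually nilpotent $\Gamma$ has polynomial growth.

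The reverse direction is the deep content of Gromov's theorem, and I would follow Gromov's original strategy. Suppose $\Gamma$ has polynomial growth of degree at most $d$ with respect to some finite symmetric generating set $S$. The plan is to rescale the Cayley graph: form the sequence of pointed metric spaces $(\Gamma, \tfrac{1}{n} d_S, e)$ and pass to a Gromov--Hausdorff (or ultra-) limit along a subsequence $n_k \to \infty$. The polynomial growth hypothesis is exactly what gives the uniform doubling estimates needed to extract a limit $(X, d_\infty, x_0)$ that is a proper, finite-dimensional, locally compact geodesic space, on which $\Gamma$ acts by isometries.

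The heart of the argument is then to show that the isometry group $\Isom(X)$ has enough structure to be a Lie group of positive dimension. This step invokes the Montgomery--Zippin--Gleason resolution of Hilbert's fifth problem: a locally compact, connected, finite-dimensional topological group with no small subgroups is a Lie group. Combining this with the transitivity properties of the $\Gamma$-action on $X$ yields a continuous homomorphism $\rho: \Gamma \to G$ into a Lie group with infinite image. Using Jordan's theorem on finite subgroups of $\GL_n$ together with the structure of connected Lie groups, one then extracts a finite-index subgroup $\Gamma_0 \leq \Gamma$ admitting a surjection $\Gamma_0 \twoheadrightarrow \mathbb{Z}$. If $\Gamma_1$ is the kernel of this surjection, a growth-counting argument (again via Bass--Guivarc'h) shows that $\Gamma_1$ is finitely generated and has strictly smaller polynomial growth degree, permitting induction on $d$. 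The base case $d=0$ corresponds to finite groups, which are trivially virtually nilpotent, and virtual-nilpotency lifts through the extension $1 \to \Gamma_1 \to \Gamma_0 \to \mathbb{Z} \to 1$, closing the induction.

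The main obstacle is the Lie-theoretic step: establishing that the limit space $X$ has enough regularity for Hilbert's fifth problem to apply, and then producing the homomorphism to $\mathbb{Z}$ from an abstract isometric action. An alternative route that sidesteps Hilbert's fifth problem is Kleiner's proof: show that the space of Lipschitz harmonic functions on any finitely generated group of polynomial growth is finite-dimensional (via a Poincar\'e-type inequality and a reverse Poincar\'e inequality on balls), use this finite-dimensional space to manufacture a finite-dimensional linear representation of $\Gamma$, apply the Tits alternative to conclude that $\Gamma$ is virtually solvable, and finally invoke the earlier Milnor--Wolf theorem that a finitely generated solvable group of polynomial growth is virtually nilpotent. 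Either approach ultimately requires some substantial analytic or Lie-theoretic input beyond pure combinatorial group theory.
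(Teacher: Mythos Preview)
The paper does not prove this statement at all: Theorem~\ref{thm:gromov} is quoted as Gromov's celebrated theorem with a citation to \cite{gromov}, and is used as a black box throughout Section~\ref{sec:general}. So there is no proof in the paper to compare your proposal against.

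That said, your sketch is a reasonable outline of Gromov's original argument (rescaled Cayley graphs, Gromov--Hausdorff limit, Montgomery--Zippin, Jordan's theorem, induction on growth degree via a surjection to $\mathbb{Z}$), together with a correct summary of Kleiner's alternative via finite-dimensionality of Lipschitz harmonic functions and Milnor--Wolf. Neither route is expected here; in the context of this paper you should simply cite the result.
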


We need the following result of De La Salle and Tessera \cite{tessera} (relying on a theorem of Trofimov).

\begin{proposition}
\label{prop:vertex-stabilizers}
Let $\Gamma$ be a finitely generated, torsion-free group of polynomial growth, and let $F = \Cay(\Gamma,S)$ be a connected, locally finite Cayley graph of $\Gamma$. Then the vertex-stabilizers of $\Aut(F)$ are finite.
\end{proposition}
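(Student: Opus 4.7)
The plan. Since $\Gamma$ acts on $V(F) = \Gamma$ by left multiplication as a free and transitive subgroup of $\Aut(F)$, the subgroups $\Stab_{\Aut(F)}(v)$ for $v \in V(F)$ are all conjugate in $\Aut(F)$, so it suffices to prove that $K := \Stab_{\Aut(F)}(e_\Gamma)$ is finite. By Gromov's theorem (Theorem \ref{thm:gromov}), $\Gamma$ is virtually nilpotent, and consequently $F$ inherits polynomial growth from $\Gamma$.

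Next I would invoke Trofimov's structure theorem for connected, locally finite, vertex-transitive graphs of polynomial growth: it produces an $\Aut(F)$-invariant equivalence relation $\sim$ on $V(F)$ with finite classes, of some uniformly bounded size $N$, such that the quotient graph $F/\sim$ is (isomorphic to) a Cayley graph of a finitely generated, torsion-free nilpotent group, and such that the induced homomorphism $\pi \colon \Aut(F) \to \Aut(F/\sim)$ has image acting regularly on $V(F/\sim)$ (so, in particular, with trivial vertex stabilizers). Writing $L := \ker(\pi)$, and observing that for any $\phi \in K$ the image $\pi(\phi)$ fixes the class $[e_\Gamma]$ and must therefore equal $\Id$, we obtain $K \subseteq L$. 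Since $L$ preserves each (finite) equivalence class setwise, restriction to $[e_\Gamma]$ yields a homomorphism $\rho \colon L \to \Sym([e_\Gamma])$, whose target has order at most $N!$.

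The main step, which I expect to be the hard part, is to show that $\rho$ is injective on $K$, i.e.\ that any $\phi \in K$ which fixes $[e_\Gamma]$ pointwise must be $\Id_F$. This is a propagation/rigidity argument along the edges of $F/\sim$: for each class $[v]$ adjacent to $[e_\Gamma]$ in $F/\sim$, the edges of $F$ between $[e_\Gamma]$ and $[v]$ form a system of matchings labelled by elements of the generating set $S$, and any non-trivial permutation of $[v]$ extending $\phi|_{[e_\Gamma]} = \Id$ and respecting the $S$-labelling would produce, via the regular action on $F/\sim$ together with the Cayley structure, a non-trivial element of the torsion-free nilpotent quotient group that fixes a vertex, contradicting torsion-freeness. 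Iterating this propagation along the connected graph $F/\sim$ forces $\phi = \Id_F$, and hence $|K| \leq N!$. The delicate point is pinning down exactly how the torsion-free hypothesis rigidifies the fibres over the edges of $F/\sim$; the cleanest route is likely to read this rigidity directly off the detailed form of Trofimov's theorem for Cayley graphs of torsion-free groups of polynomial growth.
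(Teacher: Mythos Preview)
The paper does not actually prove this proposition: it is quoted as a result of De~La~Salle and Tessera \cite{tessera}, with only the parenthetical remark that it ``rely[s] on a theorem of Trofimov.'' So there is no in-paper argument to compare against; the question is whether your plan could be turned into a proof.

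Your plan has the right starting point (Trofimov's structure theorem), but there are two genuine gaps. First, the version of Trofimov you invoke is too strong. Trofimov's theorem does \emph{not} produce an invariant partition on which the image of $\Aut(F)$ acts \emph{regularly}; it only produces one on which the image acts with \emph{finite} vertex-stabilizers (equivalently, a compact open normal subgroup $L\lhd\Aut(F)$ with $\Aut(F)/L$ discrete and virtually nilpotent). For $F=\mathbb{L}^d$ no nontrivial $\Aut(F)$-invariant partition with finite blocks gives a free quotient action, since the point-stabilizers $B_d$ together generate all of $\Aut(\mathbb{L}^d)$. So in general $K\not\subseteq L$, and your reduction to injectivity of $\rho$ does not go through as written. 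Second, the propagation argument you sketch for the ``hard part'' is not a proof. Automorphisms of $F$ do not preserve the $S$-labelling of edges (only left multiplications by $\Gamma$ do), so there is no $S$-labelling for $\phi$ to ``respect''; and it is not explained how a nontrivial permutation of an adjacent block would ``produce a nontrivial element of the torsion-free nilpotent quotient group that fixes a vertex'' --- indeed, by your own setup $\pi(\phi)=\Id$, so nothing nontrivial is produced in the quotient. You end by deferring the step back to ``the detailed form of Trofimov's theorem,'' which is circular.

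The clean way to use torsion-freeness is not via edge-by-edge propagation but via a single finite-subgroup observation. With $L$ the Trofimov kernel as above and $K=\Stab_{\Aut(F)}(e_\Gamma)$, the set $LK$ is a subgroup (as $L$ is normal), and one checks directly that, under the identification $V(F)=\Gamma$, the intersection $\Gamma\cap LK$ coincides with the $L$-orbit $L\cdot e_\Gamma$: for $\gamma\in\Gamma$ one has $\gamma\in LK$ iff $\gamma\cdot e_\Gamma\in L\cdot e_\Gamma$, i.e.\ iff $\gamma\in L\cdot e_\Gamma$. Thus $\Gamma\cap LK$ is a subgroup of $\Gamma$ of order $|L\cdot e_\Gamma|<\infty$, hence trivial since $\Gamma$ is torsion-free. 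Therefore the $L$-orbits are singletons, so $L=1$, so $\Aut(F)=\Aut(F)/L$ is discrete; and $K$, being compact in a discrete group, is finite. This is essentially the argument behind the De~La~Salle--Tessera statement, and it uses the torsion-free hypothesis exactly once, at the point where your sketch was vaguest.
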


The following is an immediate consequence of Proposition \ref{prop:vertex-stabilizers}.
\begin{corollary}
\label{cor:aut-virtually}
Let $\Gamma$ be a finitely generated, torsion-free, virtually nilpotent group, and let $F = \Cay(\Gamma,S)$ be a connected, locally finite Cayley graph of $\Gamma$. Then $[\Aut(F):\Gamma] < \infty$, and therefore $\Aut(F)$ is virtually nilpotent.
\end{corollary}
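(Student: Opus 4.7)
The plan is to exploit the fact that $\Gamma$ acts on $F$ by left multiplication, giving an embedding $\Gamma \hookrightarrow \Aut(F)$ under which $\Gamma$ acts \emph{simply transitively} on $V(F)$. Combined with Proposition \ref{prop:vertex-stabilizers}, which forces vertex-stabilizers in $\Aut(F)$ to be finite, a standard orbit-stabilizer style argument should yield $[\Aut(F):\Gamma] < \infty$ immediately. The second conclusion (virtual nilpotence of $\Aut(F)$) will then follow from transitivity of ``virtually nilpotent'' across finite-index extensions.

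In detail, I would fix the base vertex $v_0 := e \in \Gamma = V(F)$ and note that $\Stab_\Gamma(v_0) = \{e\}$, since $\gamma \cdot e = e$ forces $\gamma = e$. The key step is to establish the factorization $\Aut(F) = \Gamma \cdot \Stab_{\Aut(F)}(v_0)$: given any $\phi \in \Aut(F)$, transitivity of $\Gamma$ on $V(F)$ produces $\gamma \in \Gamma$ with $\gamma(v_0) = \phi(v_0)$, whence $\gamma^{-1}\phi \in \Stab_{\Aut(F)}(v_0)$. Hence every right coset of $\Gamma$ in $\Aut(F)$ contains a representative in $\Stab_{\Aut(F)}(v_0)$. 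To show distinct stabilizer elements give distinct cosets, I would observe that if $s_1,s_2 \in \Stab_{\Aut(F)}(v_0)$ satisfy $\Gamma s_1 = \Gamma s_2$, then $s_1 s_2^{-1} \in \Gamma \cap \Stab_{\Aut(F)}(v_0) = \Stab_\Gamma(v_0) = \{e\}$, so $s_1 = s_2$. This gives a bijection between $\Stab_{\Aut(F)}(v_0)$ and $\Gamma \backslash \Aut(F)$, hence $[\Aut(F):\Gamma] = |\Stab_{\Aut(F)}(v_0)|$, which is finite by Proposition \ref{prop:vertex-stabilizers}.

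For the second claim, since $\Gamma$ is virtually nilpotent, it has a finite-index nilpotent subgroup $N$. Then $[\Aut(F):N] = [\Aut(F):\Gamma]\cdot[\Gamma:N] < \infty$, so $N$ is a finite-index nilpotent subgroup of $\Aut(F)$, and $\Aut(F)$ is virtually nilpotent. There is no real obstacle here once Proposition \ref{prop:vertex-stabilizers} is in hand; the entire content of the corollary is packaged in that finiteness-of-vertex-stabilizers input combined with the simply transitive left-regular action.
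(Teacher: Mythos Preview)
Your argument is correct and is precisely the standard orbit--stabilizer unpacking that the paper intends when it declares the corollary an ``immediate consequence of Proposition \ref{prop:vertex-stabilizers}''. There is nothing to add: the simply transitive left-regular action of $\Gamma$ together with finiteness of the vertex-stabilizer gives $[\Aut(F):\Gamma]=|\Stab_{\Aut(F)}(e)|<\infty$, and virtual nilpotence then passes up through the finite-index extension exactly as you say.
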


Lemma \ref{lemma:lms}, Theorem \ref{thm:gromov} and Corollary \ref{cor:aut-virtually} together imply the following.

\begin{corollary}
\label{cor:aut-poly}
Let $\Gamma$ be a finitely generated group of polynomial growth, and let $F$ be a connected, locally finite Cayley graph of $\Gamma$. Then $\Aut(F)$ has polynomial subgroup growth.
\end{corollary}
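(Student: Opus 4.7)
The plan is to reduce the claim, by way of Gromov's theorem, to Lemma \ref{lemma:lms}. What we need to establish is that $\Aut(F)$ is itself finitely generated and virtually nilpotent; once we have this, Lemma \ref{lemma:lms} immediately yields polynomial subgroup growth of $\Aut(F)$.

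First, I would apply Theorem \ref{thm:gromov} to $\Gamma$ to conclude that $\Gamma$ is virtually nilpotent, and invoke the standard fact that a finitely generated virtually nilpotent group contains a torsion-free nilpotent subgroup of finite index; call it $\Gamma_0 \leq \Gamma$. In the case when $\Gamma$ itself is torsion-free, Corollary \ref{cor:aut-virtually} applied directly to $\Cay(\Gamma,S) = F$ tells us that $[\Aut(F):\Gamma] < \infty$. In the general case, one must show $[\Aut(F):\Gamma_0] < \infty$, which in turn comes down to verifying that the vertex stabilisers of $\Aut(F)$ are finite; this is exactly the content of Proposition \ref{prop:vertex-stabilizers}, which (via Trofimov's theorem) is available once the regular action is by a torsion-free group of polynomial growth like $\Gamma_0$.

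Assuming $[\Aut(F):\Gamma_0] < \infty$, the group $\Aut(F)$ is a finite extension of the finitely generated group $\Gamma_0$, so is finitely generated, and it contains $\Gamma_0$ as a finite-index nilpotent subgroup, so is virtually nilpotent. Applying Lemma \ref{lemma:lms} to $\Aut(F)$ then gives the desired conclusion.

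The main obstacle lies in Step 2 in the presence of torsion: Proposition \ref{prop:vertex-stabilizers} is stated for Cayley graphs of torsion-free groups, whereas $F$ is a priori only presented as a Cayley graph of $\Gamma$, which may have torsion. The way around this is to work with the regular action of the torsion-free subgroup $\Gamma_0$ on $V(F)$ (which has $[\Gamma:\Gamma_0]$ orbits, all of equal size, and in particular finite stabilisers if and only if the $\Gamma$-stabilisers in $\Aut(F)$ are finite), and invoke Trofimov's theorem on $F$ directly; the torsion-freeness of $\Gamma_0$ is then used to rule out non-trivial blocks in the Trofimov equivalence relation, yielding finiteness of the vertex stabilisers of $\Aut(F)$. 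In the torsion-free case the proof is essentially a one-line chain: Gromov $\Rightarrow$ virtually nilpotent $\Rightarrow$ (Corollary \ref{cor:aut-virtually}) $\Aut(F)$ virtually nilpotent $\Rightarrow$ (Lemma \ref{lemma:lms}) polynomial subgroup growth.
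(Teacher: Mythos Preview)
Your approach is essentially identical to the paper's: the paper simply states that the corollary follows from Lemma~\ref{lemma:lms}, Theorem~\ref{thm:gromov}, and Corollary~\ref{cor:aut-virtually}, i.e., Gromov gives virtual nilpotence of $\Gamma$, Corollary~\ref{cor:aut-virtually} gives $[\Aut(F):\Gamma]<\infty$ and hence virtual nilpotence (and finite generation) of $\Aut(F)$, and Lemma~\ref{lemma:lms} finishes.

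You have correctly noticed something the paper glosses over: Corollary~\ref{cor:aut-virtually} (and Proposition~\ref{prop:vertex-stabilizers} on which it rests) are stated only for \emph{torsion-free} $\Gamma$, whereas Corollary~\ref{cor:aut-poly} as written drops this hypothesis. Your sketch of a workaround via a torsion-free finite-index subgroup $\Gamma_0$ and a direct appeal to Trofimov is the right idea, though it would need more care to be fully rigorous (in particular, $F$ is not a Cayley graph of $\Gamma_0$, so one must invoke the general form of Trofimov's theorem for vertex-transitive graphs of polynomial growth rather than Proposition~\ref{prop:vertex-stabilizers} as stated). In any event, the paper only ever applies Corollary~\ref{cor:aut-poly} to torsion-free groups (see Proposition~\ref{prop:conn-upper}), so the cited chain of implications suffices for all its purposes.
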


The following is an immediate consequence of Theorem \ref{thm:dlst-intro} and Corollary \ref{cor:aut-poly}.

\begin{proposition}
\label{prop:conn-upper}
Let $F$ be a connected, locally finite Cayley graph of a finitely generated, torsion-free group of polynomial growth. Then there exists $r_0 = r_0(F) \in \mathbb{N}$ and $\alpha = \alpha(F) \geq 0$ such that the number of unlabelled, connected graphs on at most $x$ vertices that are $r_0$-locally $F$, is at most $x^{\alpha}$.
\end{proposition}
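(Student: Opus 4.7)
The plan is to combine Theorem \ref{thm:dlst-intro} (De La Salle--Tessera) with the polynomial subgroup growth of $\Aut(F)$ from Corollary \ref{cor:aut-poly} and the finiteness of vertex-stabilisers from Proposition \ref{prop:vertex-stabilizers}. First I would let $r_0 = r_0(F) \in \mathbb{N}$ be as supplied by Theorem \ref{thm:dlst-intro}, so that every connected graph $G$ which is $r_0$-locally $F$ is isomorphic to a quotient $F/\Gamma$ for some subgroup $\Gamma \leq \Aut(F)$ acting freely on $F$. Consequently, each unlabelled connected graph on at most $x$ vertices that is $r_0$-locally $F$ is represented (perhaps many times) by such a $\Gamma$, so it suffices to bound the number of subgroups of $\Aut(F)$ that can arise.

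The next step is to relate the vertex-count of $F/\Gamma$ to the index $[\Aut(F) : \Gamma]$. Since $F$ is vertex-transitive, $\Aut(F)$ acts transitively on $V(F)$; fix a vertex $v_0$ and let $H = \Stab_{\Aut(F)}(v_0)$, so by Proposition \ref{prop:vertex-stabilizers} we have $s := |H| < \infty$. Identifying $V(F)$ with $\Aut(F)/H$, the $\Gamma$-orbits on $V(F)$ correspond bijectively to the double cosets in $\Gamma \backslash \Aut(F)/H$. Because $\Gamma$ acts freely, for every $g \in \Aut(F)$ we have
$$\Gamma \cap gHg^{-1} = \Stab_\Gamma(g v_0) = \{\Id\},$$
and therefore each double coset $\Gamma g H$ decomposes into exactly $[H : H \cap g^{-1} \Gamma g] = |H| = s$ right $\Gamma$-cosets. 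Hence, if $|V(F/\Gamma)| = n \leq x$, then $[\Aut(F) : \Gamma] = ns \leq xs$.

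Finally I would appeal to Corollary \ref{cor:aut-poly}: there exists $\alpha_0 = \alpha_0(F) \geq 0$ such that $\Aut(F)$ has at most $k^{\alpha_0}$ subgroups of index exactly $k$, for each $k \in \mathbb{N}$. Summing over all admissible indices gives
$$\#\{\Gamma \leq \Aut(F) : [\Aut(F):\Gamma] \leq xs\} \leq \sum_{k=1}^{\lfloor xs \rfloor} k^{\alpha_0} \leq (xs)^{\alpha_0 + 1},$$
which is at most $x^{\alpha}$ for $\alpha = \alpha(F)$ chosen sufficiently large depending only on $F$ (and trivially for the finitely many small values of $x$, by increasing $\alpha$ if necessary). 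Combined with the first paragraph, this yields the desired bound.

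There is no real obstacle here: the two genuinely deep ingredients, namely Theorem \ref{thm:dlst-intro} and the polynomial subgroup growth of $\Aut(F)$ (which rests on Trofimov's theorem via Proposition \ref{prop:vertex-stabilizers} and on Lubotzky--Mann--Segal-type results via Lemma \ref{lemma:lms} and Gromov's theorem), have already been quoted. The only new content is the elementary double-coset identity $[\Aut(F):\Gamma] = ns$ relating the size of the quotient graph to the index of $\Gamma$, which follows immediately from freeness of the $\Gamma$-action together with $|H| < \infty$.
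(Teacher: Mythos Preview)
Your proof is correct and is essentially the argument the paper has in mind when it calls the proposition an ``immediate consequence of Theorem~\ref{thm:dlst-intro} and Corollary~\ref{cor:aut-poly}''. The only detail the paper suppresses and you supply is the identity $[\Aut(F):\Gamma] = s\,|V(F/\Gamma)|$, which you obtain cleanly via double cosets together with the finiteness of vertex-stabilisers (Proposition~\ref{prop:vertex-stabilizers}); this is exactly the bridge needed between the two cited results.
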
 

For each $n,r \in \mathbb{N}$, let $\gamma_{F,r}(n)$ denote the number of connected, unlabelled graphs on $n$ vertices that are $r$-locally $F$; Proposition \ref{prop:conn-upper} says that $\gamma_{F,r}(n) \leq n^{\alpha}$ if $r \geq r_0$. Similarly, for each $n \in \mathbb{N}$, let $b_{F,r}(n)$ denote the number of (not necessarily connected) unlabelled graphs on $n$ vertices that are $r$-locally $F$, and define $b_{F,r}(0)=1$. (We regard the empty graph to be $r$-locally $F$.) When $F$ and $r$ are understood, for brevity, we let $\B(n)$ denote the set of all unlabelled, $n$-vertex graphs that are $r$-locally $F$, so that $b_{F,r}(n) = |\B(n)|$. 

We now introduce the following.
\begin{definition}
Let $F$ be a connected, locally finite Cayley graph of a finitely generated group $\Gamma$, and let $r \in \mathbb{N}$. We say that a subgroup $\Lambda \leq \Gamma$ is {\em good} (for $F$ and $r$) if $[\Gamma:\Lambda] < \infty$ and $\Lambda \cap B_F(\Id,2r) = \emptyset$.
\end{definition}
\begin{lemma}
\label{lemma:good}
Let $F$ be a connected, locally finite Cayley graph of a finitely generated group $\Gamma$, and let $r \in \mathbb{N}$. If $\Lambda$ is a good subgroup of $\Gamma$, then $F/\Lambda$ (where $\Lambda$ acts on $V(F) = \Gamma$ by right multiplication) is connected, $r$-locally $F$ and vertex-transitive. In particular, if $\Gamma$ has a good subgroup of index $n$, then $\gamma_{F,r}(n) \geq 1$.
\end{lemma}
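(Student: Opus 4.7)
The plan is to verify each of the claimed properties of the quotient graph $F/\Lambda$ in sequence, using throughout the displacement bound implicit in the good condition, which I read as $D(\Lambda) := \min_{\lambda \in \Lambda \setminus \{\Id\}} d_F(\Id, \lambda) \geq 2r+2$ (a slight strengthening of the stated condition, needed for the $r$-local conclusion and matching the minimum-displacement convention of Theorem~\ref{thm:quotient-d}). The crucial fact throughout is that left-multiplication by $\Gamma$ gives graph automorphisms of the right Cayley graph $F$, hence isometries of the word metric; in particular $d_F(v, v\lambda) = d_F(\Id, \lambda) = |\lambda|$ for every $v \in V(F)$ and $\lambda \in \Lambda$.

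Vertex-transitivity of $F/\Lambda$ follows from the observation that the left $\Gamma$-action on $V(F) = \Gamma$ commutes with the right-multiplication $\Lambda$-action being modded out, and hence descends to a transitive action of $\Gamma$ on $V(F/\Lambda) = \Gamma/\Lambda$ that preserves the edge set of $F/\Lambda$ (any left-translate of a lifted edge is again a lifted edge). Connectedness of $F/\Lambda$ is immediate from the connectedness of $F$ and the surjectivity of the projection $p: V(F) \to V(F/\Lambda)$. For simpleness, a loop at $g\Lambda$ in $F/\Lambda$ would require an edge $\{v, v\lambda\}$ in $F$ with $\lambda \in \Lambda \setminus \{\Id\}$, forcing $|\lambda| = 1$ and contradicting $D(\Lambda) \geq 2$; absence of multi-edges is built into the natural definition of edges in $F/\Lambda$ (namely, $\{a\Lambda, b\Lambda\}$ is an edge iff some pair of representatives is adjacent in $F$).

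The main content is the $r$-locally $F$ property. Fix $g \in V(F)$. Injectivity of $p$ on $B_F(g, r)$ is easy: if $v_1, v_2 \in B_F(g, r)$ with $v_2 = v_1\lambda$, then $|\lambda| = d_F(v_1, v_2) \leq 2r$, forcing $\lambda = \Id$. Surjectivity onto $B_{F/\Lambda}(g\Lambda, r)$ follows by lifting walks, using that $p$ is locally bijective on neighbourhoods (a consequence of the displacement bound $D(\Lambda) \geq 3$: no two distinct vertices in $\{v\} \cup N(v)$ lie in the same $\Lambda$-orbit, as this would force $|\lambda| \leq 2$ for some $\lambda \in \Lambda \setminus \{\Id\}$). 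Forward adjacency preservation is automatic. For the reverse, given an edge $\{v_1\Lambda, v_2\Lambda\}$ of $F/\Lambda$ with $v_1, v_2 \in B_F(g, r)$, the local bijectivity yields a unique $y \in N(v_1)$ with $p(y) = v_2 \Lambda$; writing $y = v_2 \lambda$, the triangle inequality gives $|\lambda| = d_F(y, v_2) \leq d_F(y, g) + d_F(g, v_2) \leq (r+1) + r = 2r+1$, forcing $\lambda = \Id$ by the displacement hypothesis and so $\{v_1, v_2\} = \{v_1, y\} \in E(F)$. Hence $p$ restricts to a graph isomorphism $\Link_r(g, F) \to \Link_r(g\Lambda, F/\Lambda)$, and composing with a base-point-preserving isomorphism $\Link_r(u, F) \cong \Link_r(g, F)$ (from vertex-transitivity of $F$) establishes that $F/\Lambda$ is $r$-locally $F$. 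The ``in particular'' clause is immediate since $|V(F/\Lambda)| = [\Gamma:\Lambda]$. The main subtlety is the reverse-adjacency check, where the displacement bound is used in its tightest form and one must assemble the local-bijectivity argument carefully.
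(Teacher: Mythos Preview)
Your argument is correct and follows the same route as the paper for vertex-transitivity (the left $\Gamma$-action on left cosets); the paper's proof in fact treats only this property explicitly, declaring connectedness and the $r$-locally-$F$ condition ``clear''. Your detailed verification of the latter --- including the flagged need for $D(\Lambda)\ge 2r+2$ in the reverse-adjacency step --- fills in what the paper leaves to the reader.
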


\begin{proof}
All properties are clear except (possibly) for the vertex-transitivity. To see the latter, observe that the vertices of $F/\Lambda$ correspond to the left cosets of $\Lambda$ in $\Gamma$. Now let $F = \Cay(\Gamma,S)$. For any $\gamma,x,y \in \Gamma$, $x \Lambda$ is joined to $y \Lambda$ in $F/\Lambda$ if and only if there exist $\gamma_1,\gamma_2 \in \Lambda$ such that $(x \gamma_1)^{-1} (y \gamma_2) \in S$, which holds if and only if there exist $\gamma_1,\gamma_2 \in \Lambda$ such that $(\gamma x \gamma_1)^{-1} (\gamma y \gamma_2) \in S$, i.e.\ if and only if $\gamma x \Lambda$ is joined to $\gamma y \Lambda$ in $F/\Lambda$. Hence, for any $\gamma \in \Gamma$, the map $x \Lambda \mapsto \gamma x \Lambda$ is an automorphism of $F/\Lambda$, and the left action of $\Gamma$ is clearly transitive on the left cosets of $\Lambda$ in $\Gamma$.
\end{proof}

\noindent Recall the following.
\begin{definition}
Let $\mathcal{P}$ be a property of groups, and let $\Gamma$ be a group. We say that $\Gamma$ is {\em residually $\mathcal{P}$} if for any $\gamma \in \Gamma \setminus \{\Id\}$, there exists a normal subgroup $N \lhd \Gamma$ such that $\gamma \notin N$ and $\Gamma/N$ has the property $\mathcal{P}$.
\end{definition}

It is easy to see that if $\Gamma$ is a finitely generated, residually finite group and $F$ is a connected, locally finite Cayley graph of $\Gamma$, then for every $r \in \mathbb{N}$, there exists a finite graph that is $r$-locally $F$ (see e.g.\ \cite{georgakopoulos}). Interestingly, in the other direction, De La Salle and Tessera proved \cite[Corollary K]{tessera} that if $\Gamma$ is a finitely presented group with an element of infinite order, and for every connected, locally finite Cayley graph $F$ of $\Gamma$ and every $r \in \mathbb{N}$, there exists a graph that is $r$-locally $F$, then $\Gamma$ must be residually finite. It follows, for example, that there exists a connected, locally finite Cayley graph $F$ of the Baumslag-Solitar group $\textrm{BS}(2,3)$, and a positive integer $r$, such that no finite graph is $r$-locally $F$. We need the following slightly stronger variant of the first statement (under stronger hypotheses).

\begin{proposition}
\label{prop:linear}
Let $\Gamma$ be a finitely generated group of polynomial growth, and let $F$ be a connected, locally finite Cayley graph of $\Gamma$. Then there exists $h_1 \in \mathbb{N}$ such that for any multiple $n$ of $h_1$, there is a vertex-transitive, connected, $n$-vertex graph that is $r$-locally $F$ (so in particular, $\gamma_{F,r}(n) \geq 1$ whenever $h_1 \mid n$).
\end{proposition}
\begin{proof}
Let $B_F(\Id,2r) = \{\gamma_1,\gamma_2,\ldots,\gamma_N\}$. By Gromov's theorem (Theorem \ref{thm:gromov}), $\Gamma$ is virtually nilpotent. Let $\Gamma' \leq \Gamma$ such that $\Gamma'$ is nilpotent and $[\Gamma:\Gamma'] < \infty$. Since $\Gamma'$ is a finitely generated nilpotent group, it has finite torsion subgroup, which for convenience we denote by $\{\gamma_{N+1},\ldots,\gamma_{N+M}\}$. (Note that we may have $\gamma_i=\gamma_j$ for some $i \leq N$ and $j >N$.) By a theorem of Hirsch \cite{hirsch}, $\Gamma'$ is residually finite, and so for each $i \in [N+M]$ there exists a subgroup $\Lambda_i \leq \Gamma'$ such that $g_i \notin \Lambda_i$ and $[\Gamma':\Lambda_i] < \infty$. Define $\Lambda = \cap_{i=1}^{N+M} \Lambda_i$; then $\Lambda \cap B_F(\Id,2r) = \emptyset$ and $[\Gamma':\Lambda] < \infty$ (so $[\Gamma:\Lambda] < \infty$), and therefore $\Lambda$ is a good subgroup of $\Gamma$; moreover, $\Gamma$ is torsion-free and nilpotent.

Let $h_1 := [\Gamma:\Lambda]$. Since $\Lambda$ is a finitely generated, torsion-free nilpotent group, it has a subgroup of every finite index; every such subgroup is a good subgroup of $\Gamma$. Hence, $\Gamma$ has a good subgroup of every index dividing $h_1$. The proposition now follows from Lemma \ref{lemma:good}.

\end{proof}

We also need the following.
\begin{lemma}
\label{lemma:suff-large}
Let $\Gamma$ be a finitely generated, torsion-free group of polynomial growth, let $F$ be a connected, locally finite Cayley graph of $\Gamma$ and let $r \in \mathbb{N}$. Then there exists $n_0 = n_0(F,r) \in \mathbb{N}$ such that for any $n \in \mathbb{N}$ with $n \geq n_0$, there exists an $n$-vertex graph that is $r$-locally $F$ (i.e., $b_{F,r}(n) \geq 1$ for all $n \geq n_0$).
\end{lemma}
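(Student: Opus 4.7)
My approach combines Proposition~\ref{prop:linear} with a residual-finiteness construction and vertex-disjoint unions.

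First, I would apply Proposition~\ref{prop:linear} to obtain an integer $h_1 = h_1(F,r) \in \mathbb{N}$ together with a good subgroup $\Lambda \leq \Gamma$ of index $h_1$. As in the proof of that proposition, $\Lambda$ is a finite-index subgroup of the nilpotent subgroup $\Gamma' \leq \Gamma$, so $\Lambda$ is itself finitely generated, torsion-free, and (infinite) nilpotent; in particular its abelianisation has positive torsion-free rank, so $\Lambda$ admits subgroups of every finite index. For each $m \in \mathbb{N}$, choose $\Lambda_m \leq \Lambda$ with $[\Lambda : \Lambda_m] = m$; since $\Lambda_m \leq \Lambda$, automatically $\Lambda_m \cap B_F(\Id, 2r) \subseteq \Lambda \cap B_F(\Id, 2r) = \{\Id\}$, so $\Lambda_m$ is good in $\Gamma$ of index $h_1 m$. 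By Lemma~\ref{lemma:good}, $H_m := F/\Lambda_m$ is a connected, vertex-transitive, $r$-locally $F$ graph on $h_1 m$ vertices. Taking vertex-disjoint unions of the $H_m$'s therefore realises an $r$-locally $F$ graph on every vertex-count in $h_1 \mathbb{N}$.

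Second, to handle vertex-counts $n$ with $h_1 \nmid n$, I would exhibit one further connected $r$-locally $F$ graph $H^{*}$ of some order $N$ with $\gcd(N, h_1) = 1$. Given such $H^{*}$, the Sylvester--Frobenius (``Chicken McNugget'') theorem guarantees that every sufficiently large $n$ admits a decomposition $n = aN + bh_1$ with $a, b \in \mathbb{N} \cup \{0\}$, whence the vertex-disjoint union of $a$ copies of $H^{*}$ together with a suitable disjoint union of the $H_m$'s furnishes the required $n$-vertex $r$-locally $F$ graph.

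The production of $H^{*}$ is the main obstacle. To carry it out, I would use that $\Gamma$, being finitely generated torsion-free virtually nilpotent, is polycyclic and so admits a faithful integral linear representation $\iota : \Gamma \hookrightarrow \GL_d(\mathbb{Z})$ for some $d$. For each prime $p$ exceeding the largest absolute value of any matrix entry of $\iota(B_F(\Id, 2r))$, the mod-$p$ reduction $\pi_p : \Gamma \to \GL_d(\mathbb{F}_p)$ is injective on $B_F(\Id, 2r)$, so $K_p := \ker \pi_p$ is a good subgroup of $\Gamma$ with $[\Gamma : K_p] = |\pi_p(\Gamma)|$ dividing $|\GL_d(\mathbb{F}_p)| = p^{d(d-1)/2} \prod_{i=1}^{d}(p^i - 1)$. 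When $\Gamma$ is itself nilpotent, $\iota$ can be chosen unipotent (by Mal'cev's theorem), so that $|\pi_p(\Gamma)|$ is a power of $p$; then $\gcd([\Gamma : K_p], h_1) = 1$ is automatic once $p \nmid h_1$. In the strictly virtually nilpotent case, the factors $(p^i - 1)$ may share primes with $h_1$, and one must refine the selection of $p$ via Dirichlet's theorem on primes in arithmetic progressions, imposing congruence conditions modulo the (finitely many) prime divisors of $h_1$ to ensure the required coprimality. Setting $H^{*} := F/K_p$ for such a $p$ completes the construction.
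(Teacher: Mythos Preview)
Your first two paragraphs are fine and match the paper's use of Proposition~\ref{prop:linear}. The gap is in the final paragraph, in the ``strictly virtually nilpotent'' case.

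The subgroup $K_p = \ker\pi_p$ is \emph{normal} in $\Gamma$, and this is fatal. Take $\Gamma = \langle a,b \mid bab^{-1}=a^{-1}\rangle$ (the Klein bottle group), which is torsion-free and virtually abelian but not nilpotent. Any finite quotient $Q$ of $\Gamma$ of odd order must kill $a$: in $Q$ the image $\bar b$ inverts $\bar a$, so $\bar b^{2}$ centralises $\bar a$, whence $\langle \bar a,\bar b^{2}\rangle$ is abelian and normal with quotient of order at most $2$; oddness forces $Q=\langle \bar a,\bar b^{2}\rangle$ abelian, and then $\bar a=\bar a^{-1}$ gives $\bar a=1$. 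Thus every \emph{normal} good subgroup of $\Gamma$ has even index. On the other hand, the nilpotent subgroup in Proposition~\ref{prop:linear} must lie inside the index-$2$ abelian subgroup $\langle a,b^{2}\rangle$, so $h_1$ is even. Hence no choice of $p$ makes $[\Gamma:K_p]$ coprime to $h_1$, and the appeal to Dirichlet cannot rescue this: the obstruction is algebraic (normality), not arithmetic.

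The paper avoids this trap by not seeking a single good subgroup of index coprime to $h_1$. Instead it builds (via Gruenberg's theorem that torsion-free nilpotent groups are residually finite $p$-groups for every prime $p$) a normal $\Lambda'\lhd\Gamma$ in which every ball element has non-prime-power order in $\Gamma/\Lambda'$, and then takes the preimages $G_j$ of the Sylow subgroups of $\Gamma/\Lambda'$. These $G_j$ are good and typically \emph{non-normal}, and their indices $\prod_{i\neq j}q_i^{b_i}$ have $\gcd$ equal to $1$; no individual $G_j$ need have index coprime to $h_1$. Your argument in the nilpotent case is correct, but to handle the general case you would need to produce non-normal good subgroups, and that requires a genuinely different construction.
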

\begin{proof}
It suffices to show that the highest common factor of the set of integers $\{n \in \mathbb{N}:\ \gamma_{F,r}(n) \geq 1\}$ is equal to 1. By Lemma \ref{lemma:good}, it suffices to find a finite set of good subgroups of $\Gamma$ whose indices have highest common factor 1.

As in the proof of Proposition \ref{prop:linear}, let $B_F(\Id,2r) = \{\gamma_1,\gamma_2,\ldots,\gamma_N\}$, and let $\Gamma' \leq \Gamma$ such that $\Gamma'$ is nilpotent and $[\Gamma:\Gamma'] < \infty$. Since $\Gamma'$ is nilpotent, it is residually finite, and so for each $i \in [N]$ there exists a subgroup $\Lambda_i \leq \Gamma'$ such that $g_i \notin \Lambda_i$ and $[\Gamma':\Lambda_i] < \infty$. Define $\Lambda = \cap_{i=1}^{N} \Lambda_i$; then $\Lambda \cap B_F(\Id,2r) = \emptyset$ and $[\Gamma':\Lambda] < \infty$ (so $[\Gamma:\Lambda] < \infty$), and therefore $\Lambda$ is a good subgroup of $\Gamma$.

We now seek a finite-index subgroup $\Lambda' \leq \Lambda$ such that $\Lambda' \lhd \Gamma$ and for all $i \in [N]$, $\gamma_i \Lambda'$ does not have prime-power order in $\Gamma/\Lambda'$. If $\Lambda$ itself satisfies this condition, then we may take $\Lambda=\Lambda'$; otherwise, by relabelling the $\gamma_i$ if necessary, we may assume that $\gamma_1\Lambda,\ldots,\gamma_k\Lambda$ all have prime-power order in $\Gamma/\Lambda$, where $k \in [N]$, and $\gamma_i \Lambda$ does not have prime-power order in $\Gamma/\Lambda$ for any $i > k$. Let $\gamma_i \Lambda$ have order $p_i^{a_i}$ in $\Gamma/\Lambda$ for each $i \in [k]$, where $p_1,\ldots,p_k$ are primes and $a_1,\ldots,a_k \in \mathbb{N}$. Then by definition, we have $\gamma_{i}^{p_{i}^{a_i}} \in \Lambda$ for each $i \in [k]$, and since $\Gamma$ is torsion-free, we have $\gamma_{i}^{p_{i}^{a_i}} \neq \Id$ for each $i \in [k]$. Let $q$ be a prime distinct from $p_1,\ldots,p_k$. Since $\Lambda$ is torsion-free and nilpotent, by a theorem of Gruenberg \cite{gruenberg} it is residually a finite $p$-group for any prime $p$, and therefore for each $i \in [k]$, it has a normal subgroup $\Lambda_i'$ such that $[\Lambda:\Lambda_i']$ is a power of $q$, and $\gamma_i^{p_i^{a_i}} \notin \Lambda_i'$. Define $\Lambda_0 = \cap_{i=1}^{k} \Lambda_i'$; then $[\Lambda:\Lambda_0]$ is also a power of $q$, and $\gamma_i^{p_i^{a_i}} \notin \Lambda_0$ for all $i \in [k]$. Finally, let
$$\Lambda' = \bigcap_{g \in \Gamma} g \Lambda_0 g^{-1};$$
then $\Lambda' \lhd \Lambda$, $\Lambda' \lhd \Gamma$ and $[\Gamma:\Lambda'] < \infty$.

We now claim that for each $i \in [N]$, $\gamma_i \Lambda'$ does not have prime-power order in $\Gamma/\Lambda'$. Indeed, since $\Lambda' \leq \Lambda$, the order of $\gamma_i \Lambda$ in $\Gamma/\Lambda$ must divide the order of $\gamma_i \Lambda'$ in $\Gamma/\Lambda'$, so this is trivially true for all $i > k$. Suppose then that $i \leq k$; we may assume without loss of generality that $k \geq 1$ and $i=1$. Since $\Lambda/\Lambda_0$ has order a power of $q$, the order of $\gamma_1^{p_1^{a_1}} \Lambda_0$ in $\Lambda/\Lambda_0$ is some power of $q$, say $q^{b}$ where $b \in \mathbb{N}$. In other words, $(\gamma_1^{p_1^{a_1}})^{q^b} \in \Lambda_0$, and if $j \in \mathbb{N}$ with $(\gamma_1^{p_1^{a_1}})^{j} \in \Lambda_0$, then $q^b \mid j$. Let $t$ be the order of $\gamma_1 \Lambda'$ in $\Gamma/\Lambda'$. Then $\gamma_1^t \in \Lambda' \leq \Lambda_0$, so $(\gamma_1^{p_1^{a_1}})^t \in \Lambda_0$, and so $q^b \mid t$. Moreover, since $\gamma_1^t \in \Lambda' \leq \Lambda$, and $\gamma_1 \Lambda$ has order $p_1^{a_1}$ in $\Gamma/\Lambda$, we have $p_1^{a_1} \mid t$. Hence, $q^b p_1^{a_1} \mid t$, so $\gamma_1 \Lambda'$ does not have prime-power order in $\Gamma/\Lambda'$. The same holds for all $i \in [k]$, proving the claim.

Write the order of $\Gamma/\Lambda'$ as a product of primes,
$$|\Gamma/\Lambda'| = \prod_{j=1}^{M} q_j^{b_j}.$$
For each $j \in [M]$, let $H_j$ be a Sylow-$q_j$ subgroup of $\Gamma/\Lambda'$, and let $G_j$ be the corresponding subgroup of $\Gamma$ containing $\Lambda'$, i.e.\ such that $G_j/\Lambda' = H_j$. Then for each $i \in [N]$ and each $j \in [M]$, we have $\gamma_i \notin G_j$, since for every $g \in G_j$, $g\Lambda' \in H_j$ and therefore $g\Lambda'$ has order a power of $q_j$ in $\Gamma /\Lambda'$, whereas $\gamma_i \Lambda'$ does not have prime-power order in $\Gamma/\Lambda'$, by construction. Hence, each of the subgroups $G_j$ are good subgroups of $\Gamma$. For each $j$, we have
$$[\Gamma:G_j] = [\Gamma/\Lambda': G_j / \Lambda'] = [\Gamma/\Lambda': H_j] = \prod_{i \neq j} q_i^{b_i},$$
and therefore the indices $\{[\Gamma:G_j]:\ j \in [M]\}$ have highest common factor 1. This completes the proof of the lemma.
\end{proof}

We next make a useful `monotonicity' observation.
\begin{claim}
\label{claim:mono}
Let $\Gamma$ be a finitely generated, torsion-free group of polynomial growth, let $F$ be a connected, locally finite Cayley graph of $\Gamma$ and let $r \in \mathbb{N}$. Let $n_0$ be as in Lemma \ref{lemma:suff-large}. Then for all $m,n \in \mathbb{N} \cup \{0\}$ with $n \geq m+n_0$, we have $b_{F,r}(n) \geq b_{F,r}(m)$.
\end{claim}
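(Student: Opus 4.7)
The plan is to establish the inequality by constructing an injection from $\B(m)$ into $\B(n)$ via a fixed "padding" graph. Specifically, since $n - m \geq n_0$, Lemma \ref{lemma:suff-large} guarantees the existence of at least one graph $H \in \B(n - m)$, i.e.\ an $(n-m)$-vertex graph that is $r$-locally $F$. Fix any such $H$, and consider the map
$$\Phi_H : \B(m) \to \B(n),\qquad G \mapsto G \sqcup H,$$
where $\sqcup$ denotes vertex-disjoint union. This map is well-defined: the disjoint union of two graphs that are $r$-locally $F$ is clearly itself $r$-locally $F$ (the ball of radius $r$ around any vertex sits entirely in one of the two summands), and it has $m + (n-m) = n$ vertices.

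It remains to verify that $\Phi_H$ is injective, and this is where the only real content lies. The key fact is the cancellation law for disjoint unions of unlabelled graphs: the isomorphism type of a finite graph is determined by the multiset of isomorphism types of its connected components, and the commutative monoid of (isomorphism classes of) finite graphs under $\sqcup$ is isomorphic to the free commutative monoid on (isomorphism classes of) finite connected graphs. In particular, it is cancellative. Thus if $G_1 \sqcup H \cong G_2 \sqcup H$, then the multisets of connected components of $G_1 \sqcup H$ and $G_2 \sqcup H$ coincide; removing the multiset of components of $H$ from both sides gives that the multisets of components of $G_1$ and $G_2$ coincide, whence $G_1 \cong G_2$. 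Hence $\Phi_H$ is injective, yielding $b_{F,r}(m) \leq b_{F,r}(n)$.

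The boundary case $m = 0$ is immediate, since $b_{F,r}(0) = 1$ by convention and $b_{F,r}(n) \geq 1$ whenever $n \geq n_0$ by Lemma \ref{lemma:suff-large}. No step presents a serious obstacle; the mild subtlety is simply that the conclusion is stated for unlabelled graphs, so one must invoke unique decomposition into connected components rather than argue at the level of labelled structures.
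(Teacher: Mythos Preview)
Your proof is correct and follows exactly the same approach as the paper: fix an $(n-m)$-vertex graph $H$ that is $r$-locally $F$ (using Lemma~\ref{lemma:suff-large}) and observe that $G \mapsto G \sqcup H$ gives an injection $\B(m) \to \B(n)$. The paper simply asserts this map is ``clearly injective,'' whereas you spell out the cancellation argument via unique decomposition into connected components.
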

\begin{proof}
Let $m,n$ be as in the statement of the claim. Let $H_0$ be a fixed $(n-m)$-vertex graph that is $r$-locally $F$. Given an $m$-vertex graph $G$ that is $r$-locally $F$, we may produce from $G$ an $n$-vertex graph $\Phi(G)$ that is $r$-locally $F$ by adding to $G$ a vertex-disjoint copy of $H_0$. The map $\Phi: \B(m) \to \B(n)$ is clearly injective, proving the claim.
\end{proof}

We are now in a position to prove Theorem \ref{thm:largest-component}.

\begin{proof}[Proof of Theorem \ref{thm:largest-component}.]
Let $r_0$ be as in Theorem \ref{thm:dlst-intro}, and let $r \geq r_0$. For brevity, let us write $\gamma(n): = \gamma_{F,r}(n)$ and $b(n) : = b_{F,r}(n)$, for each $n \in \mathbb{N}$. For each $k,n \in \mathbb{N}$ with $k \leq n$, let $a(n,k)$ denote the number of unlabelled, $n$-vertex, $r$-locally $F$ graphs with largest component of order $k$.

Let $\delta = \delta(n)>0,\ \epsilon=\epsilon(n) >0$ to be chosen later. Suppose for a contradiction that for infinitely many $n \in \mathbb{N}$, the probability that a uniform random unlabelled $n$-vertex $r$-locally $F$ graph has largest component of order at least $\epsilon n$, is greater than $\delta$. Let us call such integers $n$ the {\em bad} integers. Then for all bad $n \in \mathbb{N}$, we have
$$\sum_{k = \lceil \epsilon n \rceil }^{n} a(n,k) > \delta b(n).$$
Observe that for any $k \leq n$, we have $a(n,k) \leq \gamma(k)b(n-k)$, and therefore for all bad $n \in \mathbb{N}$, we have
$$n \max_{\lceil \epsilon n \rceil \leq k \leq n} \gamma(k) b(n-k) \geq \sum_{k = \lceil \epsilon n \rceil }^{n} a(n,k) > \delta b(n).$$
By Proposition \ref{prop:conn-upper}, we have $\gamma(k) \leq n^{\alpha}$ for all $k \leq n$, and therefore
\begin{equation} \label{eq:small-quotient} \max_{\lceil \epsilon n \rceil \leq k \leq n} b(n-k) > \delta n^{-\alpha-1} b(n)\end{equation}
for all bad $n \in \mathbb{N}$. For appropriate choices of $\delta$ and $\epsilon$, this will contradict the following claim.
\begin{claim}
\label{claim:fast-growth}
For all $n,k \in \mathbb{N}$ such that $n \geq n_0$ and $k \geq n^{5/6}$, we have
$$\frac{b(n-k)}{b(n)} \leq C\exp(-cn^{1/12}),$$
where $n_0$ is as in Lemma \ref{lemma:suff-large}, and $c,C>0$ are constants depending upon $F$ and $r$ alone.
\end{claim}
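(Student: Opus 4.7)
The plan is to establish $b(n) \geq b(n-k)\exp(c n^{1/12})$ (up to a polynomial factor) via an explicit injective construction that adds $k$ vertices' worth of auxiliary components to any graph in $\B(n-k)$. The building blocks come from Proposition~\ref{prop:linear}: for each $j\in\mathbb{N}$ I would fix once and for all a connected, vertex-transitive, $r$-locally $F$ graph $H_j$ on $h_1 j$ vertices, where $h_1=h_1(F,r)$ is the constant produced by that proposition. For a partition $\mu=(\mu_1,\ldots,\mu_t)$ of a positive integer $N$, write $H^{(\mu)}:=H_{\mu_1}\sqcup\cdots\sqcup H_{\mu_t}\in\B(h_1 N)$; distinct partitions yield distinct multisets of connected components, so $\mu\mapsto H^{(\mu)}$ is injective. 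Applied to partitions of $\lfloor n/h_1\rfloor$ and combined with the monotonicity Claim~\ref{claim:mono} to handle non-multiples of $h_1$, this already gives the warm-up estimate $\log b(m)\geq c_2\sqrt{m}$ for all $m$ sufficiently large.

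For the main injection, assume (absorbing a bounded adjustment through Claim~\ref{claim:mono}) that $h_1\mid k$, and for each $G\in\B(n-k)$ and each partition $\lambda$ of $k/h_1$, set $\Phi(G,\lambda):=G\sqcup H^{(\lambda)}\in\B(n)$. The number of source pairs is $b(n-k)\cdot p(k/h_1)$, and since $\log p(k/h_1)\geq c_3\sqrt{k/h_1}\geq c_3' n^{5/12}$ for $k\geq n^{5/6}$, it will suffice to bound the multiplicity
$$M:=\max_{G'\in\B(n)}|\Phi^{-1}(G')|\leq \exp\!\bigl(c_3' n^{5/12} - c\, n^{1/12}\bigr).$$
The multiplicity counts the number of ways to pick a sub-multiset of connected components of $G'$, each of one of the canonical isomorphism types $H_j$, whose total vertex count is $k$.

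To refine the multiplicity bound, I would restrict $\lambda$ to a sub-family $\mathcal{P}\subseteq\{\text{partitions of }k/h_1\}$ on which the added components are combinatorially distinguishable from any possible component of $G\in\B(n-k)$, making $\Phi|_{\B(n-k)\times\mathcal{P}}$ injective and hence yielding $b(n)\geq b(n-k)\cdot|\mathcal{P}|$. A clean choice that works when $k>n/2$ is the set of partitions all of whose parts exceed $(n-k)/h_1$: then every component of $H^{(\lambda)}$ is strictly larger than any component of $G$, so $H^{(\lambda)}$ is recoverable from $G\sqcup H^{(\lambda)}$ by extracting all components of size $>n-k$. More generally, one can take $\mathcal{P}$ to consist of partitions whose part-multiplicity profile matches at most polynomially many multisets of components of graphs in $\B(n)$.

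The main obstacle is the regime $k\in[n^{5/6},n/2]$, where no partition can have all parts larger than $(n-k)/h_1$ and so the crude size-based identification breaks down. Here the argument must extract distinguishability from finer structural features of the $H_j$: by Proposition~\ref{prop:conn-upper} there are at most polynomially many connected $r$-locally $F$ graphs of any given order, so only a polynomially thin family of partitions $\lambda$ can be ``confused'' with a sub-partition arising from the component sizes of a typical $G\in\B(n-k)$. Turning this heuristic into the required quantitative bound $|\mathcal{P}|\geq\exp(cn^{1/12})$ is the intricate group-theoretic and combinatorial step flagged in the introduction, and is the part of the proof I expect to take the most effort.
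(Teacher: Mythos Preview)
Your proposal correctly identifies the overall architecture --- add many connected building blocks to each $G\in\B(n-k)$ via integer partitions, and control the multiplicity of the resulting map --- but it has a genuine gap in exactly the regime you flag, $k\in[n^{5/6},n/2]$, and your heuristic there does not lead to a proof.

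The issue is that your multiplicity bound cannot be obtained from the scarcity of connected types alone. Even though Proposition~\ref{prop:conn-upper} says there are only polynomially many connected $r$-locally $F$ graphs of each order, a single image graph $G'\in\B(n)$ may nonetheless contain arbitrarily many components isomorphic to your canonical $H_j$'s (nothing prevents $G\in\B(n-k)$ itself from being a union of $H_j$'s), and then the number of ways to select a sub-multiset of components with total size $k$ can be genuinely exponential. So ``only a polynomially thin family of partitions can be confused'' is not the right statement, and the appeal to Proposition~\ref{prop:conn-upper} does not control $M$. Your case $k>n/2$ is also not quite right as stated: requiring all parts to exceed $(n-k)/h_1$ forces at most a bounded number of parts when $k$ is close to $n/2$, so $|\mathcal{P}|$ collapses. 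In fact the paper disposes of $k>n/2$ trivially by monotonicity (Claim~\ref{claim:mono}) and reduces entirely to $k\le n/2$.

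The paper's actual idea for $n^{5/6}\le k\le n/2$ is different and purely size-based, with no use of the isomorphism types of the $H_j$. One splits $\B(n-k)$ according to whether most graphs have all components smaller than $(n-k)^{2/3}$, or most have some component of at least that size. In the first case, one adds roughly $k/(n-k)^{2/3}\ge n^{1/6}$ components each of size $\ge(n-k)^{2/3}$, identifiable in the image as precisely the largest components; the partition freedom is $p(m)$ with $m\gtrsim n^{1/6}$. In the second case, one uses a pigeonhole argument on the sorted list of component sizes of $G$ to find a value $s\le (n-k)^{2/3}$ and a \emph{gap} $(s,s+d]$ with $d\asymp (n-k)^{1/6}$ containing no component size of $G$; one then adds $\Theta(k/s)$ components with sizes in this gap, again identifiable purely by size, with partition freedom $p(m)$ for $m\gtrsim (n-k)^{1/6}\gtrsim n^{1/6}$. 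It is this gap-finding step, not any group-theoretic input, that handles the hard regime.
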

\begin{proof}[Proof of claim.]
By appropriate choices of $c$ and $C$, we may assume throughout that $n$ is at least any specified constant depending upon $F$ and $r$. Let $n^{5/6} \leq k \leq n$. By an appropriate choice of $c$, and by Claim \ref{claim:mono}, we may assume that $k \leq n/2$. We now split into two cases.

{\em Case (i).} First, suppose that at least half the graphs in $\B(n-k)$ have largest component of order less than $(n-k)^{2/3}$. Let $\G$ denote the set of such graphs; then $|\G| \geq \tfrac{1}{2}|\B(n-k)|$. Given a graph $G \in \G$, we produce a graph $H \in \B(n)$ as follows. Let $q_0 \in \mathbb{N}$ be minimal such that $q_0 \geq n_0$ and $k-q_0$ is a multiple of $h_1$ (where $h_1$ is as in Proposition \ref{prop:linear}), and define $W = \lceil (n-k)^{2/3}/h_1\rceil$. 

Let $M  = \lceil (k-q_0)/((W+1)h_1)\rceil$ and let $m = (k-q_0)/h_1 - MW$; then $m \in \mathbb{N}$ and $m \leq M$. Choose any integer partition $\lambda = (\lambda_1,\lambda_2,\ldots) \vdash m$, and for each $i \in [M]$, add to $G$ a connected, $((W + \lambda_i) h_1)$-vertex component that is $r$-locally $F$. Since $W h_1 \geq (n-k)^{2/3}$, each of these added components has more vertices than any component of a graph in $\G$. Now add an additional vertex-disjoint copy of a fixed, $q_0$-vertex graph $H_0$ that is $r$-locally $F$, producing an $n$-vertex graph $H$ that is $r$-locally $F$. (Note that the total number of vertices added is $M W h_1 + mh_1 + q_0 = k$.) Given a fixed graph $G \in \G$, each integer partition $\lambda \vdash m$ produces from $G$ in this way a different unlabelled graph $H \in \B(n)$, and for $G_1 \neq G_2 \in \G$, the sets of $n$-vertex unlabelled graphs $H \in \B(n)$ produced in this way by $G_1$ and $G_2$ are disjoint, since the graph in $\G$ that produced a given $H \in \B(n)$ may be recovered from $H$ by deleting a copy of $H_0$, and then deleting the $M$ largest remaining components. It follows that
$$b(n) \geq \tfrac{1}{2} p(m) b(n-k),$$
where $p(m)$ denotes the number of integer partitions of $m$. It is easy to see that, if $n^{5/6} \leq k \leq n/2$, then $m = (1+o(1))k/(n-k)^{2/3} \geq (1+o(1))n^{1/6}$ as $n \to \infty$, and therefore, using the Hardy-Ramanujan asymptotic (\ref{eq:partition-asymptotics}) for $p(m)$, we obtain
$$b(n) \geq (1+o(1))\tfrac{1}{8m\sqrt{3}} \exp(\pi \sqrt{2m/3}) b(n-k) \geq \frac{1}{C}\exp(c n^{1/12}) b(n-k),$$
where the last inequality holds for all $n$ and $k$ as in the statement of the proposition, provided $c$ is sufficiently small and $C$ is sufficiently large depending on $h_1$.

{\em Case (ii).} Now suppose that more than half the graphs in $\B(n-k)$ have largest component of order at least $(n-k)^{2/3}$. Let $\cH$ denote the set of such graphs; then $|\cH| > \tfrac{1}{2}|\B(n-k)|$. Let $G \in \cH$. Let $G_1,\ldots,G_L$ be the components of $G$, listed in non-decreasing order of size, i.e.\ $|V(G_1)| \leq |V(G_2)| \leq \ldots \leq |V(G_L)|$.
Let $a,d \in \mathbb{N}$ with $d \mid a$, $a \leq (n-k)^{2/3}$ and $a^2/(2d) \geq n-k$; we claim that there exists $i \in [L]$ such that $|V(G_{i-1})| \leq a$ and $|V(G_{i})|-|V(G_{i-1})| >d$ (where $|V(G_0)| :=0$). Indeed, suppose for a contradiction that for all $i \in [L]$ with $|V(G_{i-1})| \leq a$, we have $|V(G_{i+1})|-|V(G_i)| \leq d$; then
$$n-k = |V(G)| \geq d + 2d + \ldots + (a/d)d > a^2/(2d) \geq n-k,$$
a contradiction. From now on, fix $d= \lfloor \tfrac{1}{2}(n-k)^{1/6} \rfloor$ and $a= 2\lfloor \tfrac{1}{2}(n-k)^{1/6} \rfloor \lceil (n-k)^{5/12}\rceil$; clearly, $d \mid a$, and provided $n$ is at least an absolute constant, we have $a \leq (n-k)^{2/3}$ and $a^2/(2d) \geq n-k$. 
 
By the pigeonhole principle, there exists $s\leq a$ such that, for at least $|\cH|/a$ of the graphs $G$ in $\cH$, there exists $i$ such that $|V(G_{i-1})| =s$ and $|V(G_{i})|-|V(G_{i-1})| >d$. Let $\cJ_s$ be the set of all such graphs; then $|\cJ_s| \geq |\cH|/a \geq |\B(n-k)|/(2a)$. Let $s' > s$ be minimal such that $h_1 \mid s'$, and let $Q = s'/h_1$. Choose $d'$ maximal such that $h_1 \mid d'$ and $d' \leq d+s-s'$. (Note that $d+s-s' > d-h_1 \geq h_1$ provided $n$ is at least some constant depending upon $h_1$.) Let $M = \lfloor (k-d')/s' \rfloor$, and let $q_0 = k-d'-Ms'$. Let $m=d'/h_1 \in \mathbb{N}$. (Note that, provided $n$ is at least some constant depending upon $F$ and $r$, we have $q_0 \geq n_0$ and $m \leq M$.)

Now, given $G \in \cJ_s$ and a partition $\lambda = (\lambda_1,\lambda_2,\ldots) \vdash m$, we produce from $G$ a graph $H \in \B(n)$ as follows. For each $i \in [M]$, add to $G$ a connected, $((Q + \lambda_i) h_1)$-vertex component that is $r$-locally $F$. Since $s < s' = Q h_1 \leq (Q+\lambda_i)h_1 = s' + \lambda_i h_1 \leq s'+mh_1 = s'+d' \leq s+d$, all of these added components have orders between $s+1$ and $s+d$ (inclusive), whereas all orders of components of graphs in $\cJ_s$ are at most $s$ or greater than $s+d$. Now add an additional vertex-disjoint copy of a fixed, $q_0$-vertex graph $H_0$ that is $r$-locally $F$, producing a graph $H \in \B(n)$. (Note that the total number of vertices added is $MQh_1 + mh_1 +q_0 = Ms'+d'+q_0=k$.) Given a fixed graph $G \in \cJ_s$, each integer partition $\lambda \vdash m$ produces from $G$ in this way a different unlabelled graph $H \in \B(n)$, and for $G_1 \neq G_2 \in \cJ_s$, the sets of $n$-vertex unlabelled graphs $H \in \B(n)$ produced in this way by $G_1$ and $G_2$ are disjoint, since the graph in $\cJ_s$ that produced a given $H \in \B(n)$ may be recovered from $H$ by deleting first a copy of $H_0$, and then deleting the unique $M$ components of the remaining graph that have orders between $s+1$ and $s+d$ (inclusive). It follows that
$$b(n) \geq \tfrac{1}{2a} p(m) b(n-k).$$
Since $m > d/h_1-2 = \lfloor \tfrac{1}{2}(n-k)^{1/6} \rfloor / h_1 - 2 \geq \lfloor \tfrac{1}{2}(n/2)^{1/6} \rfloor / h_1 - 2$, and since, crudely, $a \leq n$, we obtain (again using the Hardy-Ramanujan asymptotic for the partition function)
$$b(n) \geq (1+o(1))\tfrac{1}{8nm\sqrt{3}} \exp(\pi \sqrt{2m/3}) b(n-k) \geq \frac{1}{C}\exp(c n^{1/12}) b(n-k),$$
provided $c$ is sufficiently small, and $n$ and $C$ are sufficiently large, depending upon $h_1$ (i.e., upon $F$ and $r$). By adjusting the values of $c$ and $C$ appropriately, one can ensure that the above holds for all $n \geq n_0$. This completes the proof of the claim.
\end{proof}

Applying Claim \ref{claim:fast-growth} yields a contradiction to (\ref{eq:small-quotient}) for all $n$ sufficiently large depending upon $\alpha$ and $h_1$, in the case where $\epsilon = n^{-1/6}$ and $\delta = \exp(-n^{1/13})$. This proves that there are only finitely many bad integers $n$ (with this choice of $\delta,\epsilon$), yielding Theorem \ref{thm:largest-component}.
\end{proof}

In the special case where $F = \mathbb{L}^d$, by Theorem \ref{thm:quotient-d} we may take $r_0=2+1_{\{d \geq 3\}}$ in the above proof, so we obtain the following.
\begin{theorem}
\label{thm:largest-component-d}
Define $r_0(2)=2$ and $r_0(d) = 3$ for all $d \geq 3$. Let $d \in \mathbb{N}$ with $d \geq 2$, and let $r \in \mathbb{N}$ with $r \geq r_0(d)$. Then the largest component of $G_n(\mathbb{L}^d,r)$ has order at most $n^{5/6}$ with probability at least $1-\exp(-n^{1/13})$, provided $n$ is sufficiently large depending upon $d$ and $r$.
\end{theorem}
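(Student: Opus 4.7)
The plan is to observe that Theorem \ref{thm:largest-component-d} follows immediately from the proof of Theorem \ref{thm:largest-component} once we note that, for $F = \mathbb{L}^d$, every use of the hypothesis $r \geq r_0(F)$ can be discharged at the smaller threshold $r_0(d) = 2+1_{\{d \geq 3\}}$. In the proof of Theorem \ref{thm:largest-component}, the parameter $r_0(F)$ is used only in one place: to invoke Proposition \ref{prop:conn-upper}, i.e.\ the polynomial-in-$x$ upper bound $\sum_{n \leq x} \gamma_{F,r}(n) \leq x^{\alpha}$. Every other ingredient — Proposition \ref{prop:linear} (existence of many good subgroups), Lemma \ref{lemma:suff-large} (nonemptiness of $\B(n)$ for large $n$), the monotonicity Claim \ref{claim:mono}, and the partition-based dichotomy in Claim \ref{claim:fast-growth} — is insensitive to the value of $r_0$ and applies verbatim, since $\mathbb{Z}^d$ is finitely generated, torsion-free and of polynomial growth.

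So the only task is to establish Proposition \ref{prop:conn-upper} for $F = \mathbb{L}^d$ at the threshold $r \geq r_0(d)$. For this I would invoke Corollary \ref{corr:num-2} (if $d=2$) or Corollary \ref{corr:num-d} (if $d \geq 3$), which put the connected unlabelled $n$-vertex graphs that are $r$-locally $\mathbb{L}^d$ in bijection with conjugacy classes of subgroups $\Gamma \leq \Aut(\mathbb{L}^d)$ satisfying $|\mathbb{Z}^d/\Gamma| = n$ and $D(\Gamma) \geq 2r+2$. To bound the number of such conjugacy classes with $|\mathbb{Z}^d/\Gamma| \leq x$ by $O_d(x^d)$, I would proceed exactly as in Corollary \ref{corr:non-translation}: the translation lattice $L_{\Gamma}$ is a sublattice of $\mathbb{Z}^d$ of index at most $|P_{\Gamma}| \cdot x \leq 2^d d! \cdot x$, giving $O_d(x^d)$ choices by \eqref{eq:zeta}; the point group $P_{\Gamma} \leq B_d$ contributes only $O_d(1)$ further choices; and by Lemma \ref{lemma:fin} together with Claim \ref{claim:half}, each pair $(L,P)$ yields at most $2^d k(d) = O_d(1)$ conjugacy classes in $\Aut(\mathbb{L}^d)$.

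With this polynomial bound in hand (with $\alpha = d$), the argument of Theorem \ref{thm:largest-component} proceeds without any further modification: one supposes for contradiction that the largest component exceeds $n^{5/6}$ with probability greater than $\exp(-n^{1/13})$ for infinitely many $n$, splits graphs in $\B(n-k)$ according to whether most of them already have a component of order $\geq (n-k)^{2/3}$ or not, and in each case grafts onto an element of $\B(n-k)$ an appropriate collection of components whose sizes encode a partition of a suitably chosen integer $m \geq \Omega((n-k)^{1/6})$, thus injecting $p(m)$ distinct elements of $\B(n)$ per element of $\B(n-k)$; the Hardy--Ramanujan asymptotic then yields a lower bound on $b_{\mathbb{L}^d,r}(n)/b_{\mathbb{L}^d,r}(n-k)$ that contradicts \eqref{eq:small-quotient}.

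There is no genuine obstacle here; the only point requiring care is to verify that the auxiliary ingredients (Proposition \ref{prop:linear} and Lemma \ref{lemma:suff-large}) are available at $r = r_0(d)$. Both depend only on the existence of a finite-index subgroup of $\mathbb{Z}^d$ avoiding a prescribed finite subset of $\mathbb{Z}^d \setminus \{0\}$, which is trivial in the abelian setting (take a sublattice $k \mathbb{Z}^d$ for $k$ larger than the maximum norm of the forbidden elements, and intersect with appropriate congruence sublattices), so these hold for all $r \in \mathbb{N}$ and in particular at $r = r_0(d)$. This completes the proposed derivation of Theorem \ref{thm:largest-component-d}.
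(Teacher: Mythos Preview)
Your proposal is correct and takes essentially the same approach as the paper: the paper's proof of Theorem \ref{thm:largest-component-d} is the one-line observation that in the proof of Theorem \ref{thm:largest-component}, Theorem \ref{thm:quotient-d} can replace Theorem \ref{thm:dlst-intro}, allowing $r_0 = 2+1_{\{d \geq 3\}}$. The only minor difference is that you re-derive the polynomial bound of Proposition \ref{prop:conn-upper} for $\mathbb{L}^d$ by the explicit lattice/point-group count of Corollary \ref{corr:non-translation}, whereas the paper's logic simply feeds Theorem \ref{thm:quotient-d} into the chain Theorem \ref{thm:dlst-intro} $\to$ Corollary \ref{cor:aut-poly} $\to$ Proposition \ref{prop:conn-upper}; both routes are valid and yield the same conclusion.
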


In the other direction, we prove the following.

\begin{proposition}
\label{prop:largest-component-large}
Let $F$ be a connected, locally finite Cayley graph of a torsion-free group of polynomial growth. Let $r_0 = r_0(F)$ be as in Theorem \ref{thm:dlst-intro}. Then there exists $\epsilon_0 = \epsilon_0(F) >0$ such that for all $r \geq r_0$, the random graph $G_n(F,r)$ has a component of order at least $n^{\epsilon_0}$ with probability at least $1-\exp(-n^{1/4})$, 
provided $n$ is sufficiently large depending on $F$ and $r$.
\end{proposition}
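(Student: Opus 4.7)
The plan is to define $\mathcal{A}(n) := \{G \in \B(n) : \text{every component of } G \text{ has order } < n^{\epsilon_0}\}$ and show $|\mathcal{A}(n)| \leq \exp(-n^{1/4})\,b_{F,r}(n)$ for an appropriately small $\epsilon_0 = \epsilon_0(F) > 0$. I would achieve this by separately proving $b_{F,r}(n) \geq \exp(\Omega(\sqrt{n}))$ and $|\mathcal{A}(n)| \leq \exp(o(\sqrt{n}))$, then dividing.

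For the lower bound on $b_{F,r}(n)$, Proposition \ref{prop:linear} gives $\gamma_{F,r}(h_1 m) \geq 1$ for every $m \in \mathbb{N}$. Combined with Fact \ref{fact:multisets}, this yields $b_{F,r}(n) \geq p(n/h_1)$ whenever $h_1 \mid n$, where $p$ is the integer partition function. For $n$ not a multiple of $h_1$, Claim \ref{claim:mono} lets me pass to the largest multiple of $h_1$ at most $n - n_0$, losing only a bounded amount in $\sqrt{n}$. The Hardy--Ramanujan asymptotic (equation (\ref{eq:partition-asymptotics})) then yields $\log b_{F,r}(n) \geq c_1 \sqrt{n}$ for some $c_1 = c_1(F,r) > 0$ and all sufficiently large $n$.

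For the upper bound on $|\mathcal{A}(n)|$, each graph in $\mathcal{A}(n)$ corresponds to a multiset of connected $r$-locally $F$ graphs, each of order less than $n^{\epsilon_0}$, summing to $n$ vertices, so by Fact \ref{fact:multisets},
$$|\mathcal{A}(n)| = [z^n]\prod_{k=1}^{\lceil n^{\epsilon_0}\rceil - 1}(1-z^k)^{-\gamma_{F,r}(k)}.$$
Applying the Chernoff-style bound $[z^n]f(z) \leq f(e^{-t})e^{nt}$ for any $t \in (0,1)$, together with $\gamma_{F,r}(k) \leq k^\alpha$ from Proposition \ref{prop:conn-upper} and the elementary inequality $\log(1/(1-e^{-u})) \leq u - \log u$ (valid for all $u > 0$), one obtains
$$\log|\mathcal{A}(n)| \leq nt + O_\alpha(t \cdot n^{\epsilon_0(\alpha+2)}) + O_\alpha(|\log t| \cdot n^{\epsilon_0(\alpha+1)}).$$
Optimising at $t \asymp n^{\epsilon_0(\alpha+1)-1}$ gives $\log|\mathcal{A}(n)| \leq O(n^{\epsilon_0(\alpha+1)}\log n)$, provided $\epsilon_0 < 1/(\alpha+2)$ so that the middle term is dominated by $nt$.

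Taking $\epsilon_0 = 1/(3(\alpha+1))$ then yields $\log|\mathcal{A}(n)| \leq O(n^{1/3}\log n)$, so $\log(|\mathcal{A}(n)|/b_{F,r}(n)) \leq O(n^{1/3}\log n) - c_1\sqrt{n} \leq -n^{1/4}$ for all $n$ sufficiently large, which is the desired bound. The main obstacle is the generating-function/saddle-point estimate in the third paragraph, but it is a routine Tauberian calculation once the growth bounds on $\gamma_{F,r}$ are in hand; the conceptual point is the contrast between the polynomial upper bound $\gamma_{F,r}(k) \leq k^\alpha$ (which keeps $|\mathcal{A}(n)|$ sub-exponential in $\sqrt{n}$ for small enough $\epsilon_0$) and the mere existence of a single good subgroup per admissible index, guaranteed by Proposition \ref{prop:linear}, which is already enough to push $b_{F,r}(n)$ past $\exp(\Omega(\sqrt{n}))$ via integer partitions.
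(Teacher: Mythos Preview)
Your proof is correct and takes a genuinely different route from the paper's. The paper argues by a bipartite double-counting (switching) construction: for each graph $G$ with all components small, it locates a connected graph $H_1$ occurring as a component of $G$ at least $n^{1-(\alpha+2)\epsilon_0}$ times (pigeonhole using $\gamma_{F,r}(k)\le k^{\alpha}$), deletes many copies of $H_1$, and replaces them by a collection of large components whose sizes are governed by an integer partition $\lambda\vdash m$ with $m\gtrsim n^{1-(\alpha+3)\epsilon_0}$. Each bad $G$ thus produces $p(m)$ good graphs, while each good graph has at most $n^{\alpha\epsilon_0}$ preimages (only $H_1$ need be recovered), and the ratio bound follows with $\epsilon_0=1/(2\alpha+7)$.

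Your approach instead separates the two counts completely: a global lower bound $\log b_{F,r}(n)\ge c_1\sqrt{n}$ from Proposition~\ref{prop:linear} and the partition asymptotic, and a global upper bound $\log|\mathcal{A}(n)|=O(n^{\epsilon_0(\alpha+1)}\log n)$ via the Chernoff-type estimate $[z^n]f(z)\le e^{nt}f(e^{-t})$ applied to the truncated Euler product. This is cleaner analytically and in fact gives a slightly larger $\epsilon_0=1/(3(\alpha+1))$. The paper's combinatorial argument, on the other hand, avoids the saddle-point calculation entirely and, because the replacement components all have order divisible by $h_1$, adapts immediately to the refinement in Remark~\ref{remark:mult} (existence of a large component of order divisible by $h_1$), which is what is actually used in the proof of Theorem~\ref{thm:aut-gen}; your method would need a small extra twist to get that divisibility condition.
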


\begin{proof}
Let $r \geq r_0$. Let $n_0 = n_0(F,r) \in \mathbb{N}$ and $\epsilon_0 = \epsilon_0(F) >0$ to be chosen later. Let $n \geq n_0$, and let $\G$ be the set of all graphs in $\B(n)$ with largest component of order less than $n^{\epsilon_0}$. Let $G \in \G$. Since $G$ has $n$ vertices and all its components have order less than $n^{\epsilon_0}$, $G$ has more than $n^{1-\epsilon_0}$ components, so there exists $k_1=k_1(G) < n^{\epsilon_0}$ such that $G$ has at least $n^{1-2\epsilon_0}$ components of order $k_1$. By Proposition \ref{prop:conn-upper}, there are at most $k_1^{\alpha}$ unlabelled, connected graphs on $k_1$ vertices that are $r$-locally $F$, so there exists a connected graph $H_1 = H_1(G)$ on $k_1 < n^{\epsilon_0}$ vertices such that at least $n^{1-2\epsilon_0}/k_1^{\alpha} \geq n^{1-(\alpha+2)\epsilon_0}$ components of $G$ are isomorphic to $H_1$.

We now define a bipartite graph $\mathbb{B}$ with vertex-bipartition $(\G,\B(n) \setminus \G)$, as follows. For each $G \in \G$, choose some connected graph $H_1=H_1(G)$ on $k_1 = k_1(G) < n^{\epsilon_0}$ vertices such that at least $n^{1-(\alpha+2)\epsilon_0}$ components of $G$ are isomorphic to $H_1$ (as above). Let $Q = \lfloor n^{1-(\alpha+2)\epsilon_0}/h_1 \rfloor$ (where $h_1$ is as in Proposition \ref{prop:linear}), and delete from $G$ exactly $Qh_1$ components that are isomorphic to $H_1$, producing a graph $G'$ with $n-Qh_1k_1$ vertices. Let $W = \lceil n^{\epsilon_0}/h_1 \rceil$, let $M = \lceil Qk_1/(W+1) \rceil$ and let $m = Qk_1-MW$; note that $m \leq M$. For any integer partition $\lambda  = (\lambda_1,\lambda_2,\ldots) \vdash m$, we may produce a graph $H_{\lambda,G} \in \B(n) \setminus \G$ by adding to $G'$ a connected, $(W+\lambda_i)h_1$-vertex component that is $r$-locally $F$, for each $i \in [M]$. (Note that the total number of vertices added is $MWh_1 + mh_1 = Qh_1k_1 = |V(G)|-|V(G')|$, and that each added component has order at least $Wh_1 \geq n^{\epsilon_0}$, i.e.\ it has order greater than the order of any component of $G$.) Now we define the edge-set of our bipartite graph $\mathbb{B}$ by joining $G$ to each of the $p(m)$ graphs $H_{\lambda,G}$ obtained in this way (for each $G \in \G$).

Each graph $G \in \G$ has degree exactly $p(m)$ in the bipartite graph $\mathbb{B}$, whereas each graph $H \in \B(n) \setminus \G$ has degree at most $n^{\alpha \epsilon_0}$ in $\mathbb{B}$. Indeed, if in the bipartite graph $\mathbb{B}$, a graph $H \in \B(n) \setminus \G$ is joined to some $G \in \G$, then the `intermediate' graph $G'$ depends only on $H$ and not on $G$, since $G'$ can be recovered from $H$ alone by deleting the $M$ components of $H$ that have order at least $n^{\epsilon_0}$. Given $G'$, there are at most $n^{\alpha \epsilon_0}$ possibilities for $G$, since (by Proposition \ref{prop:conn-upper}) there are at most $n^{\alpha \epsilon_0}$ connected graphs on less than $n^{\epsilon_0}$ vertices that are $r$-locally $F$, so there are at most $n^{\alpha \epsilon_0}$ choices for the component $H_1 = H_1(G)$ (and adding $Qh_1$ copies of $H_1$ to $G'$, we recover $G$).

Counting the edges of $\mathbb{B}$ in two different ways, we obtain
$$|\B(n) \setminus \G| n^{\alpha \epsilon_0} \geq |\G| p(m),$$
so
$$\frac{|\G|}{|\B(n)|} \leq \frac{n^{\alpha \epsilon_0}}{p(m)}.$$ 
It is easy to see that $m \geq (1+o(1))n^{1-(\alpha+3)\epsilon_0}$ as $n \to \infty$, and so using the Hardy-Ramanujan asymptotic (\ref{eq:partition-asymptotics}), we obtain
\begin{align*} \frac{|\G|}{|\B(n)|} & \leq (1+o(1))4\sqrt{3} mn^{\alpha \epsilon_0} \exp(-\pi \sqrt{2m/3})\\
& \leq C n \exp(-cn^{(1-(\alpha+3)\epsilon_0)/2})\\
& \leq \exp(-n^{1/4}),
\end{align*}
if we choose $\epsilon_0 = 1/(2\alpha+7)$ and $n_0$ sufficiently large depending on $F$ and $r$. (Here, $c$ and $C$ are positive absolute constants.) This proves the proposition.
\end{proof}

\begin{remark}
\label{remark:mult}
It is easy to adapt the proof of Proposition \ref{prop:largest-component-large} to show that with probability at least $1-\exp(-n^{1/4})$, the random graph $G_n = G_n(F,r)$ has a component of order that is both at least $n^{\epsilon_0}$ {\em and divisible by $h_1$}, where $h_1$ is as in Proposition \ref{prop:linear}, provided $n$ is sufficiently large depending on $F$ and $r$. (One simply replaces $\G$ in the proof by the set of all graphs in $\B(n)$ with no component of order that is both at least $n^{\epsilon_0}$ and divisible by $h_1$, observing that the components added to $G'$ all have orders that are multiples of $h_1$.) This fact is useful in the sequel.
\end{remark}

We now use Proposition \ref{prop:largest-component-large} and Remark \ref{remark:mult} to prove Theorem \ref{thm:aut-gen}.

\begin{proof}[Proof of Theorem \ref{thm:aut-gen}.]
Let $r_0 = r_0(F)$ be as in Theorem \ref{thm:dlst-intro}, and let $n \in \mathbb{N}$. Let $\delta_0 = \delta_0(F)= \epsilon_0/3$, where $\epsilon_0 = \epsilon_0(F)>0$ is as in Proposition \ref{prop:largest-component-large}. Let $\A$ denote the set of graphs in $\B(n)$ that have at most $n^{\delta_0}$ vertex-transitive components of order divisible by $h_1$, and let $\A'$ denote the set of graphs in $\A$ that have a component of order that is at least $n^{\epsilon_0}$ and divisible by $h_1$. (Here, $h_1$ is as in Proposition \ref{prop:linear}.) We define a bipartite graph $\mathbb{B}$ with vertex-bipartition $(\A',\B(n)\setminus \A)$, as follows. For each graph $G \in \A'$, let $H_2 = H_2(G)$ be a component of $G$ of maximal order divisible by $h_1$, and let $Q = Q_G = |V(H_2)|/h_1$. Delete $H_2$ from $G$, producing a graph $G'$ with $n-Qh_1$ vertices. For any integer partition $\lambda = (\lambda_1,\lambda_2,\ldots,\lambda_m) \vdash Q$, we may produce a graph $H_{\lambda,G} \in \B(n)\setminus \A$ by adding to $G'$ a vertex-transitive component of order $\lambda_i h_1$, for each $i \in [m]$. (Note that the total number of vertices added is $h_1 \sum_{i=1}^{l} \lambda_i = Q h_1$.) Now we define the edge-set of our bipartite graph $\mathbb{B}$ by joining $G$ to each of the $p(Q_G)$ graphs $H_{\lambda,G}$ obtained in this way (for each $G \in \G$). 

A graph $G \in \A'$ has degree exactly $p(Q_G)$ in the bipartite graph $\mathbb{B}$, whereas each graph $H \in \B(n)\setminus \A$ has degree at most $n^{\alpha+n^{\delta_0}}$ in $\mathbb{B}$. Indeed, if in the bipartite graph $\mathbb{B}$, a graph $H \in \B(n) \setminus \A$ is joined to some $G \in \A'$, then there are at most
$$\sum_{i=0}^{\lfloor n^{\delta_0}\rfloor} {n-1 \choose i} \leq n^{n^{\delta_0}}$$
ways of choosing the `intermediate' graph $G'$ (since $G'$ may be obtained from $H$ by deleting all but $j$ of the vertex-transitive components of $G$ of order divisible by $h_1$, for some $j \leq n^{\delta_0}$, and crudely, $H$ has at most $n/h_1 \leq n-1$ vertex-transitive components of order divisible by $h_1$). Given the intermediate graph $G'$, there are then at most $n^{\alpha}$ possibilities for $G$, since (by Proposition \ref{prop:conn-upper}) there are at most $n^{\alpha}$ connected graphs on at most $n$ vertices that are $r$-locally $F$, so there are at most $n^{\alpha}$ choices for the component $H_2 = H_2(G)$ (which, when added to $G'$, produces the graph $G$). 

Counting edges of $\mathbb{B}$ in two different ways, and using the fact that $\delta_0 = \epsilon_0/3$, we obtain
\begin{equation} \label{eq:multi}|\B(n)|n^{\alpha+n^{\epsilon_0/3}} \geq |\B(n) \setminus \A|n^{\alpha+n^{\delta_0}} \geq |\A'| \min_{G \in \mathcal{A}'} p(Q_G) \geq p(\lceil n^{\epsilon_0}/h_1 \rceil ) |\A'|.\end{equation}
On the other hand, by Proposition \ref{prop:largest-component-large} and Remark \ref{remark:mult}, provided $n$ is sufficiently large depending on $F$ and $r$, we have
\begin{equation}\label{eq:large-cpt} |\A \setminus \A'| \leq \exp(-n^{1/4})|\B(n)|.\end{equation}
Combining (\ref{eq:multi}) and (\ref{eq:large-cpt}) yields
$$\frac{|\A|}{|\B(n)|} \leq \exp(-n^{-\epsilon_0/3})$$
for all $n$ sufficiently large depending on $F$ and $r$, using the Hardy-Ramanujan asymptotic (\ref{eq:partition-asymptotics}). It follows that, with probability at least $1-\exp(-n^{-\delta_0})$, a uniform random graph $G_n \in \B(n)$ has more than $n^{\delta_0}$ vertex-transitive components of order divisible by $h_1$, so in particular has $|\Aut(G_n)| \geq (h_1)^{n^{\delta_0}} \geq 2^{n^{\delta_0}}$, proving the theorem.
\end{proof}

We now use the above machinery, combined with Brigham's theorem (Theorem \ref{thm:brigham}), to prove Theorem \ref{thm:stretched-exp}, which says that $b_{F,r}(n)$ grows like a stretched exponential for all $r \geq r_0$.

We will need the following easy corollary of Brigham's theorem.

\begin{corollary}
\label{corr:brigham}
Let $(b(n))_{n=0}^{\infty}$, $(\gamma(j))_{j=1}^{\infty}$ be as in Theorem \ref{thm:brigham}. Let $h$ be the highest common factor of the set $\{n \in \mathbb{N}:\ \gamma(n) \geq 1\}$. Then
$$\log b(n) \sim \frac{1}{u} (Ku \Gamma(u+2)\zeta(u+1))^{\frac{1}{u+1}} (u+1)^{\frac{u-v}{u+1}} n^{\frac{u}{u+1}}(\log n)^{\frac{v}{u+1}}$$
for all $n \in \mathbb{N}$ such that $h \mid n$.
\end{corollary}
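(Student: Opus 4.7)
The plan is to reduce the corollary to Brigham's theorem (Theorem \ref{thm:brigham}) by passing to a rescaled sequence that does satisfy the partitioning hypothesis. First, observe that since $\gamma(j) = 0$ whenever $h \nmid j$, any positive term of $b(n)$ requires $n$ to be expressible as a sum of multiples of $h$, so $b(n) = 0$ whenever $h \nmid n$. Consequently, I can substitute $w = z^h$ in the product formula to obtain
\[ \sum_{n \geq 0} b(hn) w^n = \prod_{k \geq 1} (1 - w^k)^{-\gamma(hk)}. \]
Setting $\tilde{b}(n) := b(hn)$ and $\tilde{\gamma}(k) := \gamma(hk)$, the sequence $\tilde{b}$ therefore has the product-type generating function required by Theorem \ref{thm:brigham}, with coefficient sequence $\tilde{\gamma}$.

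Next I would verify that $\tilde{\gamma}$ satisfies the two hypotheses of Brigham's theorem. The asymptotic estimate is routine:
\[ \sum_{k \leq x} \tilde{\gamma}(k) = \sum_{j \leq hx,\ h \mid j} \gamma(j) = \sum_{j \leq hx} \gamma(j) \sim K (hx)^u (\log(hx))^v \sim (Kh^u) x^u (\log x)^v, \]
so Brigham's hypothesis holds with $\tilde{K} := K h^u$ in place of $K$, and with the same $u$ and $v$. For the partitioning hypothesis, note that $\{k : \tilde{\gamma}(k) \geq 1\} = \{j/h : \gamma(j) \geq 1\}$ has highest common factor $1$ by the definition of $h$. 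A standard Sylvester--Frobenius argument then shows that any set of positive integers with $\gcd$ equal to $1$ generates (by finite unordered sums with repetition) every sufficiently large integer; in particular, every sufficiently large $m$ can be partitioned into integers drawn from $\{k : \tilde{\gamma}(k) \geq 1\}$.

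Having checked both hypotheses, I would apply Theorem \ref{thm:brigham} directly to $\tilde{b}$ and $\tilde{\gamma}$, obtaining
\[ \log \tilde{b}(m) \sim \tfrac{1}{u} (\tilde{K} u\, \Gamma(u+2)\, \zeta(u+1))^{\frac{1}{u+1}} (u+1)^{\frac{u-v}{u+1}} m^{\frac{u}{u+1}} (\log m)^{\frac{v}{u+1}}. \]
Finally I would substitute $m = n/h$ (valid whenever $h \mid n$) and simplify. The key algebraic cancellation is $(Kh^u)^{1/(u+1)} \cdot (n/h)^{u/(u+1)} = K^{1/(u+1)}\, n^{u/(u+1)}$, while $\log(n/h) \sim \log n$ as $n \to \infty$ absorbs into the asymptotic; this yields precisely the stated formula for $\log b(n)$ when $h \mid n$.

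I do not anticipate a serious obstacle: the content of the corollary is essentially the observation that the partitioning hypothesis in Brigham's theorem is only needed modulo the $\gcd$ of the support of $\gamma$, and the natural substitution $w = z^h$ exploits this. The only mildly delicate point is invoking the Sylvester--Frobenius-type fact that a set of positive integers with $\gcd$ equal to $1$ represents every sufficiently large integer as an unordered sum (with repetition); this is elementary but should be stated, since Brigham's theorem requires the partitioning hypothesis in the stronger form.
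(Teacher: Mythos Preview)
Your proposal is correct and is essentially the same reduction the paper has in mind: both approaches reduce the general case to the $h=1$ case of Brigham's theorem via the observation that the support of $\gamma$, having gcd $h$, generates all sufficiently large multiples of $h$. Your substitution $w=z^h$ is a clean and explicit way to package this; the paper's one-line sketch instead points to adapting the final monotonicity step of Brigham's argument (using $b(n)\geq b(m)$ for $h\mid m\leq n-n_0$), but the content is identical.
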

(Note that the $h=1$ case of Corollary \ref{corr:brigham} appears in \cite{brigham}; the general case is a straightforward deduction therefrom, using the observation that any sufficiently large multiple of $h$ can be partitioned into integers in the set $\{n:\ \gamma(n) \geq 1\}$.)

\begin{proof}[Proof of Theorem \ref{thm:stretched-exp}.]

Observe that the generating function of $b_{F,r}(n)$ satisfies
\begin{equation}
\label{eq:gen-function-rel}
\sum_{n=0}^{\infty} b_{F,r}(n) z^n = \prod_{j=1}^{\infty} (1-z^j)^{-\gamma_{F,r}(j)}.
\end{equation}

Define the sequence $(\gamma'(n))_{n =1}^{\infty}$ by $\gamma'(n)=1$ if $n$ is a multiple of $h_1$, and $\gamma'(n)=0$ otherwise. Then, by Proposition \ref{prop:linear}, we have $\gamma_{F,r}(n) \geq \gamma'(n)$ for all $n \in \mathbb{N}$. Note that
\begin{equation} \label{eq:linear} \sum_{n \leq x} \gamma'(n) = \lfloor x/h_1 \rfloor\quad \forall x >0.\end{equation}

Define the sequence $(b'(n))_{n=0}^{\infty}$ by
$$\sum_{n=0}^{\infty} b'(n) z^n = \prod_{j=1}^{\infty} (1-z^j)^{-\gamma'(j)};$$
then, since $\gamma_{F,r}(n) \geq \gamma'(n)$ for all $n \in \mathbb{N}$, we have $b_{F,r}(n) \geq b'(n)$ for all $n \in \mathbb{N}$.

By Claim \ref{claim:mono}, we have $b_{F,r}(n) \geq b_{F,r}(m)$ for all $m,n \in \mathbb{N}$ such that $n \geq m+n_0$, so in particular, whenever $n \geq n_0$, we have
\begin{equation}\label{eq:comparison-remainder} b_{F,r}(n) \geq b_{F,r}(\lfloor (n-n_0)/h_1\rfloor h_1) \geq b'(\lfloor (n-n_0)/h_1\rfloor h_1).\end{equation}
Applying Corollary \ref{corr:brigham} (with $h=h_1$) to the sequence $(\gamma'(j))_{j=1}^{\infty}$, and using (\ref{eq:linear}), yields
$$\log b'(n) \geq (1+o(1))\pi \sqrt{2n/(3h_1)}$$
for $n \to \infty$ such that $h_1 \mid n$. Using (\ref{eq:comparison-remainder}) then yields
\begin{equation}
\label{eq:gen-lower}
\log b_{F,r}(n) \geq (1+o(1))\pi \sqrt{2n/(3h_1)}\end{equation}
for (all) $n \to \infty$.

On the other hand, if $r \geq r_0$, then by Proposition \ref{prop:conn-upper}, we have
\begin{equation}\label{eq:conn-upper} \gamma_{F,r}(n) \leq n^{\alpha}\quad \forall n \in \mathbb{N},\end{equation}
where $\alpha = \alpha(F)\geq 0$. Comparing the sequence $(\gamma_{F,r}(j))_{j=1}^{\infty}$ with the sequence $(\gamma''(j))_{j=1}^{\infty}$ defined by
$$\gamma''(j) = \begin{cases} \lfloor j^{\alpha}\rfloor & \text{ if } h_0\mid j;\\ 0 & \text{ if } h_0 \nmid j,\end{cases}$$
and appealing to Corollary \ref{corr:brigham}, yields 
\begin{equation}
\label{eq:gen-upper}
\log b_{F,r}(n) \leq (1+o(1))\frac{1}{\alpha+1} (\Gamma(\alpha+3)\zeta(\alpha+2)/h_0)^{\frac{1}{\alpha+2}} (\alpha+2)^{\frac{\alpha+1}{\alpha+2}} n^{\frac{\alpha+1}{\alpha+2}}
\end{equation}
for $n \to \infty$. Combining the upper bound (\ref{eq:gen-upper}) with the lower bound (\ref{eq:gen-lower}) yields the theorem.
\end{proof}

\section{Typical properties of graphs that are $r$-locally $\mathbb{L}^d$}
\label{sec:typical}
In this section, we use some of the results and techniques of Section \ref{sec:proof-enum} to obtain some more precise results on the typical properties of unlabelled, $n$-vertex graphs which are $r$-locally $\mathbb{L}^d$, for various integers $r$ and $d$. We first prove that if $r \geq r^*(d)$ (where $r^*(d)$ is as defined in Theorem \ref{thm:enumeration}), then with high probability, all but at most a $1/\poly(n)$ fraction of the vertices of the random graph $G_n = G_n(\mathbb{L}^d,r)$ lie in components that are isomorphic to a quotient of $\mathbb{L}^d$ by a pure-translation subgroup of $\Aut(\mathbb{L}^d)$. (Such quotients can be viewed as quotient lattices of $\mathbb{L}^d$ inside some $d$-dimensional torus $\mathbb{R}^d/\Lambda$, where $\Lambda$ is a rank-$d$ sublattice of $\mathbb{Z}^d$, i.e.\ the lattice of translations of some pure-translation subgroup $\Gamma \leq \Aut(\mathbb{L}^d)$ with $|\mathbb{Z}^d/\Gamma| < \infty$; see page \pageref{page:orbifolds}, or alternatively \cite{be}.) This is the content of Theorem \ref{thm:local-limit-d}. We then obtain (in Proposition \ref{prop:aut-large}) a more precise lower bound on $|\Aut(G_n(\mathbb{L}^d,r))|$ than that which can be deduced from Theorem \ref{thm:aut-gen}.

We will need the following ratio estimate, an easy consequence of Theorem \ref{thm:enumeration}.
\begin{lemma}
\label{lemma:ratio}
Let $d \in \mathbb{N}$ with $d \geq 2$, and let $K_d,\epsilon_d,r^*(d)$ be as in Theorem \ref{thm:enumeration}. Let $r \in \mathbb{N}$ with $r \geq r^*(d)$. There exists $\beta_{d,r} >0$ such for all $m \geq \beta_{d,r} n^{1-\epsilon_d}$, we have
$$\frac{a_{d,r}(n-m)}{a_{d,r}(n)} \leq \exp(-\tfrac{1}{5}K_dmn^{-\frac{1}{d+1}}).$$
\end{lemma}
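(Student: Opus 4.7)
\medskip

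\noindent\textbf{Proof plan.} The strategy is to feed the asymptotic formula of Theorem \ref{thm:enumeration} into $\log(a_{d,r}(n)/a_{d,r}(n-m))$, extract the main term via a concavity estimate, and absorb the error terms using the hypothesis that $m$ is at least a polynomially growing multiple of $n^{1-\epsilon_d}$.

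First, by Theorem \ref{thm:enumeration}, there is a constant $C_{d,r}>0$ and an integer $N_{d,r}$ such that
\[
\bigl|\log a_{d,r}(n) - K_d n^{d/(d+1)}\bigr| \;\leq\; C_{d,r}\, n^{d/(d+1)-\epsilon_d} \qquad \forall n \geq N_{d,r}.
\]
Writing $\alpha = d/(d+1)$, note $\alpha-\epsilon_d > 0$ (we may shrink $\epsilon_d$ at the outset to ensure this). The bulk of the proof concerns the generic regime $n-m \geq N_{d,r}$; the small remainder regime is handled at the end.

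Next, the main term in the log-ratio comes from $K_d\bigl(n^{\alpha}-(n-m)^{\alpha}\bigr)$. Since $t\mapsto t^{\alpha}$ is concave on $(0,\infty)$, the mean value theorem gives $n^{\alpha}-(n-m)^{\alpha} = \alpha \xi^{\alpha-1} m$ for some $\xi \in [n-m,n]$, and since $\alpha-1<0$ we get $\xi^{\alpha-1} \geq n^{\alpha-1} = n^{-1/(d+1)}$, hence
\[
K_d \bigl(n^{\alpha}-(n-m)^{\alpha}\bigr) \;\geq\; \tfrac{d}{d+1} K_d\, m\, n^{-1/(d+1)} \;\geq\; \tfrac{2}{3} K_d\, m\, n^{-1/(d+1)},
\]
using $d\geq 2$. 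On the other hand, the error contribution is bounded by $C_{d,r}(n^{\alpha-\epsilon_d}+(n-m)^{\alpha-\epsilon_d}) \leq 2C_{d,r} n^{\alpha-\epsilon_d}$. Demanding that this error be at most, say, $\tfrac{1}{15} K_d m n^{-1/(d+1)}$ amounts to
\[
m \;\geq\; \frac{30 C_{d,r}}{K_d}\, n^{\alpha-\epsilon_d+1/(d+1)} \;=\; \frac{30 C_{d,r}}{K_d}\, n^{1-\epsilon_d},
\]
so taking $\beta_{d,r}$ at least this large yields $\log(a_{d,r}(n)/a_{d,r}(n-m)) \geq (\tfrac{2}{3}-\tfrac{1}{15}-\tfrac{1}{15}) K_d m n^{-1/(d+1)} \geq \tfrac{1}{5} K_d m n^{-1/(d+1)}$, as required.

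Finally, I handle the remainder regime $n-m < N_{d,r}$: here $a_{d,r}(n-m)$ is bounded by an absolute constant $M_{d,r}:=\max_{k< N_{d,r}}a_{d,r}(k)$, while $n$ is large (since $m\leq n$ and $m\geq \beta_{d,r}n^{1-\epsilon_d}$ force $n\geq \beta_{d,r}^{1/\epsilon_d}$). For such $n$, $\log a_{d,r}(n)\geq \tfrac{1}{2}K_d n^{\alpha}$ by Theorem \ref{thm:enumeration}, and since $m\leq n$ gives $m n^{-1/(d+1)}\leq n^{\alpha}$, we have $\tfrac{1}{5}K_d m n^{-1/(d+1)}\leq \tfrac{1}{5}K_d n^{\alpha}$; thus enlarging $\beta_{d,r}$ (equivalently, taking $n$ large enough) makes $\log a_{d,r}(n) - \log M_{d,r}$ dominate $\tfrac{1}{5}K_d m n^{-1/(d+1)}$, completing the proof. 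The main obstacle, such as it is, is tracking constants carefully enough to see that the slack between $d/(d+1)\geq 2/3$ and $1/5$ is large enough to swallow the $O(n^{\alpha-\epsilon_d})$ error once $m\gtrsim n^{1-\epsilon_d}$; no deeper idea is needed beyond the concavity bound and Theorem \ref{thm:enumeration}.
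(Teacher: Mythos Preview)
Your proof is correct and follows the same core strategy as the paper's: plug the asymptotic of Theorem \ref{thm:enumeration} into $\log a_{d,r}(n)-\log a_{d,r}(n-m)$, lower-bound $n^{\alpha}-(n-m)^{\alpha}$ by $\alpha\,m\,n^{\alpha-1}$ via concavity, and absorb the $O(n^{\alpha-\epsilon_d})$ error using $m\gtrsim n^{1-\epsilon_d}$. The one noteworthy difference is in the treatment of large $m$: the paper splits at $m=n/2$ and, for $m>n/2$, invokes the monotonicity $a_{d,r}(k)\leq a_{d,r}(k+(2r+2)^d)$ (coming from appending a discrete torus) to reduce to $m'\approx n/2$; you instead split at $n-m<N_{d,r}$ and dispose of that case by the trivial bound $a_{d,r}(n-m)\leq M_{d,r}$. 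Your version is slightly cleaner in that it avoids the monotonicity step altogether, at the small cost of shrinking $\epsilon_d$ so that $\alpha-\epsilon_d>0$ (which is harmless). Either way the essential content is the same concavity-plus-error argument.
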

\begin{proof}
By taking $\beta_{d,r}$ sufficiently large, we may assume that $n \geq n_0(d,r)$ for any function $n_0 = n_0(d,r)$. We first suppose that $m \leq n/2$. Using the fact that $(1-x)^{a} \leq 1-a x$ for all $a,x \in (0,1)$, we have
\begin{align*} \log (a_{d,r}(n)) - \log (a_{d,r}(n-m)) & = (1+O(n^{-\epsilon_d})) K_d n^{\frac{d}{d+1}}\\
& - (1+O((n-m)^{-\epsilon_d})) K_d (n-m)^{\frac{d}{d+1}}\\
& = K_dn^{\frac{d}{d+1}} (1+O(n^{-\epsilon_d}) - (1-m/n)^{\frac{d}{d+1}})\\
& \geq K_d n^{\frac{d}{d+1}}(1+O(n^{-\epsilon_d}) - 1 + \tfrac{d}{d+1}\tfrac{m}{n})\\
& = K_d n^{\frac{d}{d+1}}(\tfrac{d}{d+1}\tfrac{m}{n} + O(n^{-\epsilon_d}))\\
& \geq \tfrac{1}{2}K_d n^{\frac{d}{d+1}}m/n\\
& = \tfrac{1}{2}K_d mn^{-\frac{1}{d+1}},\end{align*}
provided $\beta_{d,r}$ is sufficiently large depending on $d$ and $r$. Now suppose that $m \geq n/2$. Given a graph on $k$ vertices which is $r$-locally $\mathbb{L}^d$, we may produce a graph on $k+(2r+2)^d$ vertices which is $r$-locally $\mathbb{L}^d$ by adding a vertex-disjoint copy of the $d$-dimensional discrete torus $C_{2r+2}^{d}$. Hence, $a_{d,r}(k) \leq a_{d,r}(k+(2r+2)^d)$ for all $k \geq 0$. It follows that $a_{d,r}(n-m) \leq a_{d,r}(n - m')$ for some $m' \in \mathbb{N}$ with $n/2 - (2r+2)^d \leq m' \leq n/2$. Hence, we have
\begin{align*} \log (a_{d,r}(n)) - \log (a_d(n-m)) & \geq \log (a_{d,r}(n)) - \log (a_{d,r}(n-m'))\\
& \geq \tfrac{1}{2}K_d m'n^{-\frac{1}{d+1}}\\
& \geq \tfrac{1}{5}K_dmn^{-\frac{1}{d+1}},\end{align*}
provided $n$ is sufficiently large depending on $d$ and $r$. Taking exponents proves the lemma.
\end{proof}

We now prove the following. 

\begin{theorem}
\label{thm:local-limit-d}
Let $d \in \mathbb{N}$ with $d \geq 2$, and let $r,R \in \mathbb{N}$ with $R \geq r \geq r^*(d)$. There exists $\kappa = \kappa_{d,R} >0$ depending upon $d$ and $R$ alone, such that with high probability, all but at most $n^{1-\kappa}$ of the vertices of $G_n = G_n(\mathbb{L}^d,r)$ are in components that are $R$-locally $\mathbb{L}^d$ and isomorphic to a quotient of $\mathbb{L}^d$ by a pure-translation subgroup of $\Aut(\mathbb{L}^d)$.
\end{theorem}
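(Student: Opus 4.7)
\medskip
\noindent\textbf{Proof plan.} Call a connected graph $H$ \emph{bad} if it is $r$-locally $\mathbb{L}^d$ but fails at least one of the two conclusions of the theorem, i.e.\ either $H$ is not $R$-locally $\mathbb{L}^d$, or $H$ is not isomorphic to $\mathbb{L}^d/\Gamma$ for any pure-translation $\Gamma \leq \Aut(\mathbb{L}^d)$; call $H$ \emph{good} otherwise. A vertex of $G_n$ is bad if it lies in a bad component. The plan is to show that, for an appropriate $\kappa = \kappa_{d,R} > 0$, the total number of bad vertices of $G_n$ exceeds $n^{1-\kappa}$ with probability tending to $0$; a simple switching/enumeration argument will reduce this to known estimates on $a_{d,r}(n)$.

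First I would count the bad component types. By Theorem \ref{thm:quotient-d} together with Corollaries \ref{corr:num-2} and \ref{corr:num-d}, the connected unlabelled $m$-vertex graphs that are $r$-locally $\mathbb{L}^d$ correspond bijectively to conjugacy classes of subgroups $\Gamma \leq \Aut(\mathbb{L}^d)$ with $|\mathbb{Z}^d/\Gamma|=m$ and $D(\Gamma) \geq 2r+2$, and the graph is $R$-locally $\mathbb{L}^d$ iff $D(\Gamma) \geq 2R+2$. Since $r \geq r^*(d) \geq \lceil(d-1)/2\rceil$, Claim \ref{claim:involution} gives $-\Id \notin P_{\Gamma}$ for every such $\Gamma$, so any $\Gamma$ with $P_{\Gamma} \neq \{\Id\}$ is highly symmetric in the sense of Corollary \ref{corr:non-translation}. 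Thus the bad component types correspond to conjugacy classes $\Gamma$ that are either highly symmetric, or are pure-translation subgroups whose lattice of translations has minimum distance strictly less than $2R+2$. Combining Corollary \ref{corr:non-translation} with Lemma \ref{lemma:small-distance} (with threshold $2R+1$) yields
\[
\sum_{m \leq x} \gamma^{\textrm{bad}}(m) \;\leq\; O_{d,R}\bigl(x^{d-1+O(1/\log\log x)}\bigr),
\]
where $\gamma^{\textrm{bad}}(m)$ denotes the number of bad component types on $m$ vertices.

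Next I would bound $B(s)$, the number of multisets of bad components of total size $s$, whose generating function is $\sum_s B(s)z^s = \prod_j (1-z^j)^{-\gamma^{\textrm{bad}}(j)}$. Evaluating at $z = e^{-\alpha}$ gives $\log B(s) \leq s\alpha - \sum_j \gamma^{\textrm{bad}}(j) \log(1-e^{-j\alpha})$, and Abel summation with the partial-sum bound from Step~1 yields $\sum_j \gamma^{\textrm{bad}}(j)(-\log(1-e^{-j\alpha})) \leq O_{d,R}(\alpha^{-(d-1+\epsilon)})$ for any small $\epsilon > 0$; optimising over $\alpha$ produces
\[
\log B(s) \;\leq\; C_{d,R}\, s^{(d-1)/d + o(1)}.
\]
By the uniqueness of the component decomposition, for each $s$ the number of graphs in $\mathcal{B}(n)$ whose bad components have total order exactly $s$ is at most $B(s)\cdot a_{d,r}(n-s)$, so that
\[
\Pr\bigl[G_n \text{ has at least } n^{1-\kappa} \text{ bad vertices}\bigr] \;\leq\; \sum_{s \geq n^{1-\kappa}} B(s)\,\frac{a_{d,r}(n-s)}{a_{d,r}(n)}.
\]

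Finally, applying the ratio estimate of Lemma \ref{lemma:ratio} (which applies once $\kappa < \epsilon_d$), the ratio is bounded by $\exp(-\tfrac{1}{5}K_d\, s\, n^{-1/(d+1)})$, so the summand is at most $\exp(C s^{(d-1)/d+o(1)} - \tfrac{1}{5}K_d\, s\, n^{-1/(d+1)})$. The two terms in the exponent balance when $s \approx n^{d/(d+1)+o(1)}$, so choosing $\kappa = 1/(2(d+1))$ ensures $1-\kappa > d/(d+1)+o(1)$ and the negative linear-in-$s$ term dominates for every $s \geq n^{1-\kappa}$, making the sum a convergent geometric series bounded by $\exp(-n^{c(d)})$ for some $c(d) > 0$. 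The main obstacle will be the saddle-point bound in Step~2: Theorem \ref{thm:brigham} is stated for sequences with exact partial-sum asymptotics, so I would either carry out the saddle-point estimate directly as sketched, or sandwich $\gamma^{\textrm{bad}}$ between sequences to which Brigham's theorem applies. Everything else is essentially bookkeeping on top of the results already developed in Section \ref{sec:proof-enum}.
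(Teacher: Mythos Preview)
Your proposal is correct and follows essentially the same approach as the paper's proof: count bad component types via Corollary~\ref{corr:non-translation} and Lemma~\ref{lemma:small-distance}, pass to a Brigham-type upper bound on the number $B(s)$ of multisets of bad components of total size $s$, and finish with Lemma~\ref{lemma:ratio} and a union bound over $s\geq n^{1-\kappa}$. The only point worth noting is that the ``sandwich'' workaround you anticipate for Step~2 is exactly what the paper provides as Corollary~\ref{corr:upper-bound}; the paper applies it after crudely replacing the $x^{d-1+o(1)}$ partial-sum bound by $x^{d-1/2}$, obtaining $\log B(s)\leq K_{d,R}\,s^{(2d-1)/(2d+1)}$ (slightly weaker than your $s^{(d-1)/d+o(1)}$, but leading to the same choice $\kappa=1/(2(d+1))$).
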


To prove Theorem \ref{thm:local-limit-d}, we need a straightforward corollary of Theorem \ref{thm:brigham}.
\begin{corollary}
\label{corr:upper-bound}
Suppose $(b(n))_{n=0}^{\infty}$ is a sequence of positive integers with generating function satisfying
\begin{equation} \label{eq:def-b} \sum_{n=0}^{\infty} b(n) z^n = \prod_{j=1}^{\infty} (1-z^j)^{-\gamma(j)},\end{equation}
where $(\gamma(j))_{j=1}^{\infty}$ is a sequence of non-negative integers satisfying
$$\sum_{j \leq x} \gamma(j) \leq K x^{u} (\log x)^{v}$$
for some constants $K >0,\ u >0$, $v \in \mathbb{R}$. Then there exists a constant $K'>0$ (depending only upon $u,v$ and $K$) such that 
$$\log b(n) \leq K' n^{u/(u+1)}(\log n)^{v/(u+1)} \quad \forall n \in \mathbb{N}.$$
\end{corollary}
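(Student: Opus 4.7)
The plan is to bound the generating function $G(z) := \sum_{n \ge 0} b(n) z^n$ on the real axis and then optimize. For any $z = e^{-s}$ with $s > 0$, the trivial inequality $b(n) z^n \le G(z)$ yields
$$\log b(n) \;\le\; \log G(e^{-s}) + ns,$$
so it suffices to choose $s = s(n) > 0$ making the right-hand side small. The overall strategy is a standard saddle-point/Tauberian bound, differing from the proof of Brigham's theorem in that we will need only an \emph{upper} bound on $\log G(e^{-s})$, coming from the one-sided hypothesis on the partial sums of $\gamma$.

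The key step is to establish an upper bound of the form
$$\log G(e^{-s}) \;\le\; C\, s^{-u}\, (\log(1/s))^v \qquad (0 < s < s_0),$$
where $C, s_0 > 0$ depend only on $K,u,v$. To prove this, I would split the defining sum at $j_0 := \lfloor 1/s \rfloor$, writing $\log G(e^{-s}) = A + B$ where
$$A := \sum_{j \le j_0} \gamma(j) \log\tfrac{1}{1-e^{-js}}, \qquad B := \sum_{j > j_0} \gamma(j) \log\tfrac{1}{1-e^{-js}}.$$
Using $-\log(1-e^{-js}) \le \log(2/(js))$ for $js \le 1$ and $-\log(1-e^{-js}) \le 2 e^{-js}$ for $js \ge 1$, each piece can be controlled by Abel summation (equivalently Stieltjes integration by parts) using only the cumulative bound $\Gamma(x) := \sum_{j \le x} \gamma(j) \le K x^u (\log x)^v$. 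The sum $A$ dominates and contributes $O(s^{-u}(\log(1/s))^v)$ via the integral comparison $\sum_{j \le j_0} j^{u-1}(\log j)^v = O(s^{-u}(\log(1/s))^v)$; the sum $B$ contributes at most the same order after the substitution $y = xs$, using that $(\log y + \log(1/s))^v \le 2^{|v|}((\log y)^v_+ + (\log(1/s))^v)$ on the relevant range.

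Once this bound is in hand, I would optimize $f(s) := C s^{-u}(\log(1/s))^v + ns$ over $s > 0$. Setting $f'(s) = 0$ and solving asymptotically as $n \to \infty$ gives $s_n \sim c\, n^{-1/(u+1)} (\log n)^{v/(u+1)}$ for an explicit constant $c = c(C,u,v)$; substituting back, and using $\log(1/s_n) \sim (\log n)/(u+1)$, yields
$$f(s_n) \;\le\; K' n^{u/(u+1)}(\log n)^{v/(u+1)}$$
for some $K' = K'(C,u,v)$ and all sufficiently large $n$. The finitely many remaining small values of $n$ can be absorbed by enlarging $K'$ (using the convention that the bound holds for $n \ge n_1$ with some $n_1$, and noting that each $b(n)$ is finite).

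The main technical hurdle will be the first step, specifically the careful Abel-summation bounds on $A$ and $B$ using only the one-sided hypothesis on $\Gamma(x)$, and in particular handling arbitrary real $v$ (including negative values) in the integrals that appear after the substitution $y = xs$. The subsequent optimization of $f(s)$ is a routine saddle-point computation.
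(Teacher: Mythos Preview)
Your approach is correct but takes a genuinely different route from the paper. The paper proceeds \emph{combinatorially}: it defines an auxiliary sequence $(\eta(j))$ whose partial sums strictly dominate those of $(\gamma(j))$ and satisfy the two-sided asymptotic $\sum_{j\le x}\eta(j)\sim Kx^u(\log x)^v$; it then proves, via an explicit injection between weight-$n$ multisets, that the resulting coefficients $(a(n))$ satisfy $b(n)\le a(n)$ for all $n$; finally it applies Brigham's theorem as a black box to $(a(n))$. Your approach is \emph{analytic}: you bound $\log G(e^{-s})$ directly from the one-sided hypothesis on $\sum_{j\le x}\gamma(j)$ via Abel summation, and then invoke the trivial inequality $\log b(n)\le \log G(e^{-s})+ns$ together with a saddle-point choice of $s$. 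The paper's argument has the advantage of reducing everything to a citable theorem, and the multiset-injection lemma (``partial-sum domination of $\gamma$ implies coefficient domination of $b$'') is a clean, reusable fact. Your argument is more self-contained and avoids both the injection lemma and the appeal to Brigham's full theorem, at the cost of redoing the upper-bound half of the Tauberian estimate by hand; the splitting at $j_0=\lfloor 1/s\rfloor$ with the bounds $-\log(1-e^{-t})\le \log(2/t)$ for $t\le 1$ and $-\log(1-e^{-t})\le 2e^{-t}$ for $t\ge 1$, followed by Abel summation, is exactly what is needed, and your remarks about handling negative $v$ are on point.
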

\begin{proof}
Let $(\eta(j))_{j=1}^{\infty}$ be the sequence of non-negative integers defined by
\begin{equation} \label{eq:partial-sums} \sum_{j \leq N}\eta(j) = \lceil K(N+1)^{u} (\log (N+1))^{v} \rceil+1\quad (\forall N \in \mathbb{N})\end{equation}
Then we have
\begin{equation} \label{eq:ineq} \sum_{j \leq N}\gamma(j) < \sum_{j \leq N} \eta(j) \quad (\forall N \in \mathbb{N}).\end{equation}
Define $(a(n))_{n=0}^{\infty}$ by
\begin{equation} \label{eq:def-a} \sum_{n=0}^{\infty} a(n) z^n = \prod_{j=1}^{\infty} (1-z^j)^{-\eta(j)}.\end{equation}
We make the following.
\begin{claim} If (\ref{eq:ineq}) holds, and $(a(n)),(b(n))$ are defined by (\ref{eq:def-a}) and (\ref{eq:def-b}) respectively, then $b(n) \leq a(n)$ for all $n \in \mathbb{N}$.
\end{claim}
\begin{proof}[Proof of claim.]
Recall from Fact \ref{fact:multisets} that $b(n)$ is the number of weight-$n$ multisets of elements of $\mathcal{T}$, where $\mathcal{T}$ contains exactly $\gamma(j)$ elements of weight $j$ for all $j \in \mathbb{N}$, and $a(n)$ is the number of weight-$n$ multisets of elements of $\mathcal{S}$, where $\mathcal{S}$ contains exactly $\eta(j)$ elements of weight $j$ for all $j \in \mathbb{N}$. Let $\mathcal{A}(n)$ (respectively $\mathcal{B}(n)$) denote the set of weight-$n$ multisets of elements of $\mathcal{S}$ (respectively $\mathcal{T}$); then $a(n) = |\mathcal{A}(n)|$ and $b(n) = |\mathcal{B}(n)|$. Fix $n \in \mathbb{N}$. It suffices to construct an injection $\Phi:\mathcal{B}(n) \to \mathcal{A}(n)$. Choose any element $S_1 \in \mathcal{S}$ of weight 1. Let $\mathcal{S}_{\leq n}$, $\mathcal{T}_{\leq n}$ denote the set of elements of $\mathcal{S}$ (respectively $\mathcal{T}$) of weight at most $n$. Let $w:\mathcal{S} \cup \mathcal{T} \to \mathbb{N}$ denote the weight function. We first construct an injection $f:\mathcal{T}_{\leq n} \to \mathcal{S}_{\leq n} \setminus \{S_1\}$ such that $w(T) \geq w(f(T))$ for all $T \in \mathcal{T}_{\leq n}$. Define $\eta'(1) = \eta(1)-1$ and $\eta'(j) = \eta(j)$ for all $j \geq 2$. Then the set $\mathcal{S} \setminus \{S_1\}$ contains exactly $\eta'(j)$ elements of weight $j$, for each $j \in \mathbb{N}$, and by (\ref{eq:ineq}), we have
\begin{equation} \label{eq:ineq2} \sum_{j \leq N}\gamma(j) \leq \sum_{j \leq N} \eta'(j) \quad (\forall N \in \mathbb{N}).\end{equation}
We define $f$ inductively. Order the elements of $\mathcal{T}_{\leq n}$ in non-decreasing order of weight, say as $T_1,T_2,\ldots,T_M$. If we have already defined $f(T_1),\ldots,f(T_{i-1})$, define $f(T_i)$ to be an element of $(\mathcal{S}\setminus \{S_1\}) \setminus \{f(T_j):\ j < i\}$ of minimal weight. By (\ref{eq:ineq2}), we have $w(f(T)) \leq w(T)$ for all $T \in \mathcal{T}_{\leq n}$, so $f(\mathcal{T}_{\leq n}) \subset \mathcal{S}_{\leq n} \setminus \{S_1\}$, as needed. Now for any multiset $X = (B_1,B_2,\ldots,B_k) \in \mathcal{B}(n)$, define
$$\Phi(X) = (f(B_1),f(B_2),\ldots,f(B_k),S_1,\ldots,S_1),$$
where the number of $S_1$'s is equal to $\sum_{i=1}^{k} w(B_i) - \sum_{i=1}^{k} w(f(B_i))$. Clearly, $\Phi(X)$ is an injection from $\mathcal{B}(n)$ to $\mathcal{A}(n)$. This proves the claim.
\end{proof}
By (\ref{eq:partial-sums}), we have
$$ \sum_{j \leq x} \eta(x) = (1+o(1))K x^{u} (\log x)^{v}.$$
Hence, by Theorem \ref{thm:brigham}, we have
$$\log a(n) = (1+o(1)) K_1 n^{u/(u+1)}(\log n)^{v/(u+1)},$$
for some constant $K_1>0$ depending only upon $u$, $v$ and $K$. Hence, we have
$$\log b(n) \leq \log a(n) \leq K' n^{u/(u+1)}(\log n)^{v/(u+1)}$$
for some constant $K'>0$ depending only upon $u$, $v$ and $K$, proving the corollary.
\end{proof}

\begin{proof}[Proof of Theorem \ref{thm:local-limit-d}]
Let $d \geq 2$ and $R \geq r \geq r^*(d)$. Note that, by adjusting the value of $\kappa_{d,R}$ if necessary, we may assume throughout that $n \geq n_0(d,R)$, for any function $n_0 = n_0(d,R)$.

Define $\gamma_{d,r,R}(n)$ (respectively, $b_{d,r,R}(n)$) to be the number of connected (respectively, possibly disconnected), unlabelled, $n$-vertex graphs which are $r$-locally $\mathbb{L}^d$ and either not $R$-locally $\mathbb{L}^d$ or else not isomorphic to a quotient of $\mathbb{L}^d$ by a pure-translation subgroup. Then, using Fact \ref{fact:multisets}, we have
$$\sum_{n=0}^{\infty} b_{d,r,R}(n) z^n = \prod_{j=1}^{\infty} (1-z^j)^{-\gamma_{d,r,R}(j)}.$$
Observe that
\begin{equation}\label{eq:bad-small} \sum_{n \leq x} \gamma_{d,r,R}(n) = O_{d,R}(x^{d-1+O(1/\log \log n)})\quad \forall x \geq 1.\end{equation}
Indeed, the above sum is precisely the number of conjugacy-classes of subgroups $\Gamma \leq \Aut(\mathbb{L}^d)$ such that $|\mathbb{Z}^d/\Gamma| \leq x$ and $2r+2 \leq D(\Gamma) \leq 2R+1$. The total number of pure-translation subgroups with this property is precisely the number of sublattices of $\mathbb{Z}^d$ with index at most $x$ and minimum distance in $\{2r+2,2r+3,\ldots,2R+1\}$, which, by Lemma \ref{lemma:small-distance}, is at most
$$O_{d,R}(x^{d-1} \log x).$$
Moreover, the total number of subgroups $\Gamma \leq \Aut(\mathbb{L}^d)$ with $|\mathbb{Z}^d/\Gamma| \leq x$ and $D(\Gamma) \geq 2r^*(d)+1$, such that $\Gamma$ is {\em not} a pure-translation subgroup, is at most 
$$O_{d}(x^{d-1+O(1/\log \log x)}),$$
by Corollary \ref{corr:non-translation} and Claim \ref{claim:involution}. Hence, we have
$$\sum_{n \leq x} \gamma_{d,r,R}(n) = O_{d,R}(x^{d-1+O(1/\log \log x)}),$$
as desired.

Slightly more crudely, it follows that
$$\sum_{n \leq x} \gamma_{d,r,R}(n) = O_{d,R}(x^{d-1/2}).$$
It follows from Corollary \ref{corr:upper-bound} that
$$\log b_{d,r,R}(n) \leq K_{d,R} n^{(d-1/2)/(d+1/2)} = K_{d,R} n^{(2d-1)/(2d+1)} \quad \forall n \in \mathbb{N},$$
where $K_{d,R} >0$ is a constant depending upon $d$ and $R$ alone.

Now let $H$ be a finite, simple, connected graph which is $r$-locally $\mathbb{L}^d$. Let us say that $H$ is {\em $R$-bad} if $H$ is not $R$-locally $\mathbb{L}^d$ or if $H$ is not isomorphic to a quotient of $\mathbb{L}^d$ by a pure-translation subgroup. Let $q_{d,R}=q_{d,R}(n)$ be a function of $n$ to be chosen later. We shall bound the probability that the random graph $G_n(\mathbb{L}^d,r)$ has at least $q_{d,R}(n)$ of its vertices in $R$-bad components. Fix an integer $m \geq q_{d,R}(n)$. Using Lemma \ref{lemma:ratio}, it follows that the probability that $G_n$ has exactly $m$ vertices in $R$-bad components is at most
\begin{align*} \frac{b_{d,r,R}(m) a_{d,r}(n-m)}{a_{d,r}(n)} & \leq \exp(K_{d,R} m^{\frac{2d-1}{2d+1}}) \exp(-\tfrac{1}{5}K_dmn^{-\frac{1}{d+1}})\\
&  \leq \exp(-\tfrac{1}{6}K_dn^{(2d-1)/(2d+2)}),
\end{align*}
provided we choose
$$q_{d,R}(n) = \max\{\max\{\beta_{d,r}:r \leq R\}n^{1-\epsilon_d}, L_{d,R} n^{(2d+1)/(2d+2)}\}$$
for some $L_{d,R} >0$ sufficiently large. Hence, by the union bound, the probability that $G_n$ has at least $q_{d,R}(n)$ vertices in $R$-bad components is at most
$$n \exp(-\tfrac{1}{6}K_d n^{(2d-1)/(2d+2)}) = o(1).$$
Note that for each fixed $d,R$, we have $q_{d,R}(n) \leq n^{1-\kappa_{d,R}}$ for $n$ sufficiently large depending on $d$ and $R$, provided we choose $\kappa_{d,R}>0$ to be sufficiently small. This proves Theorem \ref{thm:local-limit-d}.
\end{proof}

From Theorem \ref{thm:local-limit-d}, it follows immediately that the local limit of $(G_n)$ is the rooted lattice $(\mathbb{L}^d,0)$. For the reader's convenience, we recall the definition of a local limit. If $(F,u)$ and $(G,w)$ are two rooted graphs, we say that they are {\em isomorphic as rooted graphs} if there exists a graph isomorphism $\phi:V(F) \to V(G)$ such that $\phi(u)=w$. Let $(G,w)$ be a random rooted graph, i.e.\ a probability distribution on the set of rooted graphs. Following \cite{bs}, we say that a sequence of graphs $(G_n)_{n \in \mathbb{N}}$ has {\em local limit} $(G,w)$ as $n \to \infty$, if for every $R \in \mathbb{N}$ and for every rooted graph $(H,v)$, as $n \to \infty$, the probability that $\Link_R(G_n,w_n)$ is isomorphic to $(H,v)$ (as a rooted graph) converges to the probability that $\Link_R(G,w)$ is isomorphic to $(H,v)$ (as a rooted graph), where $w_n$ is a vertex chosen uniformly at random from $V(G_n)$. In particular, if $(G,w)$ is constant, the sequence $(G_n)_{n \in \mathbb{N}}$ has local limit $(G,w)$ if the probability that $\Link_R(G_n,w_n)$ is isomorphic to $\Link_R(G,w)$ (as a rooted graph) tends to 1 as $n \to \infty$, where $w_n$ is a vertex chosen uniformly at random from $V(G_n)$.

Now let $d \geq 2$, let $r \geq r^*(d)$ and let $G_n = G_n(\mathbb{L}^d,r)$. Let $T = \{n: \mathcal{S}_n \neq \emptyset\}$, where $\mathcal{S}_n$ denotes the set of all unlabelled, $n$-vertex graphs that are $r$-locally $\mathbb{L}^d$. (Recall that, by (\ref{eq:one-Cayley}), we have $n \in T$ for all $n$ sufficiently large depending on $d$ and $r$.) Theorem \ref{thm:local-limit-d} implies that for any $R \in \mathbb{N}$, the probability that $\Link_R(G_n,w_n)$ is isomorphic to $\Link_R(\mathbb{L}^d,0)$ tends to 1 as $n \to \infty$, so the local limit of $(G_n)_{n \in T}$ is $(\mathbb{L}^d,0)$, as claimed.

Theorems \ref{thm:largest-component-d} and \ref{thm:local-limit-d} also easily imply the following.

\begin{proposition}
\label{prop:aut-large}
Let $d \in \mathbb{N}$ with $d \geq 2$, and let $r \in \mathbb{N}$ with $r \geq r^*(d)$. Then with high probability,
$$|\Aut(G_n(\mathbb{L}^d,r))| \geq \exp(\Omega(n^{1/6} \log n)).$$
\end{proposition}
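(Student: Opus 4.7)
The plan is to combine Theorems \ref{thm:largest-component-d} and \ref{thm:local-limit-d} to show that with high probability, many components of $G_n$ are vertex-transitive and their orders must collectively be spread widely enough to force $|\Aut(G_n)| \ge \exp(\Omega(n^{1/6}\log n))$.

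Call a component of $G_n$ \emph{toric} if it is isomorphic to $\mathbb{L}^d/\Gamma$ for some pure-translation subgroup $\Gamma \le \Aut(\mathbb{L}^d)$; every such component is a finite Cayley graph of the abelian group $\mathbb{Z}^d/L_\Gamma$, so its automorphism group has order at least $|V|$ (from the regular action). First I would invoke Theorem \ref{thm:local-limit-d} with $R=r$ to obtain that, with high probability, all but at most $n^{1-\kappa}$ vertices of $G_n$ lie in toric components (for some $\kappa = \kappa_{d,r}>0$); and simultaneously invoke Theorem \ref{thm:largest-component-d} to obtain that, with high probability, every component of $G_n$ has order at most $n^{5/6}$. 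If $s_1,\ldots,s_N$ denote the orders of the toric components of $G_n$, then with high probability
\begin{equation*}
k^* \le s_i \le n^{5/6}, \qquad \textstyle\sum_{i=1}^N s_i \ge n - n^{1-\kappa}, \qquad N \ge (1-o(1))\,n^{1/6},
\end{equation*}
where $k^* = k^*(d,r) \ge 2$ is a fixed positive lower bound on the order of a toric $r$-locally $\mathbb{L}^d$ graph (one may take $k^* = (2r+2)^d$, attained by the discrete torus $C_{2r+2}^d$), and the bound on $N$ follows by dividing the second estimate by $n^{5/6}$. Vertex-transitivity of the toric components then yields $\log|\Aut(G_n)| \ge \sum_i \log s_i$ with high probability, reducing the proposition to showing $\sum_i \log s_i = \Omega(n^{1/6}\log n)$.

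The final step is a short concavity argument. Since $\log$ is concave, for fixed $N$ and $M := \sum_i s_i$ the quantity $\sum_i \log s_i$ is minimised at a vertex of the polytope $\{s : k^* \le s_i \le n^{5/6},\ \sum s_i = M\}$, i.e., at a configuration where essentially every $s_i$ equals one of the two extreme values $k^*$ or $n^{5/6}$. Writing $\alpha$ for the number of coordinates equal to $n^{5/6}$ and $N-\alpha$ for the number equal to $k^*$, the constraint $\alpha n^{5/6} + (N-\alpha)k^* \ge (1-o(1))n$ forces $\alpha \ge (1-o(1))n^{1/6}$, and hence
\begin{equation*}
\textstyle\sum_i \log s_i \;\ge\; \alpha \cdot \tfrac{5}{6}\log n \;\ge\; \bigl(\tfrac{5}{6} - o(1)\bigr)\, n^{1/6}\log n,
\end{equation*}
which is the required bound. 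The only mild subtlety will be to verify that enlarging $N$ beyond its guaranteed value does not decrease this minimum; but that is immediate, since any additional toric component contributes at least $\log k^* > 0$ to the sum and only negligibly perturbs $\alpha$. None of the steps looks delicate: the structural input from Theorems \ref{thm:largest-component-d} and \ref{thm:local-limit-d} does essentially all of the work, and the vertex-transitivity of toric components converts this structural information directly into the claimed automorphism bound.
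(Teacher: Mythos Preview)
Your proof is correct and follows essentially the same route as the paper's: both invoke Theorems~\ref{thm:largest-component-d} and~\ref{thm:local-limit-d}, use vertex-transitivity of the toric components to get $|\Aut(H_i)| \geq |H_i|$, and finish with an elementary optimisation showing $\sum_i \log |H_i| = \Omega(n^{1/6}\log n)$. The paper simply asserts the inequality $\prod_i |H_i| \geq (n^{5/6})^{\lfloor n/(2n^{5/6})\rfloor}$ in one line; your concavity/vertex argument spells out why the minimum occurs when the component sizes are pushed to the upper bound $n^{5/6}$, which is a helpful elaboration. One small inaccuracy: the parenthetical ``one may take $k^* = (2r+2)^d$'' is not correct (e.g.\ for $d=2$, $r=2$, the lattice generated by $(3,3)$ and $(3,-3)$ has minimum $\ell_1$-distance $6$ and index $18 < 36$), but this is immaterial since the argument only uses $k^* \geq 2$.
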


\begin{proof}
We may assume that $n \geq n_0(d,r)$, for any function $n_0 = n_0(d,r)$. Observe that if $H = \mathbb{L}^d/\Gamma$ for some pure-translation subgroup $\Gamma \leq \Aut(\mathbb{L}^d)$, then $H$ is vertex-transitive, since the translation map
$$t_a:\mathbb{Z}^d/L_{\Gamma} \to \mathbb{Z}^d/L_{\Gamma};\quad x+L_{\Gamma} \mapsto x+a+L_{\Gamma}$$
is an automorphism of $H$, for any $a \in \mathbb{Z}^d$. By Theorem \ref{thm:largest-component-d}, with high probability the largest component of $G_n$ has order at most $n^{5/6}$, and by Theorem \ref{thm:local-limit-d}, with high probability, all but at most $n/2$ vertices of $G_n$ are in components of $G_n$ that are isomorphic to a quotient of $\mathbb{L}^d$ by a pure-translation subgroup of $\Aut(\mathbb{L}^d)$, provided $n_0$ is sufficiently large depending on $d$ and $r$. If these two conditions hold, and there are $k$ components ($H_1,\ldots,H_k$, say) that are isomorphic to a quotient of $\mathbb{L}^d$ by a pure-translation subgroup of $\Aut(\mathbb{L}^d)$, then we have
$$\sum_{i=1}^{k} |H_i| \geq n/2,\quad 3 \leq |H_i| \leq n^{5/6}\ \forall i \in [k],$$
so
$$|\Aut(G_n)| \geq \prod_{i=1}^{k} |\Aut(H_i)| \geq \prod_{i=1}^{k}|H_i| \geq (n^{5/6})^{\lfloor n/(2n^{5/6}) \rfloor} = \exp(\Omega(n^{1/6} \log n)).$$
This proves the proposition.
\end{proof}

\section{Appendix}
In this section, we prove Theorem \ref{thm:brigham-error}, by adapting Brigham's proof in \cite{brigham}, and using a theorem of Odlyzko \cite{odlyzko} in place of Theorem A of Hardy and Ramanujan in \cite{hardy-ramanujan-general}, as in the proof of Brigham's theorem sketched in \cite{odlyzko}.

\begin{proof}[Proof of Theorem \ref{thm:brigham-error}.]
Throughout the proof, `for $s$ sufficiently small' will mean for all $s$ with $0 < s < s_0$, where $s_0 >0$ depends only upon the constants $K$, $u$ and $\epsilon$. If $f,g:X \to \mathbb{R}$ are functions, we will write $g = O(f)$ to mean that there exists a constant $C$ depending on $K,u$ and $\epsilon$, such that $|g(x)| \leq C|f(x)|$ for all $x \in X$. Moreover, $\kappa$ will be used to denote several (possibly different) positive constants depending upon $u$ and $\epsilon$ alone.

Define
$$g(s) := \prod_{n=1}^{\infty} (1-e^{-sn})^{-\gamma(n)} = \sum_{n=0}^{\infty} b(n) e^{-sn},$$
and define
$$h(s): = \log g(s).$$
Then we have
\begin{equation}\label{eq:taylor-expansion} h(s) = \sum_{n=1}^{\infty} \gamma(n) \log  ((1-e^{-sn})^{-1}) = \sum_{m=1}^{\infty} \frac{1}{m} \phi(ms),\end{equation}
where
$$\phi(s) := \sum_{n=1}^{\infty} \gamma(n) e^{-sn}.$$
We first make the following.
\begin{claim}
\label{claim:approx}
Under the hypotheses of Theorem \ref{thm:brigham-error}, there exists $\eta = \eta(u,\epsilon)>0$ such that 
$$h(s) = (1+O(s^{\eta})) K \Gamma(u+1) \zeta(u+1) s^{-u} ,$$
for all $s \in (0,1]$.
\end{claim}
\begin{proof}[Proof of claim.]
By replacing $\epsilon$ with $\min\{\epsilon,u\}$, we may assume throughout that $\epsilon \leq u$. Note that $\log g(s)$ is a monotone decreasing function of $s$. Therefore, once we have proved the claim for $s \in (0,s_0)$ for some $s_0 >0$, it will follow for all $s \in (0,1]$, by adjusting the value of the constant implicit in the $O(s^{\eta})$ term, if necessary. Hence, it suffices to prove the claim for $s$ sufficiently small. 

Let
$$F(x) := \sum_{n \leq x} \gamma(n).$$
Then, by hypothesis, we have
$$F(x) = (1+O(x^{-\epsilon})Kx^u.$$
From the definition of $\phi$, we have
\begin{align}
\label{eq:phi-formula} \phi(s) & =-\sum_{q=1}^{\infty} \sum_{n=1}^{q} \gamma(n) (e^{-(q+1)s} - e^{-qs})\nonumber \\
& = \sum_{q=1}^{\infty} \sum_{n=1}^{q} \gamma(n) \int_{q}^{q+1} se^{-sy} dy \nonumber \\
& = \sum_{q=1}^{\infty}  F(q) \int_{q}^{q+1} se^{-sy} dy \nonumber \\
& = \sum_{q=1}^{\infty}  \int_{q}^{q+1} F(y) se^{-sy} dy \nonumber \\
& = \int_{1}^{\infty} F(y) se^{-sy} dy\\
& = \int_{s}^{\infty} F(x/s) e^{-x} dx. \nonumber
\end{align}

Let us write
\begin{equation}\label{eq:split} \phi(s) = \int_{s}^{s^{1/2}} F(x/s) e^{-x} dx + \int_{s^{1/2}}^{\infty} F(x/s)e^{-x} ds.\end{equation}
Since $F$ is monotone increasing, we have
$$F(x/s) \leq F(s^{-1/2}) \leq (1+O(s^{\epsilon/2}))K s^{-u/2} = O(s^{-u/2})\quad \forall x \in [s,s^{1/2}],$$
for all $s\in (0,1]$. Hence, for the first integral in (\ref{eq:split}), we have
$$\left| \int_{s}^{s^{1/2}} F(x/s) e^{-x} dx \right| \leq O(s^{-u/2})s^{1/2} = O(s^{-u/2+1/2}).$$
By hypothesis, we have
$$F(x/s) = K(x/s)^{u} (1+O((x/s)^{-\epsilon})) = K(x/s)^{u}(1+O(s^{\epsilon/2})) \quad \forall x \geq s^{1/2},$$
so
$$\int_{s^{1/2}}^{\infty} F(x/s)e^{-x} ds = Ks^{-u} (1+O(s^{\epsilon/2}))\int_{s^{1/2}}^{\infty} x^{u}e^{-x} dx.$$
We have
$$\int_{s^{1/2}}^{\infty} x^{u}e^{-x} dx = \int_{0}^{\infty} x^{u}e^{-x} dx - \int_{0}^{s^{1/2}} x^{u}e^{-x} dx = \Gamma(u+1) -  \int_{0}^{s^{1/2}} x^{u}e^{-x} dx,$$
and
$$\int_{0}^{s^{1/2}} x^{u}e^{-x} dx \leq \int_{0}^{s^{1/2}} x^u dx = \frac{s^{(u+1)/2}}{u+1} = O(s^{1/2}).$$
Hence, the second integral in (\ref{eq:split}) satisfies
$$\int_{s^{1/2}}^{\infty} F(x/s)e^{-x} ds = Ks^{-u} (1+O(s^{\epsilon/2}))\Gamma(u+1) - O(s^{-u+1/2}).$$
Putting everything together, we get
\begin{align}
\label{eq:phi-bound} \phi(s) & = K\Gamma(u+1) s^{-u} (1+O(s^{\epsilon/2})) + O(s^{-u/2+1/2}) + O(s^{-u+1/2})\nonumber \\
& = K\Gamma(u+1) s^{-u} (1+O(s^{\epsilon/2}))\end{align}
for all $s \in (0,1]$. 

Now write 
\begin{align*} h(s) & = \sum_{m=1}^{\infty} \frac{1}{m} \phi(ms)\\
& = \sum_{m \leq s^{-1/2}}\frac{\phi(ms)}{m} + \sum_{s^{-1/2} < m \leq s^{-1}}\frac{\phi(ms)}{m} + \sum_{m > s^{-1}} \frac{\phi(ms)}{m}\\
& =: \psi_1(s)+\psi_2(s)+\psi_3(s).\end{align*}

We have
\begin{align*} \psi_1(s) & = \sum_{m \leq s^{-1/2}} \frac{1}{m}\phi(ms)\\
&= K\Gamma(u+1) \sum_{m \leq s^{-1/2}} \frac{1}{m} (ms)^{-u} (1+O((ms)^{\epsilon/2}))\\
& = K\Gamma(u+1) s^{-u} \left( \sum_{m \leq s^{-1/2}} \frac{1}{m^{u+1}} + O(s^{\epsilon/2}) \sum_{m \leq s^{-1/2}} \frac{1}{m^{u+1 - \epsilon/2}}\right)\\
& = K\Gamma(u+1) s^{-u} \left( \sum_{m \leq s^{-1/2}} \frac{1}{m^{u+1}} + O(s^{\epsilon/2})\right)\\
& = K\Gamma(u+1) s^{-u} \left( \sum_{m=1}^{\infty} \frac{1}{m^{u+1}} - \sum_{m > s^{-1/2}}  \frac{1}{m^{u+1}} +  O(s^{\epsilon/2})\right)\\
& = K\Gamma(u+1) s^{-u}(\zeta(u+1) - O(s^{u/2}) +  O(s^{\epsilon/2}))\\
& = K\Gamma(u+1) s^{-u}\zeta(u+1)(1+O(s^{\epsilon/2})),\end{align*}
using the fact that $\epsilon \leq u$.

By (\ref{eq:phi-bound}), we have $\phi(s) = O(s^{-u})$ for all $s \in (0,1]$. Hence, 
\begin{align*} \psi_2(s) & = \sum_{s^{-1/2} < m \leq s^{-1}} \frac{\phi(ms)}{m}\\
& \leq O(1) \sum_{s^{-1/2} < m \leq s^{-1}} \frac{(ms)^{-u}}{m}\\
& = O(s^{-u}) \sum_{s^{-1/2} < m \leq s^{-1}} \frac{1}{m^{u+1}}\\
& = O(s^{-u}) \int_{s^{1/2}}^{s^{-1}} \frac{1}{y^{u+1}} dy\\
& = O(s^{-u}) \frac{1}{u} \left[-\frac{1}{y^u}\right]_{s^{-1/2}}^{s^{-1}}\\
& = O(s^{-u}) O(s^{u/2})\\
& = O(s^{-u/2}) \end{align*}

Finally, we deal with
$$\psi_3(s) = \sum_{m > s^{-1}} \frac{\phi(ms)}{m}.$$

By (\ref{eq:phi-formula}), for $t \geq 1$, we have
\begin{align*} \phi(t)& = \int_{1}^{\infty} F(y) te^{-ty} dy\\
& = t \int_{1}^{\infty} F(y) e^{-ty} dy \\
& = O(t) \int_{1}^{\infty} y^u e^{-ty} dy \\
&= O(te^{-t})\\
&= O(2^{-t}).\end{align*}

Hence,
\begin{align*} \psi_3(s) & = \sum_{m > s^{-1}} \frac{\phi(ms)}{m}\\
& = O(1) \sum_{m > s^{-1}} \frac{2^{-ms}}{m}\\
& = O(s) \sum_{m > s^{-1}} 2^{-ms}\\
& = O(s) \frac{1}{1-2^{-s}}\\
& = O(1).\end{align*}

Putting everything together, we have
\begin{align*} h(s) & = K\Gamma(u+1) s^{-u}\zeta(u+1)(1+O(s^{\epsilon/2})) + O(s^{-u/2}) + O(1)\\
& = (1+O(s^{\epsilon/2}))K\Gamma(u+1) \zeta(u+1) s^{-u},\end{align*}
proving the claim.
\end{proof}

Given $y \geq 0$, let $s(y)$ be the value of $s$ minimising $h(s) + sy$. Then we have
\begin{equation}\label{eq:stationary} h'(s(y)) = -y.\end{equation}

We now make the following.
\begin{claim}
\label{claim:derivative}
$$h'(s)  = -(1+O(s^{\kappa}))\frac{Ku\Gamma(u+2)\zeta(u+1)}{u+1} s^{-u-1}$$
for all $s \in (0,1]$.
\end{claim}

\begin{proof}[Proof of claim.]
By (\ref{eq:taylor-expansion}), we have
$$h'(s) = \sum_{m=1}^{\infty} \phi'(ms),$$
and
$$\phi'(s) = -\sum_{n=1}^{\infty} n\gamma(n) e^{-sn}.$$
Let
$$F_1(x) = \sum_{n \leq x} n \gamma (n).$$
Recall the Abel summation formula: if $a,f:\mathbb{N} \to \mathbb{N}$, and $A(n):= \sum_{i=1}^{n} f(i)$, then we have
\begin{equation} \label{eq:Abel} \sum_{n=1}^{N} a(n) f(n) = A(N)f(N) - \sum_{n=1}^{N-1} A(n) (f(n+1)-f(n)).\end{equation}
Applying this with $a(n) = \gamma(n)$ and $f(n) = n$ gives
\begin{align*} F_1(N) & = \sum_{n=1}^{N} n\gamma(n)\\
 & = NF(N) - \sum_{n=1}^{N-1} F(n)\\
& = N(1+O(N^{-\epsilon})) KN^u - \sum_{n=1}^{N-1} Kn^u(1+O(n^{-\epsilon}))\\
& = \frac{Ku}{u+1}(1+O(N^{-\epsilon}))N^{u+1}.\end{align*}
Therefore,
$$F_1(x) = \frac{Ku}{u+1}(1+O(x^{-\epsilon}))x^{u+1}.$$
Hence, using exactly the same argument as in the proof of Claim \ref{claim:approx}, with $-\phi'$ in place of $\phi$, and with $F_1$ in place of $F$, we obtain
\begin{align} \label{eq:F1integral} -\phi'(s) & = \int_{1}^{\infty} F_1(y) se^{-sy} dy\\
& = \int_{s}^{\infty} F(x/s) e^{-x} dx \nonumber \end{align}
and
\begin{equation}\label{eq:deriv-approx}
-\phi'(s) = \frac{Ku\Gamma(u+2)}{u+1} s^{-u-1} (1+O(s^{\epsilon/2}))
\end{equation}
for all $s \in (0,1]$. Using the functional equation of $\Gamma(x+1) = x\Gamma(x)$ of the Gamma function, (\ref{eq:deriv-approx}) can be rewritten as
\begin{equation}\label{eq:deriv-approx-2}
-\phi'(s) = Ku\Gamma(u+1) s^{-u-1} (1+O(s^{\epsilon/2}))
\end{equation}

As before, we write
\begin{align*} -h'(s) & = \sum_{m=1}^{\infty} -\phi'(ms)\\
& = \sum_{m \leq s^{-1/2}}-\phi'(ms) + \sum_{s^{-1/2} < m \leq s^{-1}}-\phi'(ms) + \sum_{m > s^{-1}} -\phi'(ms)\\
& =: \chi_1(s)+\chi_2(s)+\chi_3(s).\end{align*}

We have
\begin{align*} \chi_1(s) & = \sum_{m \leq s^{-1/2}} -\phi'(ms)\\
&= Ku\Gamma(u+1) \sum_{m \leq s^{-1/2}} (ms)^{-u-1} (1+O((ms)^{\epsilon/2}))\\
& = Ku\Gamma(u+1) s^{-u-1} \left( \sum_{m \leq s^{-1/2}} \frac{1}{m^{u+1}} + O(s^{\epsilon/2}) \sum_{m \leq s^{-1/2}} \frac{1}{m^{u+1 - \epsilon/2}}\right)\\
& = Ku\Gamma(u+1) s^{-u-1} \zeta(u+1)(1+O(s^{\epsilon/2})),\end{align*}
just as in the proof of Claim \ref{claim:approx}.

Note from (\ref{eq:deriv-approx-2}) that $-\phi'(s) = O(s^{-u-1})$. It follows that
\begin{align*} \chi_2(s) & = \sum_{s^{-1/2} < m \leq s^{-1}}-\phi'(ms)\\
& \leq O(s^{-u-1}) \sum_{s^{-1/2} < m < s^{-1}} \frac{1}{m^{u+1}}\\
& = O(s^{-u-1}) O(s^{u/2})\\
& = O(s^{-u/2-1}),\end{align*}
using the integral comparison test, just as in the proof of Claim \ref{claim:approx}.

Finally, for $t \geq 1$, by (\ref{eq:F1integral}), we have
\begin{align*} -\phi'(t)& = \int_{1}^{\infty} F_1(y) te^{-ty} dy\\
& = t \int_{1}^{\infty} F_1(y) e^{-ty} dy \\
& = O(t) \int_{1}^{\infty} y^{u+1} e^{-ty} dy \\
&= O(te^{-t})\\
&= O(2^{-t}).\end{align*}

Hence,
\begin{align*} \chi_3(s) & = \sum_{m > s^{-1}} -\phi'(ms)\\
& = O(1) \sum_{m > s^{-1}} 2^{-ms}\\
& = O(1) \frac{1}{1-2^{-s}}\\
& = O(s^{-1}).\end{align*}

Putting everything together, we have
\begin{align*} -h'(s) & = Ku\Gamma(u+1)\zeta(u+1) s^{-u-1} \zeta(u+1)(1+O(s^{\epsilon/2})) + O(s^{-u/2-1}) + O(s^{-1})\\
& = (1+O(s^{\epsilon/2}))Ku\Gamma(u+1)\zeta(u+1) s^{-u-1},\end{align*}
proving the claim.
\end{proof}

It follows from (\ref{eq:stationary}) and Claim \ref{claim:derivative} that
$$s(y) = (1+O(y^{-\kappa}))(Ku\Gamma(u+1)\zeta(u+1))^{1/(u+1)}y^{-1/(u+1)}.$$

Note that
$$h''(s) = \sum_{m=1}^{\infty} m\phi''(ms),$$
and
$$\phi''(s) = \sum_{n=1}^{\infty} n^2 \gamma(n) e^{-sn}.$$
Let
$$F_2(x) = \sum_{n \leq x} n^2 \gamma (n).$$
By applying Abel's summation formula (\ref{eq:Abel}) with $a(n) = n\gamma(n)$ and $f(n)=n$, we obtain
$$F_2(N) = \frac{Ku}{u+2}(1+O(N^{-\epsilon}))N^{u+2}.$$
Therefore,
$$F_2(x) = \frac{Ku}{u+2}(1+O(x^{-\epsilon}))x^{u+2}.$$
Using the same argument as in the proof of Claim \ref{claim:approx}, we get
$$\phi''(s) = Ku\Gamma(u+2) s^{-(u+2)} (1+O(s^{\epsilon/2}))$$
for all $s \in (0,1]$. Using the same argument as in the proof of Claim \ref{claim:derivative}, it follows that
\begin{align*} h''(s) & = Ku\Gamma(u+2) s^{-u-2} \zeta(u+1)(1+O(s^{\epsilon/2})) + O(s^{-u/2-2}) + O(s^{-2})\\
& = (1+O(s^{\eta}))Ku\Gamma(u+2)\zeta(u+1) s^{-u-2}.\end{align*}

Finally, we have
$$h'''(s) = \sum_{m=1}^{\infty} m^2\phi''(ms),$$
and
$$-\phi'''(s) = \sum_{n=1}^{\infty} n^3 \gamma(n) e^{-sn}.$$
Let
$$F_3(x) = \sum_{n \leq x} n^3 \gamma (n).$$
By applying Abel's summation formula (\ref{eq:Abel}) with $a(n) = n^2\gamma(n)$ and $f(n)=n$, we obtain
$$F_3(N) = \frac{Ku}{u+3}(1+O(N^{-\epsilon}))N^{u+3}.$$
Therefore,
$$F_3(x) = \frac{Ku}{u+3}(1+O(x^{-\epsilon}))x^{u+3}.$$
Using the same argument as in the proof of Claim \ref{claim:approx}, we get
$$-\phi'''(s) = Ku\Gamma(u+3) s^{-(u+3)} (1+O(s^{\epsilon/2})).$$
for all $s \in (0,1]$. Using the same argument as in the proof of Claim \ref{claim:derivative}, it follows that
\begin{align*} -h'''(s) & = Ku\Gamma(u+3) s^{-u-3} \zeta(u+1)(1+O(s^{\epsilon/2})) + O(s^{-u/2-3}) + O(s^{-3})\\
& = (1+O(s^{\epsilon/2}))Ku\Gamma(u+3)\zeta(u+1) s^{-u-3}.\end{align*}

We are now in a position to apply Proposition 1 and Theorem 2 of \cite{odlyzko}, which in our case (and with our notation), say the following.

\begin{theorem}[Odlyzko]
Suppose $(b(n))_{n=0}^{\infty}$ is a sequence of non-negative integers with generating function satisfying
$$\sum_{n=0}^{\infty} b(n) z^n = \prod_{j=1}^{\infty} (1-z^j)^{-\gamma(j)},$$
where $(\gamma(j))_{j=1}^{\infty}$ is a sequence of non-negative integers. Define
$$B(x) = \sum_{1\leq n \leq x} b(n).$$
Define
\begin{equation} \label{eq:g-definition} g(s) := \sum_{n=0}^{\infty} b(n) e^{-sn},\end{equation}
and define
$$h(s): = \log g(s).$$
Suppose the sum in (\ref{eq:g-definition}) converges for all $s >0$. Then:\newline
\newline
(Proposition 1) For each $y>0$ and each $s >0$, we have
$$B(y) \leq \exp(h(s) + ys).$$
There is a unique $s(y)>0$ which minimises $h(s) + ys$, and of course we have
$$B(y) \leq \exp(h(s(y))+ys(y)).$$
(Theorem 2) Moreover, let
$$C(y) := h''(s(y)).$$
Suppose that $y>0$ is such that $C(y) \geq 10^6$, and suppose that for all $t$ with
$$s(y) \leq t \leq s(y) + 20 C(y)^{-1/2},$$
we have
$$|h'''(t)| \leq 10^{-3} C(y)^{3/2}.$$
Then
$$B(y) - B(y-30C(y)^{-1/2}) \geq \exp(h(s(y)) + y s(y) - 30s(y)C(y)^{1/2} - 100).$$
\end{theorem}

In our case, by Claim \ref{claim:approx}, the sum in (\ref{eq:g-definition}) certainly converges for all $s>0$. We have $s(y) = \Theta(y^{-1/(u+1)})$ and so
$$C(y) = \Theta(y^{(u+2)/(u+1)}) \geq 10^6,$$
provided $y$ is sufficiently large. Hence, for all $t$ such that
$$s(y) \leq t \leq s(y) + 20 C(y)^{-1/2},$$
we have $t = \Theta(s(y))$, and so
$$|h'''(t)| = \Theta(y^{(u+3)/(u+1)}) \leq 10^{-3} C(y)^{3/2} = 10^{-3} \Theta(y^{(3u/2+3)/(u+1)}),$$
provided $y$ is sufficiently large. Hence,
\begin{align*} B(y) & \geq \exp(h(s(y)) + y s(y) - 30s(y)C(y)^{1/2} - 100)\\
& = \exp(h(s(y)) + y s(y) - O(y^{u/(2u+2)})).\end{align*}
On the other hand, we have
$$B(y) \leq \exp(h(s(y)) + ys(y)).$$
It remains to estimate $h(s(y)) + ys(y)$. We have
\begin{align*} h(s(y)) & = (1+O((s(y))^{\kappa})) \Gamma(u+1) \zeta(u+1) s(y)^{-u}\\
& = (1+O(y^{-\kappa}))K \Gamma(u+1) \zeta(u+1)(Ku\Gamma(u+1)\zeta(u+1))^{-u/(u+1)}y^{u/(u+1)}\\
& = (1+O(y^{-\kappa}))(K \Gamma(u+1) \zeta(u+1))^{1/(u+1)}u^{-u/(u+1)}y^{u/(u+1)}.\end{align*}
Hence,
\begin{align*} h(s(y)) + ys(y) & = (1+O(y^{-\kappa}))(K \Gamma(u+1) \zeta(u+1))^{1/(u+1)}u^{-u/(u+1)}y^{u/(u+1)}\\
&+ (1+O(y^{-\kappa}))(Ku\Gamma(u+1)\zeta(u+1))^{1/(u+1)}y^{u/(u+1)}\\
& = (1+O(y^{-\kappa}))(K \Gamma(u+1) \zeta(u+1))^{1/(u+1)}u^{-u/(u+1)}(u+1)y^{u/(u+1)}\\
& = (1+O(y^{-\kappa}))\tfrac{1}{u} (Ku \Gamma(u+2) \zeta(u+1))^{1/(u+1)}(u+1)^{u/(u+1)}y^{u/(u+1)},\end{align*}
using the functional equation $\Gamma(x+1) = x\Gamma(x)$. Putting together our upper and lower bounds, we get
$$B(y) = \exp((1+O(y^{-\kappa}))\tfrac{1}{u} (Ku \Gamma(u+2) \zeta(u+1))^{1/(u+1)}(u+1)^{u/(u+1)}y^{u/(u+1)}),$$
for all $y >0$. (Of course, we can make this true for all $y>0$, not just for $y$ sufficiently large, by adjusting the value of the constant implicit in the $O(y^{-\kappa})$ term.) In particular,
$$B(n) = \exp((1+O(n^{-\kappa}))\tfrac{1}{u}(Ku \Gamma(u+2) \zeta(u+1))^{1/(u+1)}(u+1)^{u/(u+1)}n^{u/(u+1)})$$
for all $n \in \mathbb{N}$. Taking logs, we get
\begin{equation}\label{eq:bound-sum} \log B(n) = (1+O(n^{-\kappa}))\tfrac{1}{u}(Ku \Gamma(u+2) \zeta(u+1))^{\frac{1}{u+1}}(u+1)^{\frac{u}{u+1}}n^{\frac{u}{u+1}}\end{equation}
for all $n \in \mathbb{N}$.

The final step is exactly the same as in Brigham's proof of Theorem \ref{thm:brigham}. Suppose in addition that every integer $n \geq n_0$ can be partitioned into integers in the set $\{n:\ \gamma(n) \geq 1\}$. Then we have $b(n) \geq b(m)$ if $1 \leq m \leq n-n_0$, using the interpretation of the $b(i)$'s in Fact \ref{fact:multisets}, since we can produce $b(m)$ distinct multi-sets of weight $n$ by adding on a fixed element of weight $n-m$ to each multi-set of weight $m$. Hence,
$$\frac{B(n-n_0)}{n-n_0} \leq b(n) \leq B(n),$$
so (\ref{eq:bound-sum}) holds with $b(n)$ in place of $B(n)$. This completes the proof of Theorem \ref{thm:brigham-error}.
\end{proof}

\subsection*{Acknowledgements}
We would like to thank Agelos Georgakopoulos, Alex Lubotzky, Mikael De La Salle, Omri Sarig and Romain Tessera for very helpful discussions. We also thank an anonymous referee for several very helpful suggestions, which we have incorporated.

\end{document}